\newlength{\defbaselineskip}
\newcommand{\setlinespacing}[1]%
           {\setlength{\baselineskip}{#1 \defbaselineskip}}
\newcommand{\singlespacing}{\setlength{\baselineskip}{\defbaselineskip}}
\newcommand{\R}{\ensuremath{\mathbb{R}}}
\newcommand{\e}{\ensuremath{\varepsilon}}
\newcommand{\p}{\ensuremath{\partial}}
\newcommand{\g}{\ensuremath{\gamma}}
\newcommand{\G}{\ensuremath{\Gamma}}
\newcommand{\s}{\ensuremath{\sigma}}
\renewcommand{\l}{\ensuremath{\lambda}}
\renewcommand{\a}{\ensuremath{\alpha}}
\newcommand{\psit}{\ensuremath{\psi_{\mathrm{tw}}}}
\renewcommand{\L}{\ensuremath{\mathcal{L}}}
\renewcommand{\O}{\ensuremath{\mathcal{O}}}
\DeclarePairedDelimiter{\ip}\langle\rangle
\DeclarePairedDelimiter{\bip}{\big\langle}{\big\rangle}
\DeclarePairedDelimiter{\nrm}\lVert\rVert
\def\Re{\mathop\mathrm{Re}\nolimits}			
\newcommand{\Real}{\mathbb{R}}							
\newcommand{\abs}[1]{\left\vert#1\right\vert}			
\newcommand{\norm}[1]{\left\Vert#1\right\Vert}		
\newcommand{\sref}[1]{(\ref{#1})}                       
\newtheorem{thm}{Theorem}[section]
\newtheorem{cor}[thm]{Corollary}
\newtheorem{lem}[thm]{Lemma}
\newtheorem{prop}[thm]{Proposition}
\newtheorem*{defn*}{Definition}
\begin{document}

\begin{frontmatter}
\title{Travelling Waves \\ for Reaction-Diffusion Equations \\
  Forced by Translation Invariant Noise}
\journal{Physica D}

\author[LD1]{C. H. S. Hamster\corauthref{coraut}},
\corauth[coraut]{Corresponding author. }
\author[LD2]{H. J. Hupkes},
\address[LD1]{
  Mathematisch Instituut - Universiteit Leiden \\
  P.O. Box 9512; 2300 RA Leiden; The Netherlands \\
 Email:  {\normalfont{\texttt{c.h.s.hamster@math.leidenuniv.nl}}}
}
\address[LD2]{
  Mathematisch Instituut - Universiteit Leiden \\
  P.O. Box 9512; 2300 RA Leiden; The Netherlands \\ Email:  {\normalfont{\texttt{hhupkes@math.leidenuniv.nl}}}
}

\date{\today}

\begin{abstract}
\singlespacing
Inspired by applications, we consider reaction-diffusion equations on $\R$
that are stochastically forced by a small multiplicative noise term that is white in time, coloured in space and invariant
under translations. We show how these equations can be understood as a stochastic partial differential equation (SPDE) forced by a cylindrical Q-Wiener process and subsequently explain how to study stochastic travelling waves in this setting. In particular, we generalize the phase tracking framework that was developed in \cite{Hamster2017,Hamster2018}
for noise processes driven by a single Brownian motion.
The main focus lies on explaining how this framework naturally leads to long term approximations for the stochastic wave profile and speed.
We illustrate our approach by two fully worked-out examples, which
highlight the predictive power of our expansions.
\end{abstract}

\begin{subjclass}
\singlespacing
35K57 \sep 35R60 .
\end{subjclass}

\begin{keyword}
\singlespacing
travelling waves, stochastic forcing,
nonlinear stability, stochastic phase shift.
\end{keyword}

\end{frontmatter}

\numberwithin{equation}{section}
\numberwithin{figure}{section}
\renewcommand{\theequation}{\thesection.\arabic{equation}}

\section{Introduction}
\label{sec:int}

In this paper we set out to
study the propagation of wave solutions to
stochastic equations of the form
\begin{align}
\label{eq:NagumoPhysics}
\begin{split}
u_t&=\rho \p_{xx} u +f(u) +\s g(u)\xi(x,t),\\
\end{split}
\end{align}
in which $\xi$ is a Gaussian process\footnote{
Actually `generalised Gaussian' would be a more
accurate term, since we will see that $\xi$
does not have the right properties
to be a Gaussian random variable
on $L^2(\R)$.}
that is white in time and coloured in space.
In particular, we assume formally that
\begin{align}
\label{eq:int:exp:xi}
\begin{split}
    E[\xi(x,t)]&=0,\\
    E[\xi(x,t)\xi(x',t')]&= \delta(t-t')q(x-x'),
    \end{split}
\end{align}
for some smooth covariance function $q$
that describes the correlation in space.
Such equations have been used in a wide range of applications, for example
to model the appearance of travelling waves in light-sensitive Belousov-Zhabotinsky chemical reactions \cite{Kadar1998}
or to study the excitability of activator-inhibitor systems
such as nerve fibres \cite{Garcia2001}.
We refer to \cite[\S 6.1]{GarciaSpatiallyExtended}
for an extended list of examples.

We will assume here that \sref{eq:NagumoPhysics}
with $\sigma = 0$ admits a spectrally stable deterministic travelling front or pulse and examine
the impact of the multiplicative noise term for small $\sigma$. The nonlinearity
$g$ will be chosen to vanish at the
endpoints of the deterministic wave.
In particular, the full stochastic system
is at rest whenever the deterministic
portion is at rest.
Such an assumption is typically
used to examine the distortions
on a system caused by
\textit{external} random effects,
such as fluctuations in the
intensity of the light driving a Belousov-Zhabotinsky reaction.
In a controlled setting these effects
can often be minimized or switched off completely, leading to the notion of
the deterministic limit.

On the other hand,
\textit{internal} fluctuations
arise from the microscopic
properties of the system itself
and cannot be readily eliminated.
For example, the vibrations of the individual atoms are essential
ingredients in the derivation
of the ideal gas equations. It is more natural to use an additive noise term to model effects of this type, but we do not focus on this case here.

The main goal of this paper is to uncover the corrections to the deterministic wave that are caused by the (small)
multiplicative noise term. In particular, we develop a framework that allows the corrections
to the speed and shape of the wave to be computed to any desired order in $\sigma$. We explicitly
compute the second and third order correction terms and show numerically that these expansions are valid
for long time scales. We also outline in which sense these predictions can be made rigorous,
which involves casting the translationally invariant Stochastic Partial Differential Equation (SPDE) \sref{eq:NagumoPhysics}
into a mathematically precise form.

\paragraph{Example I: The Nagumo equation}
In order to set the stage, let us consider the stochastic Nagumo equation
\begin{align}\label{eq:ExNag}
\begin{split}
u_t&= \rho \p_{xx} u + f_{\mathrm{cub}}(u; \a ) +\sigma u(1-u)\xi(x,t),\\
\end{split}
\end{align}
with the bistable cubic nonlinearity
\begin{equation}
f_{\mathrm{cub}}(u; \a) = u(1-u)(u-\a), \qquad \qquad 0 < \a < 1
\end{equation}
and the
 Gaussian covariance kernel
\begin{equation}
\label{eq:int:q:gaussian}
q(x) =\frac{1}{2}e^{\frac{-\pi x^2}{4}}.
\end{equation}
For $0< a < 1$, the deterministic system is known to have a spectrally stable wave
solution $u(x,t) = \Phi_0(x  - c_0 t)$
that connects the stable rest states zero and one \cite{Kapitula}. In fact, the travelling wave ODE
\begin{align}
    \rho\Phi_0''+c_0\Phi_0'+f_{\mathrm{cub}}(\Phi_0;\a)=0
\end{align}
can be solved by using the explicit expressions
\begin{equation}
\label{eq:int:formulas:det:nag:wave}
c_0 = \sqrt{2\rho_0}\left(\frac{1}{2}-\a_0\right), \qquad
\Phi_0(x) = \frac{1}{2}\left[1-\tanh\left(\frac{1}{2\sqrt{2\rho_0}}x\right)\right]
\end{equation}
with $(\rho_0, \a_0) = (\rho, \a)$.

Note that the form of the nonlinear terms in \sref{eq:ExNag}
allows us to recast the system as
\begin{align}\label{eq:ExNag:par}
\begin{split}
u_t&= \rho_0 \p_{xx} u + f_{\mathrm{cub}}\big(u; \a_0- \s\xi(x,t)   \big),\\
\end{split}
\end{align}
showing that we are stochastically forcing the (external) parameter $\a$.
As a consequence, it is natural to ask whether effective $\sigma$-dependent parameters
$(\rho_\sigma, \a_\sigma)$ can be derived that
are able to capture the stochastic effects on the waves
by replacing \sref{eq:int:formulas:det:nag:wave}
with
\begin{equation}
\label{eq:int:formulas:det:nag:wave:sigma}
c_\s = \sqrt{2\rho_\s}\left(\frac{1}{2}-\a_\s \right), \qquad
\Phi_\s(x) = \frac{1}{2}\left[1-\tanh\left(\frac{1}{2\sqrt{2\rho_\s}}x\right)\right].
\end{equation}
In the case where $\xi(x,t)$ is replaced by the derivative of a single $x$-independent Brownian motion in time,
this point of view can be made fully explicit and precise. Indeed,
in this case the wave \sref{eq:int:formulas:det:nag:wave:sigma}
with 
\begin{equation}
\label{eq:int:wave:steepening}
    (\rho_\sigma, \a_\sigma) = (\frac{\rho_0}{1+\s^2\rho_0},\a_0)
\end{equation}
is an exact solution
to the underlying SPDE \cite{Hamster2017,Cartwright2018},
with a phase that follows a scaled Brownian motion.

The early results in \cite{GarciaSpatiallyExtended} for general $\xi$ can
also be seen in this light. Indeed,
the authors use a formal (partial) expansion to suggest the choices
\begin{equation}
\rho_{\sigma} =\frac{1}{1-\s^2q(0)},
\qquad \a_{\sigma} = \frac{2\a_0-\s^2q(0)}{2-2\s^2q(0)}
\end{equation}
where $\rho_0=1$.
However, the waves found in this way are only approximate solutions\footnote{Note that these scalings hold for the Stratonovich interpretation, while the results from \cite{Hamster2017,Cartwright2018} hold for the It\^o interpretation.}
to \sref{eq:ExNag}. We show in \S\ref{sec:resN} how our techniques
can be used to significantly improve the quality
of this approximation.

In order to discuss the stability of these waves,
we introduce the linearised operator
\begin{align}
    \L_\mathrm{tw}v=\rho v''+c_0v'+f_{\mathrm{cub}}'(\Phi_0;\a)v,
\end{align}
together with its formal adjoint
\begin{align}
    \L^*_\mathrm{tw}w=\rho w'' -c_0w'+f_{\mathrm{cub}}'(\Phi_0,\a)w.
\end{align}
By direct substitution, it can be verified that $\psi_{\mathrm{tw}}(\xi) = \kappa e^{c_0 \xi/\rho } \Phi_0'(\xi)$ is an element of the kernel of $\L^*_\mathrm{tw}$; see also \cite[ex. 2.3.1 and 4.1.4]{Kapitula}
for more intuition. The constant $\kappa$ can be chosen in such a way that
\begin{equation}
\label{eq:int:AdjointEf}
\L_\mathrm{tw} \Phi_0' = \L^*_\mathrm{tw} \psi_{\mathrm{tw}} = 0,
\qquad \langle \psi_{\mathrm{tw}} , \Phi_0' \rangle = 1,
\end{equation}
which allows us to write 
\begin{equation}
\label{eq:int:def:sp:proj}
P v = \langle \psi_{\mathrm{tw}}, v \rangle_{L^2}\Phi_0'
\end{equation}
for the spectral projection onto the simple zero
eigenspace of $\L_{\mathrm{tw}}$.

Since the remainder of the spectrum of $\L_{\mathrm{tw}}$ lies strictly to the left of the imaginary axis,
general considerations \cite{lorenzi2004analytic} can be used to show that the associated semigroup satisfies the bound
\begin{equation}
\label{eq:int:sem:bound:with:m}
    \norm{e^{\mathcal{L}_{\mathrm{tw}} t} (I-P) v}_{L^2(\R)}
      \le M e^{- \beta t} \norm{(I-P) v}_{L^2(\R)}
\end{equation}
for some $\beta > 0$ and $M \ge 1$. The approach in this paper shows how this bound
can be exploited to show that the stochastic waves discussed above are robust
against small perturbations.

With additional ad hoc work  \cite{StannatNag} it is even
possible to show that $M=1$ holds in \sref{eq:int:sem:bound:with:m}.
Based on this latter property we say that
the semigroup is immediately contractive.
Indeed, perturbations are not able to grow even on short timescales, but always
decay exponentially fast back to the wave.
We do not use this
property here, but it has played an essential role in many previous studies on stochastic waves
\cite{Inglis,StannatKruger}.

\paragraph{Example II: The FitzHugh-Nagumo system}
Our second main example is the two-component FitzHugh-Nagumo system
\begin{align}
\label{eq:int:fhn:spde}
\begin{split}
u_t&=\p_{xx} u + f_{\mathrm{cub}}(u; \a)-w+\sigma u
    \xi(x,t),\\
w_t&=\e\p_{xx}w+\varrho(u-\gamma w),\\
\end{split}
\end{align}
in which $\e > 0$ and $\varrho > 0$ are small parameters and $\gamma > 0$ is not too large.
For convenience, we reuse the covariance kernel $q$ 
given in \sref{eq:int:q:gaussian}.
In the deterministic case $\sigma =0$, this system can be used to describe signal propagation through nerve fibres.
It is famous for its  fast and slow travelling pulses that make an excursion from the stable 0 state.
Indeed, the construction of these pulses sparked many developments in the area of singular perturbation theory \cite{hastings1976travelling,carpenter1977geometric,jones1984stability,jones1991construction,jones1995geometric}.
Unlike the previous example, explicit expressions are not available for the profiles and wavespeeds.
Nevertheless, it is known that the fast pulses are spectrally stable \cite{alexander1990topological}.
This allows the framework developed in this paper to be applied to \sref{eq:int:fhn:spde},
leading to the numerical and theoretical discussion   in \S\ref{sec:resFHN}. Let us emphasize
that the specific structure of the noise term in \sref{eq:int:fhn:spde} is just for illustrative purposes.
Indeed, our conditions in \S\ref{sec:res} are rather general and allow cross-talk between the noise
on the $u$ and $w$ components.

For systems such as \sref{eq:int:fhn:spde}, we typically expect $M > 1$ to hold for the general bound \sref{eq:int:sem:bound:with:m}.
This means that larger excursions from the wave are possible before the exponential decay  of the linear semigroup steps in.
In particular, \sref{eq:int:fhn:spde} does not fit into the framework of any previous results in this area. 
In fact, besides our earlier work \cite{Hamster2017,Hamster2018}, there do not seem to be many rigorous studies of travelling waves
for multi-component SPDEs in the literature.

\paragraph{Translational invariance}
Notice that \sref{eq:NagumoPhysics} is translationally invariant,
in the sense that the deterministic terms
are autonomous while the correlation function
depends only on the distance between two points.
This is a natural assumption, as any explicit dependence on $x$ and $y$ individually
would imply some a-priori knowledge about the noise that is often not available.
Indeed, in many applications \cite{Armero1996,Garcia2001,GarciaSpatiallyExtended,Bernitt2017}
translationally invariant noise is considered to be the preferred modelling tool.

However, this choice does present certain mathematical issues that
do not arise when replacing
$q(x-y)$ by $\tilde{q}(x,y)$ in \sref{eq:NagumoPhysics}
and assuming that $\tilde{q}$ is square integrable
with respect to $(x,y)$. This breaks
the translational symmetry, but
does allow the noise-term to be expanded as a countable sum of Brownian motions.
This approach is taken in several other works such as \cite{Cartwright2018}.

In the following paragraph we will explain how to set up a framework to study
translationally invariant noise,
but we emphasize that this is only relatively straightforward in the case of multiplicative noise \cite[\S2.2.1]{LangThesis}
Indeed, 
additive noise of this type cannot be treated
directly, but needs a far more abstract machinery that is still under development \cite{Hairer2015}.

We recall that the goal of our approach is to understand
the long-term behaviour of the travelling waves under consideration, which move
freely throughout the entire spatial domain.
We therefore believe that the elegance
of the translationally invariant point of view
in combination with the direct
relevance for 
applications
outweighs the additional mathematical complications.

\paragraph{Cylindrical Wiener process}
At present,  \sref{eq:NagumoPhysics}
should be interpreted as a pre-equation rather than
an actual SPDE.
Our first task is to give a mathematical interpretation to the stochastic term involving the process $\xi$.
To this end, we
assume that the correlation function $q$
is integrable,
which allows us to
define a bounded\footnote{The boundedness of $Q$ follows from Young's convolution inequality: $\nrm{q*v}_{L^2}\leq \nrm{q}_{L^1}\nrm{v}_{L^2}$.} linear convolution operator
$Q: L^2(\R) \to L^2(\R)$
that acts as
\begin{equation}
[Q v](x) = [q* v](x) = \int_\Real q(x-y) v(y)
\, d y.
\end{equation}
Assuming furthermore that the Fourier transform
$\widehat{q}$ is a non-negative function,
one can show that $Q$
is a non-negative symmetric operator. More concretely,
we have $\ip{Qv,v}_{L^2(\R)}\geq 0$ for all $v\in L^2(\R)$
and it is possible to define a square-root
$Q^{1/2}: L^2(\R) \to L^2(\R)$.

However, we caution the reader
that typically $Q$ has infinite trace and is not even compact.
In particular, it is not generally possible to construct
a countable orthonormal basis of $L^2(\R)$ that consists
of the eigenfunctions of $Q$. This prevents us from using the Brownian-motion expansion discussed above. Stated in technical terms,
we cannot interpret $\xi$ as the derivative of a `regular' $Q$-Wiener process.

These difficulties can be resolved through the use of cylindrical
Wiener processes. Historically, such processes were
developed to handle noise that is white (i.e. completely decorrelated) both in space and time. In our notation, this means that $q$ is replaced by the delta-function to yield $Q = I$, which is clearly not of finite trace.
This approach only requires that $Q$ is bounded, self adjoint and nonnegative and hence applies to the class of operators $Q$ introduced above \cite[\S4.3]{DaPratoZab}.

To set the stage, we 
define the subspace
\begin{equation}
    L_Q^2 = Q^{1/2}\big(L^2(\R)\big) \subset L^2(\R)
\end{equation}
equipped with the inner product
\begin{equation}
    \langle v, w \rangle_{L^2_Q} = \ip{Q^{-\frac{1}{2}}v,Q^{-\frac{1}{2}}w}_{L^2(\R)}.
\end{equation}
In addition, we follow the construction in \cite[{\S}2.5]{Concise}
to define a Hilbert space $L^2_\mathrm{ext}$ that contains $L^2(\R)$
and has the special property that the inclusion
$L^2_Q \subset L^2_\mathrm{ext}$ is Hilbert-Schmidt.

One can now follow the procedure in \cite[{\S}2.5]{Concise} or \cite{Karczewska2005},
to construct the so-called cylindrical $Q$-Wiener process $W^Q_t$.
This process arises as a limit of processes on $L^2_Q$ that converges as a process on $L^2_\mathrm{ext}$, where it can be understood as a `regular' $\bar Q$-Wiener process for some compact $\bar Q: L^2_\mathrm{ext} \to L^2_\mathrm{ext}$. This means that $W^Q_t$ does not necessarily attain values in $L^2(\R)$, but fortunately, the exact choice for $L^2_\mathrm{ext}$
is immaterial\footnote{For translation invariant processes it is possible to explicitly characterize a choice for $L^2_\mathrm{ext}$ in terms of the dual of a Schwartz space, see \cite{Peszat1997}. } for two important reasons.

First,
it turns out \cite[Prop. 2]{Karczewska2005}
that the expression $\langle W^Q_t , v \rangle_{L^2(\R)}$ is well-defined
for
any $v \in L^2(\R)$.
In fact, it can be interpreted as a scaled Brownian motion
that satisfies the correlations
\begin{equation}
    E \Big[ \langle W^Q_t , v \rangle_{L^2(\R)}
       \langle W^Q_s , w \rangle_{L^2(\R)} \Big]
        = (t \wedge s) \langle Q v, w \rangle_{L^2(\R)}.
\end{equation}
In particular, formally replacing $v$ and $w$ by delta functions $\delta_x(\cdot)$ and $\delta_y(\cdot)$ and taking the derivative with respect to $t$ and $s$, we see that
\begin{equation}
    E \Big[ \langle dW^Q_t , \delta_x \rangle_{L^2(\R)}
       \langle dW^Q_s , \delta_y \rangle_{L^2(\R)} \Big]
        =  \delta(t-s)q(x-y).
\end{equation}
Comparing this with \sref{eq:int:exp:xi},
we see that $\frac{d}{dt}{W}^Q_t(x)$ and $\xi(x,t)$
are natural counterparts.

The second reason is that all the essential stochastic estimates
we will need only rely on the space $L^2_Q$. For example,
the full noise term in \sref{eq:NagumoPhysics} is well-defined
if 
the pointwise multiplication
\begin{equation}
v(\xi) \mapsto g\big(U(\xi)\big) v(\xi)
\end{equation}
can be interpreted as a Hilbert-Schmidt operator
from $L^2_Q$ into $L^2(\R)$ for any relevant
function $U$. We will show in Appendix \ref{sec:app:est} that this
can be achieved by imposing simple bounds
on the scalar function $g: \Real \to \Real$
and its derivative.

\paragraph{Interpretation}
In many applications involving external noise, it is natural
to interpret the stochastic terms in the Stratonovich
sense \cite{vanKampen}. Indeed, this interpretation yields
the correct limit when approximating a Wiener process by regularized
versions that can be fitted into the standard deterministic framework (a so-called Wong-Zakai Theorem).
Upon using the process $W^Q_t$ to
recast \sref{eq:NagumoPhysics} as a SPDE
in Stratonovich form,
we  arrive at
\begin{align}
\label{eq:NagumoMathS}
dU&=[\rho\p_{xx} U +f(U)]dt+\sigma g(U)\circ dW^Q_t.
\end{align}

The equivalent It\^o
formulation is given by
\begin{align}
\label{eq:NagumoMathI}
\begin{split}
dU&=\Big[\rho\p_{xx} U +f(U)+\mu\frac{\s^2}{2}q(0)g'(U)g(U)\Big]dt+\sigma g(U)dW^Q_t\\
\end{split}
\end{align}
with $\mu = 1$. From a mathematical point of view it is
more convenient to work in this formulation, since most
of the technical machinery for SPDEs is based on It\^o calculus.
The choice $\mu =0$ allows us to interpret
the noise in \sref{eq:NagumoPhysics} in the It\^o sense directly.
Our results in this paper cover both cases,
in order to ease the comparison with previous work and to
illustrate how the two types of noise
impact the deterministic waves in different ways.

\paragraph{Previous results}
Rigorous results concerning the well-posedness
of SPDEs of type \sref{eq:NagumoMathI} are widely available by now, see e.g. \cite{Concise}. However, the dynamics of this type of equation is less well studied in the math community. Several authors have considered
the dynamics of stochastic waves driven by
$Q$-Wiener processes, 
which means that the noise is necessarily localized in space.
For example, the shape and speed of stochastic waves
for Nagumo-type SPDEs 
were computed numerically in \cite{Lord2012}
and derived formally in \cite{Cartwright2018}
using a collective coordinate approach.
In addition, short-time stability results
for immediately contractive systems can be found in
\cite{StannatKruger,Inglis}.
The results in  \cite{Mueller1995}
do use cylindrical $Q$-Wiener processes for waves in the Fisher-KPP equation, but there the  smooth covariance function $q$ is replaced by a delta-function in order
to model noise that is white in space and time. A more detailed overview of results on stochastic travelling waves can be found in the review by Kuehn \cite{KuehnReview}.

Turning to non-rigorous results for
\sref{eq:NagumoPhysics} from other fields, we refer
to \cite{GarciaSpatiallyExtended} for an interesting
overview of studies that have appeared in the physics
and chemistry literature. For the Nagumo SPDE
\sref{eq:ExNag}, the dynamics up to first order in $\s$ of
have been formally computed \cite[eq. (6.11)]{GarciaSpatiallyExtended}.
At this order, the shape of the wave is equal to the deterministic shape and the phase of the wave follows a Brownian motion with a variance
that can be expressed in closed form. We will see
in {\S}\ref{sec:resN} how these conclusions can be recovered
as special cases from our expansions.

\paragraph{Phase tracking}
Our work here builds on the framework
developed in \cite{Hamster2017,Hamster2018}
to study travelling waves in stochastic reaction-diffusion equations  forced by a single Brownian motion.
The main idea is to use a phase-tracking
approach that is based purely on technical considerations rather than
ad hoc geometric intuition. Inspired by the overview in \cite{Zumbrun2009},
this allows us to adapt modern tools developed for deterministic
stability results for use in a stochastic setting.

In order to explain the key concepts, we turn to
the deterministic Nagumo PDE that arises by taking
$\sigma = 0$ in \sref{eq:ExNag}. The translational invariance
of the travelling wave $u(x,t) = \Phi_0(x - c_0 t)$ can be captured
by introducing an Ansatz of the form
\begin{align*}
    u\big(\cdot+\gamma(t),t\big)=\Phi_0(\cdot )+v(\cdot,t),
\end{align*}
in which $\gamma(t)$ can be interpreted as the phase of $u$.
We now demand that the evolution of the phase is governed by
\begin{align}\label{int:defdetphase}
    \dot{\g}(t)=c_0 +a\big(v(\cdot,t)\big),
\end{align}
for some (nonlinear) functional $a:L^2(\R)\to\R$ that we are still free to choose.
The resulting equation for $v$ is then given by
\begin{align}\label{eq:res:va}
    \p_t v(t)=\L_\mathrm{tw} v(t)
       +N\big(v(t) \big)+a\big(v(t)\big)\p_\xi \big[\Phi_0+v(t) \big],
\end{align}
in which
 $N(v)=f_{\mathrm{cub}}(\Phi_0+v)-f_{\mathrm{cub}}(\Phi_0)
-f_{\mathrm{cub}}'(\Phi_0)v$.
This can be recast into the mild form
\begin{align}
    v(t)=e^{\L_{\mathrm{tw}} t} v_0
      +\int_0^te^{\L_{\mathrm{tw}} (t-s)}\Big[N\big(v(s)\big)+a\big(v(s)\big)\p_\xi\big[\Phi_0+v(s)\big]\Big]ds,
\end{align}
inviting us to apply the bound \sref{eq:int:sem:bound:with:m}.

In order to apply exponential bounds such as \sref{eq:int:sem:bound:with:m} to the semigroup $e^{\L_\mathrm{tw}}$, we must avoid the neutral non-decaying part of the
semigroup. In order to force the integrand 
to be orthogonal to the zero eigenspace, 
we recall the spectral projection \sref{eq:int:def:sp:proj} and choose
\begin{align}\label{eq:res:defa}
    a(v)=-\frac{\ip{N\big(v\big),\psi_{\mathrm{tw}}}_{L^2(\R)}}
      {\ip{\p_\xi(\Phi_0+v), \psi_{\mathrm{tw}}}_{L^2(\R)}}.
\end{align}
In fact, one arrives at the same choice
if one directly imposes the orthogonality condition\\
$\langle v(t), \psi_{\mathrm{tw}} \rangle_{L^2(\R)} = 0$.
By a standard bootstrapping procedure one can now establish the limits
$\nrm{v(t)}_{L^2}\to 0$ and $t^{-1}\g(t)\to c_0$
for $t \to \infty$, provided that $v(0)$ is sufficiently small.  This allows us to conclude
that the travelling wave is orbitally stable.

In our stochastic setting, the pair $(v,\gamma)$
is replaced by its stochastic counterpart $(V,\Gamma)$, which we always write in capitals. The resulting equations for 
this pair are naturally much more complicated. They both consist
of a deterministic and a stochastic part, resulting in \textit{two}
free functionals that can be tuned to ensure $\langle V, \psit \rangle_{L^2(\R)} = 0$, see \S\ref{sec:derphase}.

Besides the fact that we use the adjoint eigenfunction $\psit$
instead of $\Phi_0'$, the main difference with the phase tracking
approaches developed in \cite{Inglis,StannatKruger}
is that our perturbation $V$ is taken relative
to a novel pair $(\Phi_{\s}, c_{\s})$ that we refer
to as the instantaneous\footnote{See {\S}\ref{sec:mr:pred}
for a justification for this terminology.} stochastic wave. This pair
is chosen in such a way that the deterministic part
of the equation for $V$ vanishes at $V = 0$.
However, this does not hold for the
stochastic part, leading to persistent fluctuations
that must be controlled.

\paragraph{Stability}
Our first contribution is that we establish that the wave $(\Phi_{\sigma},c_\s)$ is stable, in the sense that
the perturbation $V(t)$
remains small over time scales of $\O(\s^{-2})$.
In particular, we show that
the semigroup techniques developed in our earlier work \cite{Hamster2017,Hamster2018} are general enough
to remain applicable in the present more convoluted setting.
The main effort is to verify that certain technical
estimates remain valid, which is possible by the powerful
theory that has been developed for cylindrical $Q$-Wiener processes.

The procedure in \cite{Hamster2017,Hamster2018} is rather delicate
in order to compensate for the lack of immediate contractivity.
Indeed, the $H^1$-norm of $V(t)$ must be kept under control,
resulting in apparent singularities in the stochastic
integrals that must be handled with care. The time scale mentioned
above arises as a consequence of the mild Burkholder-Davis-Gundy
inequality that we used to obtain supremum bounds
on stochastic integrals. However, these bounds are known to be suboptimal.
Indeed, we believe that our phase tracking approach can be maintained
for time scales that are exponential in $\sigma$. This
is confirmed by the numerical results at the end of \S\ref{sec:resN}.

\paragraph{Expansions in $\sigma$}
The second -- and main -- contribution in this paper is that we explicitly show how to expand the fluctuations around the stochastic
wave $(\Phi_\sigma, c_\sigma)$ in powers of the noise strength $\sigma$.
In particular, we show that our framework yields
a natural procedure to compute Taylor expansions for the pair
$\big(V(t),\Gamma(t) \big)$. These results extend
the pioneering work in 
\cite{Lang,StannatKruger}, where related multi-scale expansions
were achieved in a variety of settings on short time-scales.

An important advantage
of our semigroup approach is that the resulting terms
have expectations that are well-defined in the limit $t \to \infty$.
In particular, we are able to uncover the long-term stochastic
corrections to the speed and shape of the travelling waves.

We provide explicit formula's for the first and second
order corrections, which all crucially involve
the semigroup $e^{\L_{\mathrm{tw}}t}$.
In addition, we show
how to compute the third order corrections
in the phase $\Gamma$
from these second order
corrections. In principle, the expansions can be computed to any desired
order in $\sigma$, but the process quickly becomes unwieldy.

At first order in $\sigma$,
our predictions 
concur with many earlier results
\cite{GarciaSpatiallyExtended,StannatKruger,Cartwright2018,Bressloff},
which show
that the phase $\Gamma(t)$ of the wave
behaves as a Brownian motion centered around the deterministic position $c_0t$.
In addition, the shape of the wave fluctuates at first order like an infinite dimensional Ornstein-Uhlenbeck process around the deterministic wave $\Phi_0$.

At second order in $\sigma$, two distinct effects start to play a role. The first is that differences start to appear between
the instantaneous stochastic wave $(\Phi_{\sigma},c_{\sigma})$
and the deterministic wave $(\Phi_0,c_0)$. 
This generalizes
the wave steepening effect \sref{eq:int:wave:steepening} discussed in 
\cite{Hamster2017,Cartwright2018,Lord2012}.
On top of this,
there is an additional contribution to the average speed and shape
that is caused by the feedback of the first order
fluctuations
of $V(t)$. These effects -- which we refer to as orbital drift --
become visible after an initial transient
period. Besides a brief discussion in our earlier
work \cite{Hamster2017,Hamster2018}, we do not believe
that this  long-term behaviour
has been systematically explored before.

Taken together, we now have
a quantitative procedure
that is able to accurately describe the
numerical results  in \cite{Lord2012} and the formal computations in \cite{Cartwright2018} for the Nagumo SPDE \sref{eq:ExNag}, both for the Stratonovich and It\^o formulation. This allows us to understand the differences in speed and shape between both interpretations analytically.
These predictions are confirmed by our numerical results,
which compare the solutions of the full SPDE with our
explicit formula's and exhibit the rate of convergence
with respect to $\sigma$.

\paragraph{Outlook}
In this paper we will not treat space-time white noise, i.e. $q(x-y)=\delta(x-y)$, as our mathematical framework does not yet allow distributions to be used as kernels. This can already
be seen from the fact that \sref{eq:NagumoMathS} depends
explicitly on $q(0)$, which of course is not well-defined for distributions.
In fact, it is still a subject of active research \cite{Hairer2015}
to give a clear interpretation of \sref{eq:NagumoPhysics}
in this case.
In the It\^o interpretation however, many of our computations concerning the shape and speed of the stochastic wave have a well-defined limit
if we let $q$ converge to $\delta$. In addition, many of our
expressions still make sense for $g=1$, which suggests
that our expansions could also be used to make predictions
for additive noise.

In addition, we  expect that our methods can be extended to other types of equations. For example, stochastic neural field models have attracted a lot of attention in recent years, but they lack the smoothening effect of the
diffusion operator. Finally, we are exploring
techniques that would allow us to extend the $\O(\s^{-2})$
time scales in our results to the exponentially long  scales
observed in the numerical computations.

\paragraph{Organization}
In \S\ref{sec:res} we state our assumptions and
give a step-by-step overview of the steps that lead to our
expansions. In addition, we provide the explicit
formula's that describe the first and second order terms
in the expansions for $(V,\Gamma)$.
In  \S\ref{sec:resN} and \S\ref{sec:resFHN} we illustrate
how our results can be applied to the Nagumo and FitzHugh-Nagumo
SPDEs and verify the predictions with numerical computations.

The remaining sections contain the technical heart of this paper
and provide the link between our setting here
and the bootstrapping procedure developed
in \cite{Hamster2017,Hamster2018}.
In particular, 
we show in \S\ref{sec:sps} how 
the stochastic evolution equation for $V$ 
can be computed using It\^o calculus,
which represents the core computation of this work. In {\S}\ref{sec:stb}
we explain how the technical machinery available
for cylindrical $Q$-Wiener processes 
can be used to follow
the steps of \cite{Hamster2017,Hamster2018}.
Appendix \ref{sec:app:est} provides
the main link between these papers,
showing how
the estimates in \cite{Hamster2017} can be generalized to the nonlinearities appearing here.

\paragraph*{Acknowledgements}
HJH acknowledges support from the Netherlands Organization for Scientific Research (NWO) (grant 639.032.612).

\newpage
\section{Main results}
\label{sec:res}
In this paper we study the properties of travelling
wave solutions to stochastic reaction-diffusion systems
with translationally invariant noise. In {\S}\ref{sec:mr:st}
we introduce the class of systems that we are interested in.
The main steps of our approach are outlined
in {\S}\ref{sec:derphase}, which allows us to expand the stochastic
corrections to the shape and speed of the waves in powers of the noise
strength. The precise form of these expansions
is described in {\S}\ref{sec:explexp}. Finally,
we discuss several consequences of our results
in {\S}\ref{sec:mr:pred} and compare them to earlier work
in this area.

\subsection{Setup}
\label{sec:mr:st}
In this section we formulate the conditions that we need to impose
on our stochastic reaction-diffusion system. Taken together,
these conditions ensure that the noise-term is well-defined,
that the SPDE is well-posed and that the deterministic part
admits a spectrally stable travelling wave.

\paragraph{Noise Process}
We start by
discussing the  covariance function $q$ that underpins the noise process,
which we assume to have $m$ components. In particular, we impose
the following condition on the $m \times m$ components of the function $q$
and its Fourier transform $\widehat{q}$.

\begin{itemize}
\item[(Hq)] {
We have
$q \in H^1(\R,\R^{m \times m})\cap L^1(\R,\R^{m \times m})$,
with $q(-\xi) = q(\xi)$ and $q^T(\xi) = q(\xi)$ for all $\xi \in \Real$.
In addition, for each $k \in \Real$
the $m\times m$ matrix $\widehat{q}(k)$
is non-negative definite.
}
\end{itemize}

Since the Fourier transform maps Gaussians onto Gaussians, this condition
can easily be verified for $q(x) = \mathrm{exp}(-x^2)$ in the scalar case
$m=1$. Other examples include the exponential $q(x) = \mathrm{exp}\big(-\abs{x}\big)$
and the tent function $q(x) = 1 - \abs{x}$ supported on $[-1,1]$.

The integrability of $q$ allows us to introduce a bounded linear operator $Q: L^2(\R,\R^m) \to L^2(\R,\R^m)$
that acts as
\begin{equation}
    [Q v](x) = [q* v](x) = \int_{\Real} q( x - y) v(y) \, dy .
\end{equation}
The remaining conditions in (Hq) show that $Q$ is
symmetric and
that
$\langle Q v , v \rangle_{L^2(\R,\R^m)} \ge 0$
holds for all $v \in L^2(\R,\R^m)$.
As  explained in {\S}\ref{sec:int}
and {\S}\ref{sec:sps:bck},
this allows us to follow \cite[{\S}2.5]{Concise}
and \cite{Karczewska2005} to define a cylindrical $Q$-Wiener process $W^Q_t$
over $L^2(\R,\R^m)$.
In particular, for any $v,w \in L^2(\R,\R^m)$ we have
\begin{align}
    E[\ip{W^Q_t,v}_{L^2(\R,\R^m)}\ip{W^Q_s,w}_{L^2(\R,\R^m)}]=s\wedge t\, \ip{q*v,w}_{L^2(\R,\R^m)}.
\end{align}
Upon writing $\{ e_i \}$ for the standard unit vectors
together with $v(x) = \delta(x - x_0)e_i$ and
$w(x) = \delta(x - x_1) e_j$,
this reduces formally to the familiar expression
\begin{align}
    E[dW^Q_t(x_0)dW^Q_s(x_1)]=\delta(t-s) q_{ij}(x_0-x_1),
\end{align}
after taking the time derivative with respect to $t$ and $s$.
This highlights the role that the correlation function $q$
plays in our setup.

\paragraph{Stochastic reaction-diffusion equation}
The main SPDE that we will study
can now be formulated as
\begin{align}\begin{split}
\label{eq:mr:main:spde}
dU &=
  \big[ \rho \partial_{xx} U + f(U)+\s^2 h(U) \big] dt
  + \sigma g(U) d W^Q_t.\\
\end{split}
\end{align}
Here we take $U = U(x,t) \in \Real^n$ with $x \in \Real$ and $t \ge 0$.
The nonlinearities $f: \Real^n \to \Real^n$,
$h: \Real^n \to \Real^n$ and $g: \Real^n \to \Real^{n \times m}$
are considered to act in a pointwise fashion.
In order to proceed, we need to assume that these nonlinearities
have a common pair of equilibria.
\begin{itemize}
    \item[(HEq)]{
      There exist $u_\pm \in \Real^n$ so that
      \begin{equation}
        f(u_\pm) = g(u_\pm) = h(u_\pm) = 0.
      \end{equation}
    }
\end{itemize}
If $u_- \neq u_+$, then the relevant solutions $U$ to \sref{eq:mr:main:spde}
cannot be captured in the Hilbert space $L^2(\R,\R^n)$. In order
to remedy this, we pick a smooth reference
function $\Phi_\mathrm{ref}$ that has the limits
$\Phi_\mathrm{ref}(\pm \infty) = u_\pm$
and introduce the affine spaces
\begin{equation}
    \mathcal{U}_{H^1} = \Phi_\mathrm{ref} + H^1(\R,\R^n),
    \hspace{1cm}
    \mathcal{U}_{H^2} = \Phi_\mathrm{ref} + H^2(\R,\R^n).
\end{equation}
Naturally, we can simply take $\Phi_{\mathrm{ref}} = 0$
if $u_- = u_+$.
We will see that \sref{eq:mr:main:spde} is well-posed
as a stochastic evolution equation on $\mathcal{U}_{H^1}$. In fact,
it is advantageous to study $X(t)=U(t)-\Phi_\mathrm{ref}$, which solves the SPDE\footnote{
We emphasize that the reference function $\Phi_{\mathrm{ref}}$
does not depend on time,
which is why \sref{eq:mr:main:spdeX}
contains no additional time derivatives.
}
\begin{align}\begin{split}
\label{eq:mr:main:spdeX}
dX &=
  \big[ \rho \partial_{xx}(X+\Phi_\mathrm{ref}) + f(X+\Phi_\mathrm{ref})+\s^2 h(X+\Phi_\mathrm{ref}) \big] dt
  + \sigma g(X+\Phi_\mathrm{ref}) d W^Q_t\\
\end{split}
\end{align}
and hence attains values in $H^1(\R,\R^n)$. 
This decomposition will be used in
\S\ref{sec:sps}-\S\ref{sec:stb}.

\paragraph{Stochastic terms}
In order to ensure that the stochastic term
in \sref{eq:mr:main:spde} is well-defined,
we impose the following growth bound on $g$.
\begin{itemize}
\item[(HSt)]{
  We have $g\in C^2(\Real^n, \Real^{n \times m})$.
  In addition, the derivative $Dg$ is bounded and globally Lipschitz continuous.
}
\end{itemize}
Indeed, in Appendix \ref{sec:app:est}  this assumption
is used
to establish that the pointwise map
\begin{equation}
    [g(U) v](x) = g\big(U(x)\big) v(x)
\end{equation}
is a Hilbert-Schmidt operator
from
\begin{equation}
    L^2_Q = Q^{1/2}\big(L^2(\R,\R^m) \big)
\end{equation}
into $L^2(\R,\R^n)$ for any $U \in \mathcal{U}_{H^1}$.
We note that the existence of the square-root $Q^{1/2}$
follows from the fact that $Q$ is a nonnegative operator.
This square-root has a convolution kernel $p$
that is also translationally invariant;
see Appendix \ref{sec:est:prlm} for the details.

For clarity, we take the noise intensity $\s$ to be a scalar
factor in front of $g$. In principle however,
each of the $n\times m$ components of $g$ could have its own
scaling.
This can also be fitted into our framework, but it would unnecessarily complicate the expansions we are after.

\paragraph{Deterministic terms}
Turning to the deterministic part of \sref{eq:mr:main:spde},
we first note that
$h$ is a function that can be used to represent the
appropriate It\^o-Stratonovich correction terms.
For example, in the scalar case $n=m=1$ we saw in {\S}\ref{sec:int}
that the choice $h(U)=\frac{1}{2}q(0) g'(U)g(U)$
allows us to interpret \sref{eq:mr:main:spde}
as the Stratonovich SPDE
\begin{align}\begin{split}
\label{eq:mr:main:spde:strat}
dU &=
  \big[ \rho \partial_{xx} U + f(U) \big] dt
  + \sigma g(U) \circ d W^Q_t.\\
\end{split}
\end{align}
We refer to \cite{Twardowska1996,evans2012introduction}
for further information concerning the construction of similar
correction terms for multi-component systems.

We now impose the following conditions
on the nonlinearities  $f$ and $h$.
\begin{itemize}
\item[(HDt)]{
  The matrix $\rho \in \Real^{n\times n}$ is a diagonal matrix with
  strictly positive diagonal elements $\{\rho_i\}_{i=1}^n$.
  In addition, we have $f,h \in C^3(\Real^n, \Real^n)$.
 Finally,  $D^3 f$ and $D^3 h$ are bounded
 and  there exists
 a constant $K_{\mathrm{var}} > 0$ so that the one-sided inequality
 \begin{align}
\begin{split}
    \langle f( u_A ) - f(u_B) , u_A - u_B \rangle_{\Real^n}
    + \sigma^2 \langle h( u_A ) - h(u_B) , u_A - u_B \rangle_{\Real^n}
    &\le K_{\mathrm{var}} \abs{ u_A - u_B}^2
\end{split}
 \end{align}
 holds for all pairs $(u_A,u_B) \in \Real^n \times \Real^n$
 and all $0\le \sigma \le 1$. }
\end{itemize}
The precise form of these assumptions is strongly motivated
by the setup in \cite{LiuRockner}. Indeed,
the four conditions (Hq), (HEq), (HSt) and (HDt)
together allow us to apply \cite[Thm 1.1]{LiuRockner}.
This implies  that our system
\sref{eq:mr:main:spde}
has a unique solution in $\mathcal{U}_{H^1}$
that is defined
for all $t \ge 0$. A precise statement on the properties of
these solutions can be found in Proposition \ref{prp:phs:main:ex}.

\paragraph{Travelling wave}

The following assumption states that the deterministic part
of \sref{eq:mr:main:spde}
has a spectrally stable travelling wave solution
that  connects the two equilibria $u_\pm$. We remark
again that these two limiting values are allowed to be equal.

\begin{itemize}
\item[(HTw)]{
  There exists a wavespeed $c_0 \in \Real$ and a waveprofile
  $\Phi_0 \in C^2(\Real, \Real^n)$ that
  satisfies the travelling wave ODE
  \begin{equation}\label{eq:MR:TWODE}
    \rho \Phi_0''+c_0 \Phi_0' + f(\Phi_0)=0
  \end{equation}
  and approaches its limiting values
  $\Phi_0(\pm \infty) = u_\pm$ at an exponential rate.
  In addition, the associated linear operator
  $\mathcal{L}_{\mathrm{tw}}: H^2(\R,\R^n) \to L^2(\R,\R^n)$ that acts as
  \begin{equation}
   \label{eq:mr:def:l:tw}
   [\mathcal{L}_{\mathrm{tw}} v](\xi) =
      \rho v''(\xi)+c_0 v'(\xi)  + Df\big(\Phi_0(\xi) \big) v(\xi)
  \end{equation}
  has a simple eigenvalue at $\lambda = 0$
  and has no other spectrum
  in the half-plane $\{\Re \lambda \ge -2\beta\} \subset \mathbb{C}$
  for some $ \beta > 0$.
}
\end{itemize}
The formal adjoint
\begin{equation}
\mathcal{L}_{\mathrm{tw}}^*: H^2(\R,\R^n)
  \to L^2(\R,\R^n)
\end{equation}
of the operator \sref{eq:mr:def:l:tw}  acts as
\begin{equation}
[\mathcal{L}_{\mathrm{tw}}^* w](\xi) =
\rho w''(\xi) -c_0 w'(\xi) +  Df\big(\Phi_0(\xi) \big)^T w(\xi) .
\end{equation}
Indeed, one easily verifies that
\begin{equation}
\langle \mathcal{L}_{\mathrm{tw}} v , w \rangle_{L^2(\R,\R^n)}
= \langle v, \mathcal{L}_{\mathrm{tw}}^* w \rangle_{L^2(\R,\R^n)}
\end{equation}
holds whenever $(v,w) \in H^2(\R,\R^n) \times H^2(\R,\R^n)$.
The assumption that zero is a simple eigenvalue for $\mathcal{L}_{\mathrm{tw}}$
implies that $\mathcal{L}_{\mathrm{tw}}^* \psi_{\mathrm{tw}} = 0$
for some $\psi_{\mathrm{tw}} \in H^2(\R,\R^n)$ that can be normalized to have
  \begin{equation}
    \label{eq:mr:hs:norm:cnd:psitw}
     \langle \Phi_0' , \psi_{\mathrm{tw}} \rangle_{L^2(\R,\R^n)} = 1.
  \end{equation}

These assumptions imply \cite[{\S}4]{Kapitula}
that the family of travelling wave solutions
\begin{equation}
U(x,t) = \Phi_0(x + c_0 t + \vartheta),
\qquad \qquad \vartheta \in \Real,
\end{equation}
is nonlinearly stable
under the dynamics of $\sref{eq:mr:main:spde}$ at $\s=0$. In particular,
any small perturbation from such a wave converges exponentially
fast to a nearby translate.

\subsection{Overview}
\label{sec:derphase}

Guided by the short sketch in {\S}\ref{sec:int} of the ideas
behind the deterministic stability proof,
we now give a step-by-step description of
the stochastic framework that we use to
generalize this result and compute our expansions.
At this stage we only give an overview of the key concepts,
leaving the details to {\S}\ref{sec:explexp} and later sections.

\paragraph{Step 1: Stochastic phase.}
We introduce a  wavespeed $c_{\sigma} \in \Real$,
together with a nonlinear functional
$\overline{a}_{\sigma}: \mathcal{U}_{H^1} \times \Real \to \Real$.
In addition, for any $U \in \mathcal{U}_{H^1}$
and $\Gamma \in \Real$
we define a Hilbert-Schmidt operator $\overline{b}(U,\Gamma)$
that maps $L^2_Q$ into $\Real$. We emphasize
that all three objects are unknown at present; see \S\ref{sec:sps:prlm}
for their precise definitions.
However, they do allow us to
define a stochastic phase $\Gamma(t)$
by coupling the SDE
\begin{align}\label{eq:res:Gamma}
    d \Gamma=  \big[ c_{\sigma} + \overline{a}_{\sigma}(U,\G) \big] \, dt
     + \sigma \overline{b}(U,\G) dW^Q_t
\end{align}
to the SPDE \sref{eq:mr:main:spde} that governs $U$.
This generalizes the deterministic phase
that was introduced in \sref{int:defdetphase}. For convenience, we also write the phase in the integrated form 
\begin{align}\label{eq:res:Gamma:int}
    \Gamma(t)=\G_0 +c_{\sigma}t + \int_0^t\overline{a}_{\sigma}
      \big(U(s),\Gamma(s) \big)\, ds
     + \sigma\int_0^t\overline{b}
       \big(U(s), \Gamma(s) \big) dW^Q_s.
\end{align}

\paragraph{Step 2: Decomposition of $U$.}
We now introduce a, yet unknown, waveprofile
$\Phi_{\sigma} \in \mathcal{U}_{H^2}$. We can use this
together with the phase $\Gamma$ defined in Step 1
to define the perturbation
\begin{align}
\label{eq:res:defV}
    V(t)=U(\cdot +\Gamma(t),t)-\Phi_\s,
\end{align}
which measures the deviation from $\Phi_\s$
of $U$ after shifting it to the left by $\Gamma(t)$.
We note that $V$ takes values in $H^1(\R,\R^n)$
in a sense that is made precise
in Proposition \ref{prp:sps:props:v}. In addition, we will from now on write
\begin{equation}
    a_{\sigma}(V) = \overline{a}_\sigma(\Phi_\sigma + V,0),
    \qquad
    \qquad
    b_\s(V) = \overline{b}(\Phi_\sigma + V,0).
\end{equation}

Using It\^o calculus, we will show in \S\ref{sec:sps}
that
$V(t)$ solves an equation of the form
\begin{align}\label{eq:res:V-first}
dV= \big[F_\s(\Phi_\s,c_\s;b_\s)+\L_{\mathrm{tw}} V+N_\s(V;b_\s)
+a_{\sigma} (V)\p_\xi(\Phi_\s+V)\big]dt
  +\sigma \mathcal{S}_\s(V;b_\s)dW^Q_t.
\end{align}
Due to the second order terms in the It\^o formula,
the specific shapes of $F_\s$, $N_\s$ and
$\mathcal{S}_\s$ all depend on the functional $b_\s$.
It is therefore helpful to make a choice for $b_\s$, which we will
do in the next step.

However, at this point an important warning
is in order. We are using
the same symbol for the noise
processes driving
\sref{eq:mr:main:spde} and
\sref{eq:res:V-first}, 
because
- by translation invariance -
they are are indistinguishable from one another.

On the other hand, when one wants to compare $U(t)$ and $V(t)$ numerically for a specific 
\textit{realization} of $W_t^Q$,
then one must take care to spatially translate
this realization by $\Gamma(t)$ 
when passing between \sref{eq:mr:main:spde}
and \sref{eq:res:V-first}. 
Indeed, these two equations
are defined in separate coordinate systems.
This distinction will be explained in detail in the proof
of Proposition \ref{prp:sps:props:v}.
For now, we remark that all the averages that we 
compute in this section are invariant
under translations in the noise.

\paragraph{Step 3: Choice of $b$. }
For any $V \in H^1(\Real,\Real^n)$, the computations in {\S}\ref{sec:sps}
show that
\begin{align}
    \mathcal{S}_\s(V; b_\s)[v]=
     g(\Phi_\s+V)v+\p_\xi(\Phi_\s+V)b_\s(V)[v]
\end{align}
for all $v \in U_0$.
As in the deterministic case, the goal is
to achieve the identity
\begin{equation}
    \big\langle \psi_{\mathrm{tw}} , \mathcal{S}_{\s}(V;b_\s)\big\rangle_{L^2(\R,\R^n)} = 0
\end{equation}
in order to circumvent the neutral mode of the semigroup.
Whenever $\norm{V}_{L^2(\R,\R^n)}$ is sufficiently small,
this can be achieved by writing
\begin{align}\label{eq:derphase:b}
    b_\s(V)[v]=-\frac{\big\langle g(\Phi_\s+V)v,\psit\big\rangle_{L^2(\R,\R^n)}}
      {\big\langle \p_\xi(\Phi_\s+V),\psit \big\rangle_{L^2(\R,\R^n)}}.
\end{align}
Having made this choice,
we now drop the
dependence on $b_\s$ in $F_{\sigma}(V)$, $N_{\sigma}(V)$
and $\mathcal{S}_{\sigma}(V)$.

\paragraph{Step 4: Construction of $(\Phi_\s,c_\s)$.}
Ideally, we would like $V(t) = 0$ to be a solution
to \sref{eq:res:V-first}, 
since then
$U(x,t) = \Phi_\s (x + \Gamma(t), t)$
would be an exact solution to 
\sref{eq:mr:main:spde}.
However, since the deterministic and stochastic
terms both need to vanish simultaneously, this can only be achieved
in very special situations\footnote{In the case of 1d Brownian motion,
we explain in \cite{Hamster2017} how $g$ can be chosen
to make this possible.}.
In Prop. \ref{prp:var:swv:ex} we show that for small $\sigma$,
it is possible to construct a pair $(\Phi_\s, c_\s)$
for which $F_\s(\Phi_\s,c_\s)=0$. Since we will see
below that $N_\s(0) = 0$ and $a_\s(0) =0 $,
this ensures that
the state $V =0$ only experiences (instantaneous) stochastic forcing.

For any pair $(\Phi, c) \in \mathcal{U}_{H^2} \times \R$, the nonlinearity $F_\s(\Phi,c)$ can be decomposed as
\begin{equation}
\label{eq:mr:def:F:i}
    F_{\sigma}(\Phi, c) = F_0(\Phi, c)
    + \sigma^2 F_{0;2}(\Phi).
\end{equation}
The leading order term $F_0(\Phi,c)$ is related to the deterministic wave
in the sense that
\begin{align}
\label{eq:mr:def:F:ii}
    F_0(\Phi,c)=\rho\Phi''+c\Phi'+f(\Phi),
\end{align}
while the correction term $F_{0;2}(\Phi)$ is found to be
\begin{align}
\label{eq:mr:def:F:iii}
 F_{0;2}(\Phi)=\frac{1}{2}\frac{\ip{g(\Phi)
 Q g^T(\Phi) \psi_{\mathrm{tw}},\psit}_{L^2(\R,\R^n)}}{\ip{\Phi',\psi_\mathrm{tw}}_{L^2(\R,\R^n)}^2}\Phi''
 -\frac{(g(\Phi)Q g^T(\Phi)\psi_\mathrm{tw})'}{\ip{\Phi',\psit}_{L^2(\R,\R^n)}}+h(\Phi)
\end{align}
whenever $\nrm{\Phi-\Phi_0}_{L^2}$ is sufficiently small. We emphasize here that the transpose is taken in a pointwise fashion.
Note here that the correction term $F_{0;2}(\Phi)$ depends on the operator $g(\Phi)Qg^T(\Phi)$, which is the covariance operator of the stochastic process
\begin{align}
    \int_0^t g(\Phi)dW^Q_s
\end{align}
for $t\to\infty$ \cite{Hairernotes}.
Therefore, as we will see, the lowest order corrections of $\Phi_\s$ to $\Phi_0$ can be understood in terms of the covariance of the stochastic process $\int_0^tg(\Phi_0)dW^Q_s$.

\paragraph{Step 5: Choice of $a_\s$.}
As in the deterministic case,
we now define $a_\s$ in 
such a way that the deterministic part 
of 
\sref{eq:res:V-first} (i.e., the
terms multiplying $dt$)
becomes
orthogonal to $\psit$. In particular,
for $V$ small we write
\begin{align}\label{eq:derphase:a}
    a_{\sigma}(V)=-\frac{\bip{N_\s(V),\psit}_{{L^2(\R,\R^n)}}}{\bip{\p_\xi(\Phi_\s+V),\psit}_{{L^2(\R,\R^n)}}}.
\end{align}
For reference, we note that the non-linearity $N_\s$
introduced in \sref{eq:res:V-first} is given by
\begin{align}
\begin{split}
    N_{\sigma}(V)  = & F_\s(\Phi_\s+V,c_\s)-F_\s(\Phi_\s,c_\s)-\L_\mathrm{tw}V.
	\end{split}
\end{align}
As we claimed in Step 4,
we indeed see that $a_{\s}(0) = 0$ and $N_\s(0) = 0$.
Upon introducing a nonlinearity $\mathcal{R}_\s$
that acts as
\begin{align}
\label{eq:mr:defn:R}
    \mathcal{R}_\s\big(V\big)=
      F_\s(\Phi_\s +V, c_\s)+a_{\s}(V)\p_\xi(\Phi_\s+V)
\end{align}
for small $V$,
we conclude that $V$ solves the equation
\begin{align}\label{eq:res:V-final}
    dV=\mathcal{R}_\s(V) dt+\s\mathcal{S}_\s(V)dW^Q_s.
\end{align}
The nonlinearities on the right hand side of this equation are now both 
orthogonal to $\psit$
for small $V$.

\paragraph{Step 6: Stability.}
We write $S(t)$ for the semigroup generated
by the linear operator $\mathcal{L}_{\mathrm{tw}}$ and consider
the mild formulation of \sref{eq:res:V-final},
which is given by the integral equation
\begin{align}\label{eq:res:mildV}
    V(t)=S(t)V_0+\int_0^t S(t-s)\widetilde{\mathcal{R}}_\s\big(V(s)\big)ds
    + \sigma\int_0^tS(t-s)\mathcal{S}_\s\big(V(s)\big)dW^Q_s ,
\end{align}
in which we have
\begin{equation}
    \widetilde{\mathcal{R}}_\s(V) = \mathcal{R}_\s(V) - \L_{\mathrm{tw}} V
     =  N_\s(V)+a_{\s}(V)\p_\xi(\Phi_\s+V) .
\end{equation}
By construction, we have achieved
$\ip{\psit,\widetilde{\mathcal{R}}_\s(V)}_{L^2(\R,\R^n)} = \ip{\psit,\mathcal{S}_{\s}(V)}_{L^2(\R,\R^n)} =0$
for small $V$. Whenever $U(0)$ is sufficiently close to $\Phi_0$,
we can also ensure $\ip{\psit,V_0}_{L^2(\R,\R^n)} = 0$
by picking the initial phase $\Gamma(0)$ appropriately.

We caution the reader that it is hard
to obtain estimates on $V$
directly from \sref{eq:res:mildV},
because the term $\widetilde{\mathcal{R}}_\s(V)$
still contains second order derivatives.
Tackling this problem is the key part of \cite{Hamster2018}, as we discuss
in {\S}\ref{sec:stb}.
Nevertheless, it is possible
to show that the instantaneous wave
$(\Phi_\s, c_\s)$ is stable
in the sense that the size of $V(t)$
can  be kept under control. An exact statement to this effect
can be found in {\S}\ref{sec:stb}, 
but we here provide an informal summary.
\begin{thm}[see \S\ref{sec:stb}]
\label{thm:mr:stb}
Assume that (Hq), (HEq), (HDt), (HSt) and (HTw) all  hold and that $\s$
and $V_0$
are sufficiently small. Then for time scales up to $\O(\s^{-2})$,
the perturbation $V(t)$ remains small and
the phase $\G(t)$ accurately represents the position of $U(t)$
relative to the wave $(\Phi_\s,c_\s)$.
\end{thm}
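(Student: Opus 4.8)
The plan is to derive the mild integral equation \sref{eq:res:mildV} for $V$, then close a bootstrap estimate on a stopping-time-truncated version of the $H^1$-norm of $V$ and show it stays small up to times $\O(\sigma^{-2})$, following the scheme of \cite{Hamster2017,Hamster2018} but with the cylindrical $Q$-Wiener process estimates from Appendix \ref{sec:app:est}. First I would fix the stopping time $\tau = \inf\{t \ge 0 : \nrm{V(t)}_{H^1} > \eta\}$ for a small constant $\eta$, so that on $[0,\tau]$ the denominators $\ip{\p_\xi(\Phi_\s+V),\psit}_{L^2}$ appearing in $a_\s$, $b_\s$ stay bounded away from zero and the nonlinearities $N_\s$, $\mathcal{R}_\s$, $\mathcal{S}_\s$ obey the Lipschitz and quadratic bounds established earlier. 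The orthogonality $\ip{\psit,\widetilde{\mathcal{R}}_\s(V)}_{L^2} = \ip{\psit,\mathcal{S}_\s(V)}_{L^2} = 0$ ensures every term under the semigroup in \sref{eq:res:mildV} lives in the range of $I-P$, so the exponential bound \sref{eq:int:sem:bound:with:m} applies to each convolution. The initial phase $\Gamma(0)$ is chosen to force $\ip{\psit,V_0}_{L^2}=0$.

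Next I would estimate each of the three terms in \sref{eq:res:mildV} in expectation of the supremum over $[0, t\wedge\tau]$. The linear term $S(t)V_0$ is controlled directly by \sref{eq:int:sem:bound:with:m}. For the deterministic integral $\int_0^t S(t-s)\widetilde{\mathcal{R}}_\s(V(s))\,ds$ one uses that $\widetilde{\mathcal{R}}_\s$ is $\O(\nrm{V}^2)$ together with the smoothing of the analytic semigroup to absorb the second-order derivatives hidden in $N_\s$ — this is exactly the delicate point, so I would split $\widetilde{\mathcal{R}}_\s(V)$ into a part handled in $L^2$ and a part requiring the $(t-s)^{-1/2}$-type smoothing bound $\nrm{S(t-s)(I-P)}_{L^2 \to H^1} \lesssim (t-s)^{-1/2}e^{-\beta(t-s)}$, keeping the resulting integral finite. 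For the stochastic convolution $\sigma\int_0^t S(t-s)\mathcal{S}_\s(V(s))\,dW^Q_s$, the key inputs are (i) $\mathcal{S}_\s(V)$ is Hilbert-Schmidt from $L^2_Q$ into $L^2(\R,\R^n)$ with norm $\O(1)$ (its leading part is $g(\Phi_\s)$, plus a $b_\s$-correction), shown in Appendix \ref{sec:app:est}, and (ii) a mild Burkholder-Davis-Gundy / It\^o-isometry estimate in $H^1$, which is where the apparent singularity $\int_0^t (t-s)^{-1}\,ds$ is tamed by the factorization method and the exponential weight $e^{-2\beta(t-s)}$; this produces the $\O(\sigma^{-2})$ time scale, since the supremum bound on the stochastic term over $[0,T]$ scales like $\sigma^2 T$ up to logarithmic factors.

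Combining these, on $[0, t\wedge\tau]$ one obtains an inequality of the schematic form $E\big[\sup_{[0,t\wedge\tau]}\nrm{V}_{H^1}^2\big] \le C\nrm{V_0}_{H^1}^2 + C\,E\big[\sup \nrm{V}_{H^1}^2\big]^2 \cdot(\text{small}) + C\sigma^2 t$, and a continuity/Gr\"onwall argument shows that for $t \le c\,\sigma^{-2}$ and $\sigma, \nrm{V_0}_{H^1}$ small the right-hand side stays below $\eta^2$; by a standard stopping-time contradiction this forces $\tau > c\,\sigma^{-2}$ with high probability, which is the stability statement. The claim that $\Gamma(t)$ accurately represents the position of $U(t)$ relative to $(\Phi_\s,c_\s)$ then follows because $V(t)=U(\cdot+\Gamma(t),t)-\Phi_\s$ is small in $H^1$ by construction, together with the orthogonality $\ip{\psit,V(t)}_{L^2}=0$ which pins down $\Gamma$ uniquely near the wave. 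The main obstacle I anticipate is item (ii): handling the $H^1$-valued stochastic convolution in the absence of immediate contractivity ($M>1$ in \sref{eq:int:sem:bound:with:m}) without losing the $\sigma^{-2}$ time horizon — this is precisely what requires the careful mild-BDG machinery of \cite{Hamster2018} rather than a naive energy estimate, and verifying that the cylindrical $Q$-Wiener framework supplies the Hilbert-Schmidt bounds needed to run that machinery is the bulk of the technical work deferred to \S\ref{sec:stb} and Appendix \ref{sec:app:est}.
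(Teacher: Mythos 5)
Your outline correctly reproduces the soft ingredients of the paper's argument (orthogonality to $\psit$, a stopping time to control denominators, the mild formulation \sref{eq:res:mildV}, the Hilbert--Schmidt bounds of Appendix \ref{sec:app:est}, a mild Burkholder--Davis--Gundy estimate producing the $\sigma^2 T$ term and hence the $\O(\s^{-2})$ horizon, and a Markov/stopping-time conversion into a probability statement), but it is missing the one idea the paper actually uses to make the hard step work. The offending term is not generic ``second-order derivatives hidden in $N_\s$'': it is the quasilinear It\^o correction $\s^2\mathcal{K}_B$ from \sref{eq:sps:defs:T:ABC}, i.e.\ a term of the form $\tfrac{\s^2}{2}\nrm{b_\s(V)}^2_{HS}\,\p_{\xi\xi}(\Phi_\s+V)$, whose coefficient depends on $V$ and therefore cannot be absorbed into the generator of $S(t)$. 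The paper explicitly states that \sref{eq:res:V-final} and its mild counterpart cannot be estimated directly for this reason, and removes the diagonal part of this term by the component-wise stochastic time rescaling $\tau_i(t)=\int_0^t\kappa_{\s;i}(V(s))\,ds$ leading to \sref{eq:stb:spde:for:ovl:v}; only after this transform does the machinery of \cite{Hamster2018} (with the off-diagonal smoothing of $S(t)$) apply. Your proposal contains no analogue of this step.

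The substitute you offer does not close at the level you set up the bootstrap. You propose to control $E\sup_t\nrm{V(t)}_{H^1}^2$ and to tame the second-order terms with the smoothing bound $\nrm{S(t-s)(I-P)}_{L^2\to H^1}\lesssim (t-s)^{-1/2}e^{-\beta(t-s)}$. But to bound the $H^1$-norm of the Duhamel integral of a term containing $\p_{\xi\xi}V(s)$ (with $V(s)$ only in $H^1$) you need, after moving one derivative onto the semigroup, an $L^2\to H^1$ bound applied to $\p_\xi V$, which costs $(t-s)^{-1}$ --- a non-integrable singularity; the same borderline singularity appears when you ask for a sup-in-time $H^1$ bound on the stochastic convolution. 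This is precisely why the paper does not bootstrap $\sup_t\nrm{V(t)}_{H^1}^2$ at all, but the mixed quantity $N_{U_0}(t)$ of \sref{eq:DefNeps}, in which the $H^1$-control enters only through the exponentially weighted time integral, and why Proposition \ref{prp:nls:general} is phrased for that functional. (Splitting $\widetilde{\mathcal{R}}_\s$ into pieces also destroys the orthogonality to $\psit$, which holds only for the full combination with the $a_\s$-term, so each piece would reintroduce the neutral mode unless you subtract projections by hand.) To repair the proposal you would need to either import the time-transform argument of \S\ref{sec:stb} or replace your bootstrap norm by the paper's $N_{U_0}$ and rerun the delicate singular-integral estimates of \cite{Hamster2018}; as written, the step ``keeping the resulting integral finite'' is exactly the gap.
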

As we discussed in {\S}\ref{sec:int}, we expect this result
to remain valid up to exponentially long time scales.
This is supported by the numerical evidence in \S\ref{sec:resN}.

\paragraph{Step 7: Expansion in $\sigma$.}
In order to investigate the fluctuations around
the instantaneous stochastic wave $(\Phi_\s, c_\s)$,
we choose $(V_0, \Gamma_0)= (0,0)$ and expand our equations
for $(V, \Gamma)$ in powers of $\sigma$.
In particular, we look for expansions of the form
\begin{equation}\label{eq:steps:expV}
V(t) = \sigma V^{(1)}_{\sigma}(t) + \s^2 V^{(2)}_\s(t)+ V_\mathrm{\mathrm{res}}(t)
\end{equation}
and
\begin{equation}\label{eq:steps:expG}
\Gamma(t) = c_{\sigma} t + \sigma \Gamma^{(1)}_{\sigma}(t)
+ \sigma^2 \Gamma^{(2)}_{\sigma}(t)+\s^3 \Gamma^{(3)}_\s(t) + \O(\sigma^4).
\end{equation}
For example, using \sref{eq:res:mildV} we may write
\begin{equation}
\label{eq:mr:expr:v:1:i}
V_\s^{(1)}(t) = \int_0^tS(t-s)\mathcal{S}_\s(0)dW^Q_s ,
\end{equation}
which can be substituted back into \sref{eq:res:mildV}
to find an expression for $V^{(2)}_\s(t)$ and so on.
In addition, using  \sref{eq:res:Gamma:int}
it is natural to write
\begin{equation}
\label{eq:mr:expr:gm:1:i}
    \Gamma_\s^{(1)}(t) = \int_0^t b_\sigma(0) dW^Q_s.
\end{equation}
Knowledge of $V^{(1)}_\sigma$ can subsequently be used to define $\Gamma^{(2)}_\sigma$, while $V^{(2)}_\sigma$ can be used
to compute $\Gamma^{(3)}_\sigma$.

We provide explicit formula's for these expansion terms in
{\S}\ref{sec:explexp} below. We mention here that we are including a $\sigma$-dependence in these terms as it often increases the readability to use $(\Phi_\s,c_\s)$ instead of $(\Phi_0,c_0)$.
For example, $\mathcal{S}_\s(0)$ can be expanded in terms
of $\sigma$ to yield $V^{(1)}_{\sigma}(t) = V^{(1)}_0(t) + \O(\sigma^2)$,
hence the difference between $\sigma V^{(1)}_{\sigma}(t)$ and $\s V^{(1)}_0(t)$ is only seen at third order.

\begin{cor}[see \S\ref{sec:stb}]
\label{cor:mr:res:bnd}
Assume that (Hq), (HEq), (HSt), (HDt) and (HTw) all hold.   Then $\sigma^{-2} V_{\mathrm{res}}$ remains small for
time scales up to $\O(\s^{-2})$.
\end{cor}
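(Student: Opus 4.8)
The plan is to obtain this as a quantitative sharpening of Theorem~\ref{thm:mr:stb}, by running the a priori estimates of \S\ref{sec:stb} on the residual $V_{\mathrm{res}}$ rather than on $V$ itself. Starting from the mild equation \sref{eq:res:mildV} with $(V_0,\Gamma_0)=(0,0)$ as in Step~7, I would subtract the defining formula \sref{eq:mr:expr:v:1:i} for $\sigma V^{(1)}_\sigma$ and the analogous formula for $\sigma^2 V^{(2)}_\sigma$ given in \S\ref{sec:explexp}; by construction these are precisely the first two Taylor coefficients in $\sigma$ of the right-hand side of \sref{eq:res:mildV} evaluated at $V=0$. This leaves an integral equation for $V_{\mathrm{res}}$ of exactly the same structural type,
\begin{multline*}
V_{\mathrm{res}}(t)=\int_0^t S(t-s)\big[L_\sigma V_{\mathrm{res}}(s)+E^{\mathrm{det}}_\sigma(s)\big]ds\\
  +\sigma\int_0^t S(t-s)\big[M_\sigma V_{\mathrm{res}}(s)+E^{\mathrm{sto}}_\sigma(s)\big]dW^Q_s,
\end{multline*}
where $L_\sigma=D\widetilde{\mathcal{R}}_\sigma(0)$ and $M_\sigma=D\mathcal{S}_\sigma(0)$ are the linearizations at $V=0$ and the inhomogeneities $E^{\mathrm{det}}_\sigma$, $E^{\mathrm{sto}}_\sigma$ collect the remaining Taylor remainders together with the cross terms between $\sigma V^{(1)}_\sigma$ and $\sigma^2 V^{(2)}_\sigma$. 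The structural fact to check --- using \sref{eq:mr:def:F:i}--\sref{eq:mr:def:F:iii}, the choices \sref{eq:derphase:b}, \sref{eq:derphase:a}, and the estimates $\Phi_\sigma-\Phi_0=\O(\sigma^2)$, $c_\sigma-c_0=\O(\sigma^2)$ coming from Proposition~\ref{prp:var:swv:ex} --- is that $\widetilde{\mathcal{R}}_\sigma$ and $\mathcal{S}_\sigma-\mathcal{S}_\sigma(0)$ vanish at $V=0$ with $\widetilde{\mathcal{R}}_\sigma(V)=\O\big(\sigma^2\|V\|_{H^1}+\|V\|_{H^1}^2\big)$ and $\|L_\sigma\|=\O(\sigma^2)$. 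On the good event provided by Theorem~\ref{thm:mr:stb}, where $\|V(t)\|_{H^1}=\O(\sigma)$, this forces $E^{\mathrm{det}}_\sigma(t)$ and $E^{\mathrm{sto}}_\sigma(t)$ to be of size $\O(\sigma^3)$ on the relevant time interval.

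Next I would push this equation through the bootstrap machinery of \S\ref{sec:stb}. First, the orthogonality is inherited: by the choices \sref{eq:derphase:b} and \sref{eq:derphase:a} the maps $\mathcal{S}_\sigma(0)$, $L_\sigma$, $M_\sigma$ all take values in $\{\psit\}^\perp$, and since $S(t)$ preserves this subspace we get $\langle\psit,V^{(1)}_\sigma\rangle_{L^2(\R,\R^n)}=\langle\psit,V^{(2)}_\sigma\rangle_{L^2(\R,\R^n)}=0$, hence $\langle\psit,V_{\mathrm{res}}(t)\rangle_{L^2(\R,\R^n)}=0$; the bound $\|S(t)(I-P)v\|_{L^2}\le Me^{-\beta t}\|(I-P)v\|_{L^2}$ therefore applies to every convolution above. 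Because of this contractivity the deterministic convolution of the $\O(\sigma^3)$ forcing $E^{\mathrm{det}}_\sigma$ is bounded by $C\sigma^3/\beta$ uniformly in $t$, while the linear pieces $\int_0^tS(t-s)L_\sigma V_{\mathrm{res}}(s)\,ds$ and $\sigma\int_0^tS(t-s)M_\sigma V_{\mathrm{res}}(s)\,dW^Q_s$ carry, respectively, the coefficient $\|L_\sigma\|=\O(\sigma^2)$ and the outer factor $\sigma$, so both are absorbed into the left-hand side. The remaining stochastic convolution is estimated exactly as in \cite{Hamster2018} via the mild Burkholder--Davis--Gundy inequality; this is the step that enforces the horizon $\O(\sigma^{-2})$, and on that horizon the extra power of $\sigma$ carried by $E^{\mathrm{sto}}_\sigma$ improves the outcome by one order relative to the bound used for Theorem~\ref{thm:mr:stb}. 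A fixed-point / Gr\"onwall argument in the same weighted $H^1$-norm then gives that $\sup_{0\le t\le\O(\sigma^{-2})}\|\sigma^{-2}V_{\mathrm{res}}(t)\|_{H^1}$ is small, in the precise stopping-time sense made explicit in \S\ref{sec:stb}.

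The main obstacle is the one already flagged after \sref{eq:res:mildV}: the forcing $\widetilde{\mathcal{R}}_\sigma$, and hence $L_\sigma$ and $E^{\mathrm{det}}_\sigma$, still contains second-order spatial derivatives --- these enter through the $\sigma^2F_{0;2}$ contribution in \sref{eq:mr:def:F:iii} --- so the deterministic convolution cannot be controlled in $H^1$ by the bare exponential bound but must be split and handled through the singular smoothing estimates $\|S(t)(I-P)\p_\xi v\|_{H^1}\lesssim t^{-1/2}e^{-\beta t}\|v\|_{L^2}$ and the companion inequalities for stochastic convolutions of derivative-valued integrands, exactly as in the proof of Theorem~\ref{thm:mr:stb}. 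Reconciling these integrable-but-singular kernels with the bookkeeping that every residual term genuinely carries an additional factor of $\sigma$ is where the real work lies; fortunately the hard analytic estimates are already in place in \S\ref{sec:stb}, so the corollary follows by re-running that argument with the two leading orders peeled off and tracking the gain of one power of $\sigma$.
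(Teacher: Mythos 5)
Your overall architecture --- peeling off $\sigma V^{(1)}_\s+\sigma^2V^{(2)}_\s$, writing a mild integral equation for the residual, exploiting the orthogonality to $\psit$ together with the semigroup decay, and closing with a stopping-time bootstrap that gains two powers of $\sigma$ --- is the same as the paper's, which packages it in the quantity $N_{\mathrm{res}}$ of \sref{eq:DefNeps:res} and Corollary \ref{cor:nls:general:res}. However, there is a genuine gap in how you treat the second-derivative terms. The offending contribution is not merely the $\sigma^2F_{0;2}$ term: after the It\^o computation, $\widetilde{\mathcal{R}}_\s(V)$ contains the quasilinear piece coming from $\sigma^2\mathcal{K}_B$ in \sref{eq:sps:defs:T:ABC}, i.e.\ a term proportional to $\nrm{b_\s(V)}^2_{HS(L^2_Q,\R)}\,(\Phi_\s+V)''$, so $V''$ appears with a $V$-dependent coefficient. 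The inequality you invoke, $\nrm{S(t)(I-P)\p_\xi v}_{H^1}\lesssim t^{-1/2}e^{-\beta t}\nrm{v}_{L^2}$, is false: a second-order parabolic semigroup gains only one derivative per factor $t^{-1/2}$, so estimating the $H^1$-norm (which enters the weighted norm $N_{\mathrm{res}}$ through the $\int e^{-\e(t-s)}\nrm{\cdot}^2_{H^1}ds$ term) of a convolution against $V''$ costs a nonintegrable factor $(t-s)^{-1}$. This is precisely the obstruction stated at the start of \S\ref{sec:stb}: the mild formulation cannot be estimated directly, even with singular smoothing, and the indispensable device is the stochastic time transform built from $\kappa_{\s;i}(V)=1+\tfrac{1}{2\rho_i}\nrm{b_\s(V)}^2_{HS}$, which resets the diffusion coefficient of the $i$-th component before the semigroup and off-diagonal smoothing estimates of \cite{Hamster2018} are applied. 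Your proposal never engages with this step, so as written the deterministic convolution cannot be closed in the norm the bootstrap requires.

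A second, smaller gap concerns the bookkeeping that makes the inhomogeneities $\O(\s^3)$: you assert that on the ``good event'' of Theorem \ref{thm:mr:stb} one has $\nrm{V(t)}_{H^1}=\O(\s)$. Proposition \ref{prp:nls:general} only yields $E\,\sup N_{U_0}\lesssim \nrm{V(0)}^2_{H^1}+\s^2T$, which on the relevant horizon $T=\O(\s^{-2})$ is merely $\O(1)$; there is no pathwise $\O(\s)$ bound on $V$ over these timescales. The paper sidesteps this by building the needed a-priori control into the stopping time for the residual itself: $N_{\mathrm{res}}$ contains $\s^4\nrm{V_{\mathrm{apx}}}^2$ and the threshold is $\s^4\eta$, so that up to $t_{\mathrm{st}}(T,\s,\eta;\mathrm{res})$ one has both $\nrm{V_{\mathrm{apx}}}\le\sqrt{\eta}$ and $\nrm{V_{\mathrm{res}}}\le\s^2\sqrt{\eta}$, which is what allows the quadratic and cross terms to be absorbed, produces the $K\s^6T$ supremum bound, and hence the probability estimate $p_{\mathrm{res}}\le\eta^{-1}K\s^2T$ that constitutes the quantitative form of the corollary. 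With your bookkeeping replaced by this self-contained stopping-time scaling, and with the time transform reinstated as the mechanism for the quasilinear terms, the argument goes through along the lines you sketch.
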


\paragraph{Step 8: Formal limits.}
We are now in a position to address
our main question concerning the average long-term
behaviour of the speed and shape of $U(t)$.
In particular, we are interested to see
if - and in what sense - it is possible
to define limiting quantities
\begin{equation}
\label{eq:mr:def:lim:wave}
    \big(\Phi_{\s;\mathrm{lim}},c_{\s;\mathrm{lim}} \big)
    = `\lim_{t\to\infty}\textrm{'} \, \,  E 
    \Big(
        U(\cdot + \Gamma(t), t),t^{-1} \Gamma(t)
    \Big).
\end{equation}

Any rigorous definition of such a limit most likely requires the use of 
carefully constructed stopping times, since our wave-tracking mechanism
almost surely fails at some finite time (see also {\S}\ref{sec:mr:pred}).
This delicate theoretical question is outside of the scope of the present paper unfortunately. 
From a practical point of view however, the numerical results in {\S}\ref{sec:resN}-\ref{sec:resFHN} displayed
in the stochastic reference frame $\Gamma(t)$ clearly indicate
that some type of fast convergence is taking place on long time scales.
In fact, by evaluating the averages in \sref{eq:mr:def:lim:wave}
for sufficiently large values of $t$, we construct
observed quantities 
$\big(\Phi^{\mathrm{obs}}_{\s;\mathrm{lim}},c^{\mathrm{obs}}_{\s;\mathrm{lim}} \big)$ that we feel are useful proxies for the limits \sref{eq:mr:def:lim:wave}.

We emphasize that we expect these quantities to differ from
the instantaneous stochastic wave $(\Phi_\s,c_\s)$. Indeed,
the stochastic forcing leads to an effect that we refer
to as `orbital drift'. Upon (formally) writing
\begin{equation}
\big( V^{\mathrm{od}}_{\s},c^{\mathrm{od}}_{\s}  \big)
=
    \big(\Phi_{\s;\mathrm{lim}} - \Phi_{\s},c_{\s;\mathrm{lim}} - c_{\s} \big)
\end{equation}
to quantify this difference, we note that
\begin{equation}
\label{eq:mr:def:c:v:od}
\begin{array}{lcl}
    c^{\mathrm{od}}_\s & = &
      `\lim_{t \to \infty}\textrm{'} \, \, E \, t^{-1} [\Gamma(t) - c_{\sigma} t ], \\[0.2cm]
    V^{\mathrm{od}}_\s & = & `\lim_{t \to \infty}\textrm{'} \, \,  E \, V(t) .
    \end{array}
\end{equation}
Of course, the same theoretical issues discussed above apply to these limits.

The key point however,
is that such limits \textit{do} exist naturally
for the individual terms in the expansions
\sref{eq:steps:expV}-\sref{eq:steps:expG}.
In particular, it \textit{is} possible to compute the expansions
\begin{equation}
\begin{array}{lcl}
    c^{\mathrm{od}}_{\s;i} & = &
      \lim_{t \to \infty} E \, t^{-1} \Gamma^{(i)}_{\s}(t), \\[0.2cm]
    V^{\mathrm{od}}_{\s;i} & = &
       \lim_{t \to \infty} E \, V^{(i)}_{\s}(t),
    \end{array}
\end{equation}
which allows us to compute approximations for
\sref{eq:mr:def:lim:wave} that can be explicitly
evaluated. In {\S}\ref{sec:resN}-\ref{sec:resFHN} we show that they
agree remarkably well with the observed numerical proxies
$\big(\Phi^{\mathrm{obs}}_{\s;\mathrm{lim}},c^{\mathrm{obs}}_{\s;\mathrm{lim}} \big)$ for \sref{eq:mr:def:lim:wave}.

Giving an interpretation to the pair $(\Phi_{\s;\mathrm{lim}},c_{\s;\mathrm{lim}})$ however is difficult. We do not have any ODE that it solves, but we think of $(\Phi_{\s;\mathrm{lim}},c_{\s;\mathrm{lim}})$ as the ceasefire line between the stochastic term that pushes the solution away from $(\Phi_\s,c_\s)$ and the exponential decay of the deterministic part that pushes it back to $(\Phi_\s,c_\s)$. We remark that it might be possible to embed $(\Phi_{\s;\mathrm{lim}},c_{\s;\mathrm{lim}})$ in some type of an invariant measure for the SPDE. There is a rich literature on the existence of invariant measures to stochastic Reaction-Diffusion equations, see e.g. \cite{Cerrai2005stabilization} and we intend to study this in the future.

\subsection{Explicit expansions}
\label{sec:explexp}
We now set out to explain in detail how the expansions
discussed in {\S}\ref{sec:derphase} can be derived. We give
general results here, but also show how they can be applied
to two explicit examples in \S\ref{sec:resN} and \S\ref{sec:resFHN}.

\paragraph{Expansions for $(\Phi_{\sigma}, c_{\sigma})$}
First, we examine the correction terms that are required
to obtain the instantaneous stochastic wave
from the deterministic wave $(\Phi_0, c_0)$.
In particular, we recall the defining identity
\begin{align}
\label{eq:mr:def:id:for:F:sigma}
F_0(\Phi_\s,c_\s)+\s^2F_{0;2}(\Phi_\s,c_\s)=0
\end{align}
and write
\begin{align}\label{eq:res:phi-c-clim}
    \begin{split}
        \Phi_\s&=\Phi_0+\s^2\Phi_{0;2}+\O(\s^4),\\
        c_\s&=c_0+\s^2c_{0;2}+\O(\s^4)\\
    \end{split}
\end{align}
for the solutions that are constructed in Proposition \ref{prp:var:swv:ex}.
We note that the $\O(1)$-terms in \sref{eq:mr:def:id:for:F:sigma}
indeed vanish because $F_0(\Phi_0, c_0) = 0$.
Balancing the $\O(\s^2)$-terms, we find
\begin{align}\label{eq:res:Phi02}
\begin{split}
\L_\mathrm{tw}\Phi_{0;2}&=-\frac{1}{2}\Phi_0''\ip{g(\Phi_0) Q g^T(\Phi_0)\psi_{\mathrm{tw}},\psit}_{L^2(\R,\R^n)}^2-c_{0;2}\Phi_0'+(g(\Phi_0)Qg^T(\Phi_0)\psi_{\mathrm{tw}})'-h(\Phi_0)\\
&=-F_{0;2}(\Phi_0,c_0)-c_{0;2}\Phi_0' .
\end{split}
\end{align}
By the Fredholm alternative, we know that we can solve for $\Phi_{0;2}$ when
the right hand side of this equation is orthogonal to $\psit$.
In view of the normalization $\ip{\Phi_0',\psit}_{L^2(\R,\R^n)}=1$, we
hence find
\begin{align}\label{eq:res:c02}
\begin{split}
c_{0;2}=&-\ip{F_{0;2}(\Phi_0,c_0),\psit}_{L^2(\R,\R^n)}.
\end{split}
\end{align}
The function $\Phi_{0;2}$ can now be computed
by numerically (or analytically when possible)
inverting $\L_\mathrm{tw}$ and solving \sref{eq:res:Phi02}.

\paragraph{First order: $(\Gamma_\s^{(1)} , V_\s^{(1)})$ }
We now turn our attention to the first order
terms in the expansions
\sref{eq:steps:expV}-\sref{eq:steps:expG}.
Expanding the expressions
\sref{eq:mr:expr:v:1:i}-\sref{eq:mr:expr:gm:1:i},
we may write
\begin{align}\label{eq:res:V1}
\begin{split}
    V_\s^{(1)}(t)&=\int_0^tS(t-s) g(\Phi_\sigma) dW^Q_s
    \\
    & \qquad -
          \int_0^t S(t-s) \Phi_\s'
             \frac{\ip{\psit,g(\Phi_\s) d W^Q_s}_{L^2(\R,\R^n)}}{\ip{\Phi_\s',\psit}_{L^2(\R,\R^n)}},
    \end{split}
\end{align}
together with
\begin{align}\label{eq:res:Gam1}
\begin{split}
    \G^{(1)}_\s(t)
    &=-\int_0^t\frac{\ip{\psit,g(\Phi_\s)dW^Q_s}_{L^2(\R,\R^n)}}{\ip{\Phi_\s',\psit}_{L^2(\R,\R^n)}}.
\end{split}
\end{align}

We note that $E[V_\s^{(1)}(t)]=E[\G_\s^{(1)}(t)]=0$.
On account of the decay of the semigroup, $V_\s^{(1)}$ can be regarded
as a process of Ornstein-Uhlenbeck type. On the other hand,
$\G^{(1)}_\s$ behaves as a scaled Brownian motion
with variance
  \begin{align}\label{eq:res:varG}
\begin{split}
    \mathrm{Var}(\G_\s^{(1)}(t))
    &=\ip{g(\Phi_\s)Qg^T(\Phi_\s)\psit,\psit}_{L^2(\R,\R^n)}^2t.
\end{split}
\end{align}

\paragraph{Second order: $(\Gamma_\s^{(2)} , V_\s^{(2)})$ }
Substituting the first order term $V^{(1)}_{\s}$
into the right-hand-side of \sref{eq:res:mildV},
we find that $V_\s^{(2)}(t)$
picks up a deterministic contribution coming
from the quadratic terms in $\mathcal{R}_{\s}$,
together with a stochastic contribution arising
from the linear terms in $\mathcal{S}_{\s}$.
In particular, we obtain
\begin{align}\label{eq:res:V2}
\begin{split}
    V_\s^{(2)}(t)=&\int_0^t S(t-s)\mathcal{R}_\s^{(2)}[V_\s^{(1)}(s),V_\s^{(1)}(s)]\,ds
    +\int_0^tS(t-s)\mathcal{S}_\s^{(1)}\big(V_\s^{(1)}(s)\big) \,dW^Q_s,
    \end{split}
    \end{align}
in which we have
    \begin{align}\begin{split}
 \mathcal{R}^{(2)}_\s[V,V]=&\frac{1}{2}D^2f(\Phi_\s)[V,V]
    -\frac{1}{2}\Phi_\s' \frac{\ip{D^2f(\Phi_\s)[V,V],\psi_{\mathrm{tw}}}_{L^2(\R,\R^n)}}{\ip{\Phi_\s',\psit}_{L^2(\R,\R^n)}},\\
\end{split}
\end{align}
together with
\begin{align}\begin{split}
\mathcal{S}_\s^{(1)}(V)[w]=&Dg(\Phi_\s)[V]w
    -\p_\xi V\frac{\ip{\psi_{\mathrm{tw}},g(\Phi_\s)w}_{L^2(\R,\R^n)}}{\ip{\Phi_\s',\psit}_{L^2(\R,\R^n)}}\\
    &-\Phi_\s'\frac{\ip{\psi_{\mathrm{tw}},Dg(\Phi_0)[V]w}_{L^2(\R,\R^n)}}{\ip{\Phi_\s',\psit}_{L^2(\R,\R^n)}}\\
    &+\Phi_\s'\ip{\p_\xi V,\psi_{\mathrm{tw}}}_{L^2(\R,\R^n)}\frac{\ip{\psi_{\mathrm{tw}},g(\Phi_\s)w}_{L^2(\R,\R^n)}}{\ip{\Phi_\s',\psit}_{L^2(\R,\R^n)}^2}
    \end{split}
\end{align}
for any $w \in L^2_Q$.
In a similar fashion, we find
\begin{align}\label{eq:res:Gam2}\begin{split}
    \Gamma_\s^{(2)}(t)=&\int_0^ta_\s^{(2)}(\Phi_\s)[V_\s^{(1)}(s),V_\s^{(1)}(s)] \, ds
    +\int_0^tb_\s^{(1)}(\Phi_\s)[V_\s^{(1)}(s)]\, dW^Q_t,
\end{split}
\end{align}
in which we have
\begin{align}\label{eq:res:Gam2partsa}\begin{split}
    a_\s^{(2)}[V,V]=&-\frac{1}{2}\ip{D^2f(\Phi_\s)[V,V],\psi_{\mathrm{tw}}}_{L^2(\R,\R^n)},\\
\end{split}
\end{align}
together with
\begin{align}\label{eq:res:Gam2partsb}\begin{split}
    b_\s^{(1)}(\Phi_\s)[V][w]=&-\frac{\ip{\psit,Dg(\Phi_\s)[V]v}_{L^2(\R,\R^n)}}{\ip{\Phi_\s',\psit}_{L^2(\R,\R^n)}}\\
    &  -\ip{\p_\xi V,\psit}_{L^2(\R,\R^n)}\frac{\ip{\psit,g(\Phi_\s)w}_{L^2(\R,\R^n)}}{\ip{\Phi_\s',\psit}_{L^2(\R,\R^n)}^2}.
\end{split}
\end{align}

Note that the expressions for $V_\s^{(2)}(t)$ and $\G_\s^{(2)}(t)$ depend on $f$, but not on $h$. This is due to the fact that the $\O(\s^2)$ part of the It\^o-Stratonovich correction term is already absorbed in $(\Phi_\s,c_\s)$.
If we would have started our computations around $(\Phi_0,c_0)$, the dependence of $h$ would show up via $(\Phi_{0;2},c_{0;2})$.
However the extra second order terms together form  \sref{eq:res:Phi02} and therefore vanish.

We remark that both of these second-order terms have a nonzero expectation,
which can be explicitly computed using
the It\^o lemma.
To this end,
we introduce the notation
\begin{align}\label{eq:res:DefK}
K_\s(s)[w_1,w_2]=\frac{1}{2}D^2f(\Phi_\s)[S(s)\mathcal{S}_\s(0)w_1,S(s)\mathcal{S}_\s(0)w_2]
\end{align}
for any $v,w\in L_Q^2$. Upon choosing a basis $(e_k)$ of $L^2(\R,\R^m)$ and applying Lemma \ref{lem:ito},
we find
\begin{align}\label{eq:res:EGam2}
E[\G_\s^{(2)}(t)]=-\int_0^t\int_0^s \sum_{k=0}^\infty\ip{K_\s(s')[\sqrt Qe_k,\sqrt Qe_k],\psi_{\mathrm{tw}}}_{L^2(\R,\R^n)} \, ds'ds ,
\end{align}
together with
\begin{align}\label{eq:res:EV2}
\begin{split}
E[V_\s^{(2)}(t)]=\int_0^tS(t-s)\int_0^s\sum_{k=0}^\infty &
\Big[ K_\s(s')[\sqrt Qe_k,\sqrt Qe_k]\\
   &-\Phi'_\s\frac{\ip{K_\s(s')
   [\sqrt Qe_k,\sqrt Qe_k],\psi_{\mathrm{tw}}}_{L^2(\R,\R^n)}}
   {\ip{\Phi_\s',\psit}_{L^2(\R,\R^n)}}
\Big]
ds'ds.
    \end{split}
\end{align}
Sending $t\to\infty$, we can explicitly compute
\begin{align}\label{eq:res:od}
c_{\s;2}^\mathrm{od}=\lim_{t\to\infty}t^{-1}E[\G_\s^{(2)}(t)]=-\int_0^\infty \sum_{k=0}^\infty\ip{K_\s(s')[\sqrt Qe_k,\sqrt Qe_k],\psi_{\mathrm{tw}}}_{L^2(\R,\R^n)} \, ds.
\end{align}
Note that this integral converges because
$\mathcal{S}_\s(0)$ is orthogonal to $\psit$,
which circumvents the nondecaying mode of the semigroup.

In a similar fashion, we can obtain
\begin{equation}
    V_{\s;2}^\mathrm{od}
      = \lim_{t \to \infty} 
         E \, \big[ V_{\s}^{(2)}(t) \big].
\end{equation} 
Switching the integrals in \sref{eq:res:EV2} and applying the operator identity \cite[Prop. 1.3.6]{lorenzi2004analytic}
\begin{align}
    \L_\mathrm{tw}\int_0^tS(s)ds=S(t)-I,
\end{align}
we arive at
\begin{align}\label{eq:res:odV}
V_{\s;2}^\mathrm{od}=-\L_\mathrm{tw}^{-1}\int_0^\infty\sum_{k=0}^\infty
\Big[ K_\s(s)[\sqrt Qe_k,\sqrt Qe_k]
   &-\Phi'_\s\frac{\ip{K_\s(s)
   [\sqrt Qe_k,\sqrt Qe_k],\psi_{\mathrm{tw}}}_{L^2(\R,\R^n)}}
   {\ip{\Phi_\s',\psit}_{L^2(\R,\R^n)}}
\Big]ds.
\end{align}

\paragraph{Third order: $\Gamma_\s^{(3)}$ }
Provided that the nonlinearities are sufficiently smooth,
the methods in the previous paragraphs
can in principle be extended to any desired order in $\sigma$,
but the computations get more involved. However, it is important to note that in order to compute the $n$-th order approximation of $\G(t)$, we only need information from $V(t)$ up to order $n-1$.
In particular, upon inspecting equation \sref{eq:res:Gamma:int} we find that
\begin{equation}
\begin{split}
\sigma^3 \G^{(3)}_\s(t)
=&  \int_0^t a_\s\big(\s V_\s^{(1)}(s)+\s^2V_\s^{(2)}(s)\big)\,ds
+\s\int_0^t b_\s\big(\s V_\s^{(1)}(s)+\s^2V_\s^{(2)}(s)\big) \,dW^Q_s\\
&-\s\G_\s^{(1)}(t)-\s^2\G_\s^{(2)}(t)+ O (\sigma^4) .
\end{split}
\end{equation}
This gives us a convenient numerical procedure
to compute $c^{\mathrm{od}}_{0;3}$ without having
to explicitly compute $a^{(3)}_{\sigma}$
and $b^{(2)}_{\sigma}$. Indeed, we may write
\begin{equation}
\label{eq:res:cubic}
 c^{\mathrm{od}}_{0;3}
 = \lim_{\sigma \to 0, \, t \to \infty}
   \sigma^{-3} E\Big[\int_0^t a_\s\big(\s V_\s^{(1)}(s)+\s^2V_\s^{(2)}(s)\big) \, ds-\s^2\G_\s^{(2)}(t)\Big].
\end{equation}

\subsection{Predictions}
\label{sec:mr:pred}
Based upon the perturbation analysis in the previous section, we can make the following predictions on the behaviour of the wave.

\paragraph{Diffusive phase wandering}

At leading order in $\sigma$, we see that the phase wanders diffusively
around the deterministic position $c_0 t$. Indeed,
based upon \sref{eq:res:varG} we predict that
\begin{align}\label{eq:res:varG:full}
\begin{split}
    \mathrm{Var}\big(\G(t)\big)
    =\sigma^2 \ip{g(\Phi_\s)Qg^T(\Phi_\s)\psit,\psit}_{L^2(\R,\R^n)}^2t
      +\O(\s^{3}).
\end{split}
\end{align}
Note that this expression 
coincides with 
the mean square deviation from the deterministic phase $E[(\G(t)-c_0t)^2]$ up to $\O(\s^3)$. In the specific case of the
stochastic Nagumo equation, this expression
has been known for two decades already \cite[eq.
(6.25)]{GarciaSpatiallyExtended}.
Similar identities (with $g(u) = 1$)
were found for almost translationally invariant additive noise
\cite[\S 3.4]{StannatKruger} and in the context
of neural field equations \cite{Bressloff,Lang}.
Remark that the difference between the
It\^o and Stratonovich interpretation cannot yet be observed
at this level.

\paragraph{Short term behaviour}
Based on \sref{eq:res:EGam2} we see that on short timescales
we have
\begin{equation}
    E [\Gamma_{\sigma}^{(2)}(t) ] \sim t^2,
\end{equation}
which does not contribute meaningfully to the speed for small $t$.
Similarly, we have
\begin{equation}
  \mathrm{Var} [ V_\s^{(1)}(t) ] \sim t,
  \qquad
    E [V_{\sigma}^{(2)}(t) ] \sim t^2,
\end{equation}
which shows that also the shape of the wave is relatively unaffected
by these correction terms. In particular, we see that on short timescales
the pair $(\Phi_\s, c_\s)$ indeed accurately describes the shape
and speed of the wave. We feel that this justifies
the use of our  `instantaneous stochastic wave' terminology.

\paragraph{Long term behaviour}
On longer timescales the Ornstein-Uhlenbeck-like process
$V^{(1)}_\s$ starts to play an important role,
causing fluctuations around $(\Phi_\s, c_\s)$
that lead to the orbital drift corrections.
Using the It\^o isometry,
we predict that the size of the perturbations
behaves as
\begin{align}\label{eq:res:EnrmV}
\begin{split}
    E[\nrm{V(t)}_{L^2(\R,\R^n)}^{2}]&=\sigma^2E[\nrm{V_0^{(1)}(t)}_{L^2(\R,\R^n)}^2]+\O(\s^3)\\
    &=\s^2\int_0^t\nrm{S(s)\mathcal{S}_0(0)}^2_{HS\big(L_Q^2,L^2(\R,\R^n)\big)} \, ds+\O(\s^3),
    \end{split}
\end{align}
see \cite[Ex. 4]{DaPratomildArx}.
We point out that the integral actually
converges (at an exponential rate) as $t \to \infty$. Naturally,
the residual is predicted to behave as
\begin{align}\label{eq:res:Vres}
    E[\nrm{V_\mathrm{res}(t)}_{L^2(\R,\R^n)}^{2}]&=E[\nrm{V(t)-V_\s^{(1)}(t)-V_\s^{(2)}(t)}_{L^2(\R,\R^n)}^{2}]\sim\O(\s^6).
\end{align}

Although the average of $V_0^{(1)}(t)$ can be kept under control,
our phase tracking breaks down as soon as
$\norm{V(t)}_{L^2(\R,\R^n)}$ exceeds a certain $\sigma$-independent threshold. Based on the hitting-time estimates for the scalar Ornstein-Uhlenbeck process that were obtained in \cite[Eq. (6a)]{nobile1985exponential},
we conjecture that the expected break-down time increases
exponentially with respect to $\sigma^{-2}$.
In a similar vein, the quantity of interest
for our stability analysis
is the expectation of the \textit{supremum} of
$\nrm{V(t)}^2$ over the interval $[0,T]$. 
Based on detailed
and very delicate computations for the standard scalar Ornstein-Uhlenbeck
process \cite{ricciardi1988first,alili2005representations,pickands1969}, we conjecture that 
$E \sup_{0\le t \le T} \, \nrm{V_0^{(1)}(t)}^2_{L^2(\R,\R^n)}
\sim \ln T$,
while 
the corresponding expression
for $V_0^{(2)}$ grows
as $\ln^2 T$;
see also Fig. \ref{fig:resN:supa}.

Both these conjectures
suggest that our framework remains valid for $T$ up to $\O(e^{\eta \s^{-2}})$ for some small $\eta > 0$.
However, it is not clear to us how the bounds above can be obtained
in the infinite-dimensional semigroup setting. In our rigorous
stability proof we are therefore forced to work with a weaker $\O(\sigma^2 T)$ bound for the supremum expectation,
which understates the timescales over which we can keep track
of the stochastic wave. 

Once we have lost track of the wave, it could potentially be possible to restart the tracking mechanism by allowing for an instantaneous jump in the phase. 
In \cite[\S 7]{Inglis} some first promising results in this direction were obtained by defining the phase as the - possibly discontinuous - solution to a global minimization problem.

Turning to the limiting speed and shape of the wave,
we arrive at the prediction
\begin{align}\label{eq:res:expclim}
\begin{split}
    c_{\s;\mathrm{lim}}&=c_\s+\s^2c_{\s;2}^{\mathrm{od}}+\s^3c_{\s;3}^{\mathrm{od}}+\O(\s^4)\\
    &=c_0+\s^2[c_{0;2}+c_{0;2}^{\mathrm{od}}]+\s^3c_{0;3}^\mathrm{od}+\O(\s^4),
    \end{split}
\end{align}
where we used
$(\Phi_\s,c_\s)=(\Phi_0,c_0)+\O(\s^2)$ to conclude
that the difference between $c_{\s;2}^\mathrm{od}$
and $c_{0;2}^\mathrm{od}$ is also of order $\s^2$.
In a similar fashion, we obtain
\begin{align}\label{eq:res:EU}
\begin{split}
    \Phi_{\s;\mathrm{lim}}
     &=\Phi_\s+\s^2 V^{\mathrm{od}}_{0;2}+\O(\s^3)\\
    &=\Phi_0+\s^2\big[\Phi_{0;2}+ V^{\mathrm{od}}_{0;2} \big]+\O(\s^3).
\end{split}
\end{align}
The leading order terms in the expressions 
\sref{eq:res:expclim}-\sref{eq:res:EU}
can all be explicitly computed,
which will allow us to test our predictions
against numerical simulations
in \S\ref{sec:resN} and \S\ref{sec:resFHN}.

\section{Example I: The Nagumo Equation}
\label{sec:resN}
In this section, we study the 
explicit example
\begin{align}\label{eq:resN:nageq}
\begin{split}
    dU&=[\p_{xx}U+f_\mathrm{cub}(U)+\frac{\mu\s^2}{2}q(0)g'(U)g(U)]dt+\s g(U)dW^Q_t,\\
\end{split}
\end{align}
in which $\mu$ is either zero (It\^o) or one (Stratonovich),
while the nonlinearities are given by
\begin{equation}
 f_\mathrm{cub}(U)=U(1-U)(U-\a),
     \qquad \qquad
     g(U)=U(1-U)
\end{equation}
for some $\a \in (0,1)$.
We do remark that $g$ does not have a bounded second derivative as demanded
by our assumption (HSt). This technical problem can be remedied by applying a cut-off function to $g(U)$ to ensure that this value levels off for $U>>1$.

Following \cite{Lord2012},
we use the normalized kernel
\begin{align}
    q(x)=\frac{1}{2\zeta}e^{\frac{-\pi x^2}{4\zeta^2}}
\end{align}
to generate
the cylindrical
$Q$-Wiener process $W^Q_t$ over $L^2( \Real)$.
Here the parameter $\zeta>0$ is a measure for the spatial correlation length,
which is defined \cite{GarciaSpatiallyExtended}
as the second moment of $q$, i.e.  $\frac{2\zeta}{\pi}$.
The kernel $p$ of $\sqrt Q$ can be computed 
by taking the inverse Fourier transform
of $\sqrt{\widehat{q}}$;
see Appendix \ref{sec:est:prlm}.
This yields
\begin{align}
    \begin{split}
        p(x)&=
        \sqrt[4]{\frac{\pi}{2}}\frac{e^{-\frac{\pi x^2}{2\zeta^2}}}{\zeta}.
    \end{split}
\end{align}
Notice that the dimensions of the problem
are $n=m=1$, which means that $g=g^T$.

We now set out to carefully perform the computations in \S\ref{sec:explexp} and compare the results with
our numerical simulations.
These simulations are based on the algorithms from \cite[Chap. 10]{Lord2014Book}. In particular, we use a semi-implicit scheme in time and a straight-forward central-difference
discretization in space. In addition, we use circulant embedding \cite[Alg. 6.8]{Lord2014Book} to generate a stochastic Wiener process with the prescribed spatial correlation function.

\paragraph{Computing $(\Phi_\sigma, c_\sigma)$}
As explained in {\S}\ref{sec:int}, the wave $(\Phi_0,c_0)$
satisfies the ODE
\begin{align}
    \Phi_0''+c_0\Phi_0'+f_\mathrm{cub}(\Phi_0)=0
\end{align}
and is given by
\begin{align}\label{eq:resN:wave}
    \begin{split}
        \Phi_0&=\frac{1}{2}\left[1-\tanh\left(\frac{1}{2\sqrt 2}x\right)\right],
        \qquad
        c_0 
        =\sqrt{2}\left(\frac{1}{2}-\a\right).
    \end{split}
\end{align}
The linear operators $\L_{\mathrm{tw}}$ and $\L_\mathrm{tw}^*$ act as
\begin{align}\label{eq:resN:linop}
    \begin{split}
        \L_\mathrm{tw}v&=v''+c_0v'+f_\mathrm{cub}'(\Phi_0)v,
        \qquad
        \L^*_\mathrm{tw}w
         =w''-c_0w'+f_\mathrm{cub}'(\Phi_0)w
    \end{split}
\end{align}
and we write $\Phi_0'$ respectively $\psit(x)=e^{c_0x}\Phi_0'/\ip{\Phi'_0,e^{c_0 \cdot}\Phi'_0}_{L^2(\R)}$ for their normalized simple eigenfunctions at zero.

\begin{figure}
\centering
\begin{subfigure}{.49\textwidth}
  \centering
  \includegraphics[width=1\columnwidth]{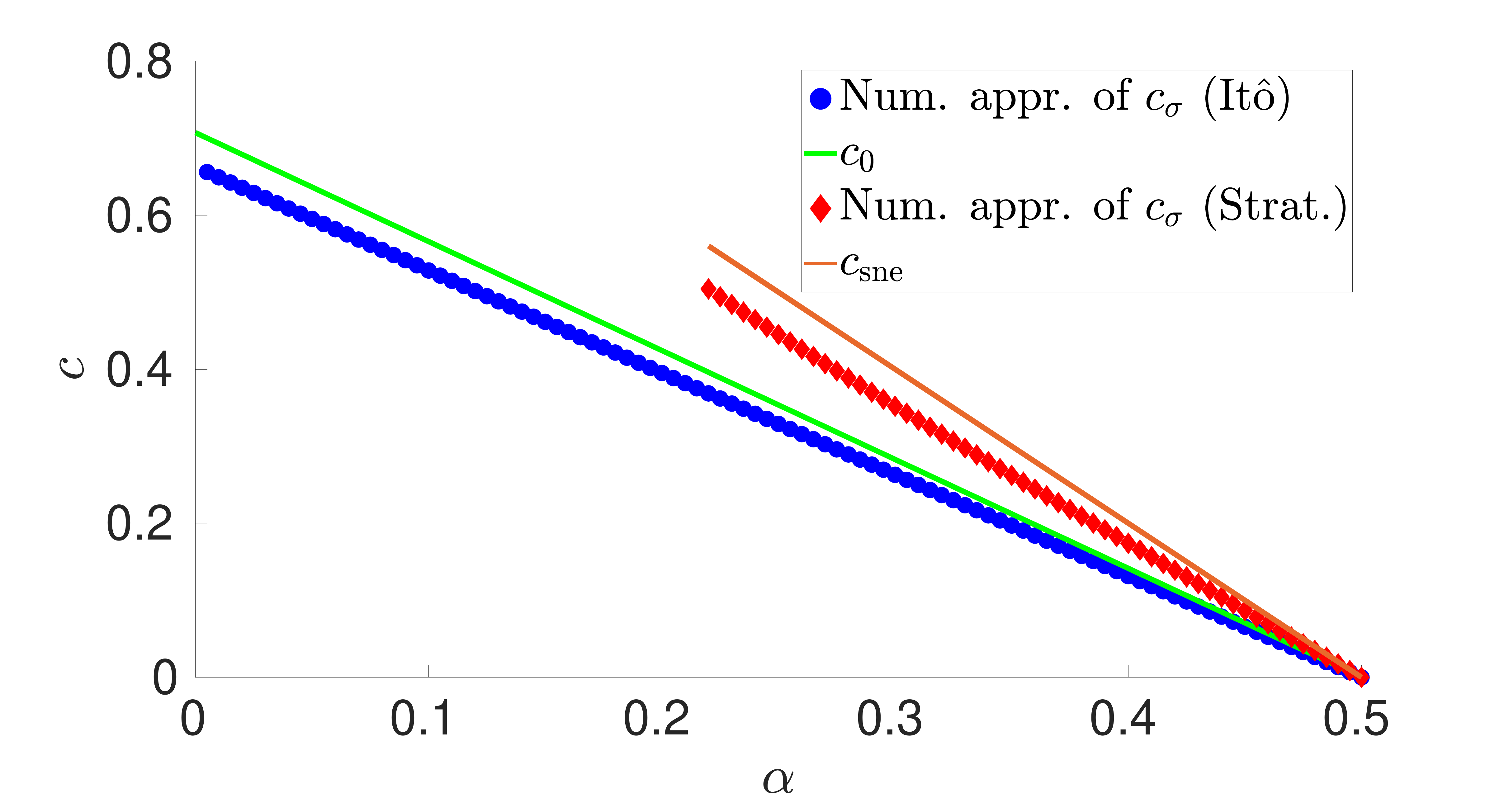}
  \caption{}
      \label{fig:resN:cvsa}
\end{subfigure}%
\begin{subfigure}{.49\textwidth}
  \centering
\includegraphics[width=1\columnwidth]{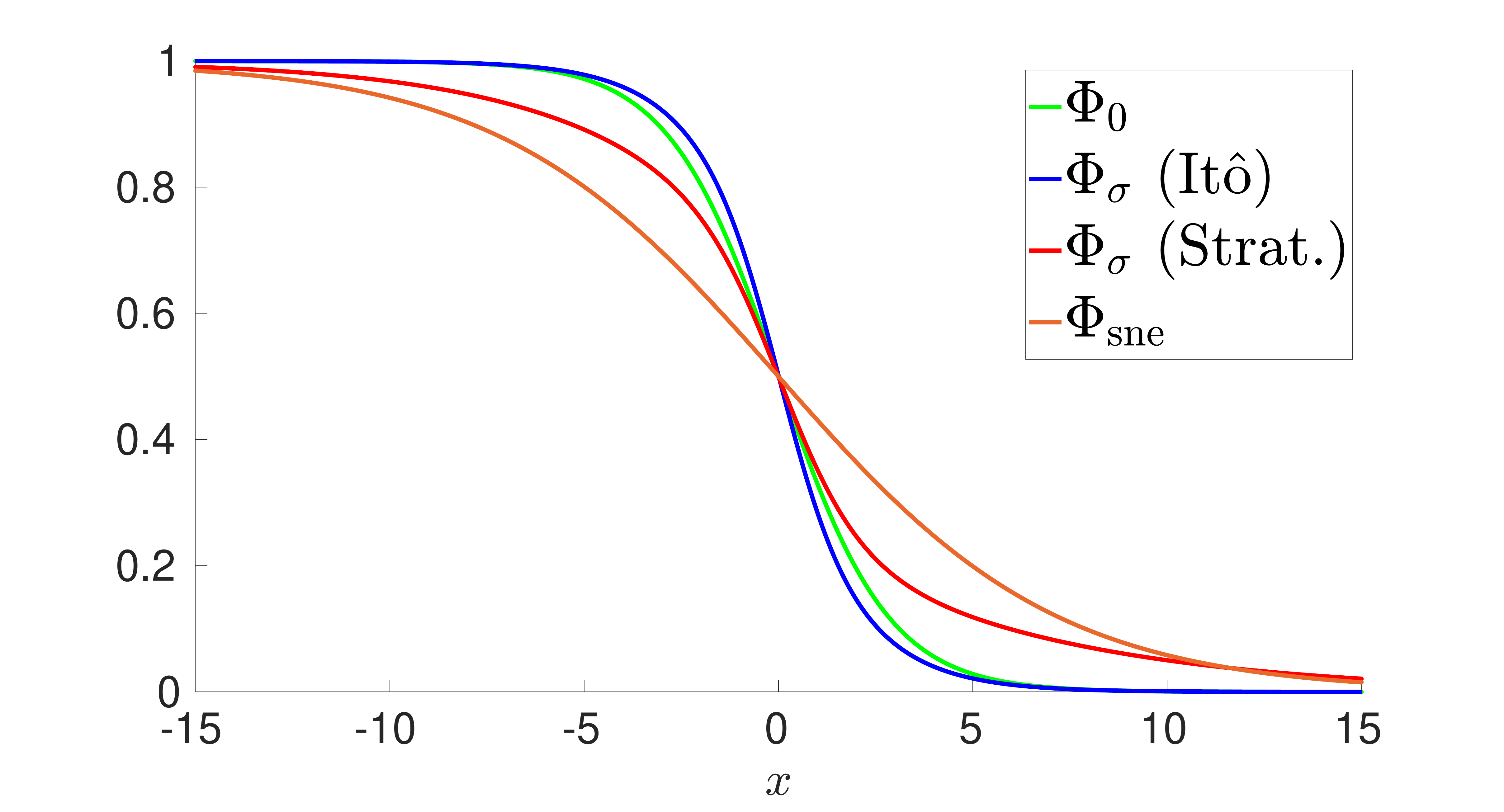}
    \caption{}
        \label{fig:resN:phiItoStrat}
\end{subfigure}
\caption{Panel (a) 
compares the deterministic wave speed
\sref{eq:resN:wave} (green),
the instantaneous stochastic wave speed
$c_{\s}$ for the Ito (blue) and
Stratonovich (red) interpretations
and the speed $c_{\mathrm{sne}}$
derived in
\cite[eq. (6.33)]{GarciaSpatiallyExtended}
(orange), all for $\sigma = 1$
and $\zeta = 1$. The red and orange lines
are only plotted for $\a_{\mathrm{eff}} \in (0,1/2)$.
Panel (b)
compares the associated wave profiles
$\a=0.45$ and $\s=1.3$.
Notice the steepening and flattening
of the waves for the It\^o
respectively Stratonovich interpretations.
The profiles are computed on the interval $[-40,40]$, but here zoomed in to $[-15,15]$ to highlight the differences.}
\end{figure}

In this scalar setting, the full equation
$F_\s(\Phi_\s,c_\s)=0$ can be written as
\begin{align}\label{eq:resN:odePhi}
\begin{split}
    \Phi_\s''+c_\s\Phi_\s'+f_\mathrm{cub}(\Phi_\s)=&-\frac{\s^2}{2}\frac{\ip{q*(g(\Phi_\s)\psit),g(\Phi_\s)\psit}_{L^2(\R)}}{\ip{\Phi_\s',\psit}^2_{L^2(\R)}}\Phi_\s''\\&+\s^2\frac{\big(g(\Phi_\s)q*(g(\Phi_\s)\psi_\mathrm{tw})\big)'}{\ip{\Phi'_\s,\psit}_{L^2(\R)}}-\frac{\mu\s^2}{2}q(0)g'(\Phi_\s)g(\Phi_\s).
\end{split}
\end{align}
It is interesting to compare this equation
with the system
\begin{align}\label{eq:resN:odePhisne}
\begin{split}
\Phi_\mathrm{sne}''+c_\mathrm{sne}\Phi_\mathrm{sne}'+f_\mathrm{cub}(\Phi_\mathrm{sne})= -\frac{\mu\s^2}{2}q(0)g'(\Phi_\mathrm{sne})g(\Phi_\mathrm{sne})
\end{split}
\end{align}
used to construct the waves $(\Phi_\mathrm{sne},c_{\mathrm{sne}})$
in \cite{GarciaSpatiallyExtended} using their so-called small noise expansion technique. As the authors remark, this equation is not the result of a systematic perturbative expansion in $\s$, but rather a partial resummation of such an expansion. For example,
the additional two $O(\sigma^2)$ terms in \sref{eq:resN:odePhi}
arise from the second order terms in the It\^o formula
which were neglected in \cite{GarciaSpatiallyExtended}.

In any case,
for $\mu = 1$ we can rewrite \sref{eq:resN:odePhisne}
in the explicit form
\begin{align}
\Phi_\mathrm{sne}''+c_\mathrm{sne}\Phi_\mathrm{sne}'
+   \big(1-\s^2q(0)\big)u(1-u)\Big(u - \a_{\mathrm{eff}}\Big) =0,
\end{align}
with a new effective detuning parameter
\begin{equation}
    \a_{\mathrm{eff}} = \frac{2\a - \s^2q(0)}{2-2\s^2q(0)}.
\end{equation}
This equation is just a scaled version of the original ODE, which
can be solved by rescaling \sref{eq:resN:wave} as  
\begin{align}\label{eq:resN:waveSNE}
    \begin{split}
        \Phi_\mathrm{sne}&=\frac{1}{2}\left[1-\tanh\left(\frac{\sqrt{1-\s^2q(0)}}{2\sqrt 2}x\right)\right],
        \qquad
        c_\mathrm{sne}
        =\sqrt{2\big(1-\s^2q(0)\big)}(\frac{1}{2}-\a_{\mathrm{eff}}).
    \end{split}
\end{align}

Our full system \sref{eq:resN:odePhi}
cannot be solved explicitly, but in the bistable
regime $\a_{\mathrm{eff}} \in (0,1)$ we were able
to use a straightforward
numerical boundary value problem solver to
approximate the solutions; see Fig. \ref{fig:resN:cvsa}.
These results show  that $c_\mathrm{sne}$ is a reasonable
approximation for $c_\s$, but in Fig. \ref{fig:resN:CubicSpeed} we shall see that $c_\mathrm{sne}$ compares less favourably with the full limiting wave speed.
Note that our solutions are in agreement
with the numerical observations from \cite{Lord2012}: for the Stratonovich interpretation the wave moves faster and is less steep, but for the It\^o interpretation the wave slows down and becomes steeper.

We now turn to expanding $(\Phi_\s,c_\s)$ in powers of
$\sigma$. Following \sref{eq:res:c02}, the lowest order correction to $c_\s$  becomes
\begin{align}\label{eq:resN:c02}
\begin{split}
c_{0;2}=&-\frac{1}{2}\ip{\Phi_0'',\psi_{\mathrm{tw}}}_{L^2(\R)}\ip{q*(g(\Phi_0)\psi_{\mathrm{tw}}),g(\Phi_0)\psi_{\mathrm{tw}}}_{L^2(\R)}-\ip{g(\Phi_0)q*(g(\Phi_0)\psi_{\mathrm{tw}}),\psi_{\mathrm{tw}}'}_{L^2(\R)}\\
& \qquad -\frac{\mu q(0)}{2}\ip{g'(\Phi_0)g(\Phi_0),\psit}_{L^2(\R)}.
\end{split}
\end{align}
We can subsequently find $\Phi_{0;2}$
by numerically inverting the linear operator $\L_\mathrm{tw}$
to solve
\begin{align}\label{eq:resN:Phi02}
\begin{split}
\L_\mathrm{tw}\Phi_{0;2}&=-\frac{1}{2}\Phi_0''\ip{q*(g(\Phi_0)\psi_{\mathrm{tw}}),\psit}_{L^2(\R)}^2-c_{0;2}\Phi_0'
+\Big(g(\Phi_0)q*(g(\Phi_0)\psi_{\mathrm{tw}})\Big)'
\\
& \qquad \qquad -\frac{q(0)}{2}g'(\Phi_0)g(\Phi_0).
\end{split}
\end{align}
We remark that these approximations can also be evaluated for additive noise ($g=1$), or, in the It\^o interpretation, for $q(x-y)=\delta(x-y)$.
In Fig. \ref{fig:resN:csPhis} we compare $(\Phi_\s-\Phi_0,c_\s-c_0)$ with our quadratic approximations for a range of different values of $\s$. There appears to be a good agreement, both for the It\^o and Stratonovich interpretation.

\paragraph{Limiting wave speed}

\begin{figure}
\centering
\begin{subfigure}{.33\textwidth}
\includegraphics[width=1\columnwidth]{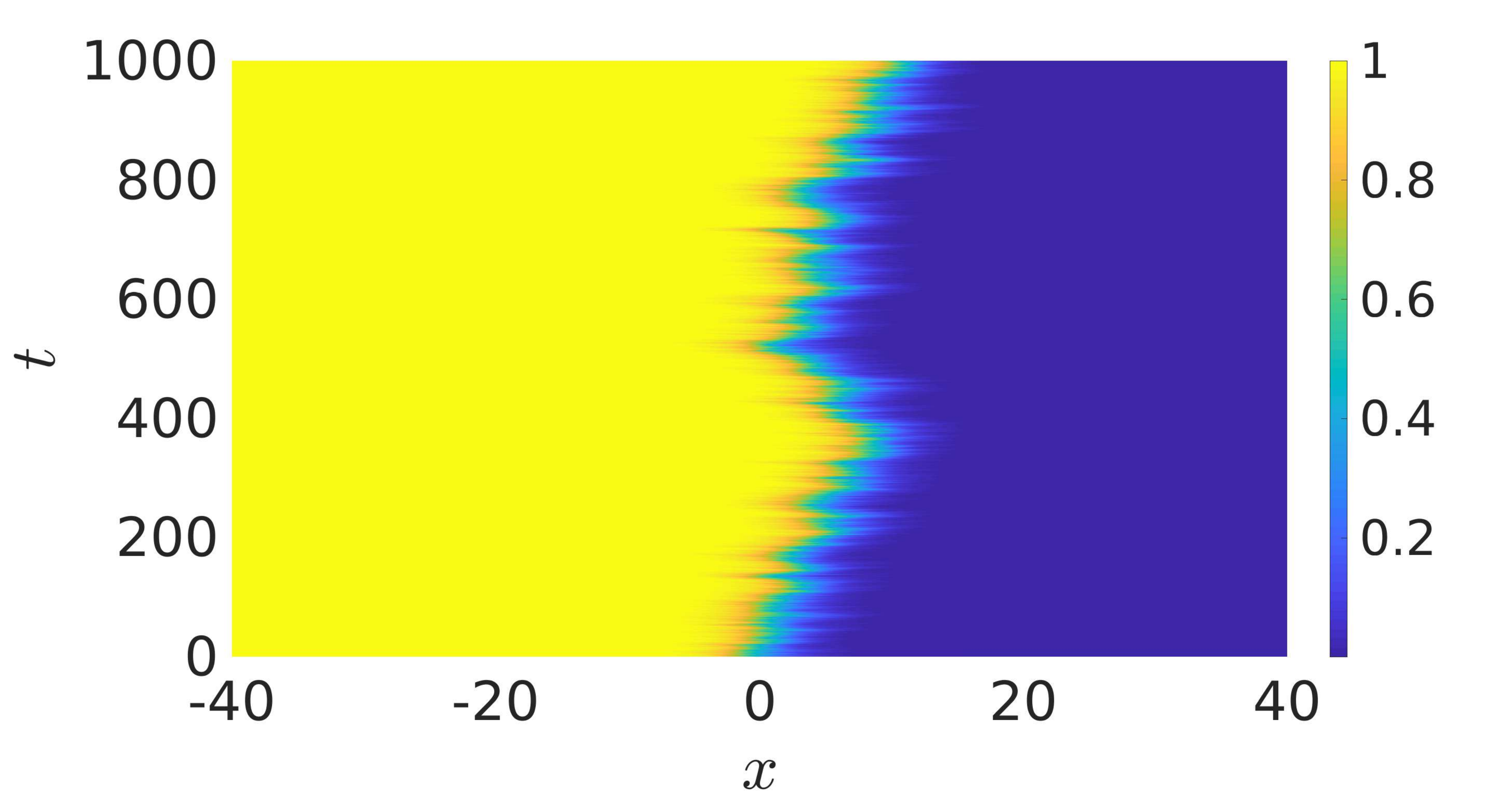}
  \caption{$U(x +c_0t,t)$}
    \label{fig:UDifRefa}
\end{subfigure}%
\begin{subfigure}{.33\textwidth}
  \centering
\includegraphics[width=1\columnwidth]{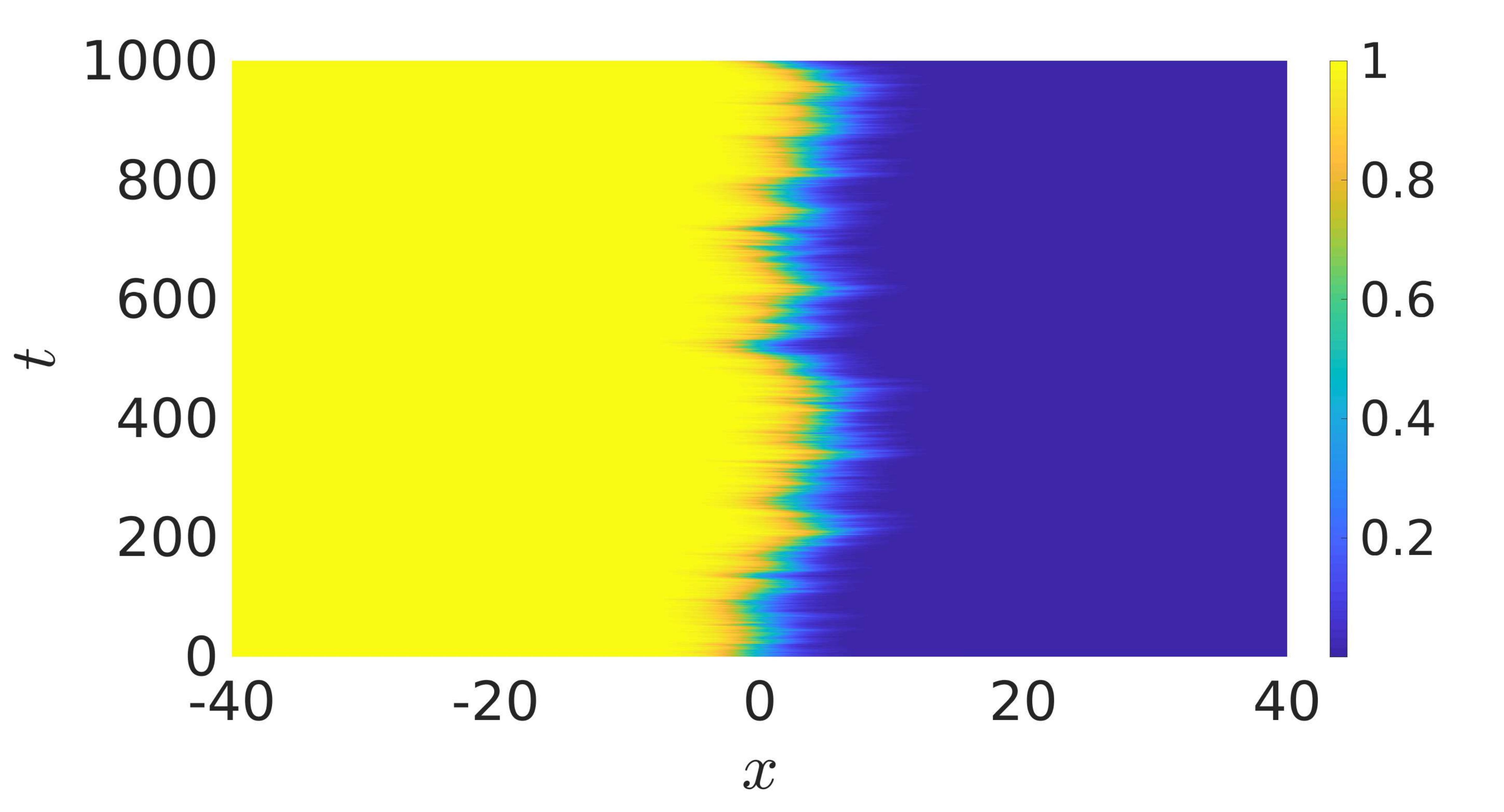}
  \caption{$U(x +c_\sigma t,t)$}
      \label{fig:UDifRefb}
\end{subfigure}%
\begin{subfigure}{.33\textwidth}
  \centering
\includegraphics[width=1\columnwidth]{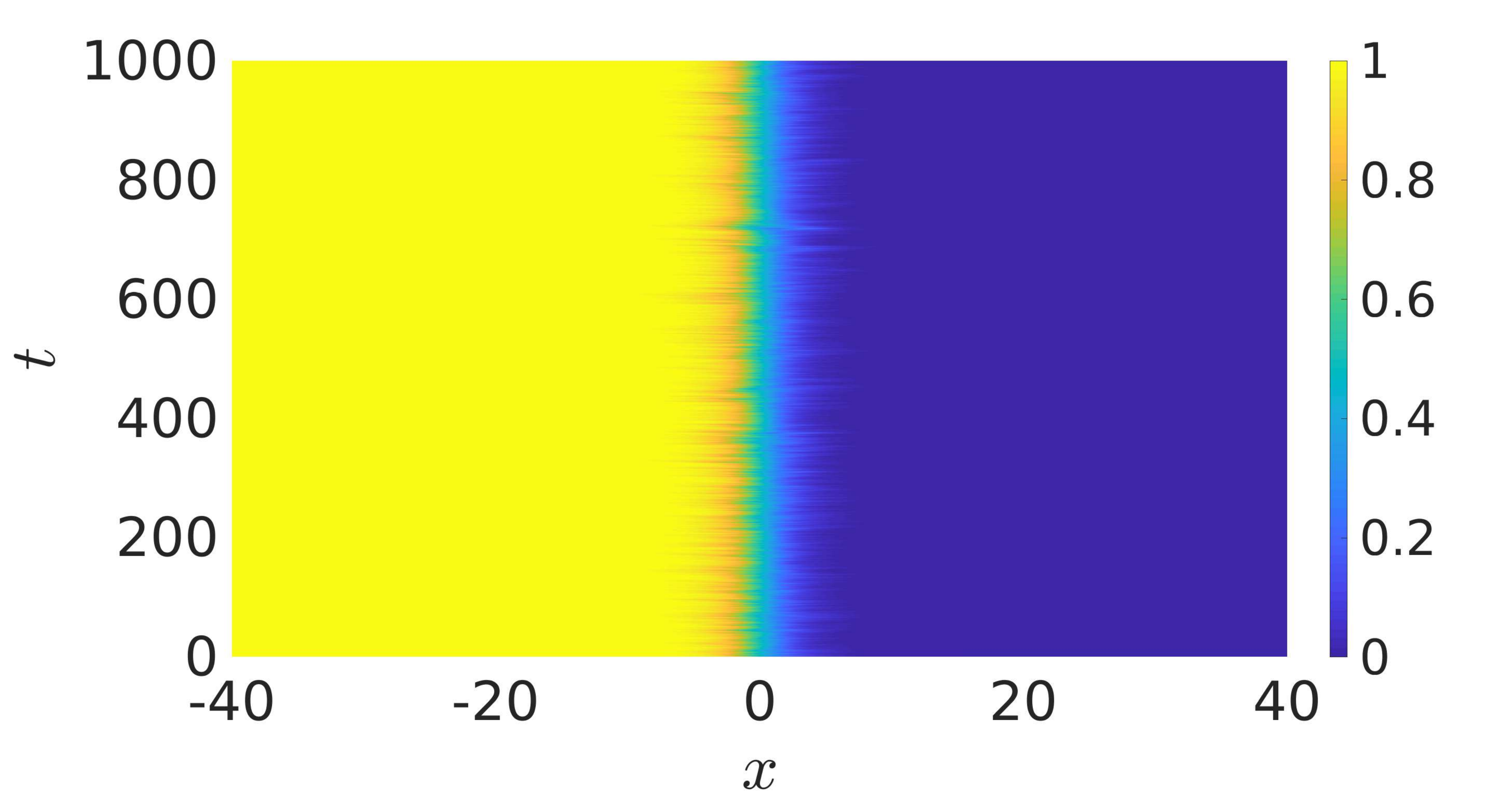}
  \caption{$U(x +\Gamma(t),t)$}
        \label{fig:UDifRefc}
\end{subfigure}
\caption{A single realization of  \sref{eq:resN:nageq} in the Stratonovich interpretation with initial condition $\Phi_\s$ in 3 different reference frames, with parameters $\a=0.25$, $\sigma=0.3$ and $\zeta=1$. We can clearly see in (a) that the deterministic speed underestimates the stochastic speed. Replacing $c_0$ with $c_\s$ in (b) captures the movement better, but the position is still fluctuating. Panel (c) shows that 
these fluctuations can be 
captured almost completely in the $\Gamma(t)$-frame. 
}
\label{fig:resN:UDifRef}
\end{figure}

In order to provide some
insight on the effectiveness of our
stochastic phase $\G(t)$,
Fig. \ref{fig:resN:UDifRef}
describes the behaviour of $U(t)$
for a single realization of \sref{eq:resN:nageq}
in three different reference frames.
The first panel shows the wave in the deterministic
co-moving frame, which clearly underestimates the wave speed. Replacing
the speed $c_0$ by $c_\s$ gives a better approximation, but the wave is still wandering. The right panel shows that these fluctuations can be largely eliminated by using $\Gamma(t)$,
confirming that this an appropriate representation for the position of the wave.

At leading order, the fluctuations around $c_\s t$
are described  by the scaled Brownian motion 
\begin{align}
\Gamma_0^{(1)}(t)=\int_0^t\ip{\psit,g(\Phi_0)dW^Q_s}_{L^2(\R)} .
\end{align}
The corresponding variance is given by
\begin{align}\label{eq:resN:varG}
    \mathrm{Var}\big(\sigma \G_0^{(1)}(t)\big)&= \sigma^2 \ip{q*(g(\Phi_0)\psit),g(\Phi_0)\psit}_{L^2(\R)}t,
    \end{align}
which exactly matches \cite[eq. (6.25)]{GarciaSpatiallyExtended}.
Since $E \, \Gamma_0^{(1)}(t) = 0$, the orbital drift corrections to the
limiting wave speed are only visible at second order in $\sigma$.
In particular, the lowest order contribution
given in \sref{eq:res:od} reduces to
\begin{align}\label{eq:resN:od}
\begin{split}
c_{0;2}^\mathrm{od}&=-\frac{1}{2}\int_0^\infty \sum_{k=0}^\infty\ip{f_\mathrm{cub}''(\Phi_0)
\Big(S(s)\mathcal{S}_0(0)[p*e_k] \Big)^2
,\psi_{\mathrm{tw}}}_{L^2(\R)}\, ds.\\
\end{split}
\end{align}
Here the square is taken
in a pointwise fashion, with
\begin{align}
\label{eq:resN:def:s0}
\begin{split}
\mathcal{S}_0(0)[p*e_k]&=g(\Phi_0)p*e_k-\Phi'_0 \ip{p*e_k,g(\Phi_0)\psit}_{L^2(\R)}.
\end{split}
\end{align}
In order to evaluate this expression for
$c_{0;2}^\mathrm{od}$,
we need to choose an 
appropriate orthonormal basis
for $L^2([-L,L];\R)$, 
where $[-L,L]$ is the domain that we use for the numerical simulations. Following \cite{Lord2012, Shardlow}, we take
\begin{align}\label{eq:resN:eigf}
e^{(L)}_{k,c}(x)=\frac{1}{\sqrt{L}}\cos(\frac{\pi kx}{L}),
\qquad
\qquad
e^{(L)}_{k,s}(x)=\frac{1}{\sqrt{L}}\sin(\frac{\pi kx}{L})
\end{align}
for all integers $k \ge 0$

and introduce the quantities
\begin{equation}
    \lambda_{k;\mathrm{apx}} = \mathrm{exp}[- \pi k^2 \zeta^2 / L^2].
\end{equation}
A short computation shows that
\begin{align}
    Qe^{(L)}_{k,c}=q*e^{(L)}_{k,c}=\int_{-L}^Lq(\cdot-y)
    e^{(L)}_{k,c}(y)dy\approx\int_{-\infty}^\infty q(\cdot-y)e^{(L)}_{k,c}(y)dy=
    \lambda_{k;\mathrm{apx}} e^{(L)}_{k,c}
\end{align}
and in the same fashion we find
$Q e^{(L)}_{k,s} \approx \lambda_{k;\mathrm{apx}} e^{(L)}_{k,s}$.
These observations can be used
to approximate the expression
\sref{eq:resN:od} by writing
\begin{align}\label{eq:resN:c:od:2:apx}
\begin{split}
c^{\mathrm{od}}_{0;2}
\approx - \frac{1}{2} \int_0^\infty \sum_{k=0}^{150}\sum_{\#\in\{c,s\}}\lambda_{k;\mathrm{apx}}
\ip{f_\mathrm{cub}''(\Phi_0)
\big(S(s)\mathcal{I}^{(L)}_{k\#} \big)^2
,\psi_{\mathrm{tw}}}_{L^2([-L,L];\R)}\, ds,
    \end{split}
\end{align}
in which we have
\begin{equation}
\label{eq:resN:def:i:l:k}
    \mathcal{I}^{(L)}_{k\#}
     = g(\Phi_0)e^{(L)}_{k\#}-\Phi'_0\ip{e^{(L)}_{k\#},g(\Phi_0)\psit}_{L^2([-L,L];\R)}.
\end{equation}

We verified numerically
that the resulting sum converges
exponentially fast in both $L$ and $k$.

In order to approximate the cubic coefficient
$c^{\mathrm{od}}_{\s;3}$, we use the fact
that
$\G_\s^{(3)}(t)$ depends only on $V_\s^{(1)}(t)$ and $V_\s^{(2)}(t)$.
In particular, we 
made the approximation
\begin{equation}
    \sigma^3 c^{\mathrm{od}}_{0;3}
    \approx c^{\mathrm{od}}_{\mathrm{cub}}(\s)
\end{equation}
by numerically computing
\begin{align}
c^{\mathrm{od}}_{\mathrm{cub}}(\s)
= \frac{2}{T}\int^T_{\frac{T}{2}}\frac{1}{t}E\big[
\G_{\mathrm{apx}}(t)
-c_\s t-\s\G_\s^{(1)}(t)-\s^2\G_\s^{(2)}(t)\big] \, dt,
\end{align}
in which 
\begin{equation}
    \Gamma_{\mathrm{apx}}(t)
    = c_\s t + \int_0^t a_\s\big(
    \s V_\s^{(1)}(s)+\s^2V_\s^{(2)}(s) \big) \,ds
    + \int_0^t b_\s( \s V_\s^{(1)}(s)+\s^2V_\s^{(2)}(s) \big) \,
     d W^Q_s
\end{equation}
denotes the value for $\Gamma(t)$
that is obtained by integrating
\sref{eq:res:Gamma:int}
using only the second order approximation
of $V$.

Putting everything together,
we obtain the prediction
\begin{align}
\label{eq:resN:c:pred}
    c^{\mathrm{pred}}_{\s;\mathrm{lim}}=c_0+\s^2[c_{0;2}+c_{0;2}^\mathrm{od}]+
    c^{\mathrm{od}}_{\mathrm{cub}}(\s)
    +\O(\s^4).
\end{align}
To get a feeling for the sizes of the perturbations in the Stratonovich interpretation, we
remark that our computations
for $\a=0.25$ and $\zeta=1$ yield
\begin{align}
\begin{split}
    c^{\mathrm{pred}}_{\s;\mathrm{lim}}
    &=0.3536+\s^2[0.056-0.0043]+0.0036\s^3+\O(\s^4).
\end{split}
\end{align}
Clearly, the contribution from the orbital drift is significantly smaller then the contribution from $c_\s$.

To test this prediction, we numerically computed
a proxy for the limiting wave speed by evaluating
the integral
%
\begin{align}
\label{eq:resN:od:obs}
 c_{\s;\mathrm{lim}}^{\mathrm{obs}}
 =c_\s+\frac{2}{T} \int_{\frac{T}{2}}^T \frac{1}{t}E[\Gamma(t)-c_\s t-\s\Gamma_\s^{(1)}(t)] \,dt ,
\end{align}
which computes the average speed over the interval $[T/2,T]$ in order to remove any transients from the data.
Note that subtracting $\G_\s^{(1)}(t)$ does not change the average but speeds up the convergence towards the average.
This computation is motivated by the plots
of $E[\Gamma(t)-c_\s t]$ contained in Fig. \ref{fig:Drifta},
which have a clear linear trend.
This validates the concept of a limiting wavespeed,
but also illustrates the need to include the orbital
drift corrections to the instantaneous wavespeed $c_{\sigma}$.

In Fig. \ref{fig:resN:CubicSpeed} we show the relative deviation of $c_{\s;\mathrm{lim}}$ from $c_0$, i.e. $(c_{\s;\mathrm{lim}}-c_0)/c_0$. The blue dots represent the numerically observed values.
The red dashed line 
shows the quadratic approximation  $c_0+\s^2[c_{0;2}+c_{0;2}^\mathrm{od}]$ and there is indeed a good correspondence.

We also provide a cubic approximation to the wave speed by adding the
term $c^{\mathrm{od}}_{\mathrm{cub}}(\s)$.
This indeed improves the prediction,
validating our computations. However, it also shows that the improvement is small and might not be worth the effort. 

For completeness, we also included the predictions \sref{eq:resN:odePhisne}
arising from the small noise expansion
technique.
The results show that these predictions capture
 the overall behaviour of the limiting speed 
  correctly, but the values deviate significantly.

\begin{figure}
    \centering
        \begin{subfigure}{0.5\textwidth}
\includegraphics[width=1\columnwidth]{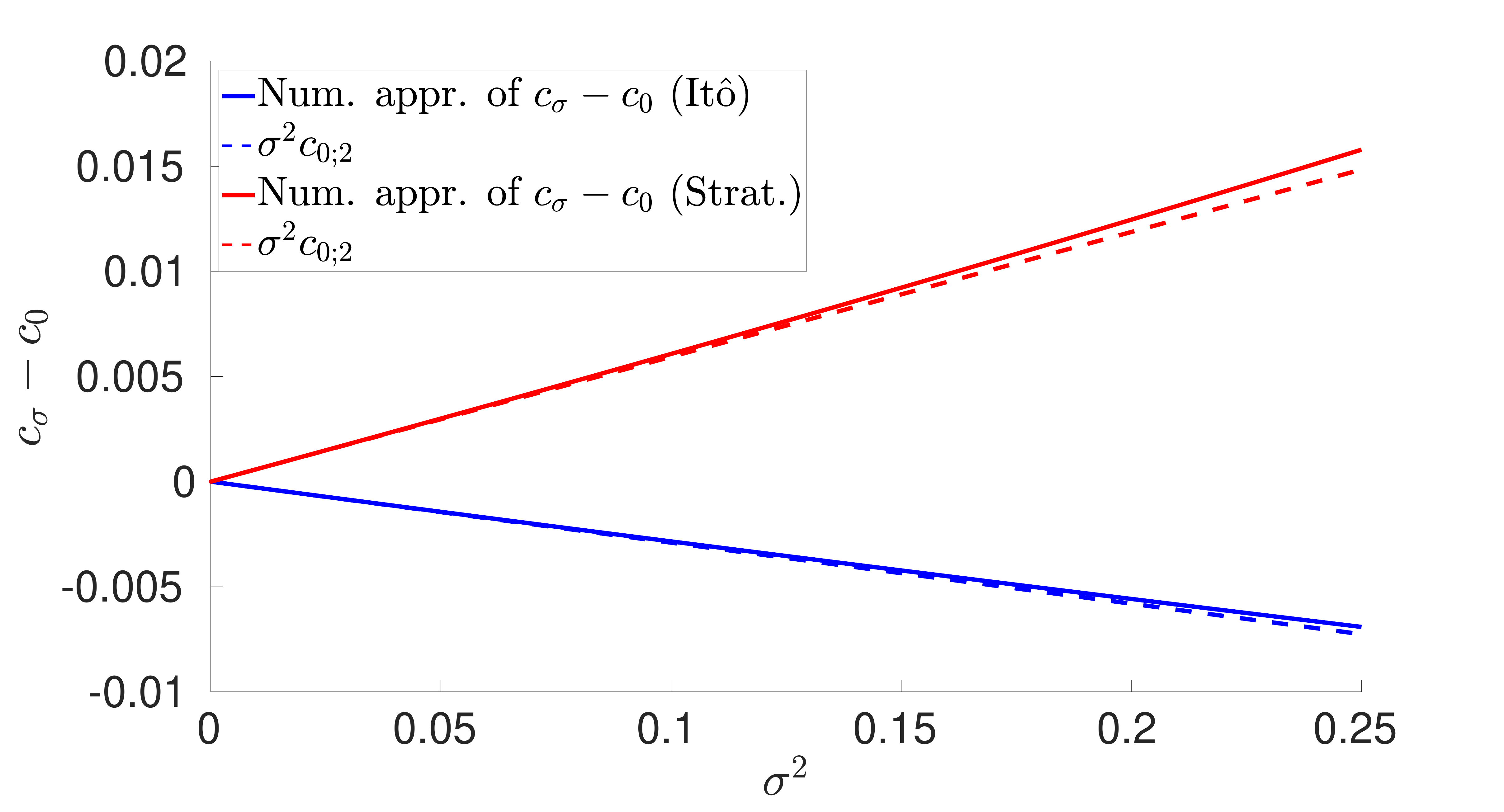}
        \caption{}
            \label{fig:resN:cs}
    \end{subfigure}%
    \begin{subfigure}{0.5\textwidth}
\includegraphics[width=1\columnwidth]{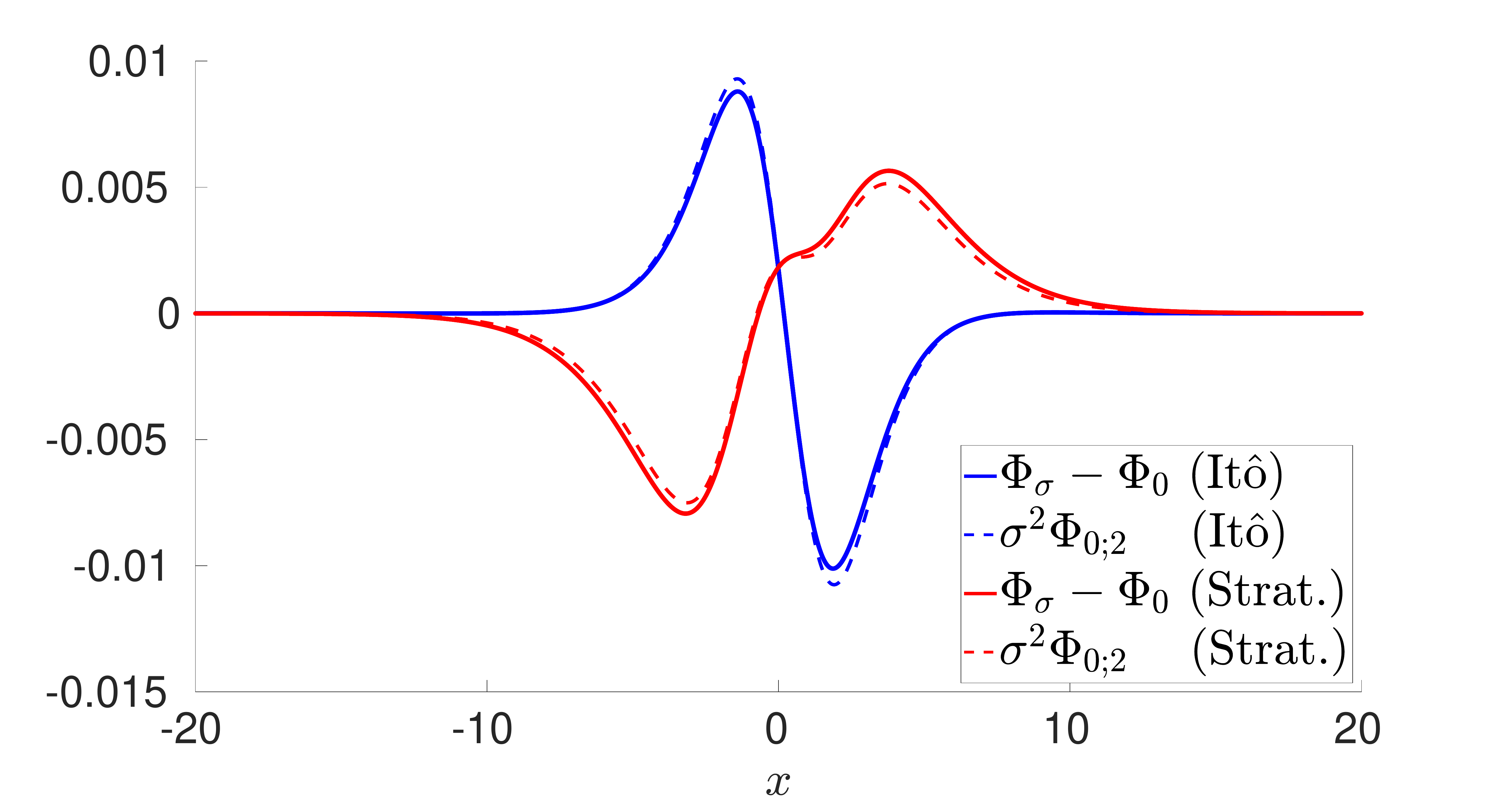}
        \caption{}
            \label{fig:resN:phi}
    \end{subfigure}
    \caption{These panels display the stochastic corrections $c_\s - c_0$ 
    and $\Phi_\s - \Phi_0$ for the wave speed (a) and wave profile (b), together
    with their leading order approximations.
    We chose $\a=0.25$ and $\zeta=1$, which results in $c_{0;2}=-0.0298$ (It\^o)
    and $c_{0;2}=0.0563$ (Stratonovich).
    The profiles in (b) were computed
    for $\sigma = 0.5$.
    }
    \label{fig:resN:csPhis}
\end{figure}

\begin{figure}
\centering
\begin{subfigure}{.5\textwidth}
  \centering
 		\def\svgwidth{\columnwidth}
    		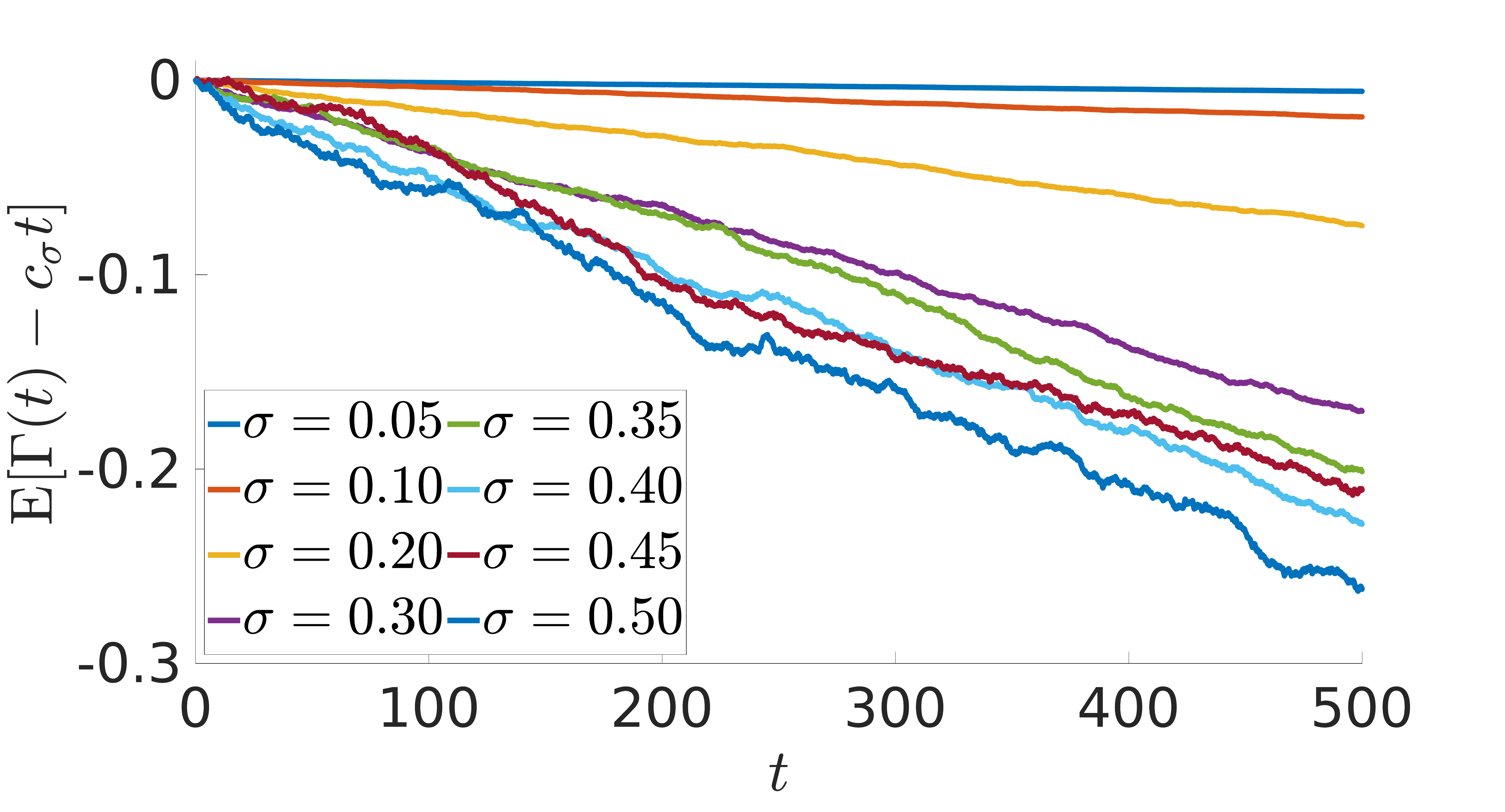
  \caption{}
  \label{fig:Drifta}
\end{subfigure}%
\begin{subfigure}{.5\textwidth}
\includegraphics[width=1\columnwidth]{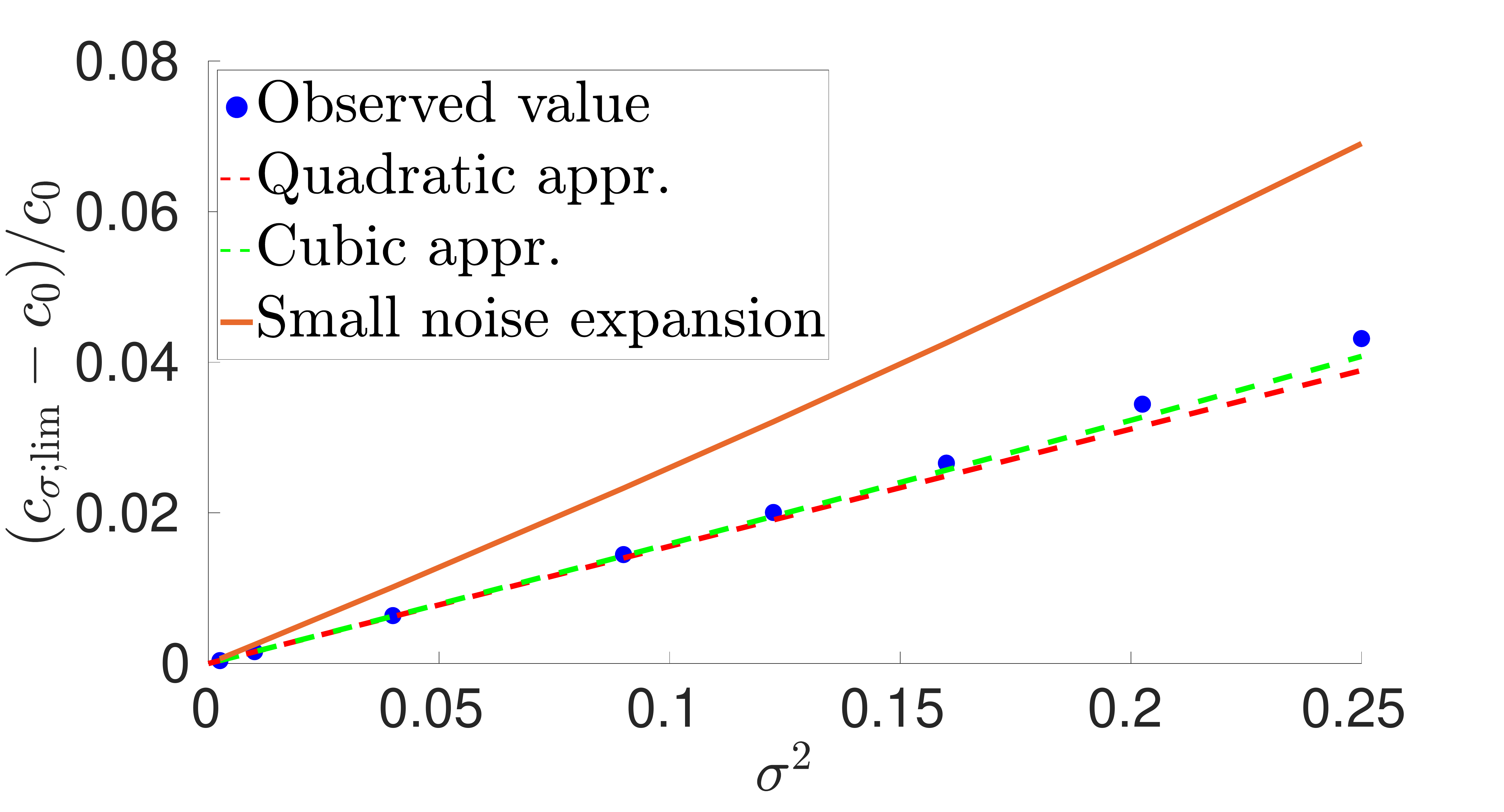}
  \caption{}
    \label{fig:resN:CubicSpeed}
\end{subfigure}
\caption{In (a) we computed the average $E[\Gamma(t)-c_\s t]$
over 1000 simulations of \sref{eq:res:V-final} for the Stratonovich interpretation, using
the procedure described in the main text for several values of $\s$.
Notice that a clear trend is visible, which validates the orbital drift principle. In (b) we show the relative deviation of $c_{\s;\mathrm{lim}}$ from $c_0$.
%
Here the observed limiting speed is computed by evaluating the average \sref{eq:resN:od:obs}
for the data in (a), while the 
quadratic and cubic approximations
were computed using the relevant
terms in \sref{eq:resN:c:pred}.
The orange line is the prediction
arising from
the small noise expansion
\sref{eq:resN:waveSNE}.
Both plots use $\a=0.25$ and $\zeta=1$.
}
\label{fig:Drift}
\end{figure}

\begin{figure}
\centering
\begin{subfigure}{.5\textwidth}
  \centering
\includegraphics[width=1\columnwidth]{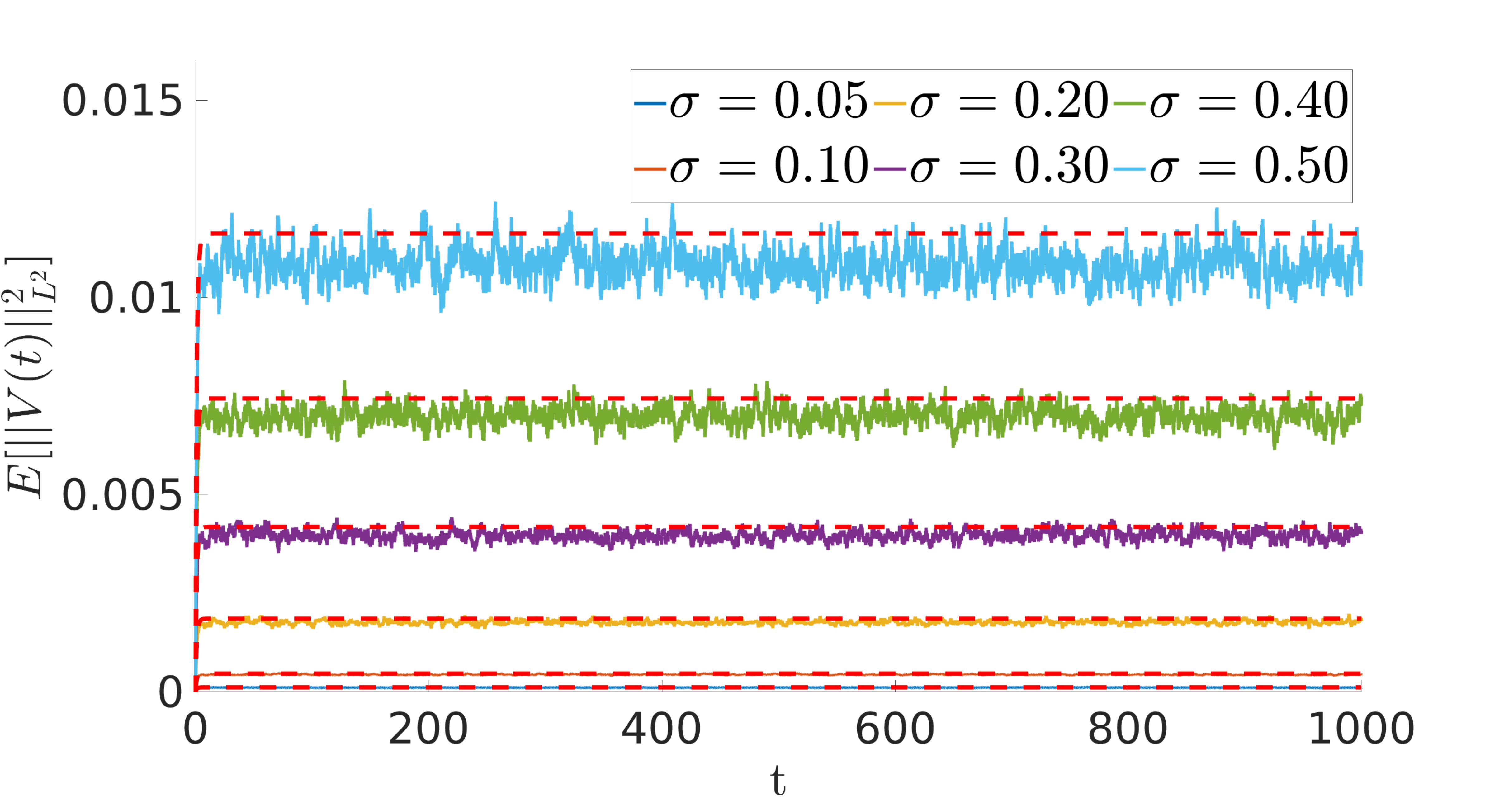}
  \caption{}
  \label{fig:resN:VvsT}
\end{subfigure}%
\begin{subfigure}{.5\textwidth}
  \centering
\includegraphics[width=1\columnwidth]{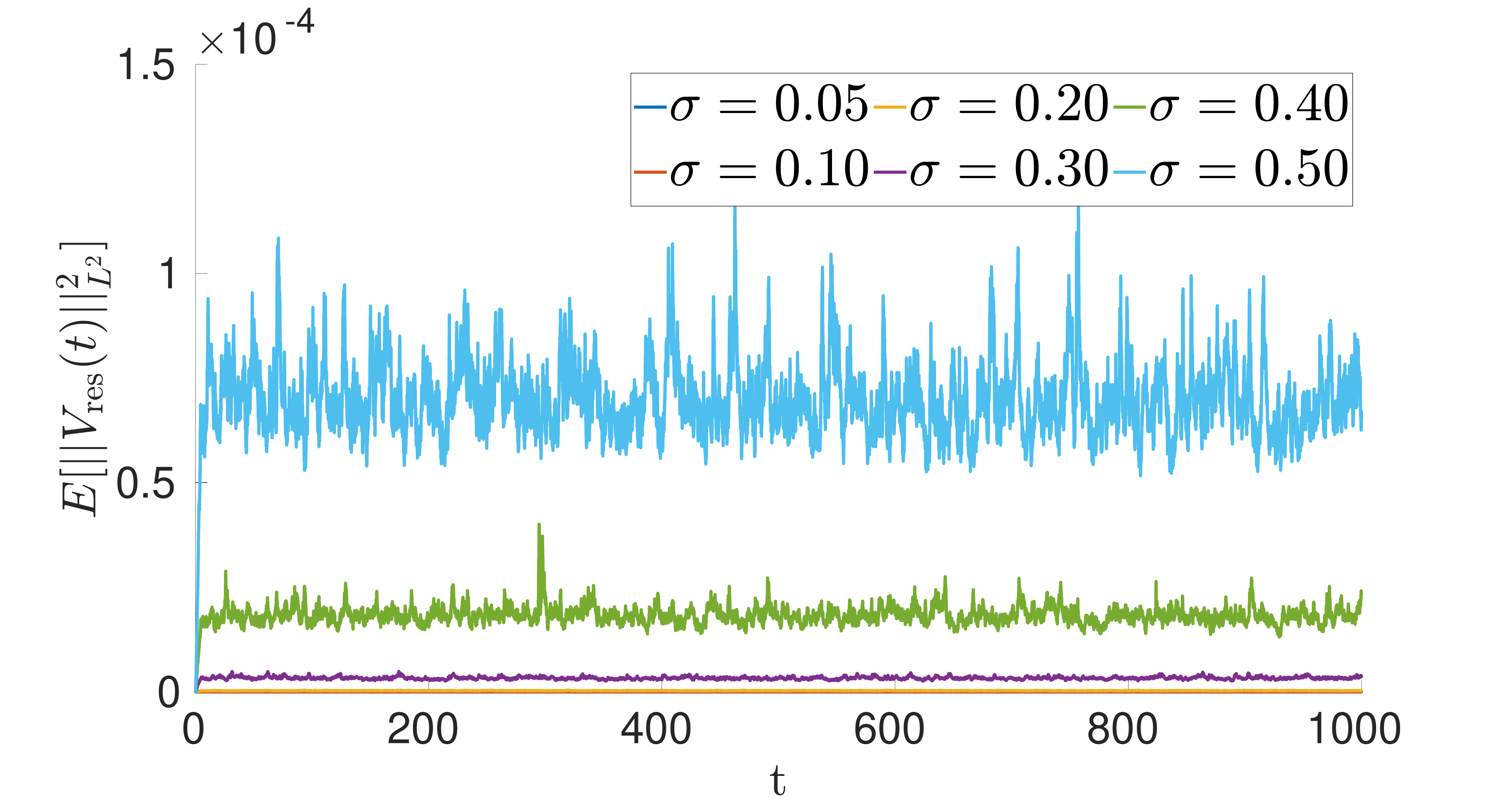}
\caption{}
    \label{fig:resN:RvsT}
\end{subfigure}
\caption{In (a) we computed the average $E[\nrm{V(t)}_{L^2}^2]$
over 1000 realization of  
\sref{eq:res:V-final} in the It\^o interpretation. The dashed line shows the numerical evaluation of the first order term
\sref{eq:resN:nrmV1}.
%
In (b) we computed the corresponding 
averages for the residual \sref{eq:resN:res} by evaluating and subtracting $\s V_\s^{(1)}(t)$ and $\sigma^2 V_\s^{(2)}(t)$ for every realization in (a). Note that both $V(t)$ and $V_\mathrm{res}(t)$ stabilize over time.}
\label{fig:VRvsT}
\end{figure}

\paragraph{Size of  $V(t)$}
Next, we turn our attention to the size
of the perturbation
$V(t)$ defined in \sref{eq:res:V-final}.
Although the leading order term $V_0^{(1)}(t)$ has zero mean,
this does not hold for its norm. Indeed, 
using \sref{eq:res:EnrmV} we find
\begin{align}\label{eq:resN:nrmV1}
\begin{split}
    E[\nrm{V_0^{(1)}(t)}_{L^2(\R)}^{2}]&=\int_0^t\nrm{S(s)\mathcal{S}_0(0)}^2_{HS\big(L^2_Q,L^2(\R)\big)}ds\\
    &=\int_0^t\sum_{k=0}^\infty\nrm{S(s)[g(\Phi_0)p*e_k-\Phi'_0\ip{g(\Phi_0)\psit,p*e_k}_{L^2(\R)}]}^2_{L^2(\R)} \, ds.
    \end{split}
\end{align}
This expectation can be approximated using the same basis functions and eigenvalues 
that we used for the orbital drift. Stated more concretely,
we recall \sref{eq:resN:def:i:l:k} and write
\begin{align}\label{eq:resN:nrmV1-num}
\begin{split}
    E[\nrm{V_0^{(1)}(t)}_{L^2(\R)}^{2}] \approx \int_0^t\sum_{k=0}^{150}\sum_{\#\in\{c,s\}}\lambda_{k;\mathrm{apx}}\nrm{S(s)\mathcal{I}^{(L)}_{k\#}}^2_{L^2([-L,L];\R)} \, ds.
    \end{split}
\end{align}

This function is represented by the red dashed line in Fig. \ref{fig:resN:VvsT}.
This agrees well with the numerical average
of $E[\nrm{V(t)}_{L^2(\R)}^2]$ that we computed
directly from our simulations.
The exponential behaviour for short time scales as well as
the longer term stabilization are nicely
captured by these results.
We note that we expect this limiting value
to be of order $\O(\s^2)$. This is confirmed
by Fig. \ref{fig:resN:VRvsS},
which shows how $E\nrm{V(T)}_{L^2(\R)}^2$
scales with $\s$ for $T = 1000$.

Similar behaviour was found during our simulations
for the residual
\begin{align}\label{eq:resN:res}
   V_\mathrm{res}(t)=V(t)-\s V_\s^{(1)}(t)-\s^2 V_\s^{(2)}(t).
\end{align}
Indeed, Fig. \ref{fig:resN:RvsT} shows that this residual also stabilizes exponentially fast to a small value which we expect to be $\O(\s^6)$,
as confirmed in Fig. \ref{fig:resN:VRvsS}.

We emphasize that we do not expect the running supremum
of $\norm{V(t)}_{L^2}$ to stabilize in the same fashion.
Indeed, we numerically computed $E[\sup_{0\leq s\leq t}\nrm{V(s)}^2_{L^2(\R)}]$ for $0\leq t\leq 1000$.
The results strongly suggest
that this supremum grows logarithmically in time (Fig. \ref{fig:resN:supa}) and scales as
$\s^2$ for large fixed $t$ (Fig. \ref{fig:resN:supb}).
This is hence significantly better
than the $\O(\s^2 t)$ bound that arises
from the Burkholder-Davis-Gundy inequality and confirms
our belief that our approach can be used
to track waves over time scales that are exponential
in $\sigma$.

\paragraph{Limiting wave profile}
Since $E[V_\s^{(1)}(t)] =0$, we expect the leading
order contribution to the average of $V(t)$ to be given by
$\sigma^2 E[V_0^{(2)}(t)]$.
Using \sref{eq:resN:def:s0} once more, 
we find that \sref{eq:res:EV2} can be written as
\begin{align}\label{eq:resN:EV2}
\begin{split}
E[V_0^{(2)}(t)]=\frac{1}{2}\int_0^tS(t-s)\int_0^s\sum_{k=0}^\infty\Big[& f''_\mathrm{cub}(\Phi_0)\left(S(s')\mathcal{S}_0(0)[p*e_k]\right)^2\\
 &-\Phi'_0\ip{f''_\mathrm{cub}(\Phi_0)\left(S(s')\mathcal{S}_0(0)[p*e_k]\right)^2,\psi_{\mathrm{tw}}}_{L^2(\R)}\Big] ds'ds.
    \end{split}
\end{align}
This can be evaluated using the same expressions for the eigenvalues and eigenfunctions that we used for the orbital drift.
In order to compare this to our simulations,
we numerically approximated
 $E[V(t)]$ by taking the average over
500 simulations of $V(t)-\s V_\s^{(1)}(t)$.
Since $E[V_\s^{(1)}(t)]=0$, this again
speeds up the convergence to the mean. The results are contained in
 Fig. \ref{fig:resN:EVEV2}, which
 shows that  $\s^2E[V^{(2)}_0(t)]$ is indeed very good approximation for $E[V(t)]$.
 These plots also 
 show that the average shape indeed appears to converge to a limit, motivating us to write
\begin{equation}
\label{eq:res:N:obs:shape}
    \Phi^{\mathrm{obs}}_{\s;\mathrm{lim}}
    = \Phi_\s + E[V(20)].
\end{equation}

We recall our prediction
\begin{equation}
\label{eq:res:N:pred:shape}
\Phi^{\mathrm{pred}}_{\s;\mathrm{lim}}
 =\Phi_0 + \s^2[\Phi_{0;2}+V_{0;2}^\mathrm{od}]+ O(\s^3)
\end{equation}
for the limiting wave profile. We can now numerically approximate 
the expression \sref{eq:res:odV}
for $V^\mathrm{od}_{0;2}$
by computing
\begin{align}
\begin{split}
V^\mathrm{od}_{0;2}\approx-
\frac{1}{2}\mathcal{L}_{\mathrm{tw}}^{-1}\int_0^{T}\sum_{k=0}^{150}\sum_{\#\in\{c,s\}}\l_{k;\mathrm{apx}}\Big[& f''_\mathrm{cub}(\Phi_0)\big(S(s)
  \mathcal{I}^{(L)}_{k\#}\big)^2\\
 &-\Phi'_0\ip{f''_\mathrm{cub}(\Phi_0)\big(S(s)\mathcal{I}^{(L)}_{k\#} \big)^2,\psi_{\mathrm{tw}}}_{L^2([-L,L];\R)}\Big] \, ds.
    \end{split}
\end{align}
To test our prediction, we compare 
$\Phi^{\mathrm{obs}}_{\s;\mathrm{lim}}-\Phi_0$ against 
$\s^2[\Phi_{0;2}+V_{0;2}^\mathrm{od}]$ 
for multiple values of $\s$.
The results are plotted in Fig. \ref{fig:resN:2ndOrder},
which again confirms that
there is a good match.

\begin{figure}
\centering
\begin{subfigure}{.5\textwidth}
  \centering
 		\def\svgwidth{\columnwidth}
    		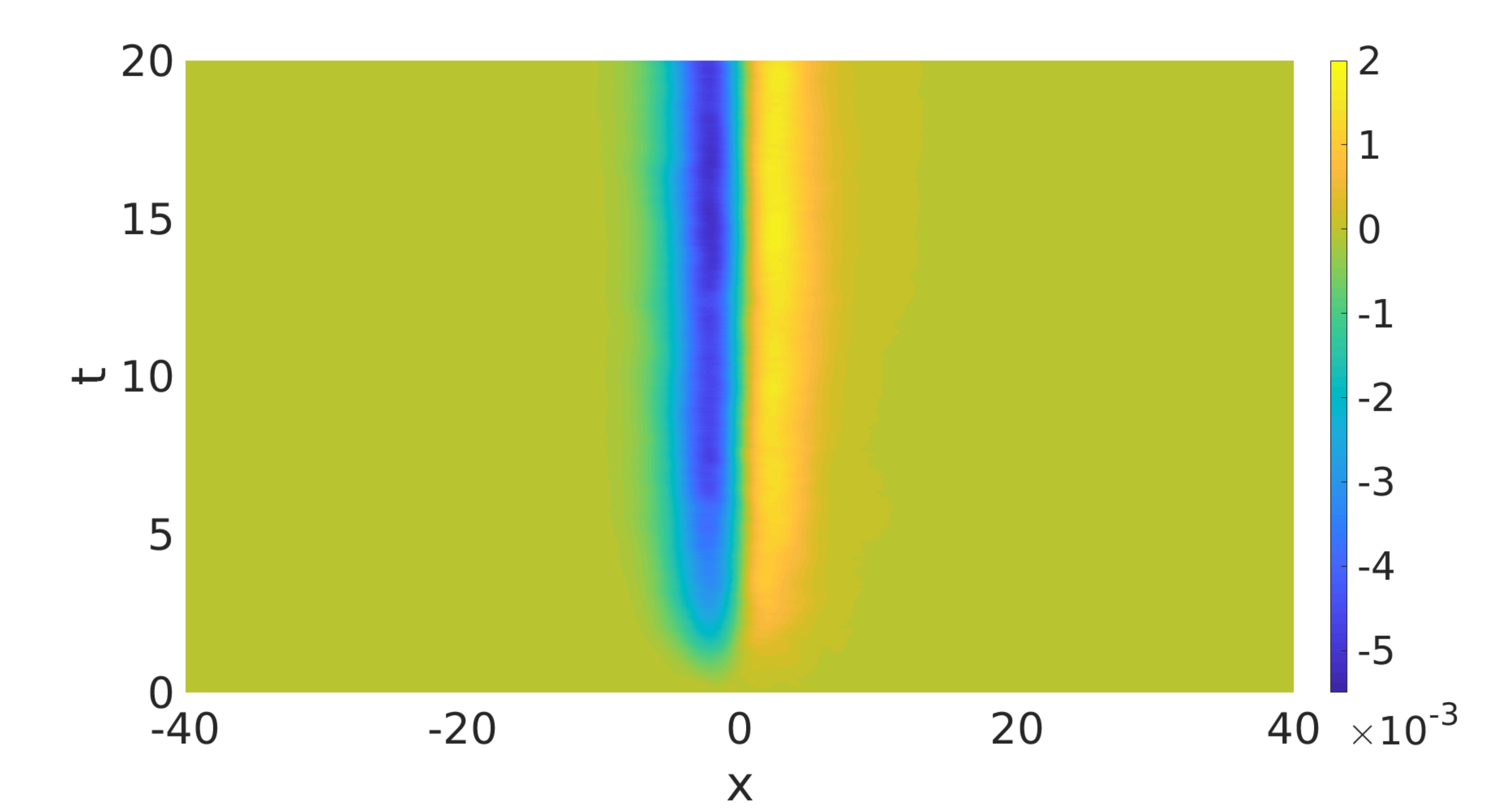
  \caption{$E[V(t)]$}
  \label{fig:resN:EV}
\end{subfigure}%
\begin{subfigure}{.5\textwidth}
  \centering
 		\def\svgwidth{\columnwidth}
    		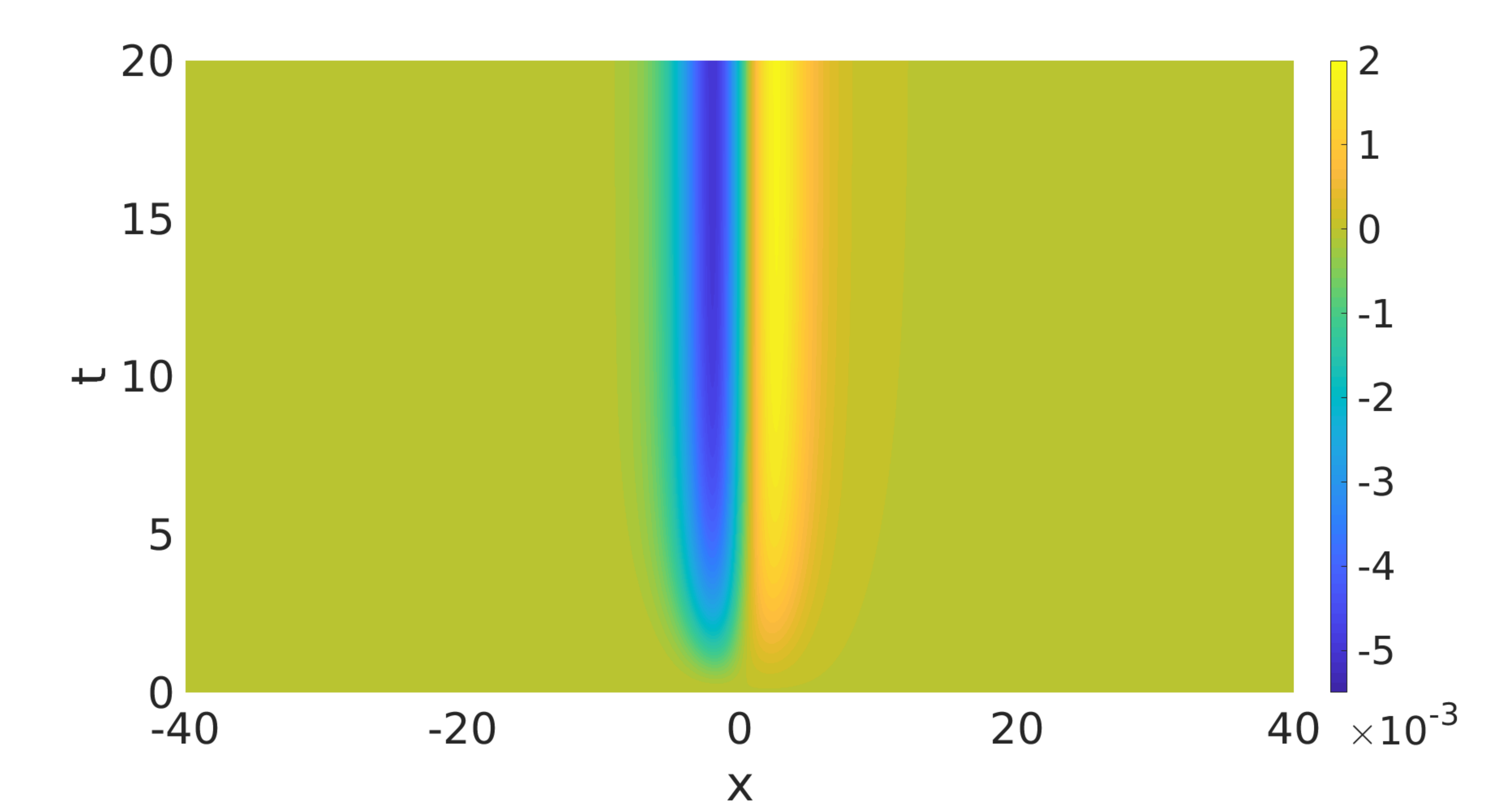
  \caption{$\s^2E[V_0^{(2)}(t)]$}
    \label{fig:resN:EV2}
\end{subfigure}
\caption{Panel (a) displays the average of $V(t)$ over 500 iterations
of \sref{eq:resN:nageq} in the Stratonovich interpretation for $\a=0.25$, $\zeta=1$ and $\s=0.5$. Panel (b) contains a numerical evaluation of \sref{eq:resN:EV2} 
that includes the
first 150 terms of the sum.
}
\label{fig:resN:EVEV2}
\end{figure}

\begin{figure}
\centering
\begin{subfigure}{.49\textwidth}
\centering
\includegraphics[width=1\columnwidth]{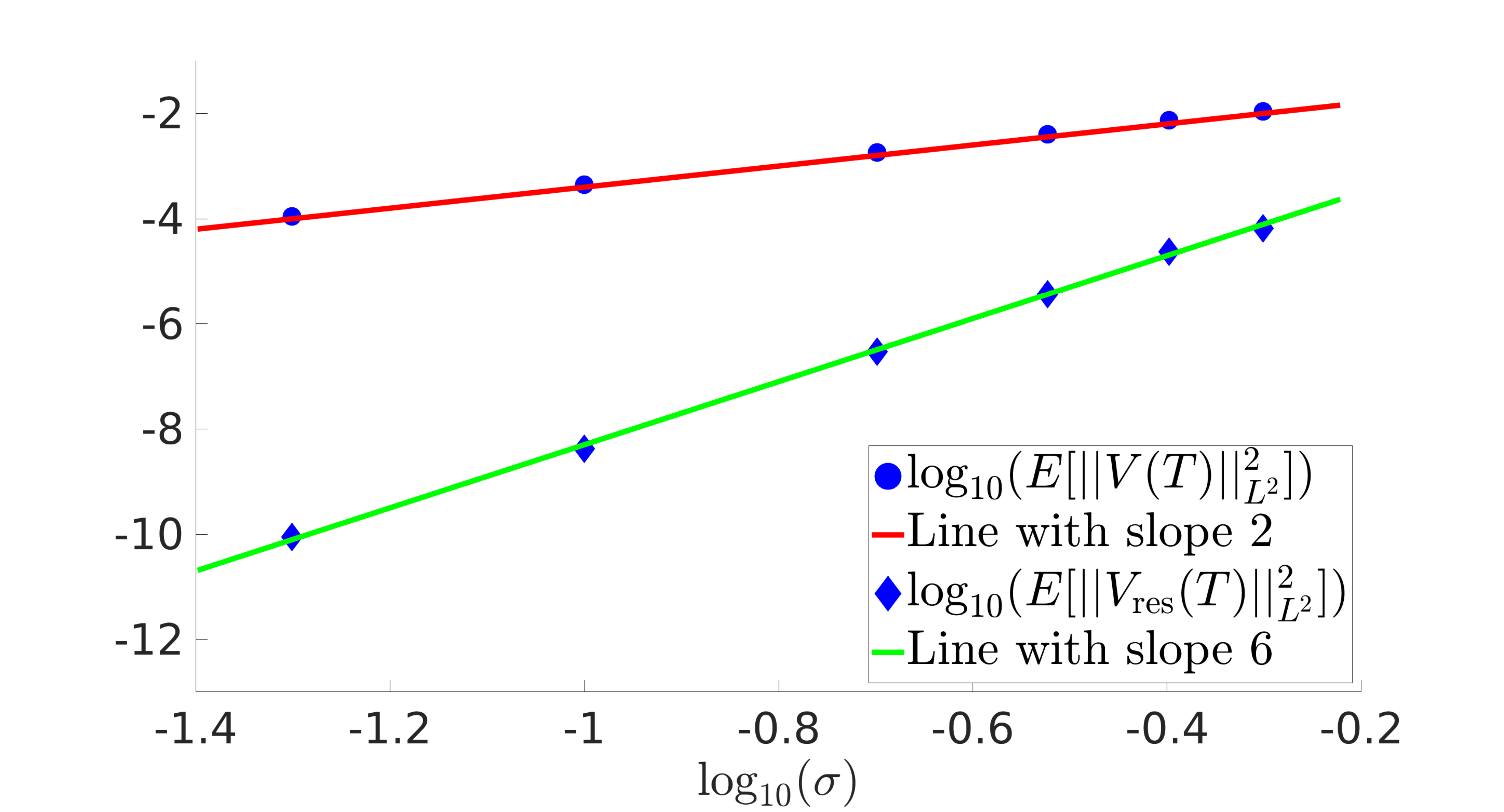}
    		\caption{}
    		\label{fig:resN:VRvsS}
\end{subfigure}
\begin{subfigure}{.49\textwidth}
\centering
 		\def\svgwidth{\columnwidth}
    		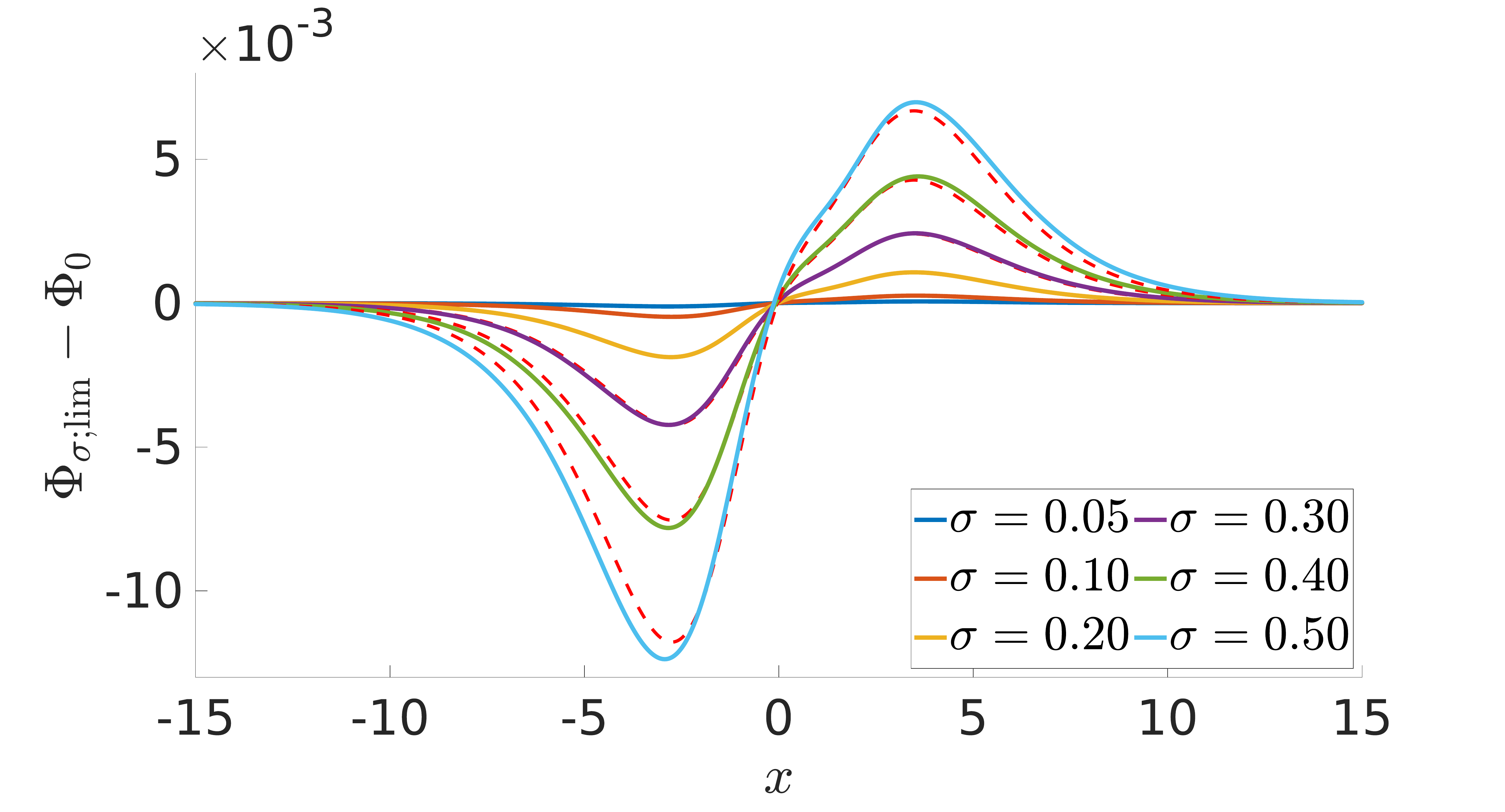
    		\caption{}
    		\label{fig:resN:2ndOrder}
\end{subfigure}
\caption{The datapoints in  (a) are computed from Fig. \ref{fig:VRvsT} by evaluating the expectations at  $T=1000$
and plotting them as function of $\s$. We observe that $E[\nrm{V(T)}_{L^2(\R)}^2]$ and $E[\nrm{V_\mathrm{res}(T)}_{L^2(\R)}^2]$ scale as $\O(\s^2)$ and $\O(\s^6)$ respectively, as predicted. Panel (b)
compares the observed (solid)
and predicted (dashed)
limiting deviations from $\Phi_0$
for multiple values of $\sigma$
in the Stratonovich interpretation,
see \sref{eq:res:N:obs:shape} and \sref{eq:res:N:pred:shape}.
}
\end{figure}

\begin{figure}
\begin{subfigure}{.49\textwidth}
    \centering
    \includegraphics[width=1\columnwidth]{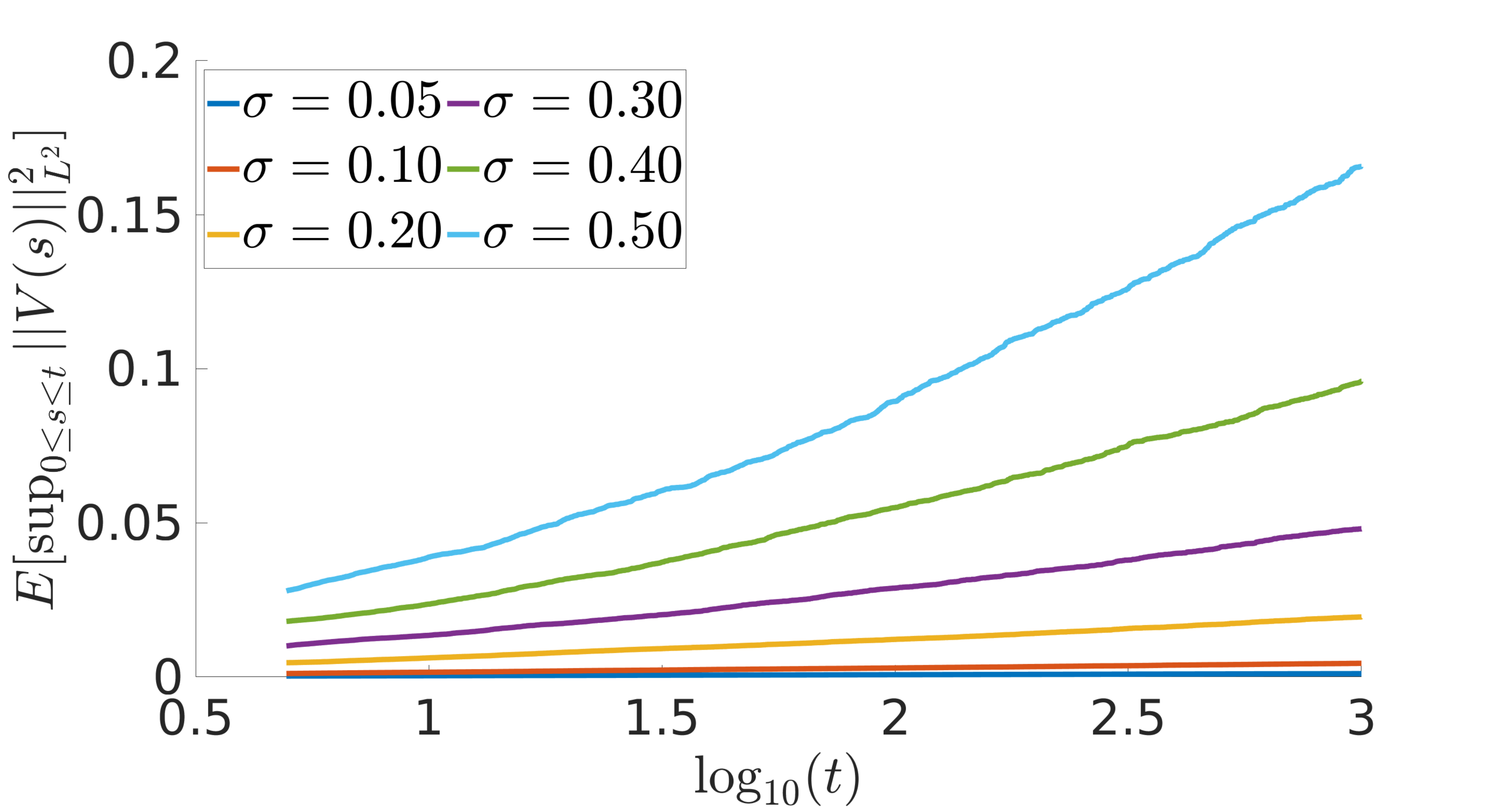}
\caption{}
    \label{fig:resN:supa}
\end{subfigure}
\begin{subfigure}{.49\textwidth}
      \centering
    \includegraphics[width=1\columnwidth]{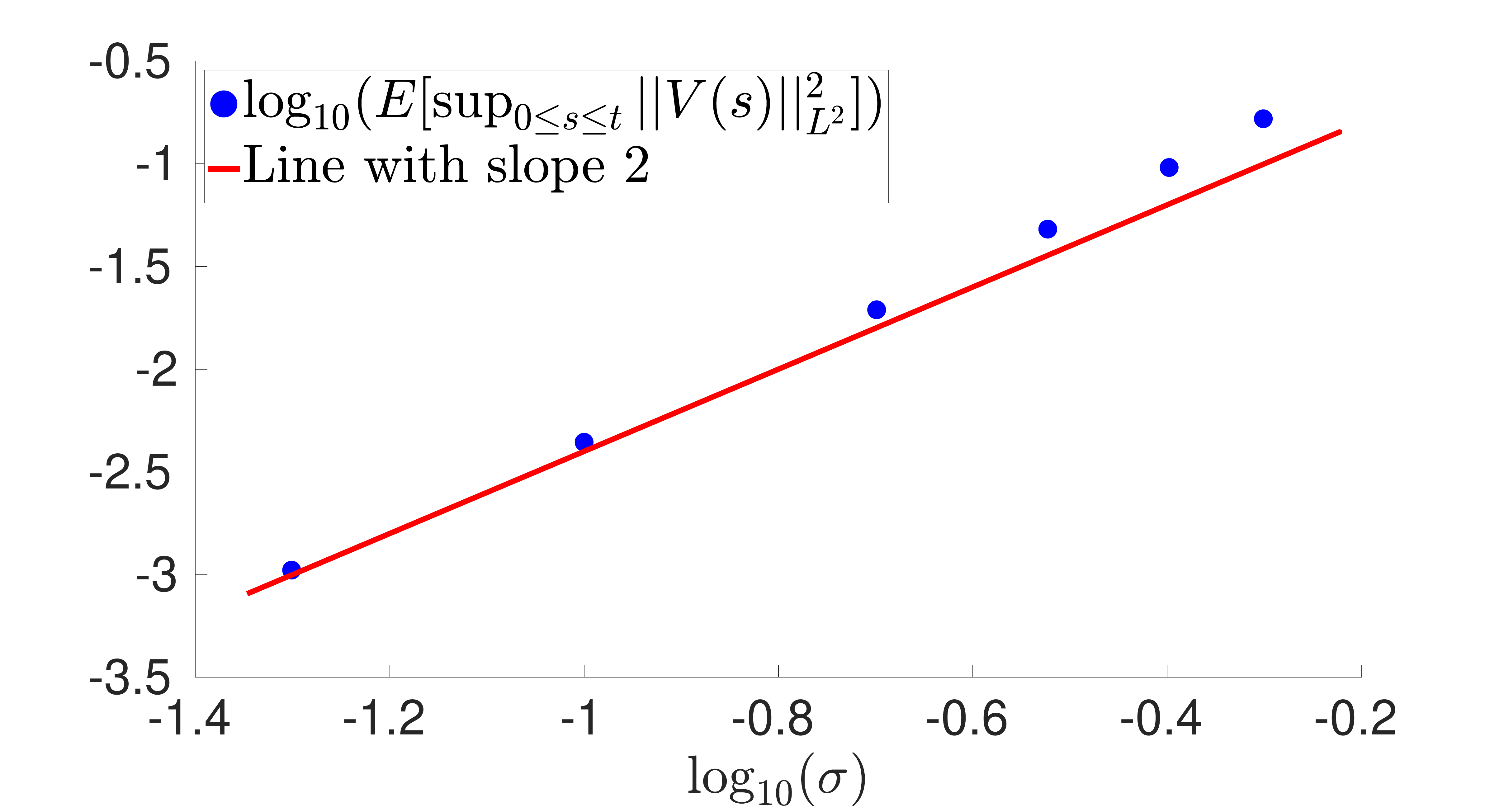}
    \caption{}
        \label{fig:resN:supb}
\end{subfigure}
    \caption{Panel (a) shows the numerical evaluation of $E[\sup_{0\leq s\leq t}\nrm{V(s)}^2_{L^2(\R)}]$ for different values of $\s$,
    where the average is computed over 500 iterations. 
    The trend lines indicate
    that this supremum admits logarithmic growth. Panel (b) plots the supremum at $t=1000$ against $\s$, illustrating the $\O(\s^2)$ behaviour. We used the It\^o interpretation with $\a=0.25$ and $\zeta=1$. }
        \label{fig:resN:sup}
\end{figure}


\section{Example II: The FitzHugh-Nagumo system}
\label{sec:resFHN}
In this section we repeat
the experiments from {\S}\ref{sec:resN}
for
the two-component
FitzHugh-Nagumo system
\begin{align}\label{eq:resFHN:fhn}
\begin{split}
dU &= \big[ U_{xx} +f_\mathrm{cub}(U) -W + \mu \s^2h^{(u)}(U,W) ] dt
+ \sigma  g^{(u)}( U,W)  d W^{Q_1}_t,
\\[0.2cm]
dW &= \big[ \varrho V_{xx} + \e(U-\gamma W)+\mu\s^2h^{(w)}(U,W)] dt+ \s g^{(w)}(U,W)dW^{Q_2}_t,
\end{split}
\end{align}
where $f_\mathrm{cub}$ is the same cubic polynomial as in
{\S}\ref{sec:resN} and $\varrho,\e,\g,\s>0$. We assume that the two processes $W^{Q_1}_t$ and $W^{Q_2}_t$ are independent, allowing us to write
\begin{align}\label{eq:resFHN:setup}
    g(U,W)=\begin{pmatrix}
    g^{(u)}(U,W)&0\\ 0&g^{(w)}(U,W)
    \end{pmatrix},
    \hspace{1cm}
    Q\begin{pmatrix}v_1\\v_2\end{pmatrix}=\begin{pmatrix}
    Q_1 v_1  &0\\0&Q_2v_2
    \end{pmatrix}
    =
    \begin{pmatrix}
    q_1 * v_1  &0\\0&q _2 *v_2
    \end{pmatrix}
\end{align}
for two convolution kernels $q_1$ and $q_2$.
In particular, we have $n=m=2$
and we assume that the combination
$\mathrm{diag}(q_1, q_2)$ satisfies (Hq).

Upon combining the general computations
in \cite[p. 123]{evans2012introduction}
with the abstract infinite-dimensional
framework developed in  \cite[\S 4.1]{Twardowska1996},
one can show that the It\^o-Stratonovich correction
term is given by 
\begin{align}
\big(h^{(u)}(U,W), h^{(w)}(U,W) \big)^T
= 
  \frac{1}{2}
  \Big(
  q_1(0)D_1g^{(u)}(U,W)g^{(u)}(U,W),
  q_2(0)D_2g^{(w)}(U,W)g^{(w)}(U,W)
  \Big)^T.
\end{align}
As usual, we can switch between the It\^o $(\mu =0)$
and Stratonovich ($\mu=1$) interpretations for the noise term.

All the expressions that we derive in this section
are valid for the general situation described in  \sref{eq:resFHN:setup}. However,
in order to generate our plots we used the specific choices
\begin{equation}\label{eq:resFHN:setupNum}
 g^{(u)}(U, W) = U,
 \qquad
 g^{(w)}(U,W) = 0,
    \qquad
    q_1(x) = q_2(x) =\frac{1}{2}e^{\frac{-\pi x^2}{4}},
\end{equation}
together with the parameter values
$\a=0.1$, $\varrho=0.01$, $\e=0.01$ and $\gamma=5$.
Although we were unable to find prior work
to which our results can be compared,
we do point out that computations for the somewhat related Barkley model
are discussed in \cite{Garcia2001}.

\begin{figure}
\begin{subfigure}{0.5\textwidth}
    \centering
\includegraphics[width=1\columnwidth]{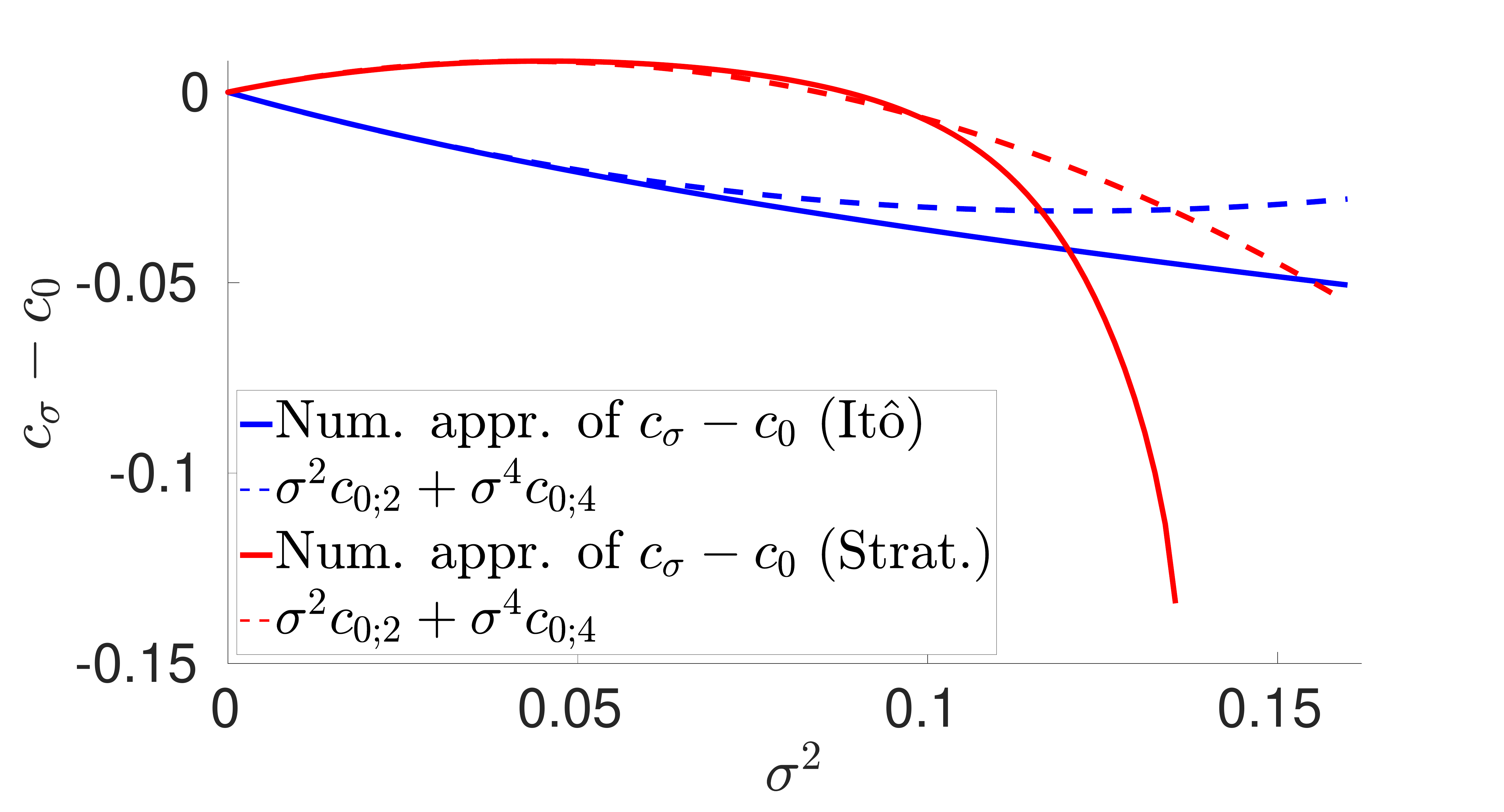}
        \caption{}
            \label{fig:resFHN:cs}
    \end{subfigure}%
    \begin{subfigure}{0.5\textwidth}
\includegraphics[width=1\columnwidth]{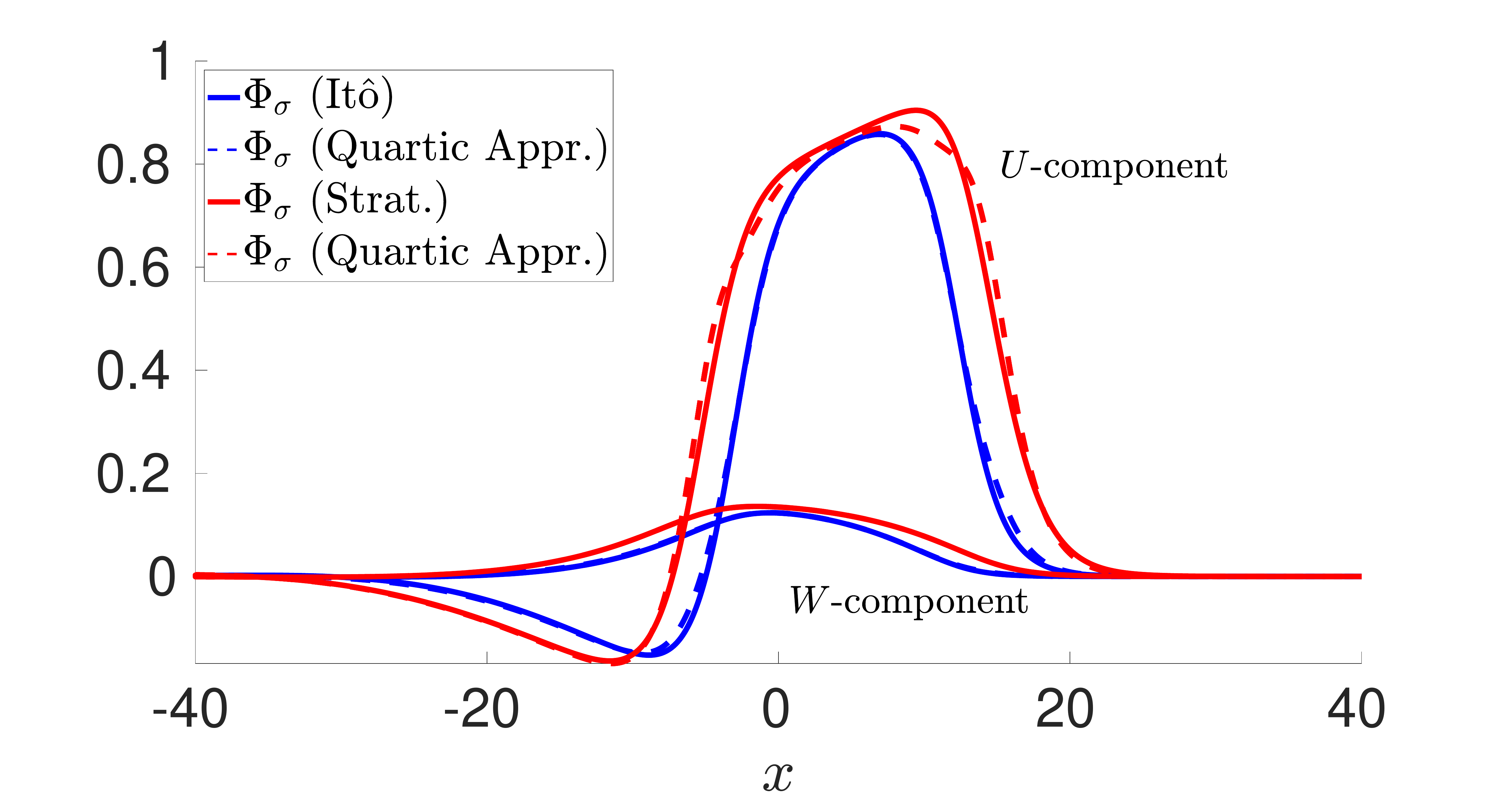}
        \caption{}
            \label{fig:resFHN:phi}
    \end{subfigure}
    \caption{
    These panels display the stochastic corrections $c_\s - c_0$ 
    for the wave speed (a) and the
    stochastic wave profiles $\Phi_\s$
    for $\sigma = 0.3$ (b), 
    together
    with their quartic approximations.
    }
    \label{fig:resFHN:Phiscs}
\end{figure}

\paragraph{Computing $(\Phi_\s,c_\s)$.}
Assume for the moment that
the deterministic travelling wave ODE
\begin{align}
\label{eq:resFHN:TWodeDet}
\begin{split}
   \p_{\xi\xi}\Phi^{(u)}_0 +c_0 \p_{\xi}\Phi^{(u)}_0+f_\mathrm{cub}(\Phi_0^{(u)})  -\Phi^{(w)}_0=&\,0,\\
  \varrho\p_{\xi\xi}\Phi^{(w)}_0
      +  c_0 \p_{\xi}\Phi^{(w)}_0+ \e(\Phi^{(u)}_0-\gamma \Phi^{(w)}_0)=&\,0
\end{split}
\end{align}
has a spectrally stable 
wave solution $\Phi_0=(\Phi^{(u)}_0,\Phi^{(w)}_0)$.
We then recall the associated linear operator
$\mathcal{L}_{\mathrm{tw}}: H^2(\R, \R^2) \to L^2(\R, \R^2)$
that acts as
\begin{equation}
\begin{array}{lcl}
\mathcal{L}_{\mathrm{tw}}
 & = & \left(
     \begin{array}{cc}
         \partial_{\xi \xi} + c_0 \partial_\xi + f'_{\mathrm{cub}}(\Phi_0^{(u)}) & - 1
         \\[0.2cm]
         \e  & \varrho \partial_{\xi \xi} + c_0 \partial_\xi   - \e \gamma
         \\[0.2cm]
     \end{array}
 \right) ,
\end{array}
\end{equation}
together with the formal adjoint operator 
that is given by
\begin{equation}
\begin{array}{lcl}
\mathcal{L}^*_{\mathrm{tw}}
 & = & \left(
     \begin{array}{cc}
         \partial_{\xi \xi} - c_0 \partial_\xi + f'_{\mathrm{cub}}(\Phi_0^{(u)}) & \e
         \\[0.2cm]
         -1  & \varrho \partial_{\xi \xi} - c_0 \partial_\xi   - \e \gamma
         \\[0.2cm]
     \end{array}
 \right) .
\end{array}
\end{equation}
The spectral stability
implies that $\mathcal{L}^*_{\mathrm{tw}}$
admits an eigenfunction
$\psi_{\mathrm{tw}}=(\psi^{(u)}_{\mathrm{tw}},\psi^{(w)}_{\mathrm{tw}})$
that can be normalized in such a way that
\begin{equation}
\label{eq:resFHN:norm:adj}
\langle \partial_\xi \Phi_0 , \psi_{\mathrm{tw}} \rangle_{L^2(\R,\R^2)} = 1.
\end{equation}
To summarize, we have
\begin{equation}
\mathcal{L}_{\mathrm{tw}} \partial_\xi ( \Phi^{(u)}_0, \Phi^{(w)}_0 )^T = 0,
\qquad \qquad
\mathcal{L}^*_{\mathrm{tw}}  ( \psi^{(u)}_{\mathrm{tw}}, \psi^{(w)}_{\mathrm{tw}})^T = 0.
\end{equation}

The existence of such spectrally stable
waves has been obtained in various
parameter regions \cite{alexander1990topological,cornwell2017opening,cornwell2017existence,jones1991construction},
but no explicit expressions
are available for $(\Phi_0,c_0)$.
However, they can readily be computed
numerically.

Upon writing $\Phi_\s=(\Phi^{(u)}_\s,\Phi^{(w)}_\s)$,
the stochastic wave equation $F_\s(\Phi_\s,c_\s)=0$ becomes
\begin{align}
\label{eq:resFHN:TWode}
\begin{split}
   \p_{\xi\xi}\Phi^{(u)}_\s +c_\s \p_{\xi}\Phi^{(u)}_\s+f_\mathrm{cub}(\Phi_\s^{(u)})  -\Phi^{(w)}_\s=&-\frac{\s^2}{2}\tilde{b}(\Phi_\s)\p_{\xi\xi}\Phi^{(u)}_\s-\mu\s^2 h^{(u)}(\Phi_\s)\\
   &\qquad +\s^2\frac{\p_\xi[g^{(u)}(\Phi_\s)q_1*(g^{(u)}(\Phi_\s)\psi^{(u)}_{\mathrm{tw}})]}
     {\ip{\p_\xi\Phi_\s,\psi_{\mathrm{tw}}}_{L^2(\R,\R^2)}} ,
     \\
  \varrho\p_{\xi\xi}\Phi^{(w)}_\s
      +  c_\s \p_{\xi}\Phi^{(w)}_\s+ \e(\Phi^{(u)}_\s-\gamma \Phi^{(w)}_\s)   =&-\frac{\s^2}{2}\tilde{b}(\Phi_\s)\p_{\xi\xi}\Phi^{(w)}_\s-\mu\s^2h^{(w)}(\Phi_\s)\\
   &\qquad +\s^2\frac{\p_\xi[g^{(w)}(\Phi_\s)q_2*(g^{(w)}(\Phi_\s)\psi^{(w)}_{\mathrm{tw}})]}
     {\ip{\p_\xi\Phi_\s,\psi_{\mathrm{tw}}}_{L^2(\R,\R^2)}} ,
\end{split}
      \end{align}
where $\tilde{b}$ is given by
\begin{align}
    \tilde{b}(\Phi)=-\frac{\ip{ q_1*(g^{(u)} (\Phi )\psi^{(u)}_{\mathrm{tw}}),g^{(u)} (\Phi )\psi^{(u)}_{\mathrm{tw}}}_{L^2(\Real)}+\ip{ q_2*( g^{(w)} (\Phi )\psi^{(w)}_{\mathrm{tw}}),g^{(w)} (\Phi )\psi^{(w)}_{\mathrm{tw}}}_{L^2(\Real)}}
    {\ip{\p_\xi\Phi_\s,\psi_{\mathrm{tw}}}_{L^2(\R,\R^2)^2}}.
\end{align}

Using \sref{eq:resFHN:TWode}
to evaluate \sref{eq:res:c02},
we find that the lowest order correction to the speed $c_\s$ reduces to
\begin{align}
\label{eq:resFHN:melnikov}
\begin{split}
c_{0;2} =& - \frac{1}{2} \tilde{b}(\Phi_0) \langle \p_{\xi \xi} \Phi_0 , \psi_{\mathrm{tw}} \rangle_{L^2(\Real , \Real^2) }
 -\ip{g^{(u)}(\Phi_0)q_1*(g^{(u)}(\Phi_0)\psi^{(u)}_{\mathrm{tw}}), \p_\xi\psi^{(u)}_{\mathrm{tw}}}_{L^2(\Real)}\\
  &\qquad -\ip{g^{(w)}(\Phi_0)q_2*(g^{(w)}(\Phi_0)\psi^{(w)}_{\mathrm{tw}}), \p_\xi\psi^{(w)}_{\mathrm{tw}}}_{L^2(\Real)}
  -\mu \ip{h(\Phi_0),\psit}_{L^2(\R,\R^2)}.
  \end{split}
\end{align}
In Fig. \ref{fig:resFHN:cs} we numerically computed $c_\s$ for the two interpretations. 
It turns out that the second order approximation above is only accurate
for a range of $\s$ that is much smaller 
than we saw for the Nagumo equation.
By also including the quartic term $c_{0;4}$ 
in our expansion we are able
to track $\Phi_\s$ reasonably well
up to $\s=0.3$. This is more
than sufficient for practical purposes,
as our simulations
of the full system \sref{eq:resFHN:fhn}
revealed that the pulse 
is unstable for  values of $\s$ larger than approximately $\s=0.15$.

Fig. \ref{fig:resFHN:phi} displays the shape of the instantaneous stochastic wave profile $\Phi_\s$ for the two different interpretations. It is striking that the wave becomes significantly wider for Stratonovich noise.

\begin{figure}
\centering
\begin{subfigure}{.33\textwidth}
  \centering
  \includegraphics[width=1\linewidth]{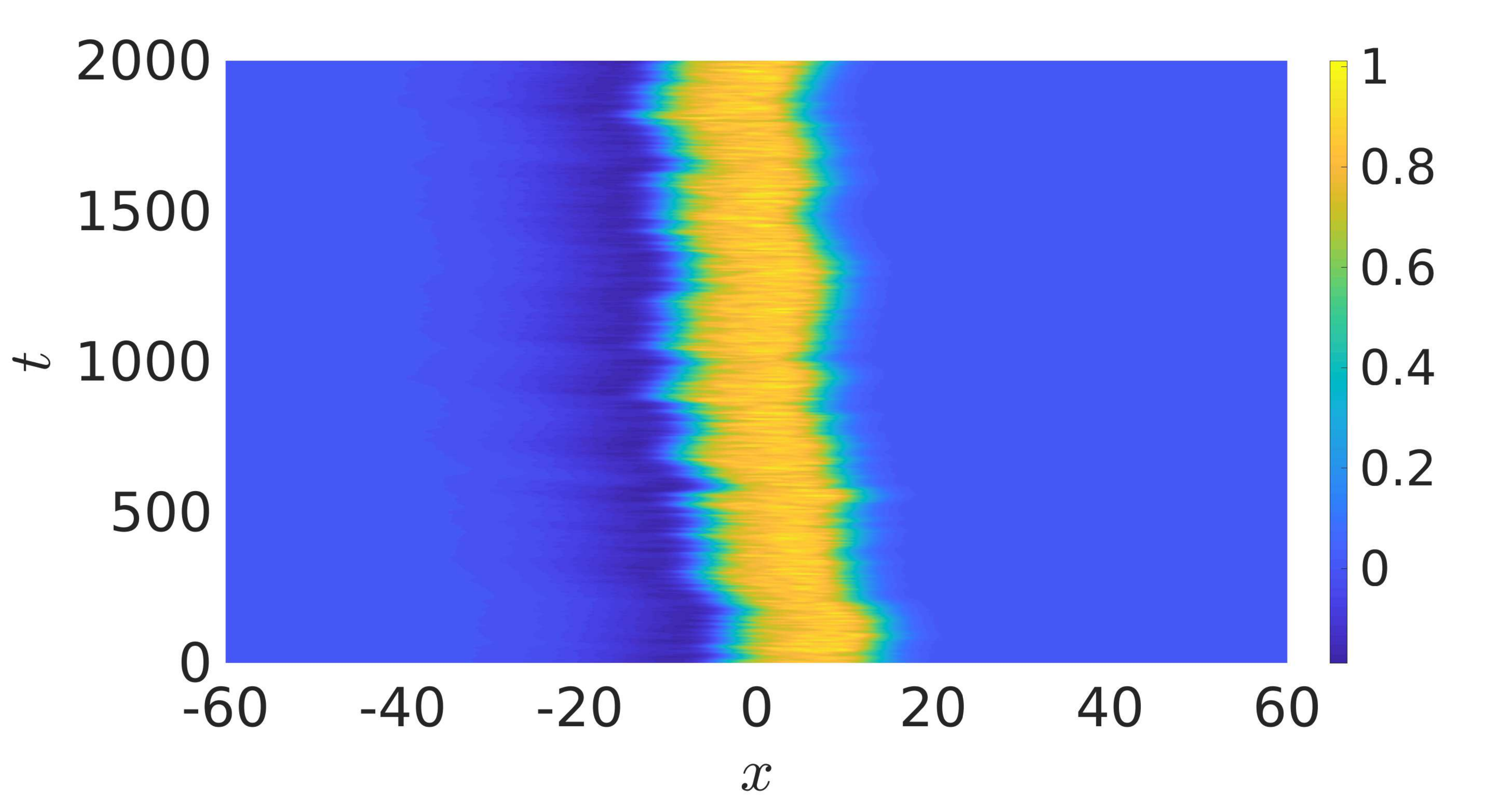}
  \caption{$U(x +c_0t,t)$}
    \label{fig:resFHN:UDifRefFHNa}
\end{subfigure}%
\begin{subfigure}{.33\textwidth}
  \centering
  \includegraphics[width=1\linewidth]{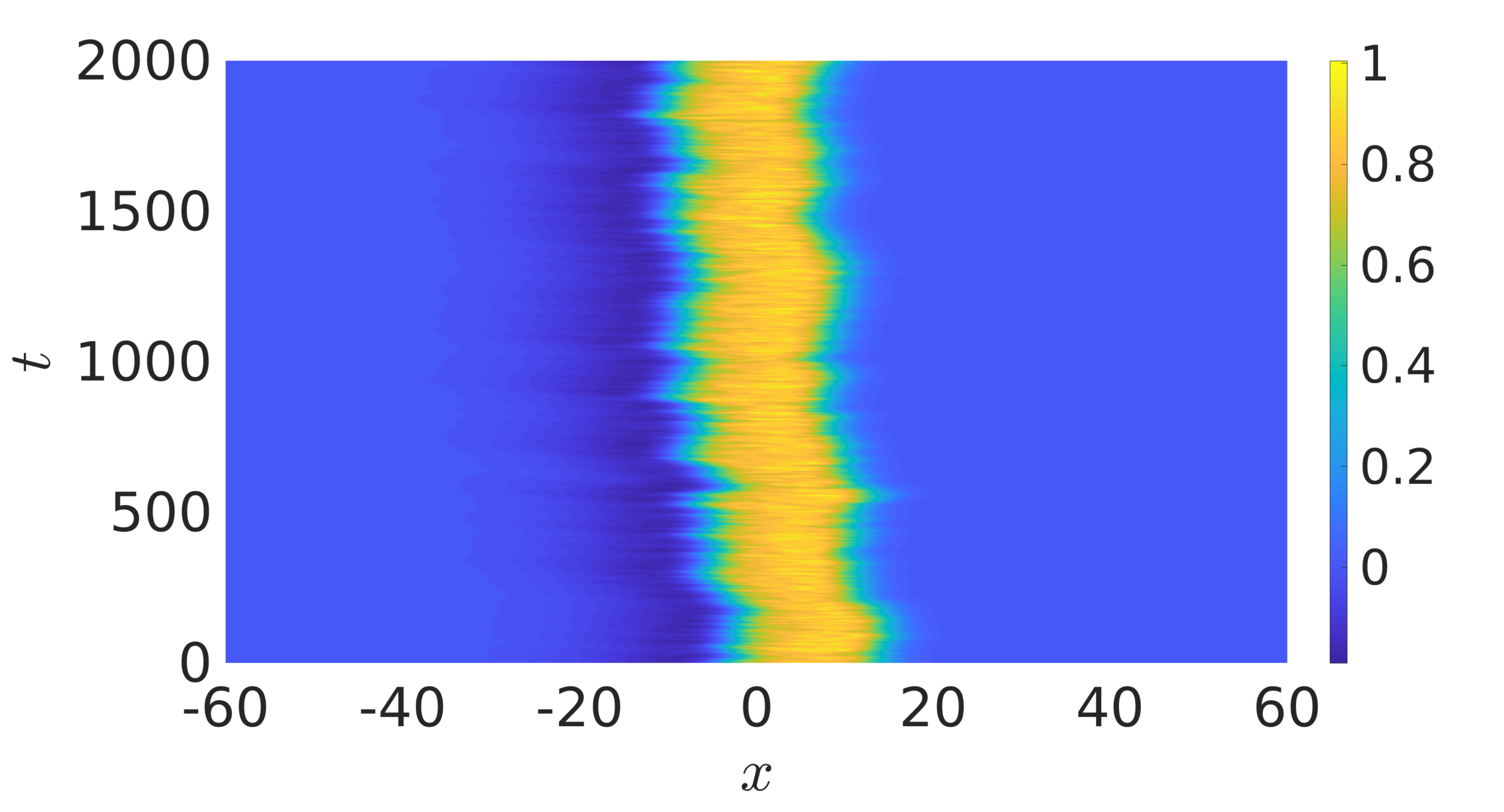}
  \caption{$U(x +c_\sigma t,t)$}
      \label{fig:resFHN:UDifRefFHNb}
\end{subfigure}%
\begin{subfigure}{.33\textwidth}
  \centering
  \includegraphics[width=1\linewidth]{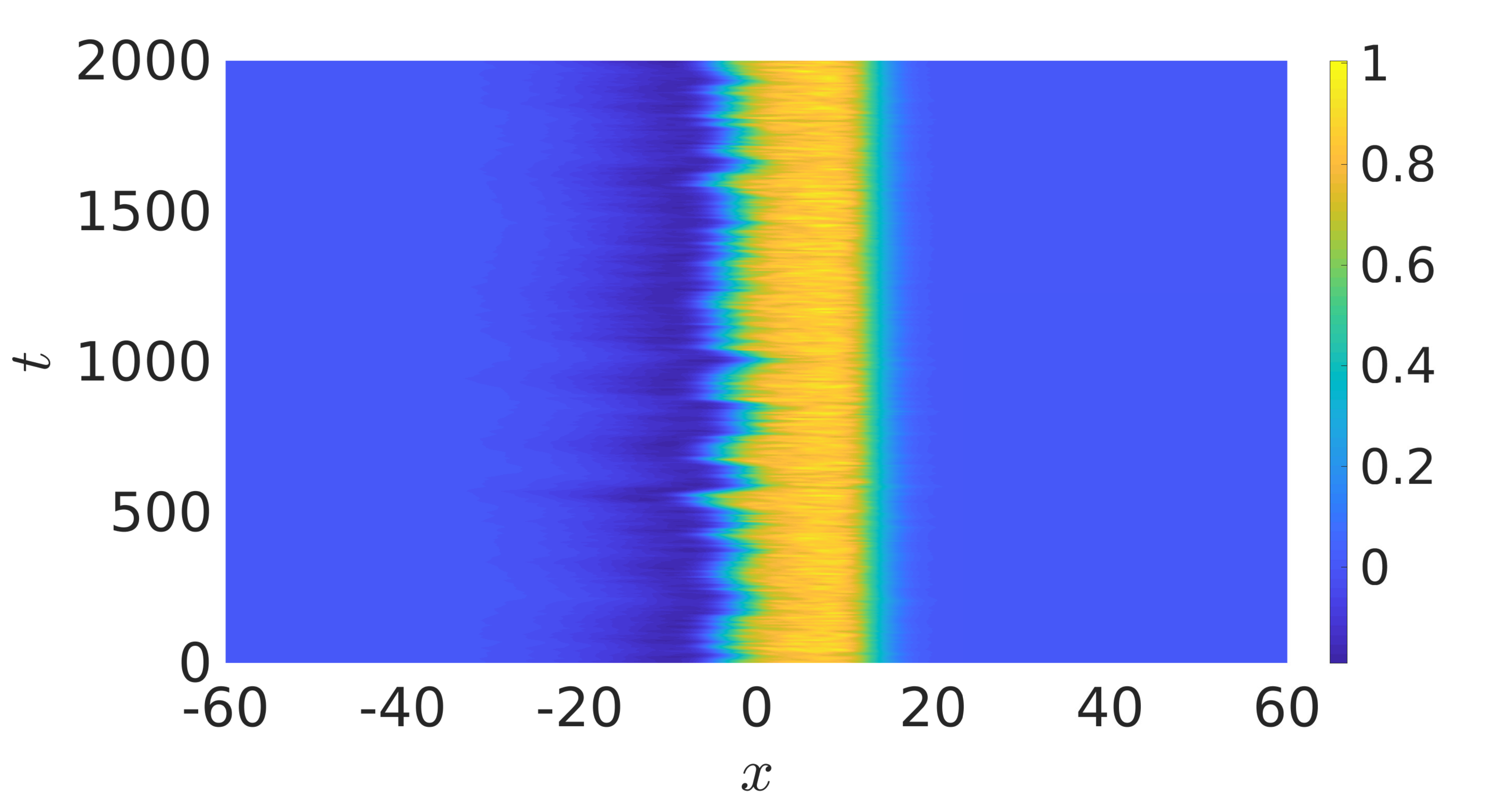}
  \caption{$U(x +\Gamma(t),t)$}
        \label{fig:resFHN:UDifRefFHNc}
\end{subfigure}
\caption{A single realization of the $U$-component of  \sref{eq:resFHN:fhn}
with $\s = 0.1$ in 3 different reference frames. The initial condition is given by $U(0) =\Phi_\s$.}
\label{fig:resFHN:UDifRefFHN}
\end{figure}

\paragraph{Limiting wave speed}
In Fig. \ref{fig:resFHN:UDifRefFHN} we illustrate the behaviour
of a representative sample solution to \sref{eq:resFHN:fhn}
by plotting it in three different moving frames.
Fig. \ref{fig:resFHN:UDifRefFHNa} clearly shows that the deterministic
speed $c_0$ overestimates the actual speed as the wave
moves to the left. The situation is slightly improved
in Fig. \ref{fig:resFHN:UDifRefFHNb}, where we use a frame
that travels with the stochastic speed $c_{\sigma}$.
However, the position of the wave now fluctuates
around a position that still moves slowly to the left
as a consequence of the orbital drift. This is remedied
in Fig. \ref{fig:resFHN:UDifRefFHNc}, where we use the full stochastic
phase $\Gamma(t)$. This again validates the idea of using $\Gamma(t)$ as position of the wave.

As in the previous example, 
the variance of $\G(t)$ is well-described
by the variance of the leading
order term $\Gamma_0^{(1)}$,
which is given by
\begin{align}\label{eq:resFHN:varG}
\begin{split}
    \mathrm{Var}\big(\G_0^{(1)}(t)\big)&=\ip{q_1*(g^{(u)}(\Phi_0)\psit^{(u)}),g^{(u)}(\Phi_0)\psit^{(u)}}_{L^2(\R)}t
    \\ & \qquad +\ip{q_2*(g^{(w)}(\Phi_0)\psit^{(w)}),g^{(w)}(\Phi_0)\psit^{(w)}}_{L^2(\R)}t.
    \end{split}
    \end{align}
In order to explain the drift
observed in Fig. \ref{fig:resFHN:UDifRefFHN},
we split the semigroup $S(t)$
into its two rows by writing
$S(t)= \big(S^{(u)}(t), S^{(w)}(t) \big)^T$.
The coefficient
\sref{eq:res:od}
can now be computed as
\begin{align}
\label{eq:resFHN:od}
\begin{split}
c^{\mathrm{od}}_{0;2}=\lim_{t\to\infty}t^{-1}E[\G_0^{(2)}(t)]&=-\int_0^\infty \sum_{k=0}^\infty\ip{K_0(s)[\sqrt Qe_k,\sqrt Qe_k],\psi_{\mathrm{tw}}}_{L^2(\R,\R^2)}ds\\
&= -\frac{1}{2}\int_0^\infty\sum_{k=0}^\infty
\ip{f''_\mathrm{cub}(\Phi_0^{(u)})
  \big(S^{(u)}(s) \mathcal{I}_k \big)^2, \psi^{(u)}_{\mathrm{tw}}}_{L^2(\R)} \, ds.
  \end{split}
\end{align}
Here  $\mathcal{I}_k$
is given by 
 \begin{align}\label{eq:resFHN:I}
    \mathcal{I}_k & = \begin{pmatrix}g^{(u)}(\Phi_0)p_1*e^{(u)}_k\\g^{(w)}(\Phi_0)p_2*e^{(w)}_k\end{pmatrix}
    -\alpha_k \p_\xi\begin{pmatrix}\Phi_0^{(u)}\\\Phi_0^{(w)}\end{pmatrix},
\end{align}
in which $(e_k)=(e_k^{(u)},e_k^{(w)})$ is a basis
of $L^2(\R,\R^2)$ and $\alpha_k$
is given by 
\begin{equation}
\alpha_k = 
\frac{\ip{p_1*e^{(u)}_k,g^{(u)}(\Phi_0)\psi^{(u)}_{\mathrm{tw}}}_{L^2(\R)}\hspace{-0.1cm}+\hspace{-0.1cm}\ip{p_2*e^{(w)}_k,g^{(w)}(\Phi_0)\psi^{(w)}_{\mathrm{tw}}}_{L^2(\R)}}{\ip{\p_\xi\Phi_0,\psi_{\mathrm{tw}}}_{L^2(\R,\R^2)}}.
\end{equation}
It is important to note here that the two components in the equation above mix even when $g^{(w)}=0$ due to the presence of the semigroup.

\begin{figure}
\centering
\begin{subfigure}{.5\textwidth}
  \centering
  \includegraphics[width=1\linewidth]{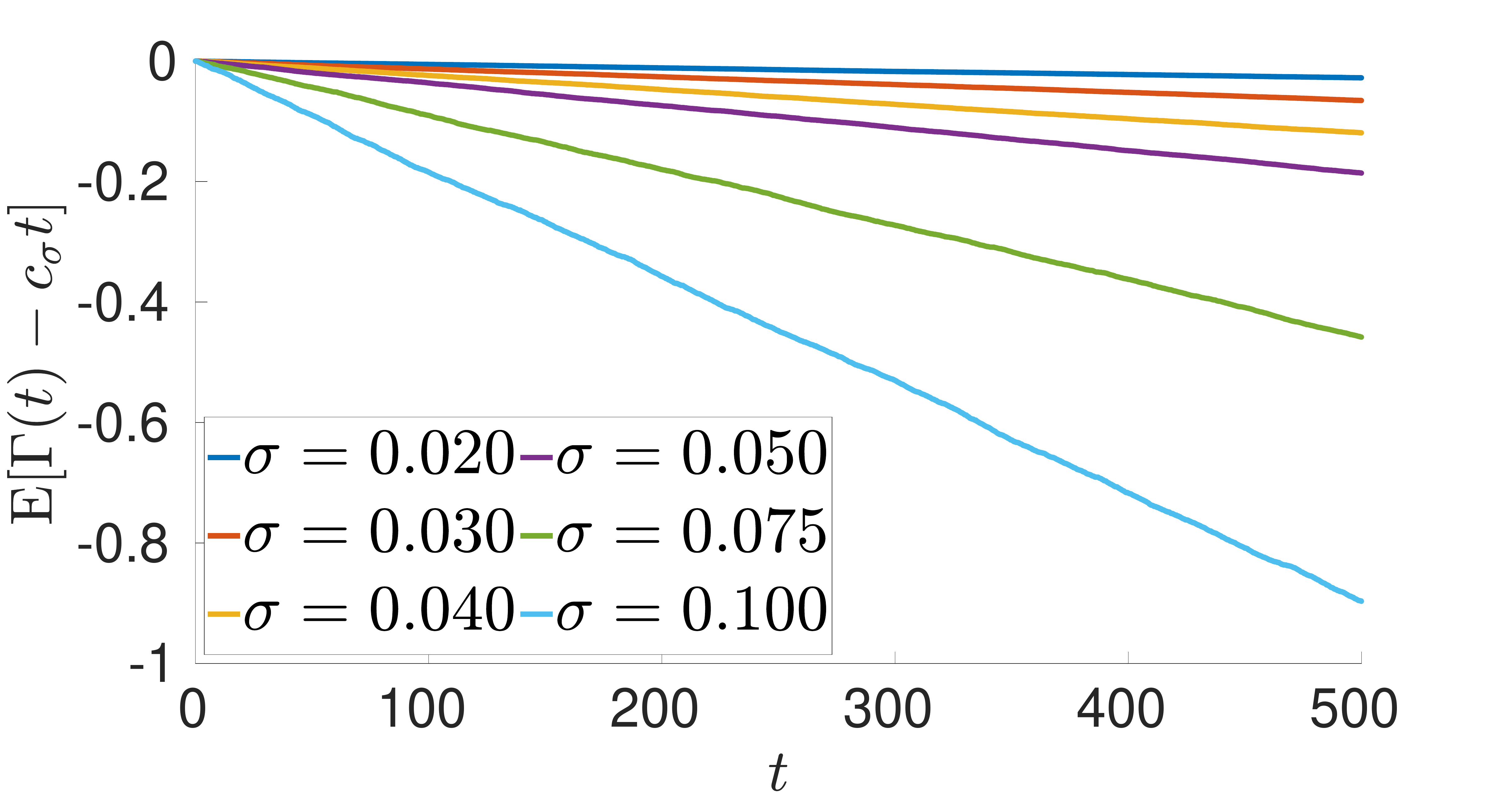}
  \caption{}
  \label{fig:DriftFHNa}
\end{subfigure}%
\begin{subfigure}{.5\textwidth}
  \centering
\includegraphics[width=1\columnwidth]{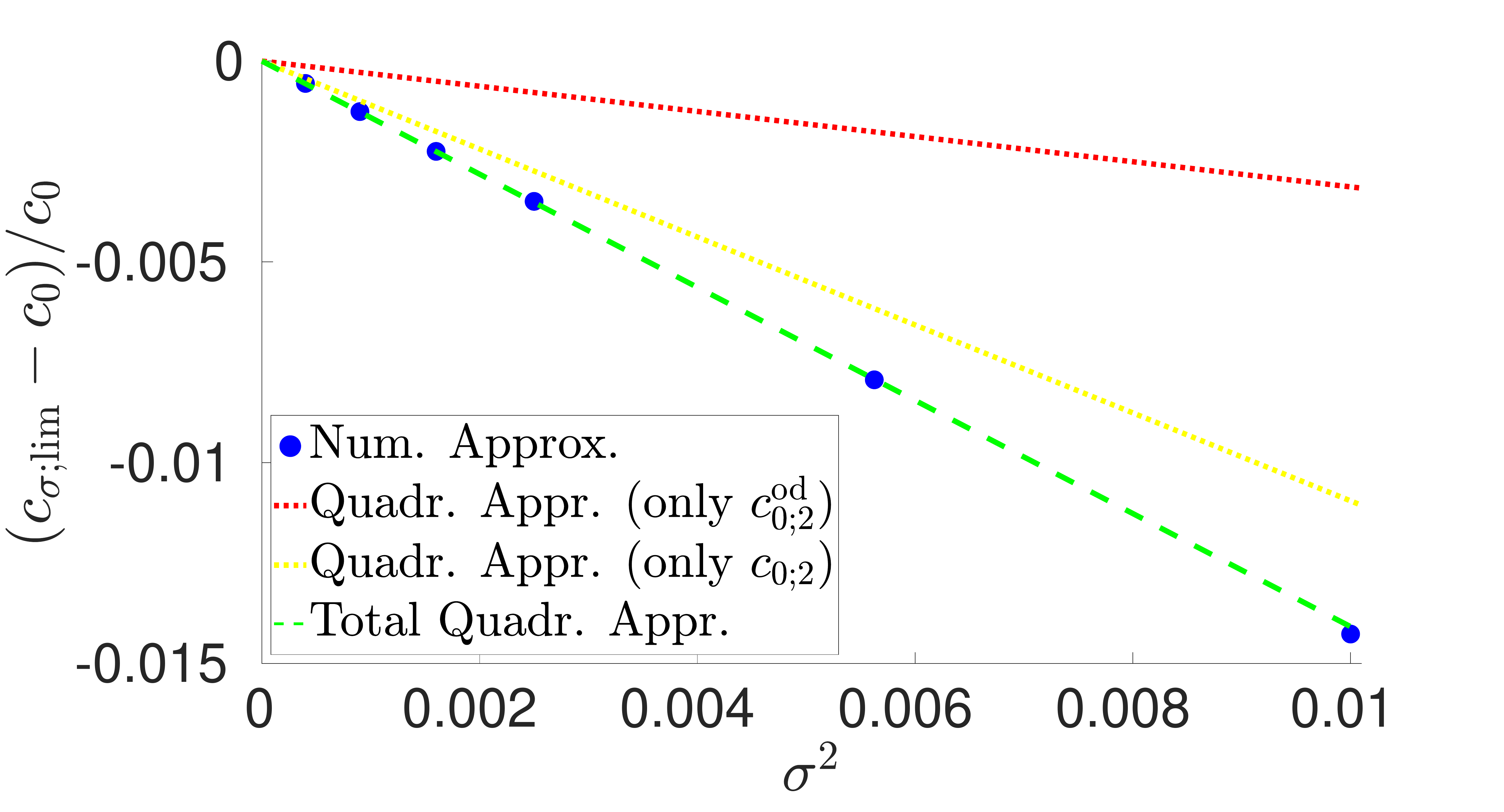}
  \caption{}
    \label{fig:DriftFHNb}
\end{subfigure}
\caption{In (a) we computed the average $E[\Gamma(t)-c_\s t]$
over 500 simulations of  \sref{eq:res:V-final}
with $\mu = 0$
for several values of $\s$, using
the procedure described in the main text.
Notice that a clear trend is visible. 
 In (b) we show the relative deviation of $c_{\s;\mathrm{lim}}$ from $c_0$.
%
Here the observed limiting speed is computed by evaluating the average \sref{eq:resFHN:od:obs}
for the data in (a), while the 
various quadratic predictions
are obtained from the relevant
terms in \sref{eq:res:fhn:c:pred}.
}
\label{fig:DriftFHN}
\end{figure}

In order to evaluate \sref{eq:resFHN:od} numerically, we reuse the basis  \sref{eq:resN:eigf} for $L^2([-L, L];\R)$ to construct a basis for $L^2([-L, L];\R)\times L^2([-L, L];\R)$. Because $Q$ is diagonal we can also recycle the approximate eigenvalues $\lambda_{k;\mathrm{apx}}$.
For the It\^o interpretation
and the parameter values
used in Fig. \ref{fig:DriftFHNa}, we obtain
\begin{align}
\label{eq:res:fhn:c:pred}
\begin{split}
    c^{\mathrm{pred}}_{\s;\mathrm{lim}}&=
    c_0+\s^2[c_{0;2}+c^\mathrm{od}_{0;2}]+\O(\s^3)\\
    &=0.4693-\s^2[0.5138+0.1470]+\O(\s^3).
\end{split}
\end{align}
Clearly, for the FitzHugh-Nagumo equation the influence of the orbital drift is significant.

To validate our predictions, we again
numerically compute
\begin{align}
\label{eq:resFHN:od:obs}
 c_{\s;\mathrm{lim}}^{\mathrm{obs}}
 =c_\s+\frac{2}{T} \int_{\frac{T}{2}}^T \frac{1}{t}E[\Gamma(t)-c_\s t-\s\Gamma_\s^{(1)}(t)] \, dt
\end{align}
and compare the outcome with \sref{eq:res:fhn:c:pred}.
Fig. \ref{fig:DriftFHNb} shows that the total observed speed is
indeed well approximated by the two leading order corrections, $\s^2c_{0;2}$ and $\sigma^2 c_{0;2}^{\mathrm{od}}$.

\paragraph{Size of $V(t)$}
We now turn our attention to the perturbation
\begin{equation}
    V(t)=(V^{(u)}(t),V^{(w)}(t))=
    \big(U(\cdot+\Gamma(t),t),W(\cdot+\Gamma(t),t)\big)
    -(\Phi_\s^{(u)}, \Phi_\s^{(w)} )
\end{equation} 
introduced in \sref{eq:res:V-final}.
As in {\S}\ref{sec:resN},
Figs. \ref{fig:resFHN:VvsTFHN} 
and \ref{fig:resFHN:RVvsSigmaFHN} 
show
that
$E\nrm{V(t)}_{L^2(\R,\R^2)}^2$ 
stabilizes exponentially fast to a fixed value
of size $O(\s^2)$.
These curves are 
nicely captured by the red dashed
lines, which describe the
integral
\begin{align}\label{eq:resFHN:nrmV1}
\begin{split}
    E[\nrm{V_0^{(1)}(t)}_{L^2(\R,\R^2)}^{2}]&=
    \int_0^t\sum_{k=0}^\infty\nrm{S(s)
      \mathcal{I}_k}^2_{L^2(\R,\R^2)}ds
    \end{split}
\end{align}
that measure the size
of the
first order approximation
\begin{equation}
    V_0^{(1)}(t)=\big(V_0^{(u,1)}(t),V_0^{(w,1)}(t)\big)^T.
\end{equation}

Fig. \ref{fig:resFHN:RvsTFHN} shows that $E\nrm{V_\mathrm{res}(t)}^2_{L^2(\R,\R^2)}$ also stabilizes over time, but Fig. \ref{fig:resFHN:RVvsSigmaFHN} 
indicates that the expected $\O(\s^6)$ scaling is not achieved (although
the behaviour is significantly better than
$\O(\s^4)$).
We expect that this can be improved
by utilizing more advanced numerical schemes,
but do not pursue this further here.

\paragraph{Limiting Wave Profile}
Turning our attention to the average
shape of $V(t)$, we
recall \sref{eq:resFHN:I}
and note that
\sref{eq:res:EV2} can be computed
as
\begin{align}\label{eq:resFHN:EV2fhn}
\begin{split}
E\,[ V_0^{(2)}(t)]
=&
\frac{1}{2}\int_0^tS(t-s)\int_0^s\sum_{k=0}^\infty\Big[\begin{pmatrix} f_\mathrm{cub}''(\Phi_0)
\big( S^{(u)}(s') \mathcal{I}_k \big)^2\\0\end{pmatrix}
\\
& \qquad \qquad
-\Phi'_0\ip{f_\mathrm{cub}''(\Phi_0)\big( S^{(u)}(s') \mathcal{I}_k \big)^2,\psi^{(u)}_{\mathrm{tw}}}_{L^2(\R)} 
\Big]\, ds'ds .
    \end{split}
\end{align}

In Fig. \ref{fig:resFHN:EVfhn}
we compare this second-order
expression with
the numerical average of
$E[V(t)]$ over 500 simulations
of \sref{eq:resFHN:fhn}.
To speed up the convergence of the average, we subtract both $\sigma V_\s^{(1)}(t)$ and the stochastic integral of $\sigma^2 V_\s^{(2)}(t)$
from $V(t)$. This does not change
the outcome as both terms have
zero expectation.

Notice that these two processes
are almost indistinguishable from each other.
To illustrate this, we provide
snapshots of both processes
at $t=50$ in Fig. \ref{fig:resFHN:evt50}
for various values  of $\s$.
Notice that the second-order
approximants follow
the intricate shape of $E[V(t)]$ 
very closely.

\begin{figure}
\centering
\begin{subfigure}{.5\textwidth}
  \centering
  \includegraphics[width=1\linewidth]{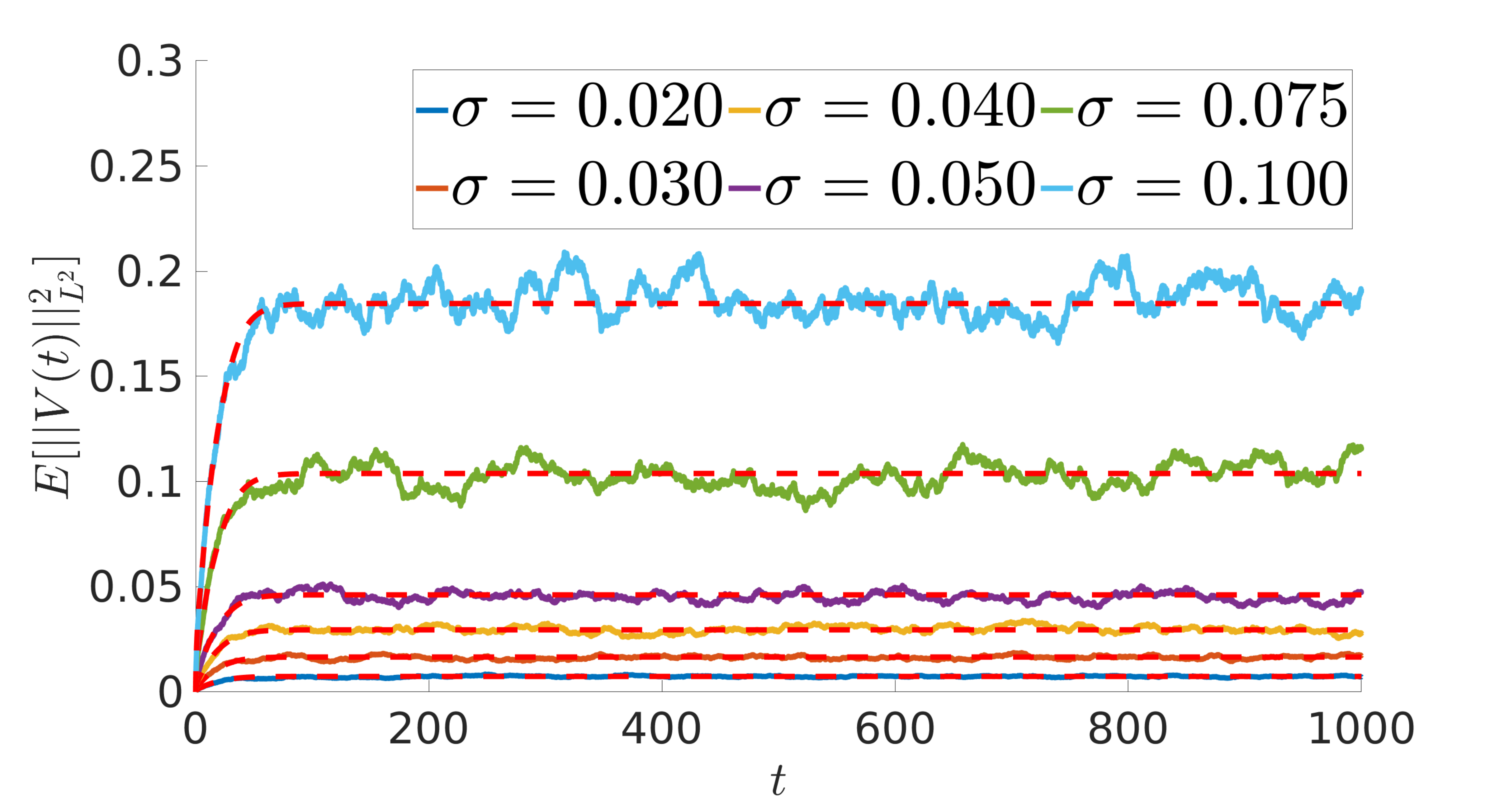}
  \caption{}
  \label{fig:resFHN:VvsTFHN}
\end{subfigure}%
\begin{subfigure}{.5\textwidth}
  \centering
  \includegraphics[width=1\linewidth]{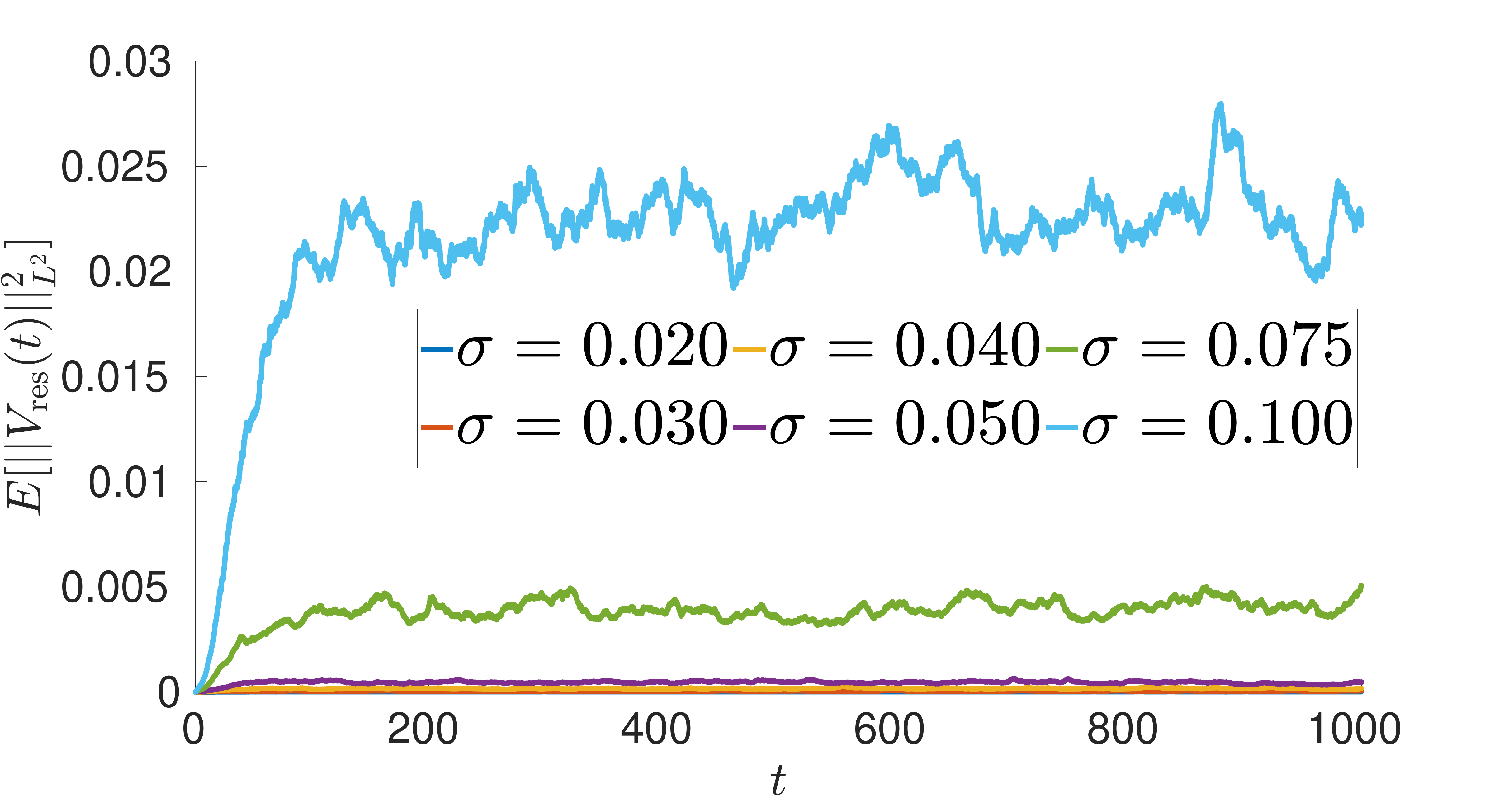}
  \caption{}
    \label{fig:resFHN:RvsTFHN}
\end{subfigure}
\caption{
In (a) we computed the average $E[\nrm{V(t)}_{L^2}^2]$
over 500 realization of  
\sref{eq:res:V-final} in the It\^o interpretation. The dashed line shows the numerical evaluation of the first order term
\sref{eq:resFHN:nrmV1}.
%
In (b) we computed the corresponding 
averages for the residual \sref{eq:resN:res} by evaluating and subtracting $\s V_\s^{(1)}(t)$ and $\sigma^2 V_\s^{(2)}(t)$ for every realization in (a). Again, both $V(t)$ and $V_\mathrm{res}(t)$ stabilize over time.
}
\label{fig:resFHN:VRvsTFHN}
\end{figure}

\begin{figure}
\centering
\begin{subfigure}{.5\textwidth}
  \centering
 		\def\svgwidth{\columnwidth}
    		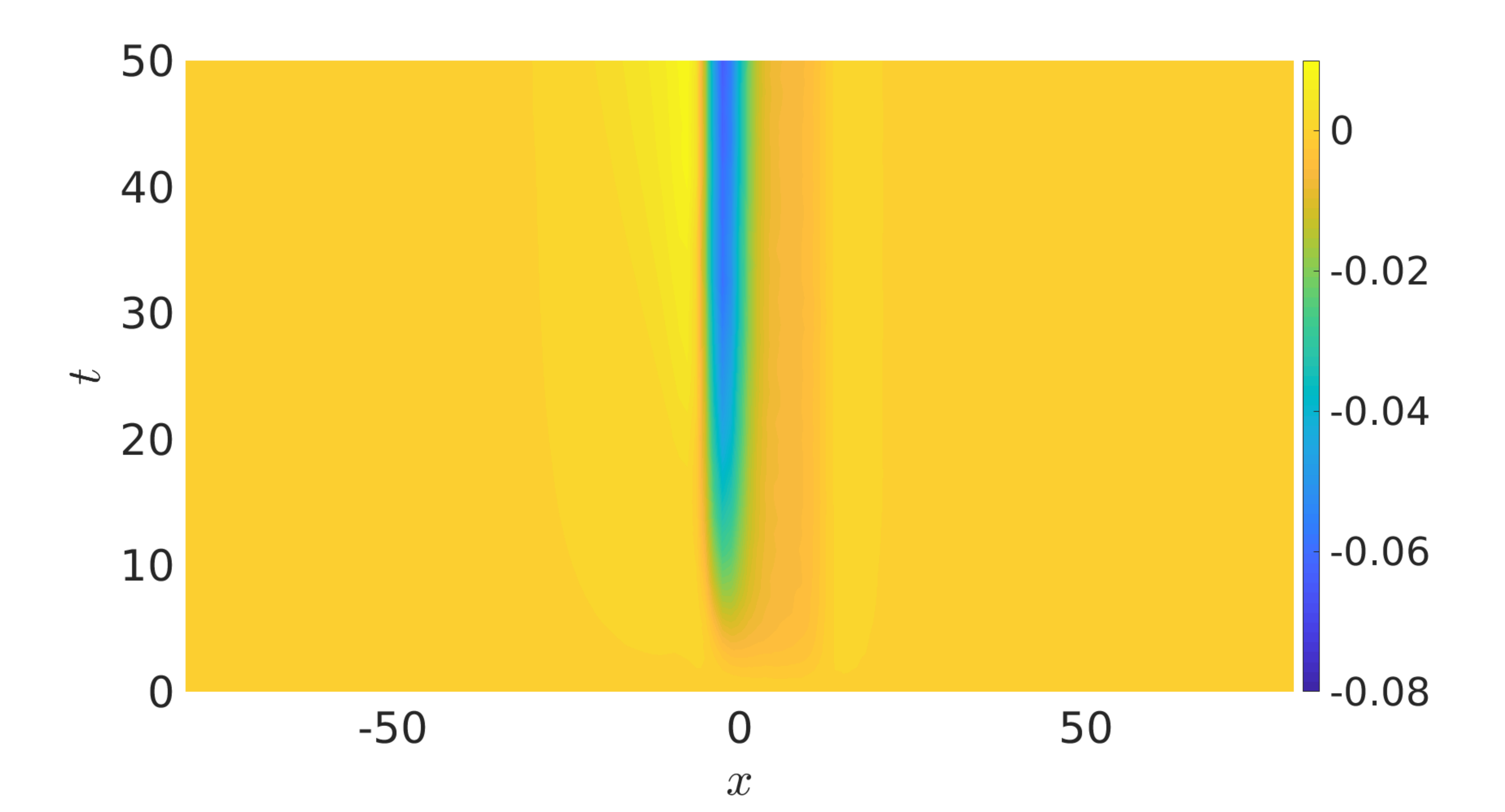
  \caption{$E[V^{(u)}(t)]$}
  \label{fig:resFHN:EV}
\end{subfigure}%
\begin{subfigure}{.5\textwidth}
  \centering
 		\def\svgwidth{\columnwidth}
    		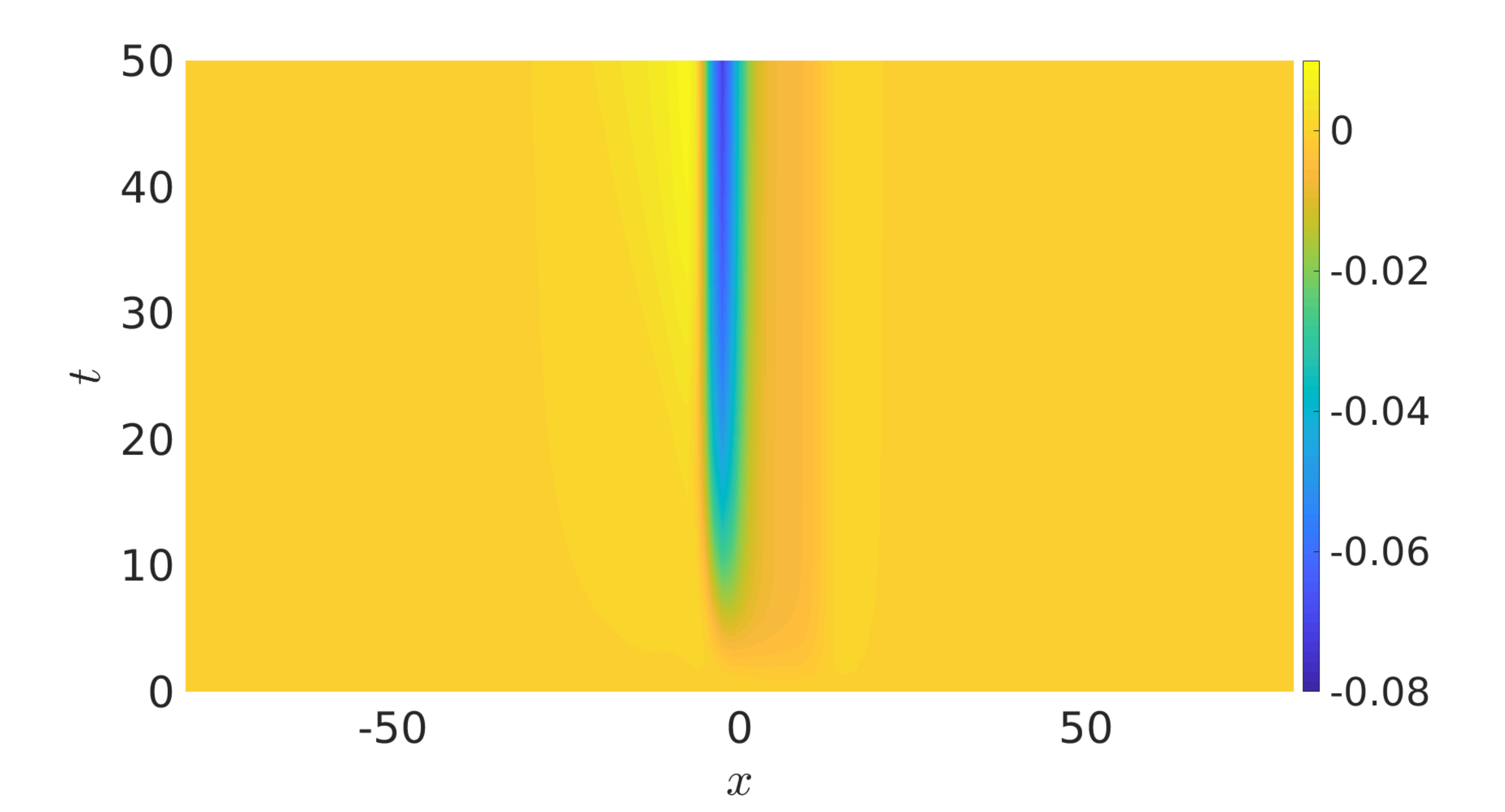
  \caption{$\s^2E[V_0^{(u,2)}(t)]$}
   \label{fig:resFHN:EV2}
\end{subfigure}
\caption{Panel (a) shows the average of the first component of $V(t)$, computed over 500 iterations of \sref{eq:resFHN:fhn}
with $\sigma = 0.1$ and $\mu=0$. Panel (b) shows the first component of the numerical evaluation of \sref{eq:resFHN:EV2fhn}. As before,
there is a good correspondence between the two figures. 
}
\label{fig:resFHN:EVfhn}
\end{figure}

\begin{figure}
\centering
\begin{subfigure}{.5\textwidth}
  \centering
\includegraphics[width=1\columnwidth]{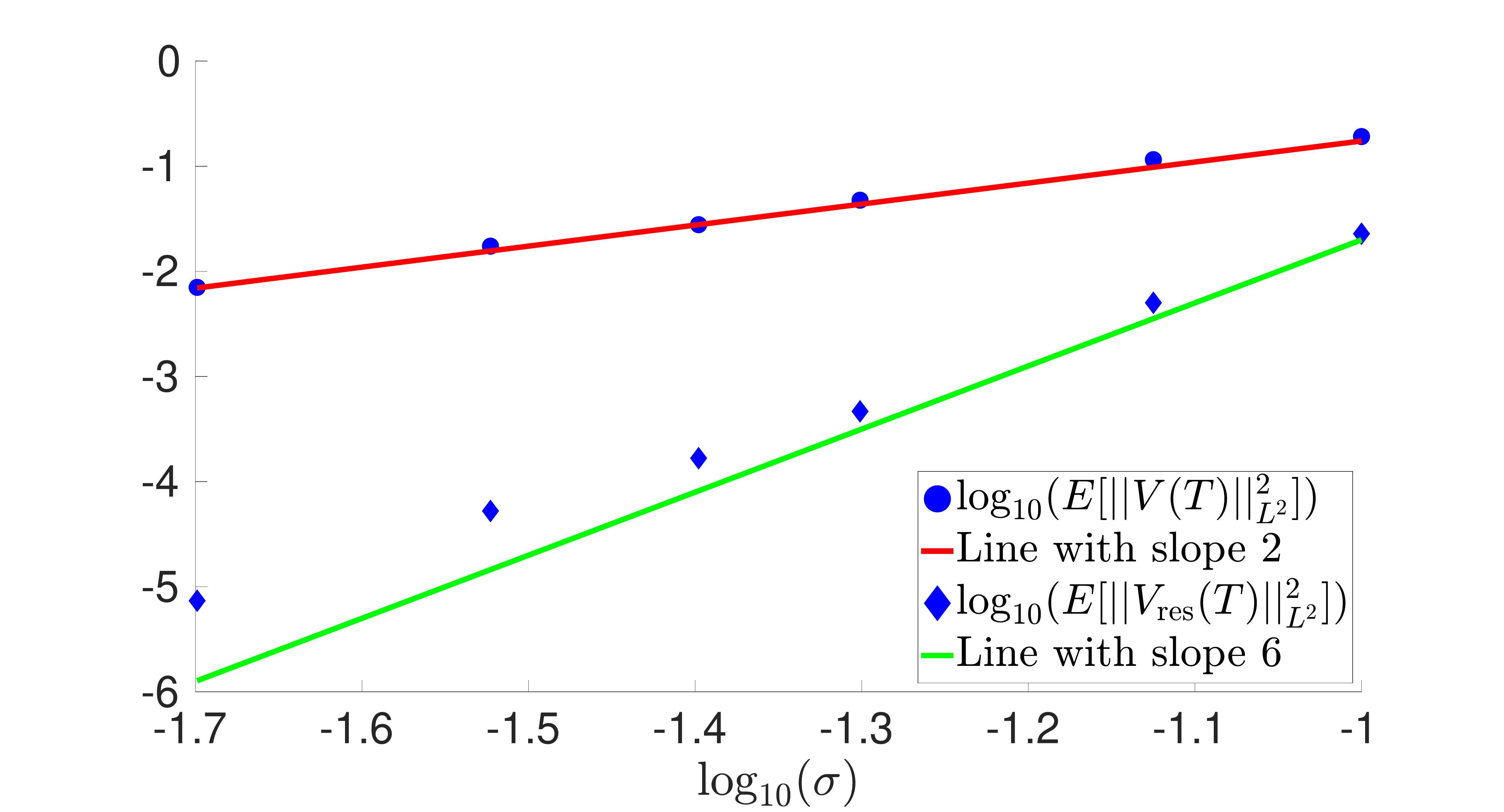}
  \caption{}
  \label{fig:resFHN:RVvsSigmaFHN}
\end{subfigure}%
\begin{subfigure}{.5\textwidth}
  \centering
  \includegraphics[width=1\linewidth]{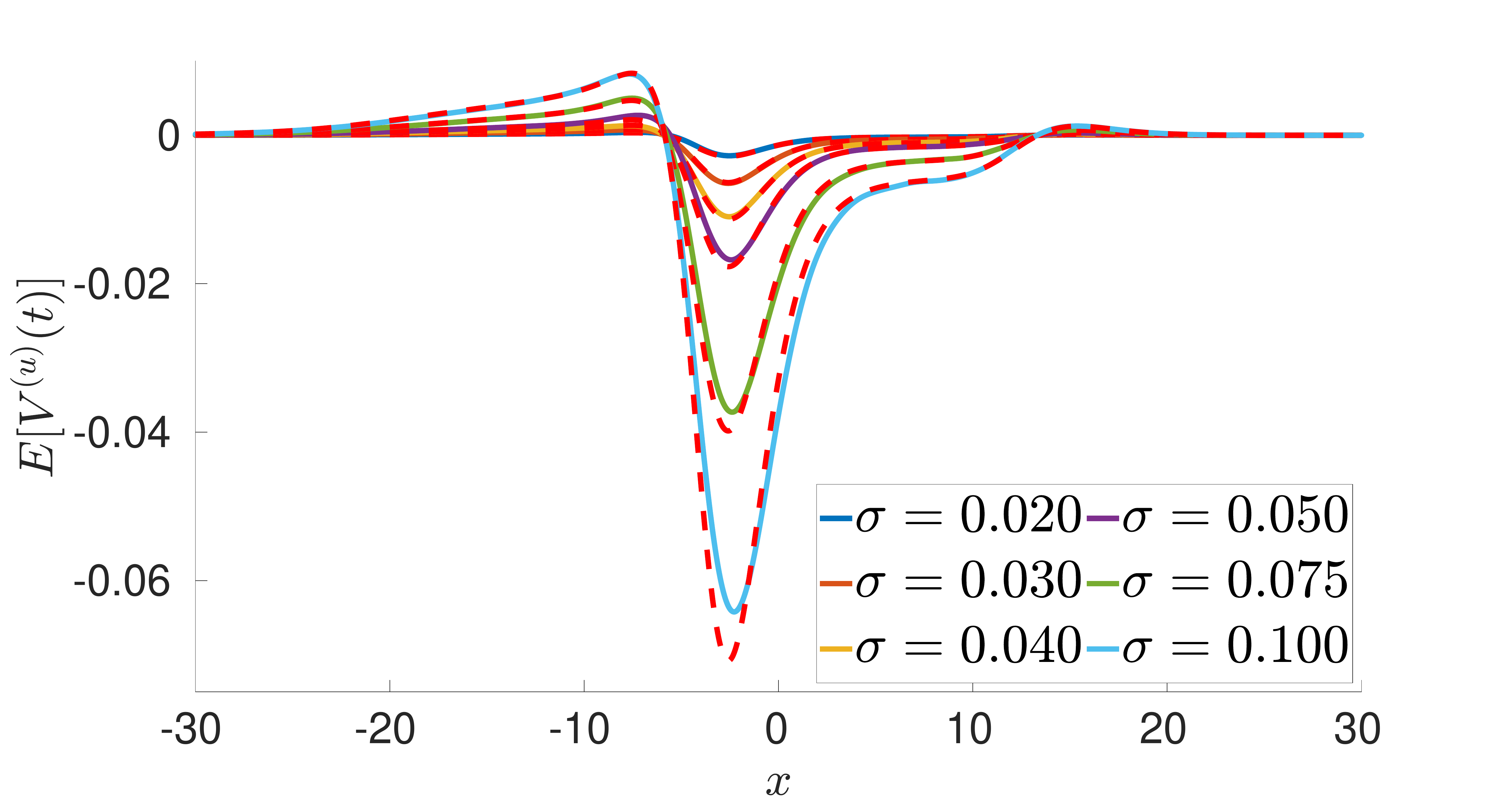}
  \caption{}
    \label{fig:resFHN:evt50}
\end{subfigure}
\caption{Panel (a) is computed from Fig. \ref{fig:resFHN:VRvsTFHN} by evaluating the expectations at the last time step $T=1000$ and plotting them as function of $\s$. We observe that $E[\nrm{V(T)}_{L^2(\R,\R^2)}^2]$ scales as $\O(\s^2)$ as predicted and $E[\nrm{V_\mathrm{res}(T)}_{L^2(\R,\R^2)}^2]$  scales significantly faster then $\O(\s^4)$, but not as the predicted $\O(\s^6)$. 
Panel (b) is computed from Fig. \ref{fig:resFHN:EVfhn}, by evaluating 
$E[V^{(u)}(50)]$. 
The dashed lines correspond to
the second order predictions $\s^2E[V_0^{(u,2)}(50)]$.
}
\label{fig:VRvsSFHN}
\end{figure}


\newpage
\section{The stochastic phase-shift}
\label{sec:sps}
In this section we derive the SPDE \sref{eq:res:V-final}
that we used to describe the behaviour
of the phase-shifted perturbation
\begin{equation}
\label{eq:sps:def:V}
V(t) = T_{-\Gamma(t)} [ X(t)+\Phi_\mathrm{ref} ]
  - \Phi_\s
\end{equation}
introduced in \sref{eq:res:defV}. 
Here $T_{\gamma}$ stands for the 
right-shift operator\footnote{
These operators will always carry a subscript
and should not be confused
with the time $T$ introduced in {\S}\ref{sec:res}.
} 
$T_{\gamma} U = U(\cdot - \gamma)$.
We recall from {\S}\ref{sec:res} 
that the process $X$ is a solution
to the SPDE
\begin{align}\begin{split}
dX &=
  \big[ \rho \partial_{xx}(X+\Phi_\mathrm{ref}) + f(X+\Phi_\mathrm{ref})+\s^2 h(X+\Phi_\mathrm{ref}) \big] dt
  + \sigma g(X+\Phi_\mathrm{ref}) d W^Q_t\\
\end{split}
\end{align}
posed on the Hilbert space $L^2(\R,\R^n)$.
In addition, the phase $\Gamma(t)$
was assumed to satisfy the SODE
\begin{align}
\label{eq:sps:eqn:for:gm}
    d \Gamma=  \big[ c_{\sigma} + \overline{a}_{\sigma}(U,\G) \big] \, dt
     + \sigma \overline{b}(U,\G) dW^Q_t ,
\end{align}
with nonlinearities $\overline{a}_\s$
and $\overline{b}$ that were only defined
locally.

In {\S}\ref{sec:sps:bck} we sketch
how the noise process $dW^Q_t$ can be rigorously
constructed. We subsequently
introduce several
cut-off functions in {\S}\ref{sec:sps:prlm}
that allow us to define $\overline{a}_\s$
and $\overline{b}$ in such a way that
\sref{eq:sps:eqn:for:gm} remains
well-posed globally.
This allows us to formulate
an appropriate It\^o lemma
in {\S}\ref{sec:sps:ito}, 
which we use in \S\ref{sec:compps}
to perform the computations that lead to \sref{eq:res:V-final}.

\subsection{Background}
\label{sec:sps:bck}
In this section we briefly recall some of the functional analysis needed to set up the rigorous framework to study
SPDEs. In order to ease the comparison with the earlier work in \cite{Hamster2017},
it turns out to be convenient to work in an abstract setting for the moment.
In particular, we consider noise that lives in an arbitrary separable Hilbert space $\mathcal{W}$
and pick a non-negative symmetric operator 
$Q \in \mathcal{L}( \mathcal{W} , \mathcal{W})$.
We then write\footnote{In the literature, the pair
$(\mathcal{W}_Q,\mathcal{W})$ is often denoted as $(U_0,U)$, but in our setting this might be confusing with the solution $U(t)$.
}
\begin{equation}
    \mathcal{W}_Q = Q^{1/2} ( \mathcal{W} ),
\end{equation}
which is again a separable Hilbert space with inner product
\begin{align}
    \ip{v,w}_{\mathcal{W}_Q}=\ip{Q^{-1/2}v,Q^{-1/2}v}_{\mathcal{W}}.
\end{align}

We now fix an orthonormal basis $(e_k)$ for $\mathcal{W}$, which means that
$(\sqrt{Q} e_k)$ is a basis for $\mathcal{W}_Q$. For any
Hilbert space $\mathcal{H}$, we recall that a linear map
$\Lambda: \mathcal{W}_Q \to \mathcal{H}$ is contained
in the set of Hilbert-Schmidt operators $HS(\mathcal{W}_Q,\mathcal{H})$
if it satisfies $\langle \Lambda, \Lambda \rangle_{HS(\mathcal{W}_Q,\mathcal{H})} < \infty$.
Here the inner product is given by
\begin{align}\begin{split}
    \langle \Lambda_1, \Lambda_2 \rangle_{HS(\mathcal{W}_Q,\mathcal{H})}:&=
    \sum_{k=0}^\infty \ip{\Lambda_1 \sqrt Q{e}_k,\Lambda_2 \sqrt Q{e}_k}_{\mathcal{H}} .
\end{split}
\end{align}
The construction in \cite[{\S}2.5]{Concise} allow us to define
a Hilbert space $\mathcal{W}_{\mathrm{ext}} \supset \mathcal{W}$ so that the inclusion
$\mathcal{W}_{Q} \subset \mathcal{W}_{\mathrm{ext}}$ is such a Hilbert-Schmidt operator.
This (non-unique) extension space is the key ingredient
that allows our noise process to be rigorously constructed.

Turning to this task,
we  introduce a complete probability space
   $\Big(\Omega, \mathcal{F},  \mathbb{P} \Big)$,
together with a normal filtration $( \mathcal{F}_t)_{t \ge 0}$
and a set of independent $(\mathcal{F}_{t})$-Brownian motions
$(\beta_k)$. Following \cite[Eq. (2)]{Karczewska2005},
we introduce the formal sum
\begin{equation}
\label{eq:sps:bck:def:w:q:t}
    W^Q_t = \sum_{k=0}^\infty \sqrt{Q} e_k  \beta_k(t) ,
\end{equation}
which converges in $L^2(\Omega,\mathcal{F},P; \mathcal{W}_{\mathrm{ext}})$
for every $t \ge 0$.  We will refer to this limiting process $W^Q_t$
as a $(\mathcal{F}_t , Q)$-cylindrical Wiener process.
The computations in \cite[Prop. 2]{Karczewska2005}
show that the formal sums
\begin{equation}
    \langle W^Q_t , w \rangle_\mathcal{W} = \sum_{k=0}^\infty \langle \sqrt{Q} e_k, w \rangle_\mathcal{W} \, \beta_k(t),
    \qquad \qquad w \in \mathcal{W}
\end{equation}
define  scalar Wiener processes that satisfy
\begin{equation}
   E \Big[ \langle W^Q_t , w_1 \rangle_\mathcal{W} \langle W^Q_s , w_2 \rangle_\mathcal{W} \Big]
    = (t \wedge s) \langle Q w_1, w_2 \rangle_{\mathcal{W}}.
\end{equation}

For any Hilbert space $\mathcal{H}$ and any $T>0$, we
follow the convention in \cite{Concise,revuz2013continuous}
and introduce the space
\begin{equation}
\label{eq:sps:def:n2}
\begin{array}{lcl}
\mathcal{N}^2\big( [0,T] ; (\mathcal{F}_t ) ;
   \mathcal{H} \big)
& = & \{ X \in L^2\big( [0 , T] \times \Omega ;
  dt \otimes \mathbb{P} ;  \mathcal{H} \big)
    :
\\[0.2cm]
& & \qquad \qquad X \hbox{ has a }
    (\mathcal{F}_t)\hbox{-progressively measurable version}
 \}.
\end{array}
\end{equation}
For any process $B \in \mathcal{N}^2\big( [0,T] ; (\mathcal{F}_t ) ;
   HS(\mathcal{W}_Q,\mathcal{H}) \big)$,
we now use \cite[Eq (7)]{Karczewska2005} to define
the stochastic integral
\begin{equation}
    \int_0^t B(s) \, d W^Q_s
    = \lim_{m \to \infty} \sum_{k=0}^m \int_0^t B(s)[\sqrt{Q} e_k] \, d
    \beta_{k}(s)
\end{equation}
for all $0 \le t \le T$. This limit can be taken
directly in $L^2(\Omega,\mathcal{F},P; \mathcal{H} )$ and hence
avoids the use of the external space. In this setting, the It\^o isometry can
be stated as
\begin{equation}
\label{eq:sps:ito:wrst:q:s}
    E \langle \int_0^t B_1(s) \, d W^Q_s , \int_0^t B_2(s) \, d W^Q_s \rangle_\mathcal{H}
    = E \, \int_0^t \langle B_1(s) , B_2(s) \rangle_{HS(\mathcal{W}_Q,\mathcal{H}) } \, ds .
\end{equation}

Returning to our main SPDE \sref{eq:mr:main:spde},
we assume for the moment
that $g(U)$ is a Hilbert-Schmidt operator from $\mathcal{W}_Q$ into $L^2(\R,\R^n)$
for every $U \in \mathcal{U}_{H^1}$.
The formal adjoint
\begin{equation}
    g^\mathrm{adj}(U): L^2(\R,\R^n) \to \mathcal{W}_Q
\end{equation}
is then defined in such a way that
\begin{equation}
    \langle g(U) [w] , \psi \rangle_{L^2(\R,\R^n)}
      = \langle w, g^\mathrm{adj}(U) \psi \rangle_{\mathcal{W}_Q}
      = \langle Q^{-1/2}w , Q^{-1/2} g^\mathrm{adj}(U)[ \psi ] \rangle_{\mathcal{W}} 
\end{equation}
holds for any $w \in \mathcal{W}$ and $\psi \in L^2(\R,\R^n)$.
This point of view allows us to unify the framework of this paper
with the setup used in 
\cite{Hamster2017,Hamster2018} where scalar noise is considered.

Indeed, for the setting described in {\S}\ref{sec:int}-\ref{sec:resFHN}
we can take $\mathcal{W} = L^2(\R,\R^m)$
and $\mathcal{W}_Q = L^2_Q$.
A simple computation
shows that
\begin{equation}
\label{eq:sps:def:g:adj:spec}
    g^\mathrm{adj}(U)[\psi] = Q g(U)^T \psi,
\end{equation}
in which the matrix transpose is taken in a pointwise fashion.
However, for $\mathcal{W} = \Real^m$ we must take
\begin{equation}
    g^\mathrm{adj}(U)[\psi] = Q \int_\R g\big(U(x)\big)^T \psi(x) \,dx ,
\end{equation}
which for $m=1$ reduces further to
\begin{equation}
\label{eq:sps:previous:1d}
    g^\mathrm{adj}(U)[\psi] = Q \langle g(U),  \psi \rangle_{L^2(\R)}.
\end{equation}
We shall see that \sref{eq:sps:previous:1d} can be used to recover
the results in \cite{Hamster2017,Hamster2018}
from
the expressions that we  derive in this section.

\subsection{Construction of $\overline{a}_\s$, $\overline{b}$, $\Phi_\s$ and $c_\s$}
\label{sec:sps:prlm}
In order to ensure that the SDE for the phase $\G(t)$ is well-defined
and admits global solutions, we need
to define the functions $\overline{a}_{\s}$ and
$\overline{b}$ appearing in \sref{eq:res:Gamma} in such a way
that $\overline{b}$ is globally bounded, while the singularities
in \sref{eq:derphase:b} and \sref{eq:derphase:a} are avoided.

To achieve this, we pick
a $C^\infty$-smooth non-decreasing cut-off function
\begin{equation}
\chi_{\mathrm{low}}: \Real \to [\frac{1}{4}, \infty),
\end{equation}
that satisfies the identities
\begin{equation}
\chi_{\mathrm{low}}(\vartheta) = \frac{1}{4} \hbox{ for } \vartheta \le \frac{1}{4},
\qquad
\chi_{\mathrm{low}}(\vartheta) = \vartheta \hbox{ for } \vartheta \ge \frac{1}{2}.
\end{equation}
In addition, we choose
a $C^\infty$-smooth non-increasing cut-off function
\begin{align}
    \chi_\mathrm{high}:\R^+\to [0,1],
\end{align}
for which we have
\begin{equation}
\chi_{\mathrm{high}}(\vartheta) = 1 \hbox{ for } \vartheta \le K_{\mathrm{up}},
\qquad
\chi_{\mathrm{high}}(\vartheta) = 0  \hbox{ for } \vartheta \ge K_{\mathrm{up}} + 1
\end{equation}
for some sufficiently large $K_{\mathrm{up}}>>1$.

For convenience, we now introduce the notation
\begin{equation}
\label{eq:sps:def:cutoffs}
    \overline{\chi}_l(U, \Gamma) =
    \big[\chi_{\mathrm{low} } \big(\bip{\p_\xi U,T_\G\psit}_{{L^2(\R,\R^n)}}\big)  \big]^{-1},
    \qquad
    \overline{\chi}_{h}(U, \Gamma) =
      \chi_{\mathrm{high}}( \norm{U - T_{\Gamma} \Phi_{\mathrm{ref}}}_{L^2(\R,\R^n)}).
\end{equation}
We remark that $\overline{\chi}_l$ and $\overline{\chi}_h$
are both uniformly bounded.
Whenever $\nrm{U - T_\Gamma \Phi_0}_{L^2(\R,\R^n)}$
is sufficiently small, we have
\begin{equation}
    \overline{\chi}_l(U, \Gamma) =\big[ \langle \partial_\xi U , T_\G \psit \rangle_{L^2(\R,\R^n)}\big]^{-1},
\qquad
\overline{\chi}_l(U, \Gamma) = 1.
\end{equation}

We now define
\begin{equation}
\label{eq:sps:cutoffb}
\overline{b}(U,\G)[v]=
- \overline{\chi}_h(U, \G)^2 \overline{\chi}_l(U,\G)
    \bip{ g(U)v, T_\G \psit }_{L^2(\R,\R^n)},
\end{equation}
noting that the square on the high cut-off
is simply for administrative reasons that 
will become clear in the sequel.
A short computation shows that
\begin{align}
\label{eq:sps:hs:norm:ovl:b}
\begin{split}
\nrm{\overline{b}(U,\G)}^2_{HS(L^2_Q, \R)}
 &=
\overline{\chi}_h(U, \G)^4 \overline{\chi}_l(U,\G)^2
\sum_{k=0}^\infty\ip{g(U) \sqrt{Q} e_k,T_{\Gamma} \psit}_{L^2(\R,\R^n)}^2
\\
&=\overline{\chi}_h(U, \G)^4 \overline{\chi}_l(U,\G)^2
\sum_{k=0}^\infty\ip{\sqrt{Q} e_k, g^\mathrm{adj}(U) T_{\Gamma} \psit}_{L^2_Q}^2
\\
 & =
\overline{\chi}_h(U, \G)^4 \overline{\chi}_l(U,\G)^2
\ip{g^\mathrm{adj}(U) T_{\Gamma} \psit, g^\mathrm{adj}(U) T_{\Gamma} \psit}_{L^2_Q}
\\
& =
\overline{\chi}_h(U, \G)^4 \overline{\chi}_l(U,\G)^2
\ip{g(U) g^\mathrm{adj}(U) T_{\Gamma} \psit, T_{\Gamma} \psit}_{L^2(\R,\R^n)}.
\end{split}
\end{align}

At this point, it is convenient
to introduce the notation
%
\begin{equation}
\label{eq:sps:defs:T:ABC}
\begin{array}{lcl}
  \mathcal{K}_{\s;A}(U)
    & = & \rho U'' + f(U) + \sigma^2 h(U) ,
\\[0.2cm]
   \mathcal{K}_B(U, \Gamma)
     & = & \frac{1}{2} \nrm{\overline{b}(U,\G)}^2_{HS(L^2_Q, \R)}
     U'' ,
 \\[0.2cm]
 \mathcal{K}_C(U, \Gamma)
  & = & - \overline{\chi}^2_h(U,\Gamma) \overline{\chi}_l( U , \Gamma)
    g(U) g^\mathrm{adj}(U) T_{\Gamma} \psit ,
\end{array}
\end{equation}
together with 
\begin{equation}
\mathcal{K}_{\sigma}(U, \Gamma ,c )
 =
 c U' + \mathcal{K}_{\s;A}(U)
    + \sigma^2 \mathcal{K}_B(U,\G) + \sigma^2
      \Big[\mathcal{K}_C(U,\G) \Big]'.
\end{equation}
In order to relate this back to
{\S}\ref{sec:res}, we write
\begin{equation}
F_{\sigma}(U, c) =  \mathcal{K}_\s(U , 0 , c)
\end{equation}
and note that this expression
reduces to \sref{eq:mr:def:F:i}
whenever $\norm{U - \Phi_0}_{L^2(\R,\R^n)}$ is sufficiently small on account of 
\sref{eq:sps:hs:norm:ovl:b}
and \sref{eq:sps:def:g:adj:spec}.
We are now in a position to construct
the instantaneous stochastic waves 
$(\Phi_\s, c_\s)$ by looking for zeroes
of $F_{\sigma}$.

\begin{prop}
\label{prp:var:swv:ex}
Suppose that (Hq), (HEq), (HDt), (HSt) and (HTw)
are all satisfied and pick a sufficiently large
constant $K > 0$. Then there exists $\delta_{\sigma} > 0$
so that
for every $0 \le \sigma \le \delta_{\sigma}$,
there is a unique pair
\begin{equation}
(\Phi_{\sigma},c_{\sigma}) \in   \mathcal{U}_{H^2}  \times \Real
\end{equation}
that satisfies the system
\begin{equation}
\label{eq:mr:prop:swv:eq}
  \mathcal{K}_\s(\Phi_\s,0,c_\s) = 0
\end{equation}
and admits the bound
\begin{equation}
\label{eq:mr:prp:swv:bnd:phi:sigma}
\norm{\Phi_{\sigma} - \Phi_0}_{H^2(\R,\R^n)} + \abs{c_{\sigma} - c_{0} }
\le K \sigma^2 .
\end{equation}
\end{prop}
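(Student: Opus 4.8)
The plan is to construct $(\Phi_\sigma,c_\sigma)$ by the implicit function theorem, using $\mu=\sigma^2$ as the small parameter and exploiting the Fredholm structure of $\mathcal{L}_{\mathrm{tw}}$ provided by (HTw). As a preliminary reduction, note that (taking $\Phi_{\mathrm{ref}}=\Phi_0$, which is admissible and gives $\Phi_0\in\mathcal{U}_{H^2}$) there is a fixed $H^2$-neighbourhood $\mathcal{N}$ of $\Phi_0$ on which both cut-offs in \sref{eq:sps:def:cutoffs} are inactive: indeed $\langle\partial_\xi\Phi_0,\psi_{\mathrm{tw}}\rangle=1>\tfrac12$ and $\|\Phi_0-\Phi_{\mathrm{ref}}\|_{L^2}=0<K_{\mathrm{up}}$, so for $\Phi\in\mathcal{N}$ one has $\mathcal{K}_{\sigma}(\Phi,0,c)=F_{\sigma}(\Phi,c)=F_0(\Phi,c)+\sigma^2F_{0;2}(\Phi)$ with $F_0$, $F_{0;2}$ given by \sref{eq:mr:def:F:ii}--\sref{eq:mr:def:F:iii}, depending only on $\Phi$. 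To handle the translational degeneracy $\mathcal{L}_{\mathrm{tw}}\Phi_0'=0$, I would impose the phase normalisation $\langle\Phi-\Phi_0,\psi_{\mathrm{tw}}\rangle=0$, i.e. work on the affine slice $\Phi\in\Phi_0+\big(H^2(\R,\R^n)\cap\{\psi_{\mathrm{tw}}\}^{\perp}\big)$, and define, on a neighbourhood of $(0,0,0)$ in $\big(H^2\cap\{\psi_{\mathrm{tw}}\}^{\perp}\big)\times\R\times\R$, the map $G(v,d,\mu)=F_0(\Phi_0+v,c_0+d)+\mu F_{0;2}(\Phi_0+v)$ with values in $L^2(\R,\R^n)$, which satisfies $G(0,0,0)=F_0(\Phi_0,c_0)=0$. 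This normalisation is also the one implicit in the expansion identity \sref{eq:res:Phi02}.

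The key linear fact is that $D_{(v,d)}G(0,0,0)\colon(v,d)\mapsto\mathcal{L}_{\mathrm{tw}}v+d\,\Phi_0'$ is a Banach-space isomorphism from $\big(H^2\cap\{\psi_{\mathrm{tw}}\}^{\perp}\big)\times\R$ onto $L^2(\R,\R^n)$. For injectivity, pairing $\mathcal{L}_{\mathrm{tw}}v+d\Phi_0'=0$ with $\psi_{\mathrm{tw}}$ and using $\mathcal{L}_{\mathrm{tw}}^{*}\psi_{\mathrm{tw}}=0$ together with $\langle\Phi_0',\psi_{\mathrm{tw}}\rangle=1$ gives $d=0$, hence $v\in\ker\mathcal{L}_{\mathrm{tw}}\cap\{\psi_{\mathrm{tw}}\}^{\perp}=\mathrm{span}\{\Phi_0'\}\cap\{\psi_{\mathrm{tw}}\}^{\perp}=\{0\}$. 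For surjectivity, given $z\in L^2$ put $d=\langle z,\psi_{\mathrm{tw}}\rangle$; then $z-d\Phi_0'\in\{\psi_{\mathrm{tw}}\}^{\perp}$, which by the simple-eigenvalue property in (HTw) is exactly $\mathrm{Range}\,\mathcal{L}_{\mathrm{tw}}$, and any preimage can be adjusted by a multiple of $\Phi_0'$ to land in $\{\psi_{\mathrm{tw}}\}^{\perp}$; boundedness of the inverse follows from the open mapping theorem. In parallel one must check that $G$ is $C^1$ (in fact $C^2$) near $(0,0,0)$: $F_0$ is $C^1$ on $\mathcal{U}_{H^2}\times\R$ since $f\in C^3$; and $F_{0;2}$ is $C^1$ on $\mathcal{N}$ because $Q$ is bounded on $H^1$ (Young's inequality, from $q\in H^1\cap L^1$ in (Hq)), $\Phi\mapsto g(\Phi)$ is $C^1$ into $W^{1,\infty}$ by (HSt), the reciprocal $\langle\partial_\xi\Phi,\psi_{\mathrm{tw}}\rangle^{-1}$ is smooth on $\mathcal{N}$, and $h\in C^3$ handles the last term; these are precisely the Nemytskii and convolution estimates of Appendix \ref{sec:app:est}.

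Granting these two ingredients, the implicit function theorem produces $\delta_0>0$ and a $C^1$ branch $\mu\mapsto(v(\mu),d(\mu))$ on $(-\delta_0,\delta_0)$ with $(v(0),d(0))=(0,0)$ and $G(v(\mu),d(\mu),\mu)=0$, locally unique. Since $G(0,0,\mu)=\mu F_{0;2}(\Phi_0)$ is linear in $\mu$ with $F_{0;2}(\Phi_0)\in L^2$ a fixed element, the standard implicit-function estimate gives $\|v(\mu)\|_{H^2}+|d(\mu)|\le C|\mu|$ with $C$ depending only on $\|D_{(v,d)}G(0,0,0)^{-1}\|$ and $\|F_{0;2}(\Phi_0)\|_{L^2}$. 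Fixing any $K>C$ and then choosing $\delta_\sigma\le\sqrt{\delta_0}$ small enough that the set $\{\|\Phi-\Phi_0\|_{H^2}+|c-c_0|\le K\sigma^2\}$ lies inside $\mathcal{N}$ and inside the local-uniqueness neighbourhood for all $0\le\sigma\le\delta_\sigma$, the pair $\Phi_\sigma:=\Phi_0+v(\sigma^2)\in\mathcal{U}_{H^2}$, $c_\sigma:=c_0+d(\sigma^2)$ obeys \sref{eq:mr:prp:swv:bnd:phi:sigma}; the cut-offs are inactive at $(\Phi_\sigma,0)$, so $\mathcal{K}_{\sigma}(\Phi_\sigma,0,c_\sigma)=F_{\sigma}(\Phi_\sigma,c_\sigma)=0$, i.e. \sref{eq:mr:prop:swv:eq}, and $(\Phi_\sigma,c_\sigma)$ is the unique solution in this region. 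Matching powers of $\mu=\sigma^2$ in $G(v(\mu),d(\mu),\mu)=0$ then recovers the expansions \sref{eq:res:phi-c-clim} and the identities \sref{eq:res:Phi02}--\sref{eq:res:c02} for the second-order coefficients.

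The step I expect to be the main obstacle is the $C^1$ mapping property of $F_{0;2}$, and in particular showing that $\Phi\mapsto\big(g(\Phi)\,Q\,g^{\mathrm{adj}}(\Phi)\,\psi_{\mathrm{tw}}\big)'$ is a well-defined, continuously differentiable map from $\mathcal{U}_{H^2}$ into $L^2(\R,\R^n)$. This is where one genuinely has to control how the non-compact, infinite-trace convolution operator $Q$ (kernel $q\in H^1\cap L^1$) interacts with the pointwise composition operators $g(\cdot)$, $g^{\mathrm{adj}}(\cdot)$ and with the spatial derivative, and it is exactly the place where the Hilbert--Schmidt estimates on $g$ and the regularity of the square-root kernel $p$ developed in Appendix \ref{sec:app:est} are needed. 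The remaining ingredients --- the Fredholm/implicit-function bookkeeping, the ordering of the quantifiers (``$K$ large, then $\delta_\sigma$ small''), and checking $\Phi_0\in\mathcal{U}_{H^2}$ --- are routine.
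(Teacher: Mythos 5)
Your overall route is, in substance, the one the paper relies on: the paper's proof simply defers to the quantitative fixed-point construction of \cite[Prop. 2.2]{Hamster2017}, which rests on exactly the ingredients you assemble — freezing the cut-offs on a fixed neighbourhood of $\Phi_0$ so that $\mathcal{K}_\sigma(\Phi,0,c)=F_0(\Phi,c)+\sigma^2 F_{0;2}(\Phi)$, inverting $(v,d)\mapsto \mathcal{L}_{\mathrm{tw}}v+d\,\Phi_0'$ on the slice $\langle v,\psi_{\mathrm{tw}}\rangle_{L^2}=0$ via the simple-eigenvalue structure of (HTw), and controlling the nonlocal $\sigma^2$-terms with the estimates of Appendix \ref{sec:app:est}. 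Two minor remarks: the Appendix supplies boundedness and Lipschitz bounds rather than Fr\'echet differentiability, so a classical $C^1$ implicit function theorem needs a short extra verification for $F_{0;2}$ (a Lipschitz/contraction formulation, as in the cited proof, sidesteps this); and taking $\Phi_{\mathrm{ref}}=\Phi_0$ is unnecessary, since $\overline{\chi}_h\equiv 1$ near $\Phi_0$ for any admissible reference function because $K_{\mathrm{up}}$ is large.

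The step that does not go through as written is your final uniqueness claim. The implicit function theorem is applied on the affine slice $\Phi\in\Phi_0+\big(H^2\cap\{\psi_{\mathrm{tw}}\}^{\perp}\big)$, so the local uniqueness it delivers is uniqueness among pairs obeying the phase normalisation $\langle \Phi-\Phi_0,\psi_{\mathrm{tw}}\rangle_{L^2}=0$; it does not give uniqueness among all pairs in the ball $\norm{\Phi-\Phi_0}_{H^2}+\abs{c-c_0}\le K\sigma^2$, and shrinking $\delta_\sigma$ cannot repair this. Indeed, the joint derivative $(v,d)\mapsto \mathcal{L}_{\mathrm{tw}}v+d\,\Phi_0'$ on all of $H^2\times\R$ is surjective with one-dimensional kernel $\mathrm{span}\{(\Phi_0',0)\}$, and this persists at any zero of $F_\sigma$ near $(\Phi_0,c_0)$; equivalently, repeating your slice argument around a translate $T_\gamma\Phi_0$ with the translated normalisation $\langle\Phi-T_\gamma\Phi_0,T_\gamma\psi_{\mathrm{tw}}\rangle_{L^2}=0$ produces, for every small $\gamma$, a solution pair within $O(\sigma^2)$ of $(T_\gamma\Phi_0,c_0)$, and those with $\abs{\gamma}=O(\sigma^2)$ also satisfy \sref{eq:mr:prp:swv:bnd:phi:sigma} once $K$ is large. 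Because the $\sigma^2$-terms involve the fixed function $\psi_{\mathrm{tw}}$, the equation is no longer translation invariant, so these nearby solutions carry slightly different speeds and are genuinely distinct from your $(\Phi_\sigma,c_\sigma)$: the zero set of $F_\sigma$ near $(\Phi_0,c_0)$ is a one-parameter branch along the translation mode, not a single point. The uniqueness your construction actually yields (and the same is true of the fixed-point construction the paper cites) is therefore uniqueness within the normalised class; to assert uniqueness in the unnormalised $K\sigma^2$-ball you would need an additional argument singling out one point of this branch, which the slice-IFT alone cannot provide, so you should either state the normalisation explicitly as part of the uniqueness class or supply that missing argument.
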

\begin{proof}
On account of the estimates in 
Appendix \ref{sec:app:est},
the bounds in \cite[\S 7]{Hamster2017}
can be transferred to the current context.
The result can hence be established by 
following the proof of \cite[Prop. 2.2]{Hamster2017}.
\end{proof}

Having defined $\overline{b},\Phi_\s$ and $c_\s$, $\overline{a}_\s$ can now be written as
\begin{equation}
    \overline{a}_{\sigma}(U,\G)=
    - \overline{\chi}_l( U,\G)
    \bip{   \mathcal{K}_{\s}( U, \G, c_\s), T_{\Gamma} \psit
     }_{{L^2(\R,\R^n)}}.
\end{equation}
The commutation relations
\begin{equation}
T_{\Gamma}  f(U) =  f (T_{\Gamma} U),
\qquad
T_{\Gamma}  g(U)[w] =  g (T_{\Gamma} U)[T_{\G}w],
\qquad
T_{\Gamma}  g^{\mathrm{adj}}(U)[\psi] =  
  g^{\mathrm{adj}}( T_{\Gamma} U)[T_{\G}\psi],
\end{equation}
the latter of which exploits the translation invariance of $Q$,
allow us to conclude the crucial identities
\begin{equation}
    \overline{a}_\s(U, \G) = \overline{a}_\s(T_{-\G} U , 0),
    \qquad
    \overline{b}(U,\Gamma)[w]
      = \overline{b}(T_{-\G}U , 0)[T_{-\Gamma} w].
\end{equation}
This motivates the definitions
\begin{align}
 a_\s(V) = \overline{a}_\s(\Phi_\s + V, 0 ),
 \qquad \qquad
    b_\s(V)= \overline{b}(\Phi_\s + V,0)
\end{align}
that were introduced in {\S}\ref{sec:mr:st}.
In order to see that these
expressions reduce to
\sref{eq:derphase:b} and \sref{eq:derphase:a}
when $\norm{V}_{L^2(\R,\R^n)}$ is small,
we note that $F_\s(\Phi_\s, c_\s)  = 0$
and $\ip{\mathcal{L}_{\mathrm{tw}}V , \psit}_{L^2(\R,\R^n)} = 0$.

\subsection{It\^o lemma}
\label{sec:sps:ito}

Our goal here is to apply an appropriate version
of the It\^o lemma to the combined
stochastic process $\mathbf{Z}(t)=\big(X(t),\G(t)\big)$,
which takes values in the Hilbert spaces
\begin{equation}
\mathcal{H}^{1}_{\mathbf{Z}} = H^1(\R,\R^n) \times \Real,
\qquad
    \mathcal{H}_{\mathbf{Z}} = L^2(\R,\R^n) \times \Real,
\qquad
  \mathcal{H}^{-1}_{\mathbf{Z}} = H^{-1}(\R,\R^n) \times \Real.
\end{equation}
Indeed, upon defining nonlinearities
\begin{equation}
    \mathbf{A}_\s: \mathcal{H}^{1}_{\mathbf{Z}} \to \mathcal{H}^{-1}_{\mathbf{Z}},
    \qquad
    \mathbf{B}: \mathcal{H}^{1}_{\mathbf{Z}} \to HS\big( L^2_Q, \mathcal{H}_{\mathbf{Z}} \big)
\end{equation}
that act as
\begin{equation}
    \mathbf{A}_\s(X,\G)=\Big(
    \mathcal{K}_{\sigma;A}(X + \Phi_{\mathrm{ref}} ),
    c_\s+\bar a_\s(X+\Phi_\mathrm{ref},\G)
    \Big),
\end{equation}
together with
\begin{equation}
\mathbf{B}(X,\G)=\Big(g(X+\Phi_\mathrm{ref}),\overline{b}(X+\Phi_\mathrm{ref},\G) \Big),
\end{equation}
the coupled system for $\mathbf{Z}$ can formally be written as
\begin{align}
  \label{eq:ph:eq:Z}
    d \mathbf{Z}=\mathbf{A}_\s(\mathbf{Z})\,dt+\s\mathbf{B}(\mathbf{Z})\,dW^Q_t.
\end{align}

Our first result here clarifies how solutions
to this system should be interpreted. 
We emphasize
that our phase $\Gamma$ is 
almost surely continuous,
unlike its counterpart in \cite{Inglis} which admits jumps. This is a
direct consequence of the fact that $\Gamma$ is defined
to be the solution of an SODE rather than the minimizer of a distance
functional. The cut-off functions introduced in 
{\S}\ref{sec:sps:prlm} ensure that the phase
$\Gamma$ remains well-defined even if the orthogonality
condition $\langle V, \psit \rangle_{L^2(\R,\R^n)}=0$ can no longer be maintained.

\begin{prop}
\label{prp:phs:main:ex}
Suppose that (Hq), (HEq), (HDt), (HSt) and (HTw) are all satisfied
and fix $T > 0$, $0 \le \sigma \le \delta_\s$ and $c_\s \in \Real$.
In addition, pick an initial condition $\mathbf{Z}_0 \in \mathcal{H}_{\mathbf{Z}}$.
Then there is a unique map $\mathbf{Z}: [0,T] \times \Omega \to \mathcal{H}_{\mathbf{Z}}$
that is of class\footnote{Recall definition \sref{eq:sps:def:n2} for $\mathcal{N}^2$.}
$\mathcal{N}^2 \big( [0 , T] ; (\mathcal{F}_t ) ;
        \mathcal{H}^1_{\mathbf{Z}} \big)$
and satisfies the following properties.
\begin{itemize}
\item[(i)]{
  For almost all $\omega \in \Omega$,
  the map $t \mapsto \mathbf{Z}(t, \omega)$ is
  of class  $C([0,T]; \mathcal{H}_{\mathbf{Z}})$.
}
\item[(ii)]{
  For all $t \in [0,T]$, the map
  $\omega \mapsto \mathbf{Z}(t, \omega) \in \mathcal{H}_{\mathbf{Z}}$
  is $(\mathcal{F}_t)$-measurable.
}
\item[(iii)]{

  We have the inclusion
    $\mathbf{B} (\mathbf{Z}) \in
      \mathcal{N}^2 \big( [0 , T] ; (\mathcal{F}_t ) ;
        HS(L^2_Q , \mathcal{H}_{\mathbf{Z}}   )
        \big).$
}
\item[(iv)]{
  For almost all $\omega \in \Omega$,
  the identity
  \begin{equation}
  \label{eq:phs:id:for:z:weak}
  \mathbf{Z}(t) = \mathbf{Z}_0
    + \int_0^t \mathbf{A}_\s\big(\mathbf{Z}(s) \big) \, ds
    + \sigma \int_0^t \mathbf{B}\big( \mathbf{Z}(s) \big) \, dW^Q_s
  \end{equation}
  holds for all $0 \le t \le T$.
}
\end{itemize}
\end{prop}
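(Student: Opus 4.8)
The plan is to exploit the fact that the coupled system \sref{eq:ph:eq:Z} is \emph{triangular}: the first component of $\mathbf{A}_\s$ equals $\mathcal{K}_{\s;A}(X+\Phi_{\mathrm{ref}})$ and the first component of $\mathbf{B}$ equals $g(X+\Phi_{\mathrm{ref}})$, neither of which depends on $\Gamma$. Hence the $X$-equation decouples and is precisely the SPDE \sref{eq:mr:main:spdeX} on $L^2(\R,\R^n)$. First I would solve this equation with the variational theory of \cite{LiuRockner} on the Gelfand triple $H^1(\R,\R^n) \subset L^2(\R,\R^n) \subset H^{-1}(\R,\R^n)$: conditions (Hq), (HEq), (HSt) and (HDt) were set up precisely so that the coercivity, local monotonicity, boundedness and hemicontinuity hypotheses of \cite[Thm 1.1]{LiuRockner} are met, with the Hilbert--Schmidt bound on the noise coefficient provided by Appendix \ref{sec:app:est}. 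This produces a unique $X$ of class $\mathcal{N}^2\big([0,T];(\mathcal{F}_t);H^1(\R,\R^n)\big)$ with almost surely continuous $L^2$-paths, satisfying the variational form of \sref{eq:mr:main:spdeX} in $H^{-1}(\R,\R^n)$.

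With $X$ now fixed, I would view the $\Gamma$-equation \sref{eq:sps:eqn:for:gm} as a scalar SDE with random, time-dependent coefficients $\Gamma \mapsto \bar a_\s(X(t)+\Phi_{\mathrm{ref}},\Gamma)$ and $\Gamma \mapsto \overline{b}(X(t)+\Phi_{\mathrm{ref}},\Gamma)$. Three structural facts make this tractable: $\Gamma \mapsto T_\Gamma\psit$ is globally Lipschitz from $\R$ into $H^1(\R,\R^n)$ with constant $\nrm{\psit}_{H^1}$, because $\psit \in H^2(\R,\R^n)$; the cut-offs $\overline{\chi}_l$ and $\overline{\chi}_h$ are bounded with bounded Lipschitz constants, since $\chi_{\mathrm{low}}$ and $\chi_{\mathrm{high}}$ are smooth and $\overline{\chi}_l$ never sees the pole of $\chi_{\mathrm{low}}^{-1}$; and, by the computation \sref{eq:sps:hs:norm:ovl:b} together with Appendix \ref{sec:app:est}, $\overline{b}(X(t)+\Phi_{\mathrm{ref}},\Gamma)$ is a genuine element of $HS(L^2_Q,\R)$ whose norm is controlled uniformly in $\Gamma$. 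Together these show the coefficients are globally Lipschitz and of linear growth in $\Gamma$, with a progressively measurable Lipschitz constant dominated by $C\big(1+\nrm{X(t)}_{H^1(\R,\R^n)}\big)$, which is square-integrable on $[0,T]\times\Omega$. Localising with $\tau_R = \inf\{t : \int_0^t (1+\nrm{X(s)}_{H^1(\R,\R^n)})^2\,ds \ge R\}$ reduces matters to the classical Picard theorem for SDEs with bounded Lipschitz coefficients, giving a unique almost surely continuous $(\mathcal{F}_t)$-adapted $\Gamma$ on $[0,T]$.

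It then remains to set $\mathbf{Z} = (X,\Gamma)$ and verify (i)--(iv). Property (i) follows from path continuity of both components; (ii) because $X(t)$ and $\Gamma(t)$ are both $(\mathcal{F}_t)$-measurable; (iii) because $g(X+\Phi_{\mathrm{ref}}) \in \mathcal{N}^2\big([0,T];(\mathcal{F}_t);HS(L^2_Q,L^2(\R,\R^n))\big)$ by Appendix \ref{sec:app:est}, while the second slot is controlled via \sref{eq:sps:hs:norm:ovl:b}; and (iv) is the concatenation of the variational identity for $X$ with the integral form of the $\Gamma$-SDE, read in $\mathcal{H}^{-1}_{\mathbf{Z}} = H^{-1}(\R,\R^n)\times\R$ consistently with $\mathbf{A}_\s : \mathcal{H}^1_{\mathbf{Z}} \to \mathcal{H}^{-1}_{\mathbf{Z}}$. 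Uniqueness is inherited: $X$ is unique by \cite[Thm 1.1]{LiuRockner}, and $\Gamma$ is then pathwise-unique given $X$. Since $T>0$ was arbitrary, the solution extends globally in time.

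The step I expect to demand the most care is the regularity bookkeeping in the drift $\bar a_\s$. It involves $\mathcal{K}_\s(U,\Gamma,c_\s)$, which contains the term $\rho U''$, so the pairing $\bip{\mathcal{K}_\s(U,\Gamma,c_\s),T_\Gamma\psit}_{L^2(\R,\R^n)}$ must be interpreted through the $H^{-1}$--$H^1$ duality, with all derivatives integrated by parts onto $\psit \in H^2(\R,\R^n)$; one must then check that the resulting functional is jointly measurable and progressively measurable in $(t,\omega)$ and Lipschitz in $\Gamma$. Here the role of $\overline{\chi}_l$ is essential: it prevents blow-up when $\bip{\p_\xi U,T_\Gamma\psit}_{L^2(\R,\R^n)}$ is small, in particular once the orthogonality $\langle V,\psit\rangle_{L^2(\R,\R^n)}=0$ can no longer be maintained, and $\overline{\chi}_h$ prevents the coefficients from growing with $\nrm{U-T_\Gamma\Phi_{\mathrm{ref}}}_{L^2(\R,\R^n)}$, so that only the square-integrable $H^1$-norm of $X$ --- and no unavailable uniform bound --- enters the localisation. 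Once these points are handled, the argument reduces entirely to invoking \cite[Thm 1.1]{LiuRockner} and the classical theory of finite-dimensional It\^o SDEs.
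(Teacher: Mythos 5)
Your proposal is correct and follows essentially the same route as the paper: exploit the triangular structure, obtain $X$ from \cite[Thm 1.1]{LiuRockner} via the estimates of Appendix \ref{sec:app:est}, and then solve the $\Gamma$-equation as a scalar SDE with random coefficients whose Lipschitz and boundedness properties come from the cut-offs and the Hilbert--Schmidt bound \sref{eq:sps:hs:norm:ovl:b}. The only (immaterial) difference is that the paper handles the $\Gamma$-component by extending \cite[Thm.\ 3.1.1]{Concise} to cylindrical $Q$-Wiener noise, whereas you use a localized Picard argument, which amounts to the same standard machinery.
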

\begin{proof}
In light of the estimates obtained
in Appendix \ref{sec:app:est}, we can closely follow the
proof of \cite[Prop 2.1]{Hamster2017}.
Indeed, the existence of the $dt \otimes \mathbb{P}$
version of $X$ that
is $(\mathcal{F}_t)$-progressively
measurable as a map into $H^1(\R,\R^n)$
follows from \cite[Ex. 4.2.3]{Concise}.
The main result
from \cite{LiuRockner} with $\alpha = 2$ and $\beta = 4$
can be used to verify the remaining statements concerning $X$.

As in \cite{Hamster2017}, the techniques developed
in \cite[Chapter 3]{Concise}
can be used to treat the second component of \sref{eq:ph:eq:Z}
as an SDE for $\Gamma$ with random coefficients. The key ingredient
is \cite[Thm. 3.1.1]{Concise}, which however is stated
only for finite dimensional noise. We claim
here that the conclusions also extend to 
the current setting 
where a cylindrical $Q$-Wiener process drives the stochastic terms.
To see this, we note that the  It\^o formula used in line 3.1.14
of the proof and the Burkholder-Davis-Gundy inequality used
on page 56 both extend naturally to our infinite-dimensional
setting. Most importantly, the local martingale defined in 3.1.14 remains a local martingale.
The remaining details can now easily be filled in by the
interested reader.
\end{proof}

The main ingredient to compute the equation for $V$
is the It\^o lemma. There are many versions 
available in the literature, but we choose to apply the formulation in \cite{DaPratomild} to our framework.
Note here that $D\phi$ and $D^2\phi$ are Frechet derivatives.

\begin{lem}\label{lem:ito}
Consider the setting of Proposition \ref{prp:phs:main:ex}
and pick a functional $\phi\in C^2(\mathcal{H}^{-1}_{\mathbf{Z}} ,\R)$.
Then for almost all $\omega \in \Omega$, the identity
\begin{align}
\label{eq:sps:itolemma}
\begin{split}
    \phi\big(\mathbf{Z}(t)\big)=&
      \phi\big(\mathbf{Z}(0)\big)
      +\int_0^t D\phi\big(\mathbf{Z}(s)\big)
        [\mathbf{A}_\s\big(\mathbf{Z}(s)\big)] \, ds
        + \sigma\int_0^tD\phi\big(\mathbf{Z}(s)\big)[\mathbf{B}\big(\mathbf{Z}(s)\big)] \, dW^Q_s\\
    &+\frac{1}{2} \s^2 \sum_{k=0}^\infty\int_0^tD^2\phi\big(\mathbf{Z}(s)\big)
      [\mathbf{B}\big(\mathbf{Z}(s)\big)\sqrt Qe_k,
        \mathbf{B}\big(\mathbf{Z}(s)\big)\sqrt Qe_k] \, ds
    \end{split}
\end{align}
holds for all $t > 0$.
\end{lem}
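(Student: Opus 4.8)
The plan is to recognise $\mathbf{Z}$ as a genuine It\^o process with values in the Hilbert space $\mathcal{H}^{-1}_{\mathbf{Z}}$ and then to apply the infinite-dimensional It\^o formula of \cite{DaPratomild} in that space. First I would collect the structural information provided by Proposition \ref{prp:phs:main:ex}: the process $\mathbf{Z}$ is of class $\mathcal{N}^2\big([0,T];(\mathcal{F}_t);\mathcal{H}^1_{\mathbf{Z}}\big)$, has $\mathcal{H}_{\mathbf{Z}}$-continuous (hence $\mathcal{H}^{-1}_{\mathbf{Z}}$-continuous) sample paths, and satisfies the integral identity \sref{eq:phs:id:for:z:weak} almost surely. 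Next I would feed the solution into the estimates of Appendix \ref{sec:app:est}, which bound the $\mathcal{H}^{-1}_{\mathbf{Z}}$-norm of $\mathbf{A}_\s(\mathbf{Z})$ and the $HS\big(L^2_Q,\mathcal{H}_{\mathbf{Z}}\big)$-norm of $\mathbf{B}(\mathbf{Z})$ polynomially in $\nrm{\mathbf{Z}}_{\mathcal{H}^1_{\mathbf{Z}}}$; together with the path-regularity and moment bounds from the proof of Proposition \ref{prp:phs:main:ex} this shows that $s\mapsto\mathbf{A}_\s(\mathbf{Z}(s))$ is almost surely Bochner-integrable on $[0,T]$ with values in $\mathcal{H}^{-1}_{\mathbf{Z}}$, while $\mathbf{B}(\mathbf{Z})$ belongs to $\mathcal{N}^2\big([0,T];(\mathcal{F}_t);HS(L^2_Q,\mathcal{H}_{\mathbf{Z}})\big)$ as already asserted in Proposition \ref{prp:phs:main:ex}(iii).

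Since the inclusion $\mathcal{H}_{\mathbf{Z}}\hookrightarrow\mathcal{H}^{-1}_{\mathbf{Z}}$ is continuous, the diffusion coefficient also lies in $HS\big(L^2_Q,\mathcal{H}^{-1}_{\mathbf{Z}}\big)$, so \sref{eq:phs:id:for:z:weak} exhibits $\mathbf{Z}$ as an $\mathcal{H}^{-1}_{\mathbf{Z}}$-valued It\^o process with Bochner-integrable drift and square-integrable Hilbert--Schmidt diffusion. Because $\phi$ is assumed $C^2$ precisely on $\mathcal{H}^{-1}_{\mathbf{Z}}$, its first Fr\'echet derivative $D\phi(\mathbf{Z}(s))$ is an element of the dual $\big(\mathcal{H}^{-1}_{\mathbf{Z}}\big)^*\cong\mathcal{H}^1_{\mathbf{Z}}$, so that $D\phi(\mathbf{Z}(s))[\mathbf{A}_\s(\mathbf{Z}(s))]$ is a legitimate duality pairing between $\mathcal{H}^1_{\mathbf{Z}}$ and $\mathcal{H}^{-1}_{\mathbf{Z}}$; likewise $D^2\phi(\mathbf{Z}(s))$ is a bounded bilinear form on $\mathcal{H}^{-1}_{\mathbf{Z}}$, a fortiori on $\mathcal{H}_{\mathbf{Z}}$, so the trace correction $\sum_{k=0}^\infty D^2\phi(\mathbf{Z}(s))[\mathbf{B}(\mathbf{Z}(s))\sqrt Qe_k,\mathbf{B}(\mathbf{Z}(s))\sqrt Qe_k]$ converges absolutely and is integrable in $(s,\omega)$. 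Applying the It\^o formula of \cite{DaPratomild} in the space $\mathcal{H}^{-1}_{\mathbf{Z}}$ (which, with the zero generator, reduces to the classical Hilbert-space formula; see also \cite{DaPratoZab}) then produces \sref{eq:sps:itolemma} pathwise for almost every $\omega$.

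I expect the only genuine obstacle to be a bookkeeping one rather than a conceptual one: the textbook statements of the It\^o formula are phrased for a Wiener process of finite trace (or at least a ``regular'' $Q$-Wiener process), whereas $W^Q_t$ is cylindrical. The remedy, already used in the proof of Proposition \ref{prp:phs:main:ex}, is to work throughout with $HS\big(L^2_Q,\cdot\big)$-valued integrands: the stochastic integral, the It\^o isometry \sref{eq:sps:ito:wrst:q:s}, and the Burkholder--Davis--Gundy bounds all carry over verbatim, and the proof of the It\^o formula uses only these together with a second-order Taylor expansion of $\phi$, so it transfers without change. Any residual integrability worries would be dispatched by a routine localisation: one stops at $\tau_N=\inf\{t:\nrm{\mathbf{Z}(t)}_{\mathcal{H}^1_{\mathbf{Z}}}+\int_0^t\big(\nrm{\mathbf{A}_\s(\mathbf{Z}(s))}_{\mathcal{H}^{-1}_{\mathbf{Z}}}+\nrm{\mathbf{B}(\mathbf{Z}(s))}^2_{HS(L^2_Q,\mathcal{H}_{\mathbf{Z}})}\big)\,ds\ge N\}$, applies the formula on $[0,t\wedge\tau_N]$, and lets $N\to\infty$, using the almost-sure finiteness of these quantities established above.
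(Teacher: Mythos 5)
Your proposal is correct and follows essentially the same route as the paper: the paper's proof simply interprets $\mathbf{Z}$, via item (iii) of Proposition \ref{prp:phs:main:ex} and the identity \sref{eq:phs:id:for:z:weak}, as an It\^o process on $\mathcal{H}^{-1}_{\mathbf{Z}}$ in the sense of \cite[Def. 1]{DaPratomild} with $S_{s,t}=I$, and then invokes \cite[Thm. 1]{DaPratomild}. Your additional verification of integrability, the duality pairing for $D\phi$, and the localisation/cylindrical-noise bookkeeping are details the paper leaves implicit (the cited result already accommodates cylindrical $Q$-Wiener processes), but they do not change the argument.
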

\begin{proof}
Item (iii) of Proposition \ref{prp:phs:main:ex}
and the identity \sref{eq:phs:id:for:z:weak}
allow us to interpret $\mathbf{Z}(t)$ as a (standard)
It\^o process on $\mathcal{H}^{-1}_{\mathbf{Z}}$
in the sense of \cite[Def. 1]{DaPratomild}, with
$S_{s,t} = I$. In particular, we can apply
\cite[Thm. 1]{DaPratomild} to obtain the result.
\end{proof}

\subsection{SPDE for $V$}
\label{sec:compps}

The defining identity \sref{eq:mr:defn:R}
for $\mathcal{R}_\s$ can be formulated
as
\begin{equation}
\begin{array}{lcl}
    \mathcal{R}_\s(V) & = & F_{\s}(\Phi_\s + V, c_\s) + a_\s( V) [ \Phi_\s' + V' ]
\\[0.2cm]
    & = & \mathcal{K}_\s(\Phi_\s + V, 0, c_\s) +  a_\s( V) [ \Phi_\s' + V' ],
\end{array}
\end{equation}
which is now well-defined as an element
of $H^{-1}(\R,\R^n)$ for all $V \in H^1(\R,\R^n)$.
Recalling the definition
\begin{align}
    \mathcal{S}_\s(V)[w]=
     g(\Phi_\s+V)[w]
       +\p_\xi(\Phi_\s+V)b_\s(V)[w] ,
\end{align}
we now set out to establish
the following result.

\begin{prop}
\label{prp:sps:props:v}
Suppose that (Hq),  (HEq), (HDt), (HSt) and (HTw) all hold.
Then the map
\begin{equation}
V: [0, T] \times \Omega \to L^2(\R,\R^n)
\end{equation}
defined by \sref{eq:sps:def:V}
is of class
$\mathcal{N}^2
       \big( [0 , T] ; (\mathcal{F}_t ) ; H^1(\R,\R^n) \big)$
and 
satisfies the following properties.
\begin{itemize}
\item[(i)]{
  For almost all $\omega \in \Omega$,
  the map
  $t \mapsto V(t,\omega)$
  is of class $C\big([0,T]; L^2(\R,\R^n)\big)$.
}
\item[(ii)]{
  For all $t \in [0,T]$, the map
  $\omega \mapsto V(t, \omega)
    \in L^2(\R,\R^n)$
  is $(\mathcal{F}_t)$-measurable.
}
\item[(iii)]{
  We have the inclusion

    $ \mathcal{S}_{\sigma}(V)
     \in
       \mathcal{N}^2 \big( [0 , T] ; (\mathcal{F}_t ) ; HS\big(L^2_Q , L^2(\R,\R^n)\big) \big). $
}
\item[(iv)]{
  For almost all $\omega \in \Omega$, we have the inclusion
  \begin{equation}
    \mathcal{R}_{\sigma}\big( V(\cdot, \omega) \big)
      \in L^1\big([0,T]; H^{-1}(\R,\R^n)\big)
  \end{equation}
and the identity
  \begin{equation}
  \label{eq:sps:int:eg:for:v}
  \begin{array}{lcl}
  V(t) & = & V(0) + \int_0^t
     \mathcal{R}_{\sigma}\big( V(s) \big)\, ds
    + \sigma \int_0^t
        \mathcal{S}_{\s}\big(V(s)\big)
       d W^Q_s
  \end{array}
  \end{equation}
  holds for all $0 \le t \le T$. }
\end{itemize}
\end{prop}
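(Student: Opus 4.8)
The plan is to derive \sref{eq:sps:int:eg:for:v} by applying the It\^o lemma (Lemma \ref{lem:ito}) componentwise to the process $\mathbf{Z}(t) = \big(X(t),\G(t)\big)$ of Proposition \ref{prp:phs:main:ex}, using the explicit change-of-variables map that sends $(X,\G)$ to $V = T_{-\G}[X + \Phi_{\mathrm{ref}}] - \Phi_\s$. Concretely, for each fixed test function $\zeta \in C_c^\infty(\R,\R^n)$ I would introduce the scalar functional $\phi_\zeta(X,\G) = \langle T_{-\G}[X+\Phi_{\mathrm{ref}}] - \Phi_\s, \zeta\rangle_{L^2}$, which is smooth as a map $\mathcal{H}^{-1}_{\mathbf{Z}} \to \R$ because it is linear in $X$ (after pairing against the smooth, compactly supported, shifted test function $T_\G\zeta$) and smooth in $\G$ through the shift. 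Computing $D\phi_\zeta$ and $D^2\phi_\zeta$ and substituting $\mathbf{A}_\s$, $\mathbf{B}$ into \sref{eq:sps:itolemma} then produces, after recognizing the shift derivative $\p_\G T_{-\G}U = -\p_\xi T_{-\G}U$ and collecting terms, exactly the weak form of \sref{eq:sps:int:eg:for:v} tested against $\zeta$; since $\zeta$ was arbitrary in a dense set, the identity holds in $H^{-1}(\R,\R^n)$.

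The bulk of the argument is the It\^o computation in \S\ref{sec:compps} itself, which is where the shapes of $F_\s$, $N_\s$, $\mathcal{S}_\s$ and the correction-term structure \sref{eq:mr:def:F:i}--\sref{eq:mr:def:F:iii} are read off. The first-order (drift) terms give $c_\s \p_\xi(\Phi_\s+V)\,\overline\chi_l$-type contributions together with $\rho(\Phi_\s+V)'' + f(\Phi_\s+V) + \s^2 h(\Phi_\s+V)$ and the $\overline a_\s$ contribution, which reassemble into $\mathcal{R}_\s(V) = F_\s(\Phi_\s+V,c_\s) + a_\s(V)\p_\xi(\Phi_\s+V)$. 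The stochastic term gives $g(\Phi_\s+V) + \p_\xi(\Phi_\s+V)b_\s(V)$, i.e.\ $\mathcal{S}_\s(V)$. Crucially, the second-order It\^o correction $\tfrac12\s^2\sum_k D^2\phi_\zeta[\mathbf{B}\sqrt Q e_k, \mathbf{B}\sqrt Q e_k]$ is precisely what produces the $\mathcal{K}_B$ term $\tfrac12\|\overline b\|^2_{HS}\,\Phi_\s''$ and, combined with the cross terms between the $\G$-shift and the $g$-noise, the $[\mathcal{K}_C]'$ term in \sref{eq:sps:defs:T:ABC}; getting these cross-derivative terms organized correctly is the delicate bookkeeping step. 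Once the calculation is done for small $V$ one observes that both right-hand sides in \sref{eq:sps:int:eg:for:v} are globally defined (via the cut-offs $\overline\chi_l,\overline\chi_h$) and agree with the stated formulas on the region where $\|V\|_{L^2}$ is small.

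For the regularity and measurability claims (i)--(iii): the inclusion $V \in \mathcal{N}^2([0,T];(\mathcal{F}_t);H^1(\R,\R^n))$ follows from the corresponding property of $X$ in Proposition \ref{prp:phs:main:ex}(iii), the a.s.\ continuity of $t\mapsto\G(t)$ (which holds because $\G$ solves an SODE with bounded coefficients), and the fact that the shift group $\{T_\gamma\}$ acts strongly continuously and boundedly on $H^1(\R,\R^n)$ — so $(X,\G)\mapsto T_{-\G}(X+\Phi_{\mathrm{ref}}) - \Phi_\s$ is a continuous map from $\mathcal{H}_{\mathbf{Z}}$ into $L^2$ (and into $H^1$ on the progressively-measurable $H^1$-version), preserving both the $C([0,T];L^2)$ paths and $(\mathcal{F}_t)$-measurability. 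Item (iii), the Hilbert--Schmidt inclusion for $\mathcal{S}_\s(V)$, follows by combining the Appendix \ref{sec:app:est} estimates on $g(U)$ as a map $L^2_Q\to L^2(\R,\R^n)$ with the finite-rank (rank one in the $\psit$-direction) correction $\p_\xi(\Phi_\s+V)b_\s(V)$, whose HS-norm is controlled by \sref{eq:sps:hs:norm:ovl:b} together with $\Phi_\s' + V' \in L^2$; item (iv)'s $L^1$-in-time integrability of $\mathcal{R}_\s(V)$ in $H^{-1}$ is immediate from $V \in L^2([0,T]\times\Omega;H^1)$ and the at-most-quadratic growth of the nonlinearities (using (HDt) and the boundedness of the cut-offs).

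The main obstacle is the transition from the abstract $\mathcal{H}^{-1}_{\mathbf{Z}}$-valued It\^o lemma to a genuine $H^{-1}(\R,\R^n)$-valued identity for $V$: the map $(X,\G)\mapsto V$ is \emph{not} an affine map of the ambient Hilbert-space variable (the $\G$-dependence enters through a nonlinear shift), so one must verify that the test-functional $\phi_\zeta$ really is $C^2$ on $\mathcal{H}^{-1}_{\mathbf{Z}}$ — in particular that $D^2\phi_\zeta$ is bounded, which uses that differentiating the shift lands on the \emph{smooth} test function $\zeta$ rather than on the rough $X$. A secondary subtlety, already flagged in the remark after \sref{eq:res:V-first}, is that \sref{eq:mr:main:spde} and \sref{eq:res:V-final} are driven by the \emph{same symbol} $W^Q_t$ only because of translation invariance; making this precise requires the commutation relations for $T_\G$ with $f$, $g$, $g^{\mathrm{adj}}$ stated in \S\ref{sec:sps:prlm} and the translation invariance of the kernel $p$ of $\sqrt Q$, so that the shifted noise $T_{-\G}\,dW^Q_t$ has the same law (as a process on $L^2_Q$) as $dW^Q_t$ and the stochastic integrals transform correctly. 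Beyond these two points the proof is a (lengthy but mechanical) It\^o calculation, so I would present the identification of terms in detail and refer to \cite{Hamster2017,Hamster2018} for the verification that the bootstrapping hypotheses on $\mathcal{R}_\s$ and $\mathcal{S}_\s$ carry over.
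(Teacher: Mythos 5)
Your proposal follows essentially the same route as the paper: the weak formulation via the test functional $\phi_\zeta$ (so that derivatives of the shift fall on the smooth $\zeta$ rather than on $X$), the application of Lemma \ref{lem:ito} to $\mathbf{Z}=(X,\Gamma)$ with the second-order terms producing $\mathcal{K}_B$ and $[\mathcal{K}_C]'$, the observation that the shifted noise $T_{-\Gamma(s)}dW^Q_s$ defines another $(\mathcal{F}_t,Q)$-cylindrical Wiener process by translation invariance of $\sqrt{Q}$, and the appeal to \cite{Hamster2017} for the regularity items. The argument is correct and matches the paper's proof in structure and in its identification of the two delicate points (smoothness of $\phi_\zeta$ and the law-invariance of the shifted noise).
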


Our main task here is to establish
\sref{eq:sps:int:eg:for:v}.
Taking derivatives of translation operators typically requires extra regularity of the underlying function,
which prevents us from applying an It\^o formula directly to \sref{eq:sps:def:V}.
In order to circumvent this technical issue,
we pick a test function $\zeta \in C_c^\infty(\Real,\Real^n)$
and consider 
the map
\begin{equation}
    \phi_{\zeta}: H^{-1}(\R,\R^n) \times \Real \to \Real
\end{equation}
that acts as
\begin{equation}
\begin{array}{lcl}
\phi_{\zeta}\big( X , \Gamma \big)
  & = &
   \langle X + \Phi_\mathrm{ref} - T_{\Gamma} \Phi_\s, T_{\Gamma} \zeta
       \rangle_{H^{-1}; H^1}.
\\[0.2cm]
\end{array}
\end{equation}
Here $\langle \cdot, \cdot \rangle_{H^{-1}; H^1}$
denotes the duality pairing
between $H^{-1}(\R,\R^n)$
and $H^1(\R,\R^n)$, which
coincides with the inner product
on $L^2(\R,\R^n)$
when both factors are from this space; see \cite[{\S}2]{Hamster2017}.
This map does have sufficient smoothness for our purposes here and allows us to write
\begin{equation}
\label{eq:sps:vt:with:zeta:ip}
    \langle V(t), \zeta \rangle
      = \phi_{\zeta} \big( X(t) , \G(t) \big).
\end{equation}

We now introduce the notation
\begin{equation}
\begin{array}{lcl}
   \overline{\mathcal{R}}_{\s;\zeta}(U , \G)
     & = &
      \bip{ \mathcal{K}_\s(U, \G , c_\s)
       + \overline{a}_{\s}(U, \G)  U'
   , T_{\G} \zeta  }_{H^{-1} ; H^1} ,
\end{array}
\end{equation}
together with
\begin{equation}
\begin{array}{lcl}
   \overline{\mathcal{S}}_{\s;\zeta}(U , \G) [w]
     & = &
      \langle g(U)[w] , T_\G \zeta \rangle_{L^2(\R,\R^n)}
       + \langle U' , T_\G \zeta \rangle_{L^2(\R,\R^n)} \overline{b} (U,\G) [w] .
\end{array}
\end{equation}
As usual, we have
\begin{equation}
    \overline{\mathcal{R}}_{\s;\zeta}(U , \Gamma)
    = \overline{\mathcal{R}}_{\s;\zeta}(T_{-\Gamma} U , 0),\
     \qquad
     \overline{\mathcal{S}}_{\s;\zeta}(U, \Gamma)[w]
     =  \overline{\mathcal{S}}_{\s;\zeta}(T_{-\G} U, 0)[ T_{-\G} w].
\end{equation}
In addition, we note that
\begin{equation}
\langle \mathcal{R}_{\sigma}\big(V\big), \zeta \rangle_{H^{-1};H^1}
= \overline{\mathcal{R}}_{\s;\zeta}( \Phi_\s + V, 0),
\qquad
\langle \mathcal{S}_{\sigma}\big(V \big)[w], \zeta \rangle_{L^2}
= \overline{\mathcal{S}}_{\s;\zeta}( \Phi_\s + V, 0)[w].
\end{equation}
These auxiliary functions can be used
to formulate the equation that arises
when applying Lemma \ref{lem:ito}
to the functional $\phi_{\zeta}$. 

\begin{lem}
\label{lem:sps:ito:comp}
Suppose that (Hq), (HEq), (HDt), (HSt) and (HTw) all hold.
Then for almost all $\omega \in \Omega$,
the identity
\begin{align}
\label{eq:sps:ito:i}
\begin{split}
\phi_{\zeta}\big(X(t), \Gamma(t) \big)
  =&  \phi_{\zeta}\big(X(0), \Gamma(0) \big)
 + \int_0^t \overline{\mathcal{R}}_{\s;\zeta}\big(U(s), \G(s)  \big) \, ds
+ \int_0^t \overline{\mathcal{S}}_{\s;\zeta}\big(U(s),\G(s) \big) \, d W_s^Q
\end{split}
\end{align}
holds for all $0 \le t \le T$,
in which we have used $U(s) = X(s) + \Phi_{\mathrm{ref}}$.
\end{lem}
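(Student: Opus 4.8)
The plan is to obtain \sref{eq:sps:ito:i} by applying the It\^o lemma of Lemma~\ref{lem:ito} to the functional $\phi_\zeta$ along the process $\mathbf{Z}=(X,\G)$ furnished by Proposition~\ref{prp:phs:main:ex}, and then to identify the three integrands that appear. The first step is to check $\phi_\zeta\in C^2(\mathcal{H}^{-1}_{\mathbf{Z}},\R)$ and to compute its derivatives. The useful observation is that $T_\G\Phi_\s$ and $T_\G\zeta$ are translated by the same amount, so $\ip{T_\G\Phi_\s,T_\G\zeta}_{L^2(\R,\R^n)}=\ip{\Phi_\s,\zeta}_{L^2(\R,\R^n)}$ does not depend on $\G$; writing $U=X+\Phi_\mathrm{ref}$ we may therefore use the equivalent form $\phi_\zeta(X,\G)=\ip{U,T_\G\zeta}-\ip{\Phi_\s,\zeta}_{L^2(\R,\R^n)}$, in which $X\mapsto\ip{X,T_\G\zeta}$ is bounded and linear, $\ip{\Phi_\mathrm{ref},T_\G\zeta}$ is a finite integral, and $\G\mapsto T_\G\zeta\in H^1(\R,\R^n)$ is smooth with $\p_\G T_\G\zeta=-T_\G\zeta'$ and $\p_\G^2T_\G\zeta=T_\G\zeta''$. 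This gives the required regularity and the derivatives
\[
D\phi_\zeta(X,\G)[(Y,\g)]=\ip{Y,T_\G\zeta}-\g\,\ip{U,T_\G\zeta'},
\]
\[
D^2\phi_\zeta(X,\G)[(Y_1,\g_1),(Y_2,\g_2)]=-\g_2\ip{Y_1,T_\G\zeta'}-\g_1\ip{Y_2,T_\G\zeta'}+\g_1\g_2\ip{U,T_\G\zeta''}.
\]
I would then use freely that $\ip{U,T_\G\zeta'}=-\ip{U',T_\G\zeta}$ and $\ip{U,T_\G\zeta''}=\ip{U'',T_\G\zeta}_{H^{-1};H^1}$ (valid since $\zeta\in C^\infty_c(\R,\R^n)$ and $X\in H^1(\R,\R^n)$), as well as $\ip{[\mathcal{K}_C(U,\G)]',T_\G\zeta}_{H^{-1};H^1}=-\ip{\mathcal{K}_C(U,\G),T_\G\zeta'}$.

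With these formulas the stochastic and first-order drift integrands are read off immediately. Substituting $\mathbf{B}(X,\G)[w]=\big(g(U)[w],\overline{b}(U,\G)[w]\big)$ into $D\phi_\zeta$ and integrating by parts yields exactly $\s\,\overline{\mathcal{S}}_{\s;\zeta}(U,\G)[w]$, and substituting $\mathbf{A}_\s(X,\G)=\big(\mathcal{K}_{\s;A}(U),c_\s+\overline{a}_\s(U,\G)\big)$ yields $\ip{\mathcal{K}_{\s;A}(U)+\big(c_\s+\overline{a}_\s(U,\G)\big)U',T_\G\zeta}$. Comparing with the definitions \sref{eq:sps:defs:T:ABC} of $\mathcal{K}_\s$ and of $\overline{\mathcal{R}}_{\s;\zeta}$, this accounts for every term of the target drift except $\s^2\mathcal{K}_B(U,\G)$ and $\s^2[\mathcal{K}_C(U,\G)]'$; the crux of the computation is that these are precisely what the second-order It\^o correction produces.

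The remaining step, which I regard as the heart of the matter, is to expand $\tfrac12\s^2\sum_k D^2\phi_\zeta(\mathbf{Z})[\mathbf{B}(\mathbf{Z})\sqrt Qe_k,\mathbf{B}(\mathbf{Z})\sqrt Qe_k]$, which converges thanks to item~(iii) of Proposition~\ref{prp:phs:main:ex}. Writing $\g_k=\overline{b}(U,\G)[\sqrt Qe_k]$ and $Y_k=g(U)\sqrt Qe_k$, the $\g_k^2$-part contributes $\tfrac12\s^2\big(\sum_k\g_k^2\big)\ip{U,T_\G\zeta''}=\tfrac12\s^2\nrm{\overline{b}(U,\G)}^2_{HS(L^2_Q,\R)}\ip{U'',T_\G\zeta}=\ip{\s^2\mathcal{K}_B(U,\G),T_\G\zeta}$ by \sref{eq:sps:hs:norm:ovl:b} and the definition of $\mathcal{K}_B$. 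For the cross part I would use that $(\sqrt Qe_k)$ is an orthonormal basis of $L^2_Q$ and the definition of $\overline{b}$ to obtain $\sum_k\g_kY_k=-\overline{\chi}^2_h(U,\G)\,\overline{\chi}_l(U,\G)\sum_k\ip{g(U)\sqrt Qe_k,T_\G\psit}\,g(U)\sqrt Qe_k=-\overline{\chi}^2_h(U,\G)\,\overline{\chi}_l(U,\G)\,g(U)g^\mathrm{adj}(U)T_\G\psit=\mathcal{K}_C(U,\G)$, where the middle identity is the resolution $\sum_k\ip{\Lambda\sqrt Qe_k,\psi}\Lambda\sqrt Qe_k=\Lambda\Lambda^\mathrm{adj}\psi$ with $\Lambda=g(U)$ and $\Lambda^\mathrm{adj}=g^\mathrm{adj}(U)$; hence the cross part equals $-\s^2\ip{\mathcal{K}_C(U,\G),T_\G\zeta'}=\ip{\s^2[\mathcal{K}_C(U,\G)]',T_\G\zeta}$. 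Adding the three integrands and using $\mathcal{K}_\s(U,\G,c)=cU'+\mathcal{K}_{\s;A}(U)+\s^2\mathcal{K}_B(U,\G)+\s^2[\mathcal{K}_C(U,\G)]'$, the drift becomes $\ip{\mathcal{K}_\s(U,\G,c_\s)+\overline{a}_\s(U,\G)U',T_\G\zeta}=\overline{\mathcal{R}}_{\s;\zeta}(U,\G)$, which is \sref{eq:sps:ito:i}.

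I expect the main obstacle to be the regularity bookkeeping around the shift operators rather than any hard estimate: since $\Phi_\s\notin H^1(\R,\R^n)$ and $X''\in H^{-1}(\R,\R^n)$ only, one cannot apply an It\^o formula to \sref{eq:sps:def:V} directly, which is exactly why the mollifying test function $\zeta$ is introduced. One must take care that the $\G$-dependence of $T_\G\Phi_\s$ genuinely drops out of every derivative of $\phi_\zeta$ (equivalently, that one works from the rewritten form above) and that each pairing that occurs is either a genuine $H^{-1}$--$H^1$ duality or a finite integral, so that the differentiations and integrations by parts above are legitimate. Checking that the hypotheses of Lemma~\ref{lem:ito} hold --- the $\mathcal{N}^2$-regularity and progressive measurability of $\mathbf{Z}$ and the inclusion $\mathbf{B}(\mathbf{Z})\in\mathcal{N}^2\big([0,T];(\mathcal{F}_t);HS(L^2_Q,\mathcal{H}_{\mathbf{Z}})\big)$ --- is immediate from Proposition~\ref{prp:phs:main:ex}.
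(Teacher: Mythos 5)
Your proposal is correct and follows essentially the same route as the paper's proof: apply Lemma~\ref{lem:ito} to $\phi_\zeta$, compute its first and second Fr\'echet derivatives (your rewriting based on the $\Gamma$-independence of $\langle T_\Gamma \Phi_\s, T_\Gamma \zeta\rangle_{L^2(\R,\R^n)}$ is equivalent to the paper's splitting $\phi_\zeta=\phi_{1;\zeta}+\phi_{2;\zeta}$), identify the second-order It\^o correction with $\s^2\mathcal{K}_B+\s^2[\mathcal{K}_C]'$ through the sum over the basis $(\sqrt{Q}e_k)$ of $L^2_Q$, and finally transfer derivatives from $\zeta$. One bookkeeping remark: $D\phi_\zeta[\mathbf{B}w]$ equals $\overline{\mathcal{S}}_{\s;\zeta}[w]$ itself, the prefactor $\s$ entering only through the It\^o formula, which is consistent with Corollary~\ref{cor:sps:weak:form:with:zeta} and shows your accounting is right (the absent $\s$ in the displayed statement of the lemma is a typographical slip, not a flaw in your argument).
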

\begin{proof}
For convenience, we introduce the splitting
\begin{equation}
\phi_{\zeta}(X,\Gamma) = \phi_{1;\zeta}(X,\Gamma) + \phi_{2;\zeta}(\Gamma)
\end{equation}
with
\begin{equation}
\begin{array}{lcl}
\phi_{1;\zeta}\big( X , \Gamma \big)
  & = &
   \langle X, T_{\Gamma} \zeta
       \rangle_{H^{-1} ; H^1 },
\\[0.2cm]
\phi_{2;\zeta}\big( \Gamma \big)
 & = &
   \langle  \Phi_{\mathrm{ref}} - T_{\Gamma} \Phi_\s , T_{\Gamma} \zeta
       \rangle_{H^{-1} ; H^1 }
\\[0.2cm]
 & = &
     \langle T_{-\Gamma} \Phi_{\mathrm{ref}} 
      - \Phi_\s , \zeta
       \rangle_{L^2(\R,\R^n)}.
\end{array}
\end{equation}
We note that $\phi_{1;\zeta}$ and $\phi_{2;\zeta}$ are both $C^2$-smooth,
with derivatives given by
\begin{align}
\begin{split}
D \phi_{1;\zeta}(X, \Gamma)[\widetilde{X}, \widetilde{\Gamma}]&=D_1\phi_{1;\zeta}(X, \Gamma)[\widetilde{X}]
  +D_2 \phi_{1;\zeta}(X, \Gamma)[\widetilde{\Gamma}]
 \\
 &= \langle \widetilde{X} , T_{\Gamma} \zeta \rangle_{H^{-1}; H^1}
  - \widetilde{\Gamma} \langle X , T_{\gamma} \zeta'  \rangle_{H^{-1}; H^1} ,
 \\
D \phi_{2;\zeta}(\Gamma)[\widetilde{\Gamma}]
 &= - \widetilde{\Gamma} \langle \Phi_{\mathrm{ref}} , T_{\Gamma} \zeta'  \rangle_{L^2(\R,\R^n)} ,
\end{split}
\end{align}
together with
\begin{align}
\begin{split}
D^2 \phi_{1;\zeta}(X, \Gamma)[\widetilde{X}, \widetilde{\Gamma}][\widetilde{X}, \widetilde{\Gamma}]=
&D_{1}^2 \phi_{1;\zeta}(X, \Gamma)[\widetilde{X},\widetilde{X}]
+2D_{1,2} \phi_{1;\zeta}(X, \Gamma)[\widetilde{X},\widetilde{\Gamma}]\\
&+D_{2}^2 \phi_{1;\zeta}(X, \Gamma)[ \widetilde{\Gamma},\widetilde{\Gamma}] \\
 =&-2\widetilde{\Gamma} \langle \widetilde{X} , T_{\gamma} \zeta' \rangle_{H^{-1}; H^1}
   + \beta^2 \langle X , T_{\gamma} \zeta'' \rangle_{H^{-1}; H^1} ,
\\
D^2 \phi_{2;\zeta}( \Gamma)[ \widetilde{\Gamma}, \widetilde{\Gamma}]
 =& \widetilde{\Gamma}^2 \langle \Phi_{\mathrm{ref}} , T_{\Gamma} \zeta'' \rangle_{L^2}.
\end{split}
\end{align}

We hence see that
\begin{equation}
\begin{array}{lcl}
D \phi_{\zeta}\big(\mathbf{Z}(s) \big)[\mathbf{A}_\s\big(\mathbf{Z}(s)\big)]
& = & \langle \mathcal{K}_{A;\s} \big(U(s)\big) , T_{\G(s)} \zeta \rangle_{H^{-1};H^1}
\\[0.2cm]
& & \qquad
- \big[c_\s + \overline{a}_\s\big(U(s),\G(s)\big)\big] \langle U(s), T_{\G(s)} \zeta' \rangle_{L^2(\R,\R^n)}
\\[0.2cm]
D \phi_{\zeta}\big(\mathbf{Z}(s) \big)[\mathbf{B}\big(\mathbf{Z}(s)\big)w]
& = & \langle g \big(U(s)\big)[w] , T_{\G(s)} \zeta \rangle_{H^{-1};H^1}
\\[0.2cm]
& & \qquad
-  \overline{b}\big(U(s),\G(s)\big)[w] \langle U(s), T_{\G(s)} \zeta' \rangle_{L^2(\R,\R^n)}.
\end{array}
\end{equation}
Upon writing
\begin{equation}
\begin{array}{lcl}
\mathcal{I}_k( U, \Gamma)
& = & 
- 2 \overline{b}(U,\G\big)[\sqrt{Q} e_k]
  \langle g(U)[ \sqrt{Q} e_k], T_{\G} \zeta' \rangle_{L^2(\R,\R^n)}
  \\[0.2cm]
& & \qquad
  + \Big(\overline{b}(U,\G)[\sqrt{Q}e_k] \Big)^2
    \langle U, T_{\G} \zeta'' \rangle_{L^2(\R,\R^n)},
\\[0.2cm]
\end{array}
\end{equation}
we also observe that
\begin{equation}
      D^2 \phi_{\zeta}\big(\mathbf{Z}(s) \big)
\big[\mathbf{B}\big(\mathbf{Z}(s)\big)\sqrt{Q}e_k,
  \mathbf{B}\big(\mathbf{Z}(s)\big)\sqrt{Q}e_k
\big] = \mathcal{I}_k\big(U(s),\G(s)\big) .
\end{equation}
A short computation yields
\begin{equation}
\begin{array}{lcl}
\mathcal{I}_k(U,\G)
& = & 
2 \overline{\chi}_h(U,\G)^2
  \overline{\chi}_{l}(U,\G)
      \bip{ g(U) [\sqrt{Q} e_k], T_{\G} \psit }_{L^2(\R,\R^n)}
      \\[0.2cm]
      & & \qquad \qquad 
     \times \langle g\big(U\big) [\sqrt{Q} e_k], T_{\G} \zeta' \rangle_{L^2(\R,\R^n)}
  \\[0.2cm]
& & \qquad
  + \Big(\overline{b}(U,\G)[\sqrt{Q} e_k] \Big)^2
    \langle U, T_{\G} \zeta'' \rangle_{L^2(\R,\R^n)}
\\[0.2cm]
& = &
 2 \overline{\chi}_h(U,\G)^2
  \overline{\chi}_{l}(U,\G)
      \bip{  \sqrt{Q} e_k, g^\mathrm{adj}(U) T_{\G} \psit }_{L^2_Q}
    \\[0.2cm]
    & & \qquad \qquad
    \times \langle  \sqrt{Q} e_k, g^\mathrm{adj}(U) T_{\G} \zeta' \rangle_{L^2_Q}
  \\[0.2cm]
& & \qquad
  + \Big(\overline{b}(U,\G)[\sqrt{Q} e_k] \Big)^2
    \langle U, T_{\G} \zeta'' \rangle_{L^2(\R,\R^n)}.
\end{array}
\end{equation}
In particular, we see that
\begin{equation}
\begin{array}{lcl}
\sum_{k=0}^\infty \mathcal{I}_k(U,\G) & = &
 2 \overline{\chi}_h(U,\G)^2
  \overline{\chi}_{l}(U,\G)
  \bip{
  g^\mathrm{adj}(U) T_{\G} \psit,
  g^\mathrm{adj}(U) T_{\G} \zeta'
  }_{L^2_Q}
\\[0.2cm]
& & \qquad
 + \norm{\overline{b}(U,\G) }^2_{HS(L^2_Q , \R)}
    \langle U, T_{\G} \zeta'' \rangle_{L^2(\R,\R^n)},
\\[0.2cm]
\end{array}
\end{equation}
which yields
\begin{equation}
\begin{array}{lcl}
\sum_{k=0}^\infty \mathcal{I}_k\big(U(s),\G(s)\big) 
 & = &
- 2
  \bip{
  \mathcal{K}_C \big(U(s), \G(s)\big),
   T_{\G(s)} \zeta'
  }_{L^2(\R,\R^n)}
\\[0.2cm]
& & \qquad
 + \norm{\overline{b}\big(U(s),\G(s)\big) }^2_{HS(L^2_Q , \R)}
    \langle U(s), T_{\G(s)} \zeta'' \rangle_{L^2(\R,\R^n)}.
\end{array}
\end{equation}
The derivatives can now be transferred from $\zeta$
to yield the desired expression.
\end{proof}

\begin{cor}
\label{cor:sps:weak:form:with:zeta}
Suppose that (Hq), (HEq), (HDt), (HSt) and (HTw) all hold and pick a test-function $\zeta \in C^\infty_c(\Real,\Real^n)$.
Then for almost all $\omega \in \Omega$,
the map $V$
defined by \sref{eq:sps:def:V}
satisfies
the identity
\begin{equation}
\label{eq:sps:itg:weak:formulation}
  \begin{array}{lcl}
  \langle V(t) , \zeta \rangle_{L^2(\R,\R^n)}
   & = & \langle V(0) , \zeta \rangle_{L^2}
    + \int_0^t
     \langle \mathcal{R}_{\s}\big( V(s) \big)  , \zeta \rangle_{H^{-1}; H^1}   \, ds
    \\[0.2cm]
    & & \qquad
    + \sigma \int_0^t
        \langle \mathcal{S}_{\s}\big(V(s)\big)
          T_{-\Gamma(s)} dW^Q_s , \zeta \rangle_{L^2(\R,\R^n)}
  \end{array}
  \end{equation}
for all $0 \le t \le T$.
\end{cor}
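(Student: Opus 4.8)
The plan is to derive \sref{eq:sps:itg:weak:formulation} directly from the It\^o identity \sref{eq:sps:ito:i} established in Lemma \ref{lem:sps:ito:comp}, using the relationship \sref{eq:sps:vt:with:zeta:ip} between $\langle V(t),\zeta\rangle$ and $\phi_\zeta\big(X(t),\Gamma(t)\big)$. First I would observe that the left-hand side of \sref{eq:sps:ito:i} equals $\langle V(t),\zeta\rangle_{L^2(\R,\R^n)}$ and the initial term equals $\langle V(0),\zeta\rangle_{L^2}$, so the only work is to rewrite the two integrals on the right-hand side in terms of $\mathcal{R}_\s$ and $\mathcal{S}_\s$ evaluated at $V(s)$.

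For the drift integral, I would use the translation-covariance identity $\overline{\mathcal{R}}_{\s;\zeta}(U,\Gamma) = \overline{\mathcal{R}}_{\s;\zeta}(T_{-\Gamma}U,0)$ stated just before the lemma, together with the definition $V(s) = T_{-\Gamma(s)}[X(s)+\Phi_\mathrm{ref}] - \Phi_\s = T_{-\Gamma(s)}U(s) - \Phi_\s$ from \sref{eq:sps:def:V}. This gives $\overline{\mathcal{R}}_{\s;\zeta}\big(U(s),\Gamma(s)\big) = \overline{\mathcal{R}}_{\s;\zeta}\big(\Phi_\s + V(s),0\big) = \langle \mathcal{R}_\s(V(s)),\zeta\rangle_{H^{-1};H^1}$, where the last equality is exactly the relation $\langle \mathcal{R}_\s(V),\zeta\rangle_{H^{-1};H^1} = \overline{\mathcal{R}}_{\s;\zeta}(\Phi_\s+V,0)$ recorded in the text. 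The integrability statement $\mathcal{R}_\s(V(\cdot,\omega)) \in L^1([0,T];H^{-1}(\R,\R^n))$ asserted in item (iv) of Proposition \ref{prp:sps:props:v} justifies the manipulation.

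For the stochastic integral, the analogous covariance identity $\overline{\mathcal{S}}_{\s;\zeta}(U,\Gamma)[w] = \overline{\mathcal{S}}_{\s;\zeta}(T_{-\Gamma}U,0)[T_{-\Gamma}w]$ yields, applied to $w = \sqrt{Q}e_k$ and summed against $d\beta_k$, the identity $\overline{\mathcal{S}}_{\s;\zeta}\big(U(s),\Gamma(s)\big)dW^Q_s = \overline{\mathcal{S}}_{\s;\zeta}\big(\Phi_\s+V(s),0\big)[T_{-\Gamma(s)}dW^Q_s] = \langle \mathcal{S}_\s(V(s))\,T_{-\Gamma(s)}dW^Q_s,\zeta\rangle_{L^2(\R,\R^n)}$, again using the recorded relation $\langle\mathcal{S}_\s(V)[w],\zeta\rangle_{L^2} = \overline{\mathcal{S}}_{\s;\zeta}(\Phi_\s+V,0)[w]$. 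The Hilbert-Schmidt regularity needed to make sense of the translated stochastic integral — namely $\mathcal{S}_\s(V) \in \mathcal{N}^2\big([0,T];(\mathcal{F}_t);HS(L^2_Q,L^2(\R,\R^n))\big)$, item (iii) of Proposition \ref{prp:sps:props:v} — together with the translation invariance of $Q$ (so that $T_{-\Gamma(s)}$ preserves $L^2_Q$ isometrically), ensures the rewritten integrand is still an admissible integrand and the two stochastic integrals agree. The main obstacle I anticipate is precisely this last point: carefully justifying that one may pull the time-dependent random translation $T_{-\Gamma(s)}$ inside the cylindrical stochastic integral, i.e. that $\int_0^t \overline{\mathcal{S}}_{\s;\zeta}(U(s),\Gamma(s))\,dW^Q_s$ genuinely coincides with $\int_0^t \langle \mathcal{S}_\s(V(s))\,T_{-\Gamma(s)}dW^Q_s,\zeta\rangle_{L^2}$ as stochastic integrals and not merely formally; this is the ``separate coordinate systems'' subtlety flagged in the warning after Step 2 of {\S}\ref{sec:derphase}, and it is handled by noting that both sides are limits of the same finite sums $\sum_k \int_0^t \langle g(U(s))[\sqrt{Q}e_k],T_{\Gamma(s)}\zeta\rangle\,d\beta_k(s)$ (plus the $\overline{b}$ contribution), using translation invariance of $Q$ to match the summands term by term.

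Syntactically, it only remains to conclude: since the test function $\zeta \in C^\infty_c(\Real,\Real^n)$ was arbitrary and both sides of \sref{eq:sps:itg:weak:formulation} are continuous in $t$ for almost all $\omega$ by item (i) of Proposition \ref{prp:sps:props:v}, the identity holds for all $0 \le t \le T$ on a single full-measure event (obtained by intersecting the full-measure events from Lemma \ref{lem:sps:ito:comp} over a countable dense set of test functions, then extending by density and continuity). This completes the proof of the corollary.
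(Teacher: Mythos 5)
Your argument is essentially the paper's own proof: the corollary is obtained by combining Lemma \ref{lem:sps:ito:comp} with the identification \sref{eq:sps:vt:with:zeta:ip} and the translation-covariance identities for $\overline{\mathcal{R}}_{\s;\zeta}$ and $\overline{\mathcal{S}}_{\s;\zeta}$, exactly as you do. One caveat: you justify several steps by citing items (i), (iii) and (iv) of Proposition \ref{prp:sps:props:v}, but in the paper that proposition is proved \emph{from} this corollary, so those citations are circular; they are also unnecessary, since the integrability, measurability and continuity you need are already supplied by Proposition \ref{prp:phs:main:ex} and the statement of Lemma \ref{lem:sps:ito:comp}, while the rigorous treatment of the translated stochastic integral (your anticipated ``main obstacle'') is deferred in the paper to the proof of Proposition \ref{prp:sps:props:v} via the auxiliary process $\widetilde{W}^Q_t$, with the corollary's right-hand side read as shorthand for $\int_0^t \overline{\mathcal{S}}_{\s;\zeta}\big(U(s),\G(s)\big)\,dW^Q_s$. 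Your closing density argument over test functions is likewise superfluous: the corollary fixes a single $\zeta$, and the full-measure event from Lemma \ref{lem:sps:ito:comp} already yields the identity for all $t$ simultaneously.
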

\begin{proof}
In view of Lemma \ref{lem:sps:ito:comp},
the result  follows from \sref{eq:sps:vt:with:zeta:ip}
together with
\begin{equation}
\overline{\mathcal{R}}_{\s;\zeta}\big(U(s) ; \G(s)\big)
=  \overline{\mathcal{R}}_{\s;\zeta}( T_{-\G(s)} U(s), 0)
= \overline{\mathcal{R}}_{\s;\zeta}\big( \Phi_\s + V(s), 0 \big)
= \langle \mathcal{R}_{\sigma}\big(V(s) \big), \zeta \rangle_{H^{-1};H^1}
\end{equation}
and a similar identity involving $\mathcal{S}_{\s}$.
\end{proof}

\begin{proof}[Proof of Proposition \ref{prp:sps:props:v}]
As a preparation, we
modify the definition
\sref{eq:sps:bck:def:w:q:t} and 
define a new process $\widetilde{W}^Q_t$
via the formal sum
\begin{equation}
\widetilde{W}^Q_t = \sum_{k=0}^\infty \int_0^t T_{-\Gamma(s)} \sqrt{Q} e_k  d\beta_k(s).
\end{equation}
The estimates in
\cite[{\S}2]{Karczewska2005}
all remain valid since $T_{-\Gamma(s)}$
is an isometry. In particular,
we can replace the 
$T_{-\Gamma(s)} dW^Q_s$ term in \sref{eq:sps:itg:weak:formulation}
by $d \widetilde{W}^Q_s$. 
The proof of Prop. 5.1 in \cite{Hamster2017}
can then be readily applied
to the current setting, yielding
all the desired properties after
replacing $d W^Q_s$  by
$d \widetilde{W}^Q_s$ in \sref{eq:sps:int:eg:for:v}.

The key issue here is that
- by design - stochastic integrals
with respect to
$d W^Q_s$ and
$d \widetilde{W}^Q_s$
are indistinguishable
from each other in the sense
that they generate the same statistical properties.
To see this, we pick 
a Hilbert space $\mathcal{H}$ together with two processes 
\begin{equation}
    B_1, B_2 \in  \mathcal{N}^2\big( [0,T] ; (\mathcal{F}_t ) ;
   HS(L^2_Q, \mathcal{H}) \big)
\end{equation}
and consider the shifted inner product
\begin{equation}
\begin{array}{lcl}
\mathcal{I}_{1,2} & = &
E\,  \langle \int_0^t  B_1(s) \, d \widetilde{W}^Q_s , 
      \int_0^t B_2(s) \, d \widetilde{W}^Q_s \rangle_{\mathcal{H}}
\\[0.2cm]
& = & 
E\,  \langle \int_0^t  B_1(s) \,T_{-\Gamma(s)}  d W^Q_s , 
      \int_0^t B_2(s) \, T_{-\Gamma(s)} d W^Q_s \rangle_{\mathcal{H}}.
\end{array}
\end{equation}
The translational invariance of $\sqrt{Q}$ allows us to write
\begin{equation}
    T_{\gamma} \sqrt{Q} e_k = \sqrt{Q} T_{\gamma} e_k
\end{equation}
for any $\gamma \in \R$. In view of the fact that 
$(T_{\gamma}e_k)$ is also an orthonormal basis for $L^2(\R,\R^m)$,
we have
\begin{equation}
\begin{array}{lcl}
\langle B_1(s) T_{-\Gamma(s)} , B_2(s) T_{-\Gamma(s)} \rangle_{HS(L^2_Q,\mathcal{H} ) }
& = & 
\sum_{k=0}^\infty \langle B_1(s) T_{-\Gamma(s)} \sqrt{Q} e_k , B_2(s) T_{-\Gamma(s)} \sqrt{Q} e_k \rangle_{\mathcal{H}  }
\\[0.2cm]
& = & 
\sum_{k=0}^\infty \langle B_1(s)  \sqrt{Q}T_{-\Gamma(s)} e_k , B_2(s) \sqrt{Q} T_{-\Gamma(s)} e_k \rangle_{\mathcal{H}  }
\\[0.2cm]
& = & 
\langle B_1(s)  , B_2(s) 
   \rangle_{HS(L^2_Q, \mathcal{H} ) }
\end{array}
\end{equation}
for all $0 \le s \le t$.
The It\^o isometry \sref{eq:sps:ito:wrst:q:s}
hence allows us to compute
\begin{equation}
\begin{array}{lcl}
   \mathcal{I}_{1,2}
& = & 
   E \, \int_0^t \langle B_1(s) T_{-\Gamma(s)} , B_2(s) T_{-\Gamma(s)} \rangle_{HS(L^2_Q,\mathcal{H} ) } \, ds 
\\[0.2cm]
& = & 
   E \, \int_0^t \langle B_1(s)  , B_2(s) 
   \rangle_{HS(L^2_Q, \mathcal{H} ) } \, ds 
\\[0.2cm]
& = & 
E \, \langle \int_0^t  B_1(s) \,  d W^Q_s , 
      \int_0^t B_2(s) \,  d W^Q_s \rangle_{ \mathcal{H} }.
\end{array}
\end{equation}
Applying this with $\mathcal{H}= \R$,
$t = t_1 \wedge t_2$ and
\begin{equation}
B_1(s) = \mathbf{1}_{s < t_1} \langle \cdot , w_1 \rangle_{L^2(\R,\R^m)},
\qquad
\qquad
B_2(s) = \mathbf{1}_{s < t_2}\langle \cdot , w_2 \rangle_{L^2(\R,\R^m)},
\end{equation}
we recover
the familiar correlations
\begin{equation}
   E \Big[ \langle \widetilde{W}^Q_{t_1} , w_1 \rangle_{L^2(\R,\R^m)} \langle \widetilde{W}^Q_{t_2} , w_2 \rangle_{L^2(\R,\R^m)} \Big]
    = (t_1 \wedge t_2) \langle Q w_1, w_2 \rangle_{L^2(\R,\R^m)}.
\end{equation}
In view of \cite[Defn 2.1]{Dalang}, this means that
$\widetilde{W}^Q_t$ is also a $(\mathcal{F}_t , Q)$-cylindrical Wiener process. We therefore follow the convention in 
\cite[{\S}2.2.2]{Lord2012}
and drop the distinction between $W^Q_t$ and $\widetilde{W}^Q_t$.
\end{proof}

\section{Stability}
\label{sec:stb}
Our goal here is to provide
a rigorous formulation of the two stability
results provided in {\S}\ref{sec:mr:st}
and give a brief outline of their proofs.
Given our preparatory work in {\S}\ref{sec:sps}
and Appendix \ref{sec:app:est}, 
we can appeal to \cite{Hamster2018} for many
of the details. However, we will need
to generalize a stochastic time transformation
result to our setting of cylindrical $Q$-Wiener
processes.

 Given an initial condition $U_0 \in \mathcal{U}_{H^1}$
 that is sufficiently close to $\Phi_\s$, it is possible
 to find a corresponding $(\Gamma_0, V_0)$ so that
 $U_0 = T_{\Gamma_0} [ V_0 + \Phi_\s]$ with $\langle V_0, \psit \rangle = 0$;
 see \cite[Prop. 2.3]{Hamster2017}.
 Recalling the function $V$ defined by
 \begin{equation}
   \label{eq:stb:eqn:for:V}
     \begin{array}{lcl}
  V(t) & = & V(0) + \int_0^t
     \mathcal{R}_{\sigma}\big( V(s) \big)\, ds
    + \sigma \int_0^t
        \mathcal{S}_{\s}\big(V(s)\big)
       d W^Q_s ,
 \end{array}
 \end{equation}
 we fix a sufficiently small $\epsilon > 0$
 and introduce the scalar function
 \begin{equation}\label{eq:DefNeps}
N_{U_0} (t) =
\norm{V(t)}_{L^2(\R,\R^n)}^2
 + \int_0^t e^{- \e (t - s) }
    \norm{  V(s)}_{H^1(\R,\R^n)}^2 \, ds .
\end{equation}
In addition, for any $\eta > 0$ we introduce the
$(\mathcal{F}_t)$-stopping time
\begin{equation}
t_{\mathrm{st}}(U_0,T,\eta)
 = \inf\Big\{0 \leq t < T:
     N_{U_0}(t)
     > \eta
  \Big\} ,
\end{equation}
writing $t_{\mathrm{st}}(U_0, T,\eta) = T$
if the set is empty. 

The small (but fixed)
parameter $\eta > 0$ allows
us to keep the nonlinearities in the
problem under control. Our main technical
result provides a bound for $N_{U_0}$
in terms of the initial perturbation
and the noise strength.

\begin{prop}
\label{prp:nls:general}
Assume that (Hq), (HEq), (HDt), (HSt) and (HTw) are satisfied
and pick
two sufficiently small
constants $\delta_{\eta} > 0$
and $\delta_{\sigma} > 0$.
Then there exists a constant
$K > 0$ so that for any $T > 0$,
any $0 < \eta \le \delta_{\eta}$
and any $0 \le \sigma \le \delta_{\sigma}T^{-1/2}$
we have the bound
\begin{equation}
\label{eq:nls:prp:general:estimate}
\begin{array}{lcl}
E \, \big[\sup_{0\leq t\leq t_{\mathrm{st}}(T,\e,\eta) }N_{U_0}(t)\big]
&\leq &
   K \Big[ \nrm{V(0)}^2_{H^1}+ \sigma^2 T \Big] .
\end{array}
\end{equation}
\end{prop}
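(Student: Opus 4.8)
The plan is to reduce Proposition \ref{prp:nls:general} to the corresponding estimate in \cite{Hamster2018}, which was established for reaction-diffusion systems forced by a single Brownian motion. The overall architecture is a bootstrapping argument built on the mild formulation \sref{eq:res:mildV}, and the only genuinely new ingredient is that the scalar stochastic integrals of \cite{Hamster2018} are replaced by integrals against the cylindrical $Q$-Wiener process $W^Q_t$. First I would recall that the estimates collected in Appendix \ref{sec:app:est} show that the nonlinearities $\mathcal{R}_\s$ and $\mathcal{S}_\s$ satisfy exactly the same Lipschitz and growth bounds (measured in the relevant Hilbert--Schmidt norm $HS(L^2_Q, L^2(\R,\R^n))$ for $\mathcal{S}_\s$) that were imposed as hypotheses in \cite{Hamster2018}. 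In particular, on the stopped interval $[0, t_{\mathrm{st}}(U_0,T,\eta)]$ the orthogonality relations $\ip{\psit, \widetilde{\mathcal{R}}_\s(V)}_{L^2(\R,\R^n)} = \ip{\psit, \mathcal{S}_\s(V)}_{L^2(\R,\R^n)} = 0$ hold, so the semigroup bound \sref{eq:int:sem:bound:with:m} applies to every term appearing in the Duhamel formula for $V$.

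The core of the argument is then to run the $H^1$-bootstrap from \cite{Hamster2018} verbatim. Concretely, I would: (i) split $V(t) = S(t)V_0 + I_{\mathrm{det}}(t) + I_{\mathrm{stoch}}(t)$ with $I_{\mathrm{det}}(t) = \int_0^t S(t-s)\widetilde{\mathcal{R}}_\s(V(s))\,ds$ and $I_{\mathrm{stoch}}(t) = \sigma\int_0^t S(t-s)\mathcal{S}_\s(V(s))\,dW^Q_s$; (ii) estimate the deterministic Duhamel term in $H^1$ using the analytic smoothing of $S(t)$, which converts the second-order derivatives hidden in $\widetilde{\mathcal{R}}_\s$ into an integrable singularity $(t-s)^{-1/2}$ that is controlled by $\eta$ on the stopped interval; (iii) handle the stochastic convolution by the mild It\^o / mild Burkholder--Davis--Gundy inequality for stochastic convolutions against $W^Q_t$, which yields $E\sup_{0\le t\le t_{\mathrm{st}}}\nrm{I_{\mathrm{stoch}}(t)}_{H^1}^2 \lesssim \sigma^2\int_0^{t_{\mathrm{st}}}\nrm{\mathcal{S}_\s(V(s))}_{HS(L^2_Q,L^2)}^2\,ds$ up to the usual logarithmic correction, and then bound the Hilbert--Schmidt norm by $1 + \nrm{V(s)}_{H^1}$; and (iv) assemble these into a Gronwall-type inequality for $E\sup N_{U_0}$ that closes precisely because the stopping time keeps the quadratic feedback small. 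The time scale $0 \le \sigma \le \delta_\sigma T^{-1/2}$ is exactly what is needed so that the $\sigma^2 T$ contribution from the stochastic convolution stays below $\delta_\eta$, allowing the bootstrap to be iterated on $[0,T]$.

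The step that requires the most care — and where the present setting genuinely departs from \cite{Hamster2018} — is the stochastic time-transformation/localization argument used to obtain the supremum bound in \sref{eq:nls:prp:general:estimate}. In \cite{Hamster2018} one rewrites the stopped process via a random time change and applies a one-dimensional It\^o formula together with a martingale representation to bound $E\sup N_{U_0}$ by a non-supremum quantity; this relies on a Dambis--Dubins--Schwarz type statement and on the local-martingale property of the relevant stochastic integral. I would verify that each of these ingredients survives the passage to the cylindrical $Q$-Wiener process: the stochastic integral $\int_0^t B(s)\,dW^Q_s$ is still a continuous local martingale in $L^2(\R,\R^n)$ by the construction in {\S}\ref{sec:sps:bck}, its quadratic variation is governed by $\int_0^t \nrm{B(s)}_{HS(L^2_Q,\mathcal{H})}^2\,ds$ via the It\^o isometry \sref{eq:sps:ito:wrst:q:s}, and the mild It\^o formula of Lemma \ref{lem:ito} provides the chain rule needed to differentiate $\nrm{V(t)}_{L^2}^2$. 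These are exactly the properties invoked in the proof of Proposition \ref{prp:phs:main:ex} when extending \cite[Thm. 3.1.1]{Concise} to infinite-dimensional noise, so the same reasoning applies here. Once these technical substitutions are checked, the remaining details — the choice of $\e$ relative to $\beta$, the precise bookkeeping of constants, and the final Gronwall iteration — are routine and can be imported directly from \cite[{\S}4--5]{Hamster2018}.
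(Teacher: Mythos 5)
Your overall strategy (transfer the estimates of \cite{Hamster2018} using the Hilbert--Schmidt bounds of Appendix \ref{sec:app:est}) is indeed the paper's starting point, but your step (ii) contains a genuine gap that the paper's proof is specifically built to avoid. You propose to bound the deterministic Duhamel term in \sref{eq:res:mildV} directly, claiming that the analytic smoothing of $S(t)$ turns the second-order derivatives hidden in $\widetilde{\mathcal{R}}_\s$ into an integrable $(t-s)^{-1/2}$ singularity controlled by $\eta$. This does not work: $\widetilde{\mathcal{R}}_\s(V)$ contains the quasi-linear contribution coming from $\mathcal{K}_B$, i.e.\ a term of the form $\tfrac{\s^2}{2}\nrm{b_\s(V)}^2_{HS}\,\p_{\xi\xi}(\Phi_\s+V)$, whose coefficient depends on the unknown $V$ and which is of the same (second) order as the generator $\L_{\mathrm{tw}}$ itself. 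When one tries to control the time-integrated $H^1$-part of $N_{U_0}$ in \sref{eq:DefNeps} --- which is exactly what the proposition requires --- the variation-of-constants estimate for this term produces a non-integrable $(t-s)^{-1}$ singularity, and the smallness of $\eta$ does not repair this. This is precisely why the paper states that \sref{eq:stb:eqn:for:V} cannot be analysed through its mild formulation in a direct fashion, and why {\S}\ref{sec:stb} introduces the component-wise stochastic time transform: one rescales time by $\tau_i(t)=\int_0^t \kappa_{\s;i}\big(V(s)\big)\,ds$ so that the diffusion coefficient in front of $V_i''$ is reset to the constant $\rho_i$, after which the mild formulation of the transformed equation \sref{eq:stb:spde:for:ovl:v} (together with the better off-diagonal smoothing of $S$) becomes usable and the bootstrap of \cite{Hamster2018} can be imported.

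Relatedly, you do invoke a random time change, but you assign it the wrong role: you describe it as a Dambis--Dubins--Schwarz-type device for converting the supremum bound into a non-supremum quantity. In \cite{Hamster2017,Hamster2018} and here, the supremum estimates come from the (mild) Burkholder--Davis--Gundy inequality --- which is also where the $\sigma^2 T$ term and the restriction $\sigma\le\delta_\s T^{-1/2}$ originate --- while the time transform exists solely to remove the $V$-dependent diffusion coefficient $\rho_i\kappa_{\s;i}(V)$. Moreover, the genuinely new ingredient in the present setting, namely the verification that the time-changed noise $\overline{W}^Q_\tau=\sum_k \sqrt{Q}\,e_k\,\overline{\beta}_k(\tau)$, built from the rescaled Brownian motions $\overline{\beta}_k$, is again a cylindrical $(\overline{\mathcal{F}}_\tau,Q)$-Wiener process so that the transformed SPDE makes sense, is absent from your argument. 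Without the time transform deployed for its actual purpose and without this lemma, the reduction to \cite{Hamster2018} does not go through.
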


In a standard fashion, this bound
can be used to show that the probability
of hitting $\eta$ can be made arbitrarily small
by reducing the noise strength
and the size of the initial perturbation.
Indeed, upon writing
\begin{equation}
p(U_0,T,\eta) = P\Big(
 \sup_{0 \leq t \leq T} \big[N_{U_0}(t)\big]
 > \eta
\Big),
\end{equation} 
we can compute
\begin{equation}
\begin{array}{lcl}
\eta p(U_0,T,\eta)
& = & \eta P\big( t_{\mathrm{st}} < T\big)
\\[0.2cm]
& = & E \, \big[
      \mathbf{1}_{t_{\mathrm{st}} < T}
      N_{U_0}\big(t_{\mathrm{st}}\big)
    \big]
\\[0.2cm]
& \le & E \,  \big[N_{U_0}\big(t_{\mathrm{st}}  \big)\big]\\[0.2cm]
& \leq & E\,  \big[\sup_{0\leq t\leq t_{\mathrm{st}} }N_{U_0}(t)\big]
\\[0.2cm]
& \le & K \big[ \norm{V(0)}_{H^1}^2 + \sigma^2 T \big].
\end{array}
\end{equation} 
This is the rigorous interpretation
of the informal statement contained in
Theorem \ref{thm:mr:stb}.

We now set out to quantify the residual
resulting from the expansion process
outlined in {\S}\ref{sec:res}.
To this end, we 
take $U_0 = \Phi_\s$ (i.e. $V(0) = 0$)
and make the decomposition
$V(t) = V_{\mathrm{apx}}(t) + V_{\mathrm{res}}(t)$.
Here
\begin{equation}
    V_{\mathrm{apx}}(t) = \sigma V^1_\s(t) +  
      \sigma^2 V^2_\s(t)
\end{equation}
denotes the second order approximation
obtained formally in {\S}\ref{sec:mr:st}.
We subsequently introduce the
scalar quantity 
\begin{equation}\label{eq:DefNeps:res}
\begin{array}{lcl}
N_{\mathrm{res}} (t)
 &= &
\sigma^4 \norm{V_{\mathrm{apx}}(t)}_{L^2(\R,\R^n)}^2
+ \norm{V_{\mathrm{res}}(t)}_{L^2(\R,\R^n)}^2
\\[0.2cm]
& & \qquad
 + \int_0^t e^{- \e (t - s) }
    \Big[ \sigma^4 \norm{V_{\mathrm{apx}}(s)}^2_{H^1(\R,\R^n)}
     +
    \norm{  V_{\mathrm{res}}(s)}_{H^1(\R,\R^n)}^2 \Big]\, ds ,
\end{array}
\end{equation}
together with the
$(\mathcal{F}_t)$-stopping time
\begin{equation}
t_{\mathrm{st}}(T,\sigma,\eta;\mathrm{res})
 = \inf\Big\{0 \leq t < T:
     N_{\mathrm{res}}(t)
     >  \sigma^4 \eta
  \Big\} ,
\end{equation}
writing $t_{\mathrm{st}}(T,\sigma,\eta;\mathrm{res}) = T$
if the set is empty. 
Note that the scalings imply that
$V_{\mathrm{apx}}$ remains bounded by $\eta$ as long
as the stopping time is not hit,
which allows the nonlinear terms
to be controlled in the same fashion as
in the proof of Proposition \ref{prp:nls:general}.
Since all the quadratic terms
have now been accounted for,
we arrive at the following estimate.

\begin{cor}
\label{cor:nls:general:res}
Assume that (Hq), (HEq), (HDt), (HSt) and (HTw) are satisfied
and pick
two sufficiently small
constants $\delta_{\eta} > 0$
and $\delta_{\sigma} > 0$.
Then there exists a constant
$K > 0$ so that for any $T > 0$,
any $0 < \eta \le \delta_{\eta}$
and any $0 \le \sigma \le \delta_{\sigma}T^{-1/2}$,
we have the bound
\begin{equation}
\label{eq:nls:prp:general:estimate:res}
\begin{array}{lcl}
E\, \big[\sup_{0\leq t\leq t_{\mathrm{st}}(T,\s,\eta;\mathrm{res}) }N_{\mathrm{res}}(t) \big]
&\leq &
   K \sigma^6 T.
\end{array}
\end{equation}
\end{cor}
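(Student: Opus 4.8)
The plan is to re-run the bootstrapping argument behind Proposition \ref{prp:nls:general}, now applied to the residual $V_{\mathrm{res}}=V-V_{\mathrm{apx}}$ instead of to $V$ itself. The extra four powers of $\s$ --- replacing the $\s^2 T$ of Proposition \ref{prp:nls:general} by $\s^6 T$ --- are structural: once the linear term $\s V_\s^{(1)}$ and the quadratic term $\s^2 V_\s^{(2)}$ of the expansion have been subtracted off, the terms driving $V_{\mathrm{res}}$ are of cubic order in the small quantities, hence of size $\O(\s^3)$ in the norms relevant for the energy estimate; squaring and integrating in time then produces $\O(\s^6 T)$.

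First I would derive a closed mild equation for $V_{\mathrm{res}}$. Subtracting \sref{eq:mr:expr:v:1:i} and \sref{eq:res:V2} from the mild identity \sref{eq:res:mildV} (with $V(0)=0$, so $V_{\mathrm{res}}(0)=0$), and writing $\widetilde{\mathcal{R}}_\s(V)=\mathcal{R}_\s^{(2)}[V,V]+\widetilde{\mathcal{R}}_\s^{(\geq3)}(V)$ and $\mathcal{S}_\s(V)=\mathcal{S}_\s(0)+\mathcal{S}_\s^{(1)}(V)+\mathcal{S}_\s^{(\geq2)}(V)$ for the Taylor expansions around $V=0$ (with cubic, respectively quadratic, remainders), one uses bilinearity together with $V-\s V_\s^{(1)}=\s^2 V_\s^{(2)}+V_{\mathrm{res}}$ to obtain
\begin{align}
\begin{split}
V_{\mathrm{res}}(t)&=\int_0^t S(t-s)\Big[\mathcal{R}_\s^{(2)}\big[V(s)+\s V_\s^{(1)}(s),\,\s^2 V_\s^{(2)}(s)+V_{\mathrm{res}}(s)\big]+\widetilde{\mathcal{R}}_\s^{(\geq3)}\big(V(s)\big)\Big]\,ds\\
&\quad+\s\int_0^t S(t-s)\Big[\mathcal{S}_\s^{(1)}\big(\s^2 V_\s^{(2)}(s)+V_{\mathrm{res}}(s)\big)+\mathcal{S}_\s^{(\geq2)}\big(V(s)\big)\Big]\,dW^Q_s.
\end{split}
\end{align}
By construction each integrand is still orthogonal to $\psit$, so the non-decaying mode of the semigroup $S(t)$ never enters.

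Next I would split these integrands into three types of contributions and estimate them up to the stopping time $t_{\mathrm{st}}(T,\s,\eta;\mathrm{res})$: (i) the purely cubic terms $\widetilde{\mathcal{R}}_\s^{(\geq3)}(V)$, $\s\,\mathcal{S}_\s^{(\geq2)}(V)$ and the $V_{\mathrm{res}}$-free part of the bilinear term --- since $\norm{V(s)+\s V_\s^{(1)}(s)}$ stays bounded in the stopping region while $\norm{\s^2 V_\s^{(2)}(s)}$ carries two explicit powers of $\s$, Taylor's theorem and the Hilbert--Schmidt estimates of Appendix \ref{sec:app:est} make these $\O(\s^3)$ in $L^2(\R,\R^n)$, respectively in $HS\big(L^2_Q,L^2(\R,\R^n)\big)$; (ii) the terms linear in $V_{\mathrm{res}}$, namely $\mathcal{R}_\s^{(2)}[V(s)+\s V_\s^{(1)}(s),V_{\mathrm{res}}(s)]$ and $\s\,\mathcal{S}_\s^{(1)}(V_{\mathrm{res}}(s))$, which come with a small prefactor --- a power of $\s$ or of $\eta^{1/2}$ --- and are reserved for absorption in the final Gr\"onwall step; and (iii) the bookkeeping pieces $\s^4\norm{V_{\mathrm{apx}}}^2$ and $\s^4\norm{V_{\mathrm{apx}}}^2_{H^1}$ in $N_{\mathrm{res}}$, which I would bound directly from the explicit formulas \sref{eq:res:V1}--\sref{eq:res:V2} via the mild Burkholder--Davis--Gundy inequality (yielding, e.g., $E\sup_{[0,T]}\norm{V_\s^{(1)}}_{L^2}^2\le KT$ and a bound $K\s^4(T+T^2)$ for $E\sup_{[0,T]}\norm{\s^2 V_\s^{(2)}}_{L^2}^2$, with analogous time-averaged $H^1$-bounds), so that $\s^4 E\sup_{[0,T]}\norm{V_{\mathrm{apx}}}^2\le K\s^6 T$ after using $\s\le\delta_\s$ and the hypothesis $\s^2 T\le\delta_\s^2$. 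Feeding all of this into the energy estimate for $N_{\mathrm{res}}$ --- the semigroup bound \sref{eq:int:sem:bound:with:m} for the deterministic convolution and the mild Burkholder--Davis--Gundy inequality of \cite{DaPratomildArx} for the stochastic one, exactly as in \cite{Hamster2018} --- one is led to
\begin{equation}
E\big[\sup_{0\le t\le t_{\mathrm{st}}}N_{\mathrm{res}}(t)\big]\le K\s^6 T+\big(K\s^2+K\eta^{1/2}\big)\,E\big[\sup_{0\le t\le t_{\mathrm{st}}}N_{\mathrm{res}}(t)\big],
\end{equation}
and shrinking $\delta_\s,\delta_\eta$ so that $K\delta_\s^2+K\delta_\eta^{1/2}<\frac{1}{2}$ absorbs the last term on the left, which gives \sref{eq:nls:prp:general:estimate:res}.

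The hard part, I expect, is the very issue flagged at the start of \S\ref{sec:stb}: the cubic remainder $\widetilde{\mathcal{R}}_\s^{(\geq3)}(V)$ and the bilinear difference term still contain second spatial derivatives of $V$ --- entering through the $\s^2$-corrections hidden in $F_\s$ (see \sref{eq:mr:def:F:iii}) and through the $a_\s(V)\,\p_\xi V$ piece --- so the associated convolutions carry apparent singularities. Handling them requires the delicate decomposition of the mild formulation together with the stochastic time-transformation of \cite{Hamster2018}, which must first be generalized from finite-dimensional to cylindrical $Q$-Wiener noise; this generalization is needed for Proposition \ref{prp:nls:general} itself, and once it is in place the extra power of $\s$ propagates through the estimates routinely.
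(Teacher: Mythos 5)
Your overall strategy is the same as the paper's: the paper also treats this corollary as a re-run of the machinery behind Proposition \ref{prp:nls:general} applied to the residual, using that the scalings in $N_{\mathrm{res}}$ keep $V_{\mathrm{apx}}$ below $\eta^{1/2}$ before the stopping time, that the quadratic terms have been subtracted explicitly, and deferring the genuinely hard part (the quasi-linear $\mathcal{K}_B$-contribution, handled by the stochastic time transform, and the mild estimates of \cite{Hamster2018}) exactly as you do at the end of your proposal.

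There is, however, a gap in the power counting of step (i), which is precisely where the $\s^6$ has to come from. The pathwise information available in the stopping region is $\nrm{V_{\mathrm{apx}}}\le\eta^{1/2}$ and $\nrm{V_{\mathrm{res}}}\le\s^2\eta^{1/2}$; it supplies \emph{no} power of $\s$ for the factors $V$, $V+\s V^{(1)}_\s$ or $V_{\mathrm{apx}}$. Hence ``bounded in the stopping region'' times ``two explicit powers of $\s$'' only makes the $V_{\mathrm{res}}$-free bilinear term $\O(\s^2)$ (so $\O(\eta\,\s^4T)$ after squaring and integrating), while the genuinely cubic remainders $\widetilde{\mathcal{R}}^{(\ge3)}_\s(V)$ and $\s\,\mathcal{S}^{(\ge2)}_\s(V)$ come out as $\O(\eta^{3/2})$ and $\O(\s\eta)$ respectively; none of these is $\O(\s^3)$, and the resulting bound would be of order $\s^2T$--$\s^4T$ rather than $\s^6T$. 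To close the estimate you must expand every driving term in the variables $\s V^{(1)}_\s$, $\s^2V^{(2)}_\s$ and $V_{\mathrm{res}}$, and use uniform-in-time higher-moment bounds (fourth and sixth moments, available because $V^{(1)}_\s$ is a Gaussian stochastic convolution and $V^{(2)}_\s$ a quadratic functional thereof) so that each such factor contributes its explicit power of $\s$ in the mean-square sense, reserving the stopping-time bound (which does carry $\s^2$) and the final Gr\"onwall absorption exclusively for the factors involving $V_{\mathrm{res}}$. With that replacement your argument goes through along the lines you describe and matches the paper's intended route; as literally written, the justification in (i) does not reach the stated $K\s^6T$.
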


In order to turn this into a probability estimate,
we write
\begin{equation}
p_{\mathrm{res}}(T, \sigma,\eta) = P\Big(
 \sup_{0 \le t \le T}
 N_{\mathrm{res}}(t) >  \sigma^4 \eta
\Big) 
\end{equation}
and compute
\begin{equation}
\begin{array}{lcl}
\sigma^4 \eta p_{\mathrm{res}}(T,\s,\eta)
& = & \sigma^4 \eta P\big( t_{\mathrm{st}}(T,\sigma,\eta;\mathrm{res}) < T\big)
\\[0.2cm]
& = & E \, \big[
      \mathbf{1}_{t_{\mathrm{st}}(T,\sigma,\eta;\mathrm{res}) < T}
      N_{\mathrm{res}}\big(t_{\mathrm{st}}(T,\s,\eta;\mathrm{res})\big)
    \big]
\\[0.2cm]
& \le & K \sigma^6 T.
\end{array}
\end{equation}
In particular,
in the setting of Corollary
\ref{cor:nls:general:res}
we find
\begin{equation}
p_{\mathrm{res}}(T,\s, \eta) \le \eta^{-1} K \sigma^2 T,
\end{equation}
which is the quantitative
version of Corollary \ref{cor:mr:res:bnd}.

\subsection{Stochastic time transform}

We now set out to outline how
the techniques developed
in \cite{Hamster2018} can be used
to establish
Proposition \ref{prp:nls:general}.
The key issue is that we cannot
study \sref{eq:stb:eqn:for:V} or
its mild counterpart in a direction fashion
because it is a quasi-linear system.
The offending component is $\mathcal{K}_B$,
which represents an extra nonlinear
- but spatially homogeneous - 
diffusive term that arises
as a consequence of the It\^o lemma.

Our strategy is to partially eliminate these
terms by appropriate time transforms.
In particular, for each component
$1 \le i \le n$ we define the function
\begin{equation}
    \kappa_{\s;i}(V) =
    1 + \frac{1}{2 \rho_i} \norm{b_\s(V)}^2_{HS\big(L^2_Q , L^2(\R,\R^n)\big)}
\end{equation}
and observe that
$\rho_i \kappa_{\s;i}(V)$ corresponds
precisely with the coefficient in front
of $V_i''$ that appears in $\mathcal{R}_\s(V)$.
In order
to reset this single coefficient
to the value $\rho_i$,
we introduce the
(faster) transformed time
\begin{equation}
\tau_{i}(t) = \int_0^t
  \kappa_{\sigma;i}\big( V(s) \big) \, d s
  \ge t .
\end{equation}
The map $t \mapsto \tau_i(t)$
is a continuous
strictly increasing $(\mathcal{F}_t)$-adapted
process that hence admits an inverse $t_i(\tau)$, i.e.,
\begin{equation}
    \tau_i (t_i (\tau)\big)  = \tau,
    \qquad
    t_i\big( \tau_i(t) \big) = t.
\end{equation}
This allows us to define the time-transformed
function
\begin{equation}
\overline{V}_{(i)}(\tau) =
  V\big( t_i(\tau) \big) ,
\end{equation}
for which an appropriate SPDE can be derived.

\begin{lem}
Consider the setting of Proposition \ref{prp:sps:props:v}
and pick $1 \le i \le n$.
Then there exists a filtration
$(\overline{\mathcal{F}}_{\tau})_{\tau \ge 0}$
together with a cylindrical $(\overline{\mathcal{F}}_{\tau},Q)$-Wiener
process $\overline{W}^Q_{\tau}$
so that the following properties hold.
\begin{itemize}
\item[(i)]{
  For almost all $\omega \in \Omega$,
  the map $\tau \mapsto \overline{V}_{(i)}(\tau; \omega)$
  is of class $C\big([0,T]; L^2 \big)$.
}
\item[(ii)]{
  For all $\tau \in [0, T]$, the map
  $\omega \mapsto \overline{V}_{(i)}(\tau,\omega)$
  is $(\overline{\mathcal{F}}_{\tau})$-measurable.
}
\item[(iii)]{
  The map $\tau \mapsto \kappa_i^{-1/2}\big(\overline{V}_{(i)}(\tau)\big) \mathcal{S}_{\sigma}\big(\overline{V}_{(i)}(\tau)\big)$
   is of class  $\mathcal{N}^2\big([0,T];
          (\overline{\mathcal{F}})_{\tau} ; HS\big(L^2_Q, L^2(\R,\R^n) \big) \big)$.
}
\item[(iv)]{
  For almost all $\omega \in \Omega$,
  the identity
  \begin{equation}
  \label{eq:stb:spde:for:ovl:v}
  \begin{array}{lcl}
    \overline{V}_{(i)}(\tau)
     & = & \overline{V}_{(i)}(0)
     + \int_0^\tau \kappa_i^{-1}\big(\overline{V}_{(i)}(\tau') \big)
         \mathcal{R}_{\sigma}\big(\overline{V}_{(i)}(\tau')\big)
       
       \, d \tau'
\\[0.2cm]
& & \qquad \qquad \qquad
     + \sigma \int_0^\tau  \kappa_i^{-1/2}\big(\overline{V}_{(i)}(\tau')\big) \mathcal{S}_{\sigma}\big(\overline{V}_{(i)}(\tau')\big)
       \, d \overline{W}^Q_{\tau'}
  \end{array}
  \end{equation}
  holds for all $0 \le \tau \le T$.
}
\end{itemize}
\end{lem}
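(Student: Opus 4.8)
The plan is to obtain all four properties by transporting the integral identity \sref{eq:sps:int:eg:for:v} of Proposition \ref{prp:sps:props:v} through the random time substitution $t \mapsto \tau_i(t)$, adapting the strategy used for scalar noise in \cite[\S4]{Hamster2018} and adding the ingredients needed in the cylindrical setting.

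First I would record the structural facts about the clock. Since the cut-off functions $\overline{\chi}_h$ and $\overline{\chi}_l$ are uniformly bounded, $b_\s(V)$ has a uniformly bounded Hilbert--Schmidt norm, so $\kappa_{\s;i}(V)$ takes values in $[1, K_\kappa]$ for some constant $K_\kappa \ge 1$. Hence $\tau_i$ is continuous, strictly increasing and $(\mathcal{F}_t)$-adapted, maps $[0,T]$ into $[T, K_\kappa T]$, and its inverse $t_i$ maps $[0,T]$ into $[K_\kappa^{-1}T, T] \subset [0,T]$, so $\overline{V}_{(i)}(\tau) = V(t_i(\tau))$ only samples $V$ on $[0,T]$, where Proposition \ref{prp:sps:props:v} applies. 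For each fixed $\tau$ one has $\{t_i(\tau) \le t\} = \{\tau \le \tau_i(t)\} \in \mathcal{F}_t$, so $t_i(\tau)$ is a bounded $(\mathcal{F}_t)$-stopping time and $\overline{\mathcal{F}}_\tau := \mathcal{F}_{t_i(\tau)}$ defines a normal filtration (by normality of $(\mathcal{F}_t)$ and continuity of $t_i$) with respect to which $\overline{V}_{(i)}$ is adapted. Properties (i) and (ii) then follow by composing Proposition \ref{prp:sps:props:v}(i)--(ii) with the continuity of $t_i$, and property (iii) follows since $\kappa_i^{-1/2} \le 1$ reduces it to $\mathcal{S}_\s(V) \in \mathcal{N}^2$, which is Proposition \ref{prp:sps:props:v}(iii), the boundedness of $\kappa_i$ ensuring that the $dt\otimes\mathbb{P}$ and $d\tau\otimes\mathbb{P}$ classes coincide.

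For property (iv) I would evaluate \sref{eq:sps:int:eg:for:v} at time $t_i(\tau)$ and change variables in the two integrals. In the Bochner integral this is the ordinary substitution $s = t_i(\tau')$, $ds = \kappa_i^{-1}(\overline{V}_{(i)}(\tau'))\,d\tau'$, legitimate because $\mathcal{R}_\s(V(\cdot,\omega)) \in L^1([0,T];H^{-1}(\R,\R^n))$ by Proposition \ref{prp:sps:props:v}(iv). The stochastic integral is the delicate point. The process $\overline{M}_\tau = \sigma\int_0^{t_i(\tau)}\mathcal{S}_\s(V(s))\,dW^Q_s$ is, by optional sampling at the bounded stopping times $t_i(\tau)$, a continuous square-integrable $(\overline{\mathcal{F}}_\tau)$-martingale in $L^2(\R,\R^n)$; using the It\^o isometry \sref{eq:sps:ito:wrst:q:s} and the deterministic time change of its bracket one computes its operator quadratic variation to be $\sigma^2\int_0^\tau \Psi(\tau')\,Q\,\Psi(\tau')^*\,d\tau'$ with $\Psi(\tau') = \kappa_i^{-1/2}(\overline{V}_{(i)}(\tau'))\,\mathcal{S}_\s(\overline{V}_{(i)}(\tau'))$. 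I would then invoke a martingale representation theorem for cylindrical $Q$-Wiener processes in the spirit of \cite[Thm.~8.2]{DaPratoZab}: on a possibly enlarged probability space, with a correspondingly enlarged normal filtration still denoted $(\overline{\mathcal{F}}_\tau)$, there exists a cylindrical $(\overline{\mathcal{F}}_\tau,Q)$-Wiener process $\overline{W}^Q_\tau$ with $\overline{M}_\tau = \sigma\int_0^\tau \Psi(\tau')\,d\overline{W}^Q_{\tau'}$, the usual pseudo-inverse construction together with an auxiliary independent cylindrical Wiener process being used to absorb the possible degeneracy of $\mathcal{S}_\s$. Substituting the two changes of variables into \sref{eq:sps:int:eg:for:v} yields exactly \sref{eq:stb:spde:for:ovl:v}.

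The main obstacle is precisely this last step. The time-transformation and representation argument is classical for finite-dimensional or scalar driving noise, as exploited in \cite{Hamster2018}, but in the cylindrical $Q$-Wiener setting one must check that the operator angle-bracket identity still characterises the integral, that the Burkholder--Davis--Gundy and dominated-convergence estimates used to pass to the limit over the basis $(e_k)$ are stable under the random time change, and that the representation theorem admits a cylindrical rather than trace-class formulation. All of these ingredients are available from the general theory recalled in \cite{Concise,DaPratoZab}; assembling them is exactly the generalisation of the stochastic time-transformation result announced at the start of this section, after which the remaining bookkeeping is routine.
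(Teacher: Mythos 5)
Your proposal is sound in outline, but it takes a genuinely different route from the paper. The paper never leaves the original probability space: it works coordinate-wise with the Brownian motions $\beta_k$ in the expansion $W^Q_t=\sum_k\sqrt{Q}e_k\,\beta_k(t)$, applies the scalar time-transformation rule of \cite[Lem.~6.2]{Hamster2017} to each $\beta_k$ to produce explicit time-changed Brownian motions $\overline{\beta}_k(\tau)=\int_0^\tau(\partial_\tau t_i(\tau'))^{-1/2}\,d\beta_k\big(t_i(\tau')\big)$, reassembles $\overline{W}^Q_\tau=\sum_k\sqrt{Q}e_k\,\overline{\beta}_k(\tau)$ as a cylindrical $(\overline{\mathcal{F}}_\tau,Q)$-Wiener process, and then pushes the time change through the stochastic integral term by term before passing to the limit in $k$. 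You instead time-change the integral as a whole, identify its operator angle bracket, and invoke an infinite-dimensional martingale representation theorem in the spirit of \cite[Thm.~8.2]{DaPratoZab}, at the price of a possible enlargement of the probability space and an auxiliary independent cylindrical Wiener process to absorb degeneracy of $\mathcal{S}_\sigma$. Both routes reach \sref{eq:stb:spde:for:ovl:v}: yours is more abstract and would apply to any continuous square-integrable martingale with the right bracket, but it leaves two nontrivial verifications that the paper's construction simply bypasses — the cylindrical (non-trace-class) formulation of the representation theorem, and the identity $\langle\langle \overline{M}\rangle\rangle_\tau=\langle\langle M\rangle\rangle_{t_i(\tau)}$ for the \emph{random} (not deterministic, as you write) time change, which requires the optional-sampling/Doob--Meyer argument you sketch. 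The explicit construction also keeps $\overline{W}^Q_\tau$ adapted to the concrete filtration $\overline{\mathcal{F}}_\tau=\mathcal{F}_{t_i(\tau)}$ on the original space, which is the cleaner statement to feed into the estimates of \cite{Hamster2018}, although an enlargement would not harm the downstream use. Two small imprecisions to fix: $\tau_i$ maps $[0,T]$ onto $[0,\tau_i(T)]$ with $\tau_i(T)\in[T,K_\kappa T]$ (not ``into $[T,K_\kappa T]$''), and likewise for $t_i$; what you actually need, and what is true, is $t_i(\tau)\le\tau\le T$, so that $\overline{V}_{(i)}$ only samples $V$ on $[0,T]$.
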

\begin{proof}
Recall the set of independent $(\mathcal{F}_t)$-Brownian
motions $\beta_k$ used to define $W^Q$ in 
{\S}\ref{sec:sps:bck}.
Following the proof of \cite[Lem. 6.2]{Hamster2017},
we now construct the processes 
\begin{equation}
\overline{\beta}_k(\tau)
= \int_0^\tau \frac{1}{\sqrt{\partial_\tau t_i(\tau')}}
    \,   d \beta_{k}\big(t_i(\tau')\big).
\end{equation}
These are independent Brownian
motions with respect to the
filtration $(\mathcal{F}_t)$
defined in \cite[Eq. (6.14)]{Hamster2017}.
As explained in {\S}\ref{sec:sps:bck},
the sum
\begin{equation}
    \overline{W}^Q_{\tau} = \sum_{k} \sqrt{Q} e_k  \overline{\beta}_k(\tau)
\end{equation}
hence defines a cylindrical $(\overline{\mathcal{F}}_{\tau},Q)$-Wiener
process. We can now apply the transformation rule \cite[Lem. 6.2]{Hamster2017}
for individual Brownian motions to compute
the desired transformation
\begin{equation}
\begin{array}{lcl}
    \int_0^{t_i(\tau)} \mathcal{S}_\s\big(V(s) \big) \, d W^Q_s
    & = & \lim_{m \to \infty} \sum_{k=1}^m \int_0^{t_i(\tau)}
      \mathcal{S}_\s\big(V(s)\big)[\sqrt{Q} e_k] \, d \beta_{k}(s)
    \\[0.2cm]
    & = &
      \lim_{m \to \infty} \sum_{k=1}^m
        \int_0^\tau \kappa^{-1/2}(\overline{V}_{(i)}(\tau')\big)
          \mathcal{S}_{\s}\big(\overline{V}_{(i)}(s) \big) [\sqrt{Q} e_k] \,
           d  \overline{\beta}_{k}(\tau')
\\[0.2cm]
  & = & \int_0^\tau \kappa^{-1/2}(\overline{V}_{(i)}(\tau')\big)
    \mathcal{S}_{\s}\big(\overline{V}_{(i)}(s) \big)
    \, d \overline{W}^Q_{\tau'}.
\end{array}
\end{equation}
The remaining statements can now be established as
in the proof of \cite[Prop. 6.3]{Hamster2017}.
\end{proof}

We remark that the diffusion
coefficient for the $i$-th component
of $\overline{V}_{(i)}$ is now again
equal to $\rho_i$. This allows
this component to be appropriately
estimated by analysing
the mild formulation of \sref{eq:stb:spde:for:ovl:v}.
The key here is that the off-diagonal 
elements of the semigroup $S(t)$ have
better smoothening properties than
the diagonal elements. Since all
the relevant estimates carry
over on account of {\S}\ref{sec:app:est},
the computations in \cite{Hamster2018}
can be used to 
establish Proposition \ref{prp:nls:general}.
and Corollary \ref{cor:nls:general:res}.


\appendix

\section{Estimates}
\label{sec:app:est}

In this section we set out to derive
certain key estimates that will 
build a bridge between
our setting here and the 
extensive computations in 
\cite{Hamster2017,Hamster2018}. The main issues
are that the functions $g$ and $b$
now need to be bounded in an appropriate
Hilbert-Schmidt norm and that
the term $\mathcal{K}_C$ has a more delicate
structure than its counterpart
in \cite{Hamster2017}.

Throughout this section, we will often use
a general pair $(\Phi, c)$ for our estimates,
since a-priori the wave $(\Phi_\s,c_\s)$ has not been constructed yet.
This pair is assumed to satisfy the following conditions.
\begin{itemize}
\item[(hPar)]{
 The condition (HTw) holds
 and the pair $(\Phi,c) \in \mathcal{U}_{H^1} \times \Real$
 satisfies the bounds
 \begin{equation}
    \norm{\Phi - \Phi_{0} }_{H^1(\R,\R^n)} \le
      \min\{ 1 ,[4 \norm{\psi_{\mathrm{tw}}}_{L^2(\R,\R^n)}]^{-1} \},
    \qquad
    \abs{c - c_0} \le 1.
  \end{equation}
}
\end{itemize}
We start in {\S}\ref{sec:est:prlm} by deriving
some preliminary estimates.
This will help us in {\S}\ref{sec:est:bridge} to formulate
the `bridge' estimates on the
three functions discussed above,
which concern both their size
and their Lipschitz properties.

\subsection{Preliminaries}
\label{sec:est:prlm}

On account of (Hq), the function 
$k \mapsto \sqrt{\widehat{q}(k)}$
is well-defined. It is hence tempting to construct
a convolution kernel $p$
for $\sqrt{Q}$ by
taking the inverse Fourier transform
of this map,

since then one formally has $q*v  = p*p*v$. Our first
result shows that this is indeed possible.

\begin{lem}\label{lem:app:sqrtQ}
Suppose that (Hq) is satisfied. Then the map
$k \mapsto \sqrt{\widehat{q}(k)}$ is contained in $L^2(\R,\R^{m \times m})$.
\end{lem}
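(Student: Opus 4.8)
The plan is to show that $k \mapsto \sqrt{\widehat q(k)}$ is in $L^2(\R,\R^{m\times m})$ by controlling it through the entries and eigenvalues of $\widehat q(k)$. By (Hq), $q \in H^1(\R,\R^{m\times m}) \cap L^1(\R,\R^{m\times m})$, which by Plancherel gives $\widehat q \in L^2(\R,\R^{m\times m})$ and, because of the $H^1$-regularity, also $(1+|k|)\widehat q \in L^2(\R,\R^{m\times m})$; moreover $\widehat q \in L^\infty$ since $q \in L^1$. Also by (Hq), $\widehat q(k)$ is, for each $k$, a symmetric nonnegative-definite matrix (symmetry of $\widehat q(k)$ follows from $q(-\xi)=q(\xi)$ and $q^T(\xi)=q(\xi)$), so its matrix square root $\sqrt{\widehat q(k)}$ is well-defined, symmetric, nonnegative, and satisfies $\sqrt{\widehat q(k)}^2 = \widehat q(k)$.

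First I would reduce the matrix norm of $\sqrt{\widehat q(k)}$ to a scalar quantity. For a symmetric nonnegative matrix $A$ with eigenvalues $\lambda_j(A) \ge 0$, one has $\|\sqrt A\|^2 \le \operatorname{tr}(\sqrt A)^2 \le m\operatorname{tr}(A) = m\,\operatorname{tr}(\sqrt{A}\cdot \sqrt A)$, but more directly $\|\sqrt A\|_{F}^2 = \operatorname{tr}(A) = \sum_j \lambda_j(A)$, so it suffices to show $k \mapsto \operatorname{tr}\bigl(\widehat q(k)\bigr) \in L^1(\R)$. Since $\operatorname{tr}\widehat q(k) = \sum_i \widehat q_{ii}(k)$ and each diagonal entry is real and nonnegative, this amounts to showing each $\widehat q_{ii} \in L^1(\R)$.

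The key step is then an interpolation/decay argument: $\widehat q_{ii}$ is bounded (from $q\in L^1$) and $k\widehat q_{ii} \in L^2$ (from $q \in H^1$, since $\widehat{q'} = ik\widehat q$ and $q' \in L^2$). Hence $\widehat q_{ii}(k)$ is bounded near $k=0$ and decays like $|k|^{-1}$ times an $L^2$ function at infinity, so $\int_{|k|\le 1}|\widehat q_{ii}(k)|\,dk < \infty$ trivially, while for $|k|\ge 1$ I would write $|\widehat q_{ii}(k)| = |k|^{-1}\cdot |k\widehat q_{ii}(k)|$ and apply Cauchy–Schwarz: $\int_{|k|\ge1}|\widehat q_{ii}(k)|\,dk \le \bigl(\int_{|k|\ge1}k^{-2}\,dk\bigr)^{1/2}\bigl(\int |k\widehat q_{ii}(k)|^2\,dk\bigr)^{1/2} < \infty$. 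Therefore $\operatorname{tr}\widehat q \in L^1(\R)$, and consequently $\int_\R \|\sqrt{\widehat q(k)}\|_F^2\,dk = \int_\R \operatorname{tr}\widehat q(k)\,dk < \infty$, which is precisely the claim.

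The main obstacle I anticipate is a measurability/regularity subtlety rather than an estimate: one must check that $k \mapsto \sqrt{\widehat q(k)}$ is actually (Borel) measurable as a matrix-valued map, so that "$\in L^2$" is meaningful. This follows because the matrix square root is a continuous function on the closed cone of symmetric nonnegative matrices (continuity of the square root on nonnegative operators), composed with the continuous map $k \mapsto \widehat q(k)$ (continuity of $\widehat q$ since $q \in L^1$). Once measurability is in hand, the norm bound above finishes the proof; no further care is needed since the pointwise identity $\sqrt{\widehat q(k)}^2 = \widehat q(k)$ is immediate from the spectral definition.
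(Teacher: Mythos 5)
Your proposal is correct and follows essentially the same route as the paper: reduce the claim to $\widehat{q}\in L^1(\R,\R^{m\times m})$ (via the pointwise identity $\nrm{\sqrt{\widehat q(k)}}_F^2=\operatorname{tr}\widehat q(k)$, which the paper leaves implicit) and obtain the $L^1$ bound from $q\in H^1\cap L^1$ by a weighted Cauchy--Schwarz estimate. The only cosmetic difference is that the paper uses the global weight $(1+|k|^2)^{1/2}$ in one stroke, whereas you split into $|k|\le 1$ (boundedness of $\widehat q$) and $|k|\ge 1$ (decay from $k\widehat q\in L^2$); your added remarks on measurability and continuity of the matrix square root are fine but not needed for the paper's argument.
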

\begin{proof}
It suffices to show that $\widehat{q} \in L^1(\R,\R^{m \times m})$, which
follows from the bound
\begin{align}\begin{split}
\nrm{\widehat{q}}_{L^1(\R,\R^{m \times m})}
&=\int_\R\frac{1}{(1+|k|^2)^{\frac{1}{2}}}(1+|k|^2)^{\frac{1}{2}}|\hat q(k)|dk
\leq K \norm{q}_{H^1(\R^{m \times m})}.
\end{split}
\end{align}
\end{proof}

Using this $L^2$-bound on $p$, one can now
show that any $z \in L^2(\R^{n \times m})$
can be interpreted as a Hilbert-Schmidt
operator from $L^2_Q$ into $L^2(\R,\R^n)$.
As usual, this proceeds via the pointwise
multiplication $z[w](x) = z(x) w(x)$.

\begin{lem}
\label{lem:est:bnd:hs:nrm:a}
Suppose that (Hq) is satisfied.
There exists $K > 0$ so that
for any $z \in L^2( \R,\R^{n \times m} )$,
we have $z \in HS\big( L^2_Q , L^2(\R,\R^n)\big)$ with
\begin{equation}
    \nrm{z}_{HS\big( L^2_Q , L^2(\R,\R^n)\big) }
     \le K \norm{z}_{L^2( \R,\R^{n \times m} )} .
\end{equation}
\end{lem}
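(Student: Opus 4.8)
The plan is to evaluate the Hilbert--Schmidt norm directly from its defining formula
\[
  \nrm{z}_{HS(L^2_Q,L^2(\R,\R^n))}^2=\sum_{k=0}^\infty\nrm{z[\sqrt Q e_k]}_{L^2(\R,\R^n)}^2,
\]
where $(e_k)$ is the orthonormal basis of $L^2(\R,\R^m)$ fixed in {\S}\ref{sec:sps:bck}. By Lemma \ref{lem:app:sqrtQ} the function $p:=\mathcal F^{-1}\big(\sqrt{\widehat q}\big)$ is a well-defined element of $L^2(\R,\R^{m\times m})$, and since $\widehat{\sqrt Q v}=\sqrt{\widehat q}\,\widehat v=\widehat p\,\widehat v$ it is precisely the convolution kernel of $\sqrt Q$, so that $z[\sqrt Q e_k](x)=z(x)\,(p*e_k)(x)$ for a.e.\ $x$. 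Plancherel also gives $\nrm{p}_{L^2(\R,\R^{m\times m})}=c\,\nrm{\sqrt{\widehat q}}_{L^2(\R,\R^{m\times m})}<\infty$ for a dimensional constant $c$, so the constant produced below is finite.

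First I would fix $x\in\R$ and consider the linear map $T_x:L^2(\R,\R^m)\to\R^n$ given by $T_xf=z(x)(p*f)(x)=\int_\R z(x)p(x-y)f(y)\,dy$. This is an integral operator with $\R^{n\times m}$-valued kernel $y\mapsto z(x)p(x-y)$, which by submultiplicativity of the Frobenius norm and translation invariance of Lebesgue measure satisfies
\[
  \int_\R\nrm{z(x)p(x-y)}_{F}^2\,dy\le\nrm{z(x)}_{F}^2\int_\R\nrm{p(x-y)}_{F}^2\,dy=\nrm{z(x)}_{F}^2\,\nrm{p}_{L^2(\R,\R^{m\times m})}^2<\infty
\]
for a.e.\ $x$. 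Hence $T_x$ is Hilbert--Schmidt from $L^2(\R,\R^m)$ into $\R^n$, and computing its Hilbert--Schmidt norm through the basis $(e_k)$ on the one hand and through the kernel on the other yields the pointwise identity $\sum_{k=0}^\infty|z(x)(p*e_k)(x)|^2=\int_\R\nrm{z(x)p(x-y)}_F^2\,dy$. Summing over $k$, integrating over $x$, and interchanging the (nonnegative) sum and integral by Tonelli's theorem gives
\[
  \nrm{z}_{HS(L^2_Q,L^2(\R,\R^n))}^2=\int_\R\int_\R\nrm{z(x)p(x-y)}_F^2\,dy\,dx\le\nrm{p}_{L^2(\R,\R^{m\times m})}^2\,\nrm{z}_{L^2(\R,\R^{n\times m})}^2,
\]
so the claim holds with $K=\nrm{p}_{L^2(\R,\R^{m\times m})}$.

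The only step requiring a word of care is the identification of $\sum_k|z(x)(p*e_k)(x)|^2$ with the Hilbert--Schmidt norm of $T_x$, i.e.\ recognising that summing against the $L^2(\R,\R^m)$-basis $(e_k)$ --- which is exactly the basis indexing the $L^2_Q$-basis $(\sqrt Q e_k)$ appearing in the definition of the $HS$-norm --- reproduces $\int_\R\nrm{z(x)p(x-y)}_F^2\,dy$. This is the standard characterisation of Hilbert--Schmidt integral operators with a square-integrable matrix kernel (a one-line Parseval argument applied row by row), so I do not anticipate any real obstacle here. A fully equivalent alternative would be to write $(p*e_k)(x)_i=\ip{p_{i\cdot}(x-\cdot),e_k}_{L^2(\R,\R^m)}$, apply Parseval to obtain the pointwise bound $\sum_k|(p*e_k)(x)|^2=\nrm{p}_{L^2(\R,\R^{m\times m})}^2$, and then use $|z(x)w|\le\nrm{z(x)}_F|w|$ together with Tonelli.
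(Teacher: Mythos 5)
Your proposal is correct and follows essentially the same route as the paper: both expand $\nrm{z}^2_{HS}$ over the basis $(\sqrt{Q}e_k)$, apply Parseval pointwise in $x$ to convert the sum over $k$ into an $L^2$-norm of the kernel $y\mapsto z(x)p(x-y)$, and conclude with a bound by $\nrm{p}_{L^2}\nrm{z}_{L^2}$. The only cosmetic difference is that the paper carries out the component sums explicitly and finishes with Cauchy--Schwarz on the cross terms, whereas you package the same step as Frobenius-norm submultiplicativity for the finite-rank operator $T_x$.
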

\begin{proof}
Writing out the various matrix multiplications
in a component-wise fashion,
we obtain
\begin{equation}
    \begin{array}{lcl}
      \nrm{z}^2_{HS}
       &= &\sum_{k=0}^\infty \nrm{z[\sqrt Q e_k]}_{L^2(\R,\R^n)}^2
       \\
       & = & \sum_{k=0}^\infty \sum_{i=1}^n \sum_{j,j'=1}^m \int z_{ij}(x) \langle p_{j \cdot}(x - \cdot), e_k \rangle_{L^2(\R,\R^m)}
       z_{ij'}\langle p_{j' \cdot}(x - \cdot) ,e_k \rangle_{L^2(\R,\R^m)} \, dx
       \\[0.2cm]
       & = &  \sum_{i=1}^n \sum_{j,j'=1}^m \int z_{ij}(x) z_{ij'}(x)
        \langle p_{j \cdot}(x - \cdot), p_{j' \cdot}(x - \cdot)
          \rangle_{L^2(\R,\R^m)} \, dx
      \\[0.2cm]
        & = &
         \sum_{i=1}^n \sum_{j,j',l=1}^m\langle p_{j l}, p_{j' l} \rangle_{L^2( \Real)}
          \int z_{ij}(x) z_{ij'}(x)  \, dx
    \end{array}
\end{equation}
The result now follows by appealing to Cauchy-Schwarz.
\end{proof}

Our final two results concern
a bound on the cut-off functions \sref{eq:sps:def:cutoffs}
and a bound on the $L^2$-norm of $g$
that we borrow from  \cite{Hamster2017}.
This is especially useful when combined
with the bound in Lemma \ref{lem:est:bnd:hs:nrm:a}.

\begin{lem}
\label{lem:est:bnds:cutoff}
Suppose that (HEq), (Hg) and (hPar) are satisfied.
Then there exists a constant $K > 0$, which does not
depend on the pair $(\Phi, c)$, so that the following holds true.
For any $v \in H^1(\R,\R^n)$ and $\gamma \in \R$
we have the bound
\begin{equation}
 \label{eq:est:chi:l2:bnds}
    \begin{array}{lcl}
      \abs{\overline{\chi}_l(\Phi + v , \gamma) }
      + \abs{\overline{\chi}_h(\Phi + v , \gamma) }
      & \le &
        K ,
     \\[0.2cm]
    \end{array}
\end{equation}
while for any pair $(v_A, v_B ) \in H^1(\R,\R^n) \times H^1(\R,\R^n)$
and $(\gamma_A, \gamma_B) \in \Real^2$
we have the estimates
\begin{equation}
  \label{eq:est:chi:lip:bnds}
    \begin{array}{lcl}
       \abs{\overline{\chi}_l(\Phi + v_A , \gamma_A)
         - \overline{\chi}_l(\Phi + v_B, \gamma_B)}
         & \le &
        K \big[ \norm{v_A - v_B}_{L^2(\R,\R^n)} +
          (1 + \norm{v_A}_{L^2} )\abs{\gamma_1 - \gamma_2} \big] ,
       \\[0.2cm]
       \abs{\overline{\chi}_h(\Phi + v_A , \gamma_A)
         - \overline{\chi}_h(\Phi + v_B, \gamma_B)} & \le &
          K \big[ \norm{v_A - v_B}_{L^2(\R,\R^n)} + \abs{\gamma_A - \gamma_B}
          \big] .
    \end{array}
\end{equation}
\end{lem}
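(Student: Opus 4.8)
The plan is to establish the two families of bounds by exploiting the fact that the cut-off functions $\chi_{\mathrm{low}}$ and $\chi_{\mathrm{high}}$ are smooth with bounded derivatives, so that everything reduces to controlling the arguments appearing inside them. First I would record the elementary observation that, by construction, $\chi_{\mathrm{low}} \geq \tfrac14$ is bounded below and $\chi_{\mathrm{low}}'$ is bounded (it equals $0$ for $\vartheta \le \tfrac14$, equals $1$ for $\vartheta \ge \tfrac12$, and is smooth in between), so $\vartheta \mapsto [\chi_{\mathrm{low}}(\vartheta)]^{-1}$ is bounded by $4$ and globally Lipschitz. Likewise $\chi_{\mathrm{high}}:\R^+ \to [0,1]$ is bounded and has bounded derivative. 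Hence to prove \sref{eq:est:chi:l2:bnds} and \sref{eq:est:chi:lip:bnds} it suffices to (a) bound the arguments $\bip{\p_\xi(\Phi+v), T_\gamma \psit}_{L^2}$ and $\norm{\Phi + v - T_\gamma \Phi_{\mathrm{ref}}}_{L^2}$ and (b) show that these arguments depend Lipschitz-continuously on the pair $(v,\gamma)$ with the stated moduli.

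For the size bounds \sref{eq:est:chi:l2:bnds}: the upper bound on $\abs{\overline{\chi}_l}$ is immediate from $\chi_{\mathrm{low}} \ge \tfrac14$, giving $\abs{\overline{\chi}_l}\le 4$ with no dependence on $(\Phi,c)$; the bound on $\abs{\overline{\chi}_h}$ is immediate from $\chi_{\mathrm{high}}\le 1$. For the Lipschitz bounds \sref{eq:est:chi:lip:bnds} I would write, for any smooth scalar $\chi$ with bounded derivative, $\abs{\chi(s_A)-\chi(s_B)} \le \nrm{\chi'}_\infty \abs{s_A - s_B}$, and then estimate the differences of the arguments. For $\overline{\chi}_h$ the argument is $s = \norm{\Phi + v - T_\gamma \Phi_{\mathrm{ref}}}_{L^2}$; the triangle inequality gives $\abs{s_A - s_B} \le \norm{v_A - v_B}_{L^2} + \norm{T_{\gamma_A}\Phi_{\mathrm{ref}} - T_{\gamma_B}\Phi_{\mathrm{ref}}}_{L^2}$, and since $\Phi_{\mathrm{ref}}$ is smooth with $\Phi_{\mathrm{ref}}' \in H^1 \subset L^2$ bounded (this uses (hPar)/(HEq) to ensure $\Phi_{\mathrm{ref}}' \in L^2$, noting $\Phi_{\mathrm{ref}}$ has exponentially approached limits so its derivative is in every $L^p$), the mean value theorem along the shift gives $\norm{T_{\gamma_A}\Phi_{\mathrm{ref}} - T_{\gamma_B}\Phi_{\mathrm{ref}}}_{L^2} \le \norm{\Phi_{\mathrm{ref}}'}_{L^2}\abs{\gamma_A - \gamma_B}$, yielding exactly the claimed bound. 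For $\overline{\chi}_l$ the argument is $s = \bip{\p_\xi(\Phi+v), T_\gamma \psit}_{L^2}$; here I would split $s = \bip{\Phi', T_\gamma\psit}_{L^2} + \bip{v', T_\gamma\psit}_{L^2}$ and use Cauchy--Schwarz: the $v$-dependent difference is $\abs{\bip{v_A' - v_B', T_{\gamma_A}\psit}} \le \norm{v_A - v_B}_{H^1}\norm{\psit}_{L^2}$ — but we want $\norm{v_A - v_B}_{L^2}$, so instead integrate by parts to move the derivative onto $\psit$, writing $\bip{v', T_\gamma\psit}_{L^2} = -\bip{v, T_\gamma\psit'}_{L^2}$ (valid since $\psit \in H^2$), after which Cauchy--Schwarz gives control by $\norm{v_A - v_B}_{L^2}\norm{\psit'}_{L^2}$. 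The $\gamma$-dependent differences are handled by the same shift-regularity argument: $\abs{\bip{\Phi', T_{\gamma_A}\psit - T_{\gamma_B}\psit}} \le \norm{\Phi'}_{L^2}\norm{\psit'}_{L^2}\abs{\gamma_A - \gamma_B}$ and (after integration by parts) $\abs{\bip{v_A, T_{\gamma_A}\psit' - T_{\gamma_B}\psit'}} \le \norm{v_A}_{L^2}\norm{\psit''}_{L^2}\abs{\gamma_A-\gamma_B}$, which explains the $(1 + \norm{v_A}_{L^2})$ prefactor. Combining via the chain rule for $[\chi_{\mathrm{low}}]^{-1}$ (whose derivative is $-\chi_{\mathrm{low}}'/\chi_{\mathrm{low}}^2$, bounded by $16\nrm{\chi_{\mathrm{low}}'}_\infty$) gives \sref{eq:est:chi:lip:bnds} with a constant $K$ depending only on $\nrm{\psit}_{H^2}$, $\nrm{\chi_{\mathrm{low}}'}_\infty$, $\nrm{\chi_{\mathrm{high}}'}_\infty$ and, through $\norm{\Phi'}_{L^2}$, on $(\Phi,c)$ only via the uniform (hPar) bound $\norm{\Phi - \Phi_0}_{H^1}\le 1$ — hence $K$ can be chosen independently of $(\Phi,c)$.

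The main obstacle I anticipate is the bookkeeping around the derivative in $\overline{\chi}_l$: naively Cauchy--Schwarz forces an $H^1$-norm on $v$, which is too strong for the claimed $L^2$-modulus, so one must integrate by parts onto $\psit$ — this is legitimate because $\psit \in H^2(\R,\R^n)$ decays exponentially (it is $\kappa e^{c_0\xi/\rho}\Phi_0'$-type and lies in $H^2$ by (HTw)), but it should be stated carefully, and it is the reason the final constant involves $\norm{\psit}_{H^2}$ rather than just $\norm{\psit}_{H^1}$. A secondary minor point is justifying $\Phi_{\mathrm{ref}}' \in L^2$ and the shift-continuity estimates $\norm{T_{\gamma_A}\phi - T_{\gamma_B}\phi}_{L^2}\le \norm{\phi'}_{L^2}\abs{\gamma_A-\gamma_B}$; these follow from the fundamental theorem of calculus in the form $T_{\gamma_A}\phi - T_{\gamma_B}\phi = -\int_{\gamma_B}^{\gamma_A} T_s \phi' \, ds$ together with Minkowski's integral inequality, and require no more than the standing assumptions on $\Phi_{\mathrm{ref}}$, $\Phi_0$ and $\psit$.
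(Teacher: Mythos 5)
Your proof is correct and takes essentially the same route as the paper: the size bounds come straight from the definitions, the $\overline{\chi}_h$ estimate rests on exactly the norm-difference observation the paper records (together with shift-Lipschitz continuity of $T_\gamma \Phi_{\mathrm{ref}}$, which is fine since $\Phi_0-\Phi_{\mathrm{ref}}\in H^1$ forces $\Phi_{\mathrm{ref}}'\in L^2$), and for the $\overline{\chi}_l$ estimate the paper simply cites \cite[Lem. 3.3]{Hamster2017}. Your self-contained argument for that last bound — composing the Lipschitz map $\vartheta\mapsto[\chi_{\mathrm{low}}(\vartheta)]^{-1}$ with an integration by parts that moves the derivative from $v$ onto $\psit$ (valid since $\psit\in H^2$ by (HTw)) — is precisely the mechanism that yields the $L^2$-modulus in $v$ and the $(1+\norm{v_A}_{L^2})$ prefactor on $\abs{\gamma_A-\gamma_B}$, so it faithfully fills in the cited step.
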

\begin{proof}
The bound \sref{eq:est:chi:l2:bnds} follows directly
from the definition of the cut-off functions.
The first Lipschitz bound in \sref{eq:est:chi:lip:bnds}
can be found in  \cite[Lem. 3.3]{Hamster2017},
while the second bound follows from the observation
\begin{equation}
    \abs{\norm{ \Phi + v_A - T_{\gamma_A} \Phi_{\mathrm{ref}} }_{L^2(\R,\R^n)}
    - \norm{ \Phi + v_B - T_{\gamma_B} \Phi_{\mathrm{ref}} }_{L^2(\R,\R^n)} }
     \le K\big[ \norm{ v_A - v_B}_{L^2(\R,\R^n)} + \abs{\gamma_A - \gamma_B} \big].
\end{equation}
\end{proof}

\begin{lem}
\label{lem:est:bnds:g:l2}
Suppose that (HEq), (Hg) and (hPar) are satisfied.
Then there exists a constant $K > 0$, which does not
depend on the pair $(\Phi, c)$, so that the following holds true.
For any $v \in H^1(\R,\R^n)$
we have the bounds
\begin{equation}
 \label{eq:prlm:g:l2:bnds}
    \begin{array}{lcl}
      \norm{g(\Phi + v)}_{L^2(\R^{n\times m})}
      & \le &
        K [1 + \norm{v}_{L^2(\R,\R^n)}] ,
     \\[0.2cm]
     \norm{\partial_\xi g(\Phi + v)}_{L^2(\R^{n\times m})}  & \le &
        K [1 + \norm{v}_{H^1(\R,\R^n)}] ,
    \end{array}
\end{equation}
while for any pair $(v_A, v_B ) \in H^1(\R,\R^n) \times H^1(\R,\R^n)$
we have the estimates
\begin{equation}
  \label{eq:lem:prlm:g:lipschitz}
    \begin{array}{lcl}
      \norm{g(\Phi + v_A) - g(\Phi + v_B) }_{L^2(\R^{n\times m})} & \le &
        K \norm{v_A - v_B}_{L^2(\R,\R^n)} ,
       \\[0.2cm]
       \norm{\partial_\xi [ g(\Phi + v_A) - g(\Phi + v_B) ]}_{L^2(\R^{n\times m})}
       & \le &
        K \big[ 1 + \norm{v_A}_{H^1(\R,\R^n)}  \big] \norm{v_A - v_B}_{H^1(\R,\R^n)} .
    \end{array}
\end{equation}
\end{lem}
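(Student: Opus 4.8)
The plan is to establish the four estimates in Lemma~\ref{lem:est:bnds:g:l2} by reducing everything to the growth and Lipschitz properties of $g$ and $Dg$ contained in assumption (HSt), together with the $H^1$-bounds on $\Phi - \Phi_0$ from (hPar). The key structural observation is that $g$ vanishes at the equilibria $u_\pm$ by (HEq), so that $g(\Phi+v)$ decays at $\pm\infty$ and is genuinely an $L^2$-function rather than merely a bounded one. Concretely, I would write $g(\Phi+v)(\xi) = g\big(\Phi(\xi)+v(\xi)\big) - g\big(\Phi_0(\xi)\big) + g\big(\Phi_0(\xi)\big)$; the last term is in $L^2$ because $\Phi_0 \to u_\pm$ exponentially and $g(u_\pm)=0$ (so $|g(\Phi_0(\xi))| \le \|Dg\|_\infty |\Phi_0(\xi) - u_\pm|$ decays exponentially), while the difference is controlled pointwise by $\|Dg\|_\infty\big(|\Phi - \Phi_0| + |v|\big)$, which lies in $L^2$ by (hPar) and $v \in H^1 \subset L^2$. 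This already yields the first bound in \sref{eq:prlm:g:l2:bnds} and, by essentially the same telescoping with $v_A, v_B$ in place of $v, 0$, the first Lipschitz bound in \sref{eq:lem:prlm:g:lipschitz}. Since these are exactly the estimates used in \cite[\S3]{Hamster2017} for scalar noise, I would in fact just cite that reference for these two, noting that the matrix-valued case is identical component-by-component.

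Next I would handle the derivative bounds. By the chain rule, $\partial_\xi g(\Phi+v) = Dg(\Phi+v)[\Phi' + v']$ pointwise, so $|\partial_\xi g(\Phi+v)(\xi)| \le \|Dg\|_\infty\big(|\Phi'(\xi)| + |v'(\xi)|\big)$. Now $\Phi' = (\Phi - \Phi_0)' + \Phi_0'$; the first piece is in $L^2$ because $\Phi - \Phi_0 \in H^1$ by (hPar), and $\Phi_0' \in L^2$ because $\Phi_0$ approaches its limits at an exponential rate, hence so does $\Phi_0'$ (standard for travelling-wave profiles, or one uses that $\Phi_0 \in C^2$ solves \sref{eq:MR:TWODE} with exponentially localized right-hand side). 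Combining, $\|\partial_\xi g(\Phi+v)\|_{L^2} \le K(1 + \|v'\|_{L^2}) \le K(1 + \|v\|_{H^1})$, which is the second bound in \sref{eq:prlm:g:l2:bnds}. For the corresponding Lipschitz estimate I would write
\begin{equation*}
\partial_\xi g(\Phi+v_A) - \partial_\xi g(\Phi+v_B)
= \big(Dg(\Phi+v_A) - Dg(\Phi+v_B)\big)[\Phi' + v_A']
+ Dg(\Phi+v_B)[v_A' - v_B'].
\end{equation*}
The second term is bounded in $L^2$ by $\|Dg\|_\infty \|v_A - v_B\|_{H^1}$. For the first term, the global Lipschitz continuity of $Dg$ from (HSt) gives $|Dg(\Phi+v_A)(\xi) - Dg(\Phi+v_B)(\xi)| \le L |v_A(\xi) - v_B(\xi)|$, and multiplying by $|\Phi'(\xi) + v_A'(\xi)|$ and integrating, Cauchy--Schwarz (placing the $L^\infty$-weight on whichever factor is cleaner, or using $\|fg\|_{L^2} \le \|f\|_{L^\infty}\|g\|_{L^2}$ after a Sobolev embedding $H^1 \hookrightarrow L^\infty$) yields a bound of the form $K\|v_A - v_B\|_{L^\infty}\|\Phi' + v_A'\|_{L^2} \le K(1 + \|v_A\|_{H^1})\|v_A - v_B\|_{H^1}$, using $\|v_A-v_B\|_{L^\infty} \le C\|v_A - v_B\|_{H^1}$ and the already-established bound on $\|\Phi'\|_{L^2}$. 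This is the last estimate claimed.

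The main obstacle, such as it is, is bookkeeping rather than conceptual: one must be careful that (HSt) only gives boundedness and Lipschitz continuity of $Dg$, not of $g$ itself, so every estimate on $g$ must be routed through the telescoping trick that exploits (HEq) to produce decay — writing $g(u) = g(u) - g(u_\pm)$ and using the mean value inequality. A secondary point requiring a line of justification is the exponential localization of $\Phi_0'$: this follows from (HTw)'s hypothesis that $\Phi_0$ approaches $u_\pm$ exponentially together with the travelling-wave ODE \sref{eq:MR:TWODE}, since $\rho\Phi_0'' = -c_0\Phi_0' - f(\Phi_0)$ and $f(\Phi_0) = f(\Phi_0) - f(u_\pm)$ is exponentially small, so a bootstrap (or the standard stable/unstable manifold argument near the equilibria) gives $\Phi_0', \Phi_0'' \in L^2$ with exponential tails. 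Once these two observations are in place, all four inequalities reduce to Cauchy--Schwarz, the Sobolev embedding $H^1(\R) \hookrightarrow L^\infty(\R)$, and the hypotheses (HSt) and (hPar), exactly paralleling \cite[Lem.~3.3 and \S7]{Hamster2017}; I would present the scalar-noise parts by citation and spell out only the new matrix-valued derivative computations.
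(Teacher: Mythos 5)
Your proposal is correct and follows essentially the same route as the paper, which disposes of this lemma in one line by citing \cite[Lem.~3.2]{Hamster2017}; your telescoping via (HEq), the chain rule with the bounded and Lipschitz $Dg$ from (HSt), the $H^1\hookrightarrow L^\infty$ embedding, and the exponential localization of $\Phi_0'$ are precisely the ingredients behind that cited estimate, here simply written out (componentwise for the matrix-valued $g$).
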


\begin{proof}
This follows from lemma 3.2 in \cite{Hamster2017}.
\end{proof}

\subsection{Estimates for $g$, $b_\s$ and $\mathcal{K}_C$}
\label{sec:est:bridge}

By combining the estimates in Lemma's \ref{lem:est:bnd:hs:nrm:a} 
and \ref{lem:est:bnds:g:l2} above, we immediately
obtain bounds on $g(U)$ viewed as a pointwise
multiplication operator from  $L^2_Q$ into $L^2(\R,\R^n)$.
These correspond precisely with the $L^2$-bounds for the function
$g(U)$ itself, allowing the follow-up estimates
to be readily transferred from \cite{Hamster2017} to the current setting.

\begin{cor}
\label{cor:prlm:ests:g}
Suppose that (Hq), (HEq), (HSt) and (hPar) are satisfied.
Then there exists a constant $K > 0$,
which does not depend on $(\Phi,c)$ so that the following holds true.
For any $v \in H^1(\R,\R^n)$
we have the bounds
\begin{equation}
 \label{eq:prlm:g:l2:bnds:hs}
    \begin{array}{lcl}
      \norm{ g(\Phi + v)}_{HS\big(L^2_Q,L^2(\R,\R^n)\big)}  & \le &
        K [1 + \norm{v}_{L^2(\R,\R^n)}] ,
    \end{array}
\end{equation}
while for any pair $(v_A, v_B ) \in H^1(\R,\R^n) \times H^1(\R,\R^n)$
we have the estimates
\begin{equation}
  \label{eq:lem:prlm:g:lipschitz:hs}
    \begin{array}{lcl}
      \norm{ g(\Phi + v_A) -  g(\Phi + v_B) }_{HS\big(L^2_Q,L^2(\R,\R^n)\big)} & \le &
        K \norm{v_A - v_B}_{L^2(\R,\R^n)} .
    \end{array}
\end{equation}
\end{cor}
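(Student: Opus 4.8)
The plan is to deduce Corollary \ref{cor:prlm:ests:g} directly by composing the two preceding lemmas. The key observation is that Lemma \ref{lem:est:bnd:hs:nrm:a} says that the pointwise-multiplication Hilbert--Schmidt norm of any matrix-valued $L^2$-function is controlled (up to a fixed constant depending only on the kernel $p$ of $\sqrt{Q}$) by its plain $L^2(\R,\R^{n\times m})$-norm, while Lemma \ref{lem:est:bnds:g:l2} provides exactly the required $L^2$-bounds on $g(\Phi+v)$ and on the difference $g(\Phi+v_A)-g(\Phi+v_B)$. So the whole proof is a two-line application: the difficulty, such as it is, has already been absorbed into those lemmas, and this corollary is purely bookkeeping.

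Concretely, first I would fix the constant $K_{\mathrm{HS}}$ from Lemma \ref{lem:est:bnd:hs:nrm:a} and the constant $K_g$ from Lemma \ref{lem:est:bnds:g:l2}. For the size bound \sref{eq:prlm:g:l2:bnds:hs}, apply Lemma \ref{lem:est:bnd:hs:nrm:a} with $z = g(\Phi+v) \in L^2(\R,\R^{n\times m})$ --- which is legitimate precisely because the first estimate in \sref{eq:prlm:g:l2:bnds} guarantees $g(\Phi+v)$ lies in that space --- to get $\norm{g(\Phi+v)}_{HS(L^2_Q,L^2)} \le K_{\mathrm{HS}}\norm{g(\Phi+v)}_{L^2(\R^{n\times m})} \le K_{\mathrm{HS}} K_g [1+\norm{v}_{L^2}]$, so one takes $K = K_{\mathrm{HS}} K_g$. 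For the Lipschitz bound \sref{eq:lem:prlm:g:lipschitz:hs}, apply Lemma \ref{lem:est:bnd:hs:nrm:a} with $z = g(\Phi+v_A) - g(\Phi+v_B)$, which again lies in $L^2(\R,\R^{n\times m})$ by the first line of \sref{eq:lem:prlm:g:lipschitz}, giving $\norm{g(\Phi+v_A)-g(\Phi+v_B)}_{HS(L^2_Q,L^2)} \le K_{\mathrm{HS}}\norm{g(\Phi+v_A)-g(\Phi+v_B)}_{L^2(\R^{n\times m})} \le K_{\mathrm{HS}} K_g \norm{v_A-v_B}_{L^2}$.

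The only point requiring a word of care is the uniformity claim: the constant $K$ must not depend on the pair $(\Phi,c)$. This is automatic because $K_{\mathrm{HS}}$ depends only on the kernel $p$ (hence only on $q$ via (Hq)), and $K_g$ is already asserted to be independent of $(\Phi,c)$ in Lemma \ref{lem:est:bnds:g:l2}; the product of two $(\Phi,c)$-independent constants is $(\Phi,c)$-independent. One should also note that hypothesis (Hq) is what licenses invoking Lemma \ref{lem:est:bnd:hs:nrm:a}, while (HEq), (HSt) and (hPar) are what license Lemma \ref{lem:est:bnds:g:l2} (the excerpt writes (Hg) there, which is the labelling used in the appendix for the growth/Lipschitz hypotheses on $g$, matching (HSt) in the main text). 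Since there is no real obstacle here --- the content is entirely in the two lemmas --- the proof is just the composition, and I would write it in one short paragraph.

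\begin{proof}
Both estimates follow by composing Lemma \ref{lem:est:bnd:hs:nrm:a}
with Lemma \ref{lem:est:bnds:g:l2}. Let $K_{\mathrm{HS}}$ denote
the constant from Lemma \ref{lem:est:bnd:hs:nrm:a}, which depends
only on the kernel $p$ of $\sqrt{Q}$ and hence only on $q$, and let
$K_g$ denote the $(\Phi,c)$-independent constant from
Lemma \ref{lem:est:bnds:g:l2}. For any $v \in H^1(\R,\R^n)$,
the first bound in \sref{eq:prlm:g:l2:bnds} gives
$g(\Phi + v) \in L^2(\R,\R^{n\times m})$, so Lemma
\ref{lem:est:bnd:hs:nrm:a} applied to $z = g(\Phi+v)$ yields
\begin{equation}
\norm{g(\Phi + v)}_{HS\big(L^2_Q,L^2(\R,\R^n)\big)}
 \le K_{\mathrm{HS}} \norm{g(\Phi+v)}_{L^2(\R^{n\times m})}
 \le K_{\mathrm{HS}} K_g \big[ 1 + \norm{v}_{L^2(\R,\R^n)} \big].
\end{equation}
Similarly, for any pair $(v_A, v_B) \in H^1(\R,\R^n)^2$, the first
bound in \sref{eq:lem:prlm:g:lipschitz} gives
$g(\Phi+v_A) - g(\Phi+v_B) \in L^2(\R,\R^{n\times m})$, so applying
Lemma \ref{lem:est:bnd:hs:nrm:a} to this difference yields
\begin{equation}
\norm{g(\Phi + v_A) - g(\Phi + v_B)}_{HS\big(L^2_Q,L^2(\R,\R^n)\big)}
 \le K_{\mathrm{HS}} \norm{g(\Phi+v_A) - g(\Phi+v_B)}_{L^2(\R^{n\times m})}
 \le K_{\mathrm{HS}} K_g \norm{v_A - v_B}_{L^2(\R,\R^n)}.
\end{equation}
Taking $K = K_{\mathrm{HS}} K_g$, which is independent of $(\Phi,c)$,
completes the proof.
\end{proof}
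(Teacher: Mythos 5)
Your proposal is correct and is exactly the argument the paper intends: the corollary is obtained by applying Lemma \ref{lem:est:bnd:hs:nrm:a} to $z = g(\Phi+v)$ (respectively to the difference $g(\Phi+v_A)-g(\Phi+v_B)$) and invoking the $L^2$-bounds of Lemma \ref{lem:est:bnds:g:l2}, with the constant inherited from the two $(\Phi,c)$-independent constants. Your remark on the (Hg)/(HSt) labelling and on the uniformity of $K$ is consistent with the paper's conventions.
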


Turning to the nonlinearity $\mathcal{K}_C$ 
defined in \sref{eq:sps:defs:T:ABC},
our goal here is to derive estimates for
$\partial_\xi \mathcal{K}_C(U,\gamma)$
that are comparable to those obtained for the product 
$b(U,\gamma) \p_\xi g(U) $ in the context of \cite{Hamster2017},
where $b$ evaluates to a scalar.
To this end,
we introduce the auxiliary function
\begin{equation}
    \widetilde{\mathcal{K}}_C(U, \Gamma) =
       \overline{\chi}_l(U,\Gamma)
         \overline{\chi}_h(U,\Gamma) Q g^T(U)T_{\Gamma} \psit ,
\end{equation}
which in view of the identification \sref{eq:sps:def:g:adj:spec}
allows us to write
\begin{equation}
    \mathcal{K}_C(U,\gamma) = - \overline{\chi}_h(U,\Gamma) g(U) \widetilde{\mathcal{K}}_C(U, \Gamma).
\end{equation}
The strategy is to use the splitting
\begin{equation}
\label{eq:est:splitting:for:tc}
\begin{array}{lcl}
    \nrm{ \partial_\xi \mathcal{K}_C(U,\Gamma) }_{L^2(\R,\R^n)}
    & \le & \norm{ \overline{\chi}_h(U,\Gamma) \partial_\xi  g(U) }_{HS\big(L^2_Q , L^2(\R,\R^n)\big)}
      \norm{\widetilde{\mathcal{K}}_C(U, \Gamma) }_{L^2_Q}
\\[0.2cm]
  & & \qquad
      + \norm{\overline{\chi}_h(U,\Gamma) g(U) }_{HS\big(L^2_Q , L^2(\R,\R^n)\big)}
      \norm{\p_\xi \widetilde{\mathcal{K}}_C(U, \Gamma) }_{L^2_Q}
\end{array}
\end{equation}
together with its natural analogue for 
$\p_\xi [\mathcal{K}_C(U_A,\G_A) - \mathcal{K}_C(U_B,\G_B)]$.
The following two results provide bounds for the factors
in \sref{eq:est:splitting:for:tc}
that show that both products on the right hand side 
lead to similar expressions as those obtained
in \cite{Hamster2017}. In fact, we obtain slightly better
estimates as a consequence of a more refined use 
of the cutoff functions.

\begin{cor}
\label{cor:est:chig}
Suppose that (Hg), (HEq) and (hPar) are satisfied.
Then there exists a constant $K > 0$, which does not
depend on the pair $(\Phi, c)$, so that the following holds true.
For any $v \in H^1(\R,\R^n)$ and $\gamma \in \R$
we have the bounds
\begin{equation}
 \label{eq:est:chig:l2:bnds}
    \begin{array}{lcl}
      \norm{\overline{\chi}_h(\Phi + v , \gamma) g(\Phi + v)}_{L^2(\R^{n \times m})}
      & \le &
        K ,
     \\[0.2cm]
     \norm{\overline{\chi}_h(\Phi + v , \gamma) \partial_\xi g(\Phi + v)}_{L^2(\R^{n\times m})}
      & \le &
        K \big[ 1 + \norm{v}_{H^1(\R,\R^n)} \big] .
     \\[0.2cm]
    \end{array}
\end{equation}
In addition, for any pair $(v_A, v_B ) \in H^1(\R,\R^n) \times H^1(\R,\R^n)$
and $(\gamma_A, \gamma_B) \in \Real^2$,
the expression
\begin{equation}
    \Delta_{AB} \overline{\chi}_h g
     = \overline{\chi}_h(\Phi + v_A , \gamma_A)g(\Phi + v_A)
      - \overline{\chi}_h(\Phi + v_B , \gamma_B)g(\Phi + v_B)
\end{equation}
satisfies the estimates
\begin{equation}
  \label{eq:est:chig:lip:bnds}
    \begin{array}{lcl}
      \norm{\Delta_{AB} \overline{\chi}_h g}_{L^2(\R^{n \times m})} & \le &
        K \big[ \norm{v_A - v_B}_{L^2(\R,\R^n)} + \abs{\gamma_1 - \gamma_2} \big] ,
       \\[0.2cm]
       \norm{\partial_\xi \Delta_{AB} }_{L^2(\R^{n\times m})} & \le &
          K \big[ \norm{v_A - v_B}_{H^1(\R,\R^n)} + \abs{\gamma_A - \gamma_B}
          \big] \big[ 1 + \norm{v_A}_{H^1(\R,\R^n)} \big] .
    \end{array}
\end{equation}
\end{cor}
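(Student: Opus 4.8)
All four bounds are $L^2(\R,\R^{n\times m})$-estimates, so I would obtain them by combining the $g$-estimates of Lemma~\ref{lem:est:bnds:g:l2}, the cut-off estimates of Lemma~\ref{lem:est:bnds:cutoff}, and the single structural fact that $g$ vanishes at the equilibria $u_\pm$. Throughout I would use that $0\le\overline{\chi}_h\le 1$ and that $\overline{\chi}_h(U,\gamma)$ does not depend on the spatial variable $\xi$. The plan is to record the structural estimate first and then dispatch the size bounds and the Lipschitz bounds by elementary telescoping.

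\emph{The structural estimate.} Since $g(u_\pm)=0$ and $Dg$ is bounded by (HSt), the map $g$ is globally Lipschitz, so for any $\gamma\in\R$ and $W\in\mathcal{U}_{H^1}$ one has $\norm{g(W)-g(T_\gamma\Phi_{\mathrm{ref}})}_{L^2}\le\mathrm{Lip}(g)\norm{W-T_\gamma\Phi_{\mathrm{ref}}}_{L^2}$, while $g(T_\gamma\Phi_{\mathrm{ref}})=T_\gamma g(\Phi_{\mathrm{ref}})$ (the pointwise action of $g$ commutes with translation) has $L^2$-norm equal to the fixed finite number $\norm{g(\Phi_{\mathrm{ref}})}_{L^2}$. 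Hence on the support of $\overline{\chi}_h(W,\gamma)$, where by definition $\norm{W-T_\gamma\Phi_{\mathrm{ref}}}_{L^2}\le K_{\mathrm{up}}+1$, we get the \emph{uniform} bound
\[
\norm{g(W)}_{L^2(\R^{n\times m})}\le \mathrm{Lip}(g)(K_{\mathrm{up}}+1)+\norm{g(\Phi_{\mathrm{ref}})}_{L^2}=:K .
\]
Multiplying by $\overline{\chi}_h\le 1$ gives the first bound in \sref{eq:est:chig:l2:bnds}; this is precisely the refinement over \cite{Hamster2017}, where $\norm{g(\Phi+v)}_{L^2}$ was only controlled by $K(1+\norm{v}_{L^2})$. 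The second bound in \sref{eq:est:chig:l2:bnds} is then immediate from $\overline{\chi}_h\le 1$ and the second estimate of Lemma~\ref{lem:est:bnds:g:l2}.

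\emph{The Lipschitz bounds.} Writing $\overline{\chi}_h(A)=\overline{\chi}_h(\Phi+v_A,\gamma_A)$ and so on, I would telescope
\[
\Delta_{AB}\overline{\chi}_h g=\overline{\chi}_h(A)\big[g(\Phi+v_A)-g(\Phi+v_B)\big]+\big[\overline{\chi}_h(A)-\overline{\chi}_h(B)\big]g(\Phi+v_B).
\]
The first summand is $\le\mathrm{Lip}(g)\norm{v_A-v_B}_{L^2}$ by $\overline{\chi}_h\le 1$ and the first Lipschitz bound of Lemma~\ref{lem:est:bnds:g:l2}. For the second summand I would combine $\lvert\overline{\chi}_h(A)-\overline{\chi}_h(B)\rvert\le\min\{1,\,K(\norm{v_A-v_B}_{L^2}+\lvert\gamma_A-\gamma_B\rvert)\}$ from Lemma~\ref{lem:est:bnds:cutoff} with a two-line case split: if $\overline{\chi}_h(B)\ne 0$ then $\norm{g(\Phi+v_B)}_{L^2}\le K$ by the structural estimate; if $\overline{\chi}_h(B)=0$ the summand equals $\overline{\chi}_h(A)g(\Phi+v_B)$, so $\overline{\chi}_h(A)\ne 0$ and $\norm{g(\Phi+v_B)}_{L^2}\le\norm{g(\Phi+v_A)}_{L^2}+\mathrm{Lip}(g)\norm{v_A-v_B}_{L^2}\le K+\mathrm{Lip}(g)\norm{v_A-v_B}_{L^2}$, after which the ``$K$''-part pairs with the $K(\cdots)$-bound on $\lvert\overline{\chi}_h(A)-\overline{\chi}_h(B)\rvert$ and the $\norm{v_A-v_B}_{L^2}$-part pairs with the $1$-bound, both staying linear. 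Since $\overline{\chi}_h$ is $\xi$-independent, $\partial_\xi\Delta_{AB}=\overline{\chi}_h(A)\partial_\xi g(\Phi+v_A)-\overline{\chi}_h(B)\partial_\xi g(\Phi+v_B)$, and the identical telescoping, now with the $\partial_\xi g$-bound and the last Lipschitz bound of Lemma~\ref{lem:est:bnds:g:l2} and using $\norm{v_B}_{H^1}\le\norm{v_A}_{H^1}+\norm{v_A-v_B}_{H^1}$, yields the last estimate of \sref{eq:est:chig:lip:bnds}; no case split is needed there because the target already carries the factor $1+\norm{v_A}_{H^1}$.

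\emph{Main obstacle.} The only genuinely delicate point is the structural estimate: producing a bound on $\overline{\chi}_h g$ that is \emph{uniform} rather than of the form $O(1+\norm{v}_{L^2})$, given that $\overline{\chi}_h$ controls only $\norm{\Phi+v-T_\gamma\Phi_{\mathrm{ref}}}_{L^2}$ and not $\norm{v}_{L^2}$ (the two differ by the $\gamma$-dependent quantity $\norm{\Phi-T_\gamma\Phi_{\mathrm{ref}}}_{L^2}$). The vanishing $g(u_\pm)=0$ is exactly what removes this difficulty, and the same mechanism is what keeps the Lipschitz estimate linear across the transition region of $\overline{\chi}_h$.
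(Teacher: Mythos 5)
Your proposal follows essentially the same route as the paper: the size bounds come from the $g$-estimates of Lemma~\ref{lem:est:bnds:g:l2} combined with the control that $\overline{\chi}_h$ provides on its support, and the Lipschitz bounds come from telescoping $\Delta_{AB}\overline{\chi}_h g$ into a cut-off difference times a $g$-term plus a cut-off times a $g$-difference, estimated via Lemmas~\ref{lem:est:bnds:cutoff} and \ref{lem:est:bnds:g:l2} (the paper attaches the cut-off difference to $g(\Phi+v_A)$, you attach it to $g(\Phi+v_B)$; this is only a cosmetic reordering). Your explicit ``structural estimate'' --- uniformity of $\norm{g(\Phi+v)}_{L^2}$ on the support of $\overline{\chi}_h$ via $g(u_\pm)=0$ and global Lipschitzness of $g$ --- is precisely the content behind the paper's one-line remark that the cut-off yields an a-priori bound, and your case split on whether $\overline{\chi}_h(\Phi+v_B,\gamma_B)$ vanishes mirrors the case analysis the paper only spells out later, in the proof of Lemma~\ref{lem:est:bnd:wtc}. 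Making this explicit is a genuine improvement in precision, since the cut-off directly controls $\norm{\Phi+v-T_\gamma\Phi_{\mathrm{ref}}}_{L^2}$ rather than $\norm{v}_{L^2}$, the two differing by a $\gamma$-dependent amount when $u_-\neq u_+$.

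One small repair is needed in your final step. For $\partial_\xi \Delta_{AB}$ your pairing produces the term $[\overline{\chi}_h(\Phi+v_A,\gamma_A)-\overline{\chi}_h(\Phi+v_B,\gamma_B)]\,\partial_\xi g(\Phi+v_B)$, and estimating the cut-off difference by $K(\norm{v_A-v_B}_{L^2}+\abs{\gamma_A-\gamma_B})$ while bounding $\norm{\partial_\xi g(\Phi+v_B)}_{L^2}$ by $K(1+\norm{v_A}_{H^1}+\norm{v_A-v_B}_{H^1})$ leaves the cross term $(\norm{v_A-v_B}_{L^2}+\abs{\gamma_A-\gamma_B})\norm{v_A-v_B}_{H^1}$, which is not dominated by the right-hand side of \sref{eq:est:chig:lip:bnds} (take $\gamma_A=\gamma_B$ with $v_A$ small and $v_A-v_B$ large). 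So, contrary to your remark that no further care is needed there, you must either reuse the $\min\{1,\cdot\}$ pairing you already employed for the $L^2$ Lipschitz bound (use the bound $1$ for the cut-off difference on the part multiplying $\norm{v_A-v_B}_{H^1}$), or telescope in the paper's order, attaching the cut-off difference to $\partial_\xi g(\Phi+v_A)$, whose bound $K(1+\norm{v_A}_{H^1})$ is absorbed directly by the stated target. With that adjustment your argument is complete and coincides with the paper's.
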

\begin{proof}
The estimates \sref{eq:est:chig:l2:bnds} follow
directly from Lemma \ref{lem:est:bnds:g:l2},
using the fact that the cut-off allows us to assume
an a-priori bound for $\nrm{v}_{L^2(\R,\R^n)}$.
The Lipschitz bounds \sref{eq:est:chig:lip:bnds}
can be obtained by writing
\begin{equation}
\begin{array}{lcl}
    \Delta_{AB} \overline{\chi}_h g & = & \big[\overline{\chi}_h( \Phi + v_A, \gamma_A)
      - \overline{\chi}_h( \Phi + v_B, \gamma_B)\big]
        g(\Phi + v_A)
    \\[0.2cm]
    & & \qquad
     +\overline{\chi}_h( \Phi + v_B, \gamma_B)
       \big[ g(\Phi + v_A) - g(\Phi + v_B) \big]
\end{array}
\end{equation}
and applying the results
from Lemma's \ref{lem:est:bnds:cutoff} and \ref{lem:est:bnds:g:l2}.
\end{proof}

\begin{lem}
\label{lem:est:bnd:wtc}
Suppose that (Hq), (HEq), (HSt) and (hPar) are satisfied.
Then there exists a constant $K > 0$,
which does not
depend on the pair $(\Phi, c)$, so that the following holds true.
For any $v \in H^1(\R,\R^n)$ and $\gamma \in \R$
we have the bounds
\begin{equation}
 \label{eq:est:wtc:l2:bnds}
    \begin{array}{lcl}
      \norm{\widetilde{\mathcal{K}}_C(\Phi + v,\gamma)}_{L^2_Q}
      & \le &
        K ,
     \\[0.2cm]
     \norm{\partial_\xi \widetilde{\mathcal{K}}_C(\Phi + v,\gamma)}_{L^2_Q}  & \le &
        K [1 + \norm{v}_{H^1(\R,\R^n)}] .
    \end{array}
\end{equation}
In addition, for any pair $(v_A, v_B ) \in H^1(\R,\R^n) \times H^1(\R,\R^n)$
and $(\gamma_A, \gamma_B) \in \Real^2$,
the expression
\begin{equation}
    \Delta_{AB}\widetilde{T}_C  
     = \widetilde{\mathcal{K}}_C(\Phi + v_A,\g_A)
        - \widetilde{\mathcal{K}}_C(\Phi + v_B,\g_B)
\end{equation}
satisfies the estimates
\begin{equation}
  \label{eq:est:wtc:lip:bnds}
    \begin{array}{lcl}
      \norm{\Delta_{AB}\widetilde{T}_C }_{L^2_Q} & \le &
        K \big[ \norm{v_A - v_B}_{L^2(\R,\R^n)} + \abs{\gamma_A -\gamma_B} \big],
       \\[0.2cm]
       \norm{\partial_\xi \Delta_{AB}\widetilde{T}_C}_{L^2_Q}
       & \le &
        K \big[ 1 + \norm{v_A}_{H^1(\R,\R^n)}  \big]
        \big[\norm{v_A - v_B}_{H^1(\R,\R^n)} +  \abs{\gamma_A -\gamma_B} \big].
    \end{array}
\end{equation}
\end{lem}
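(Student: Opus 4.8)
The plan is to reduce all four estimates to the scalar $L^2$-bounds that are already available — Lemma \ref{lem:est:bnds:cutoff} for the cut-off functions, Lemma \ref{lem:est:bnds:g:l2} and especially Corollary \ref{cor:est:chig} for $g(\Phi+v)$ and $\partial_\xi g(\Phi+v)$ — together with three elementary structural facts about $Q^{1/2}$ and $\psit$. First I would record: (a) since the kernel $p$ of $Q^{1/2}$ satisfies $q*w = p*p*w$, any $z\in L^2(\R,\R^{n\times m})$ obeys $Qz = Q^{1/2}(Q^{1/2}z)$ and hence $\norm{Qz}_{L^2_Q} = \norm{Q^{-1/2}Qz}_{L^2} = \norm{Q^{1/2}z}_{L^2}$; (b) $Q^{1/2}$ is a bounded operator on $L^2$ with $\norm{Q^{1/2}z}_{L^2}^2 = \langle Qz,z\rangle_{L^2}\le \norm{q}_{L^1}\norm{z}_{L^2}^2$ by Young's convolution inequality; and (c) since $p\in L^2$ (Lemma \ref{lem:app:sqrtQ}), $Q^{1/2}$ commutes with $\partial_\xi$ on $H^1$, i.e. $\partial_\xi(Q^{1/2}z) = Q^{1/2}(\partial_\xi z)$. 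I would also note that $\psit\in H^2(\R,\R^n)\hookrightarrow W^{1,\infty}$ decays exponentially, so $\psit,\psit',\psit''$ all lie in $L^2\cap L^\infty$, and that translation by $\gamma$ is an $L^2$-isometry which is Lipschitz in $\gamma$: $\norm{(T_{\gamma_A}-T_{\gamma_B})\psit}_{L^2}\le\abs{\gamma_A-\gamma_B}\norm{\psit'}_{L^2}$ and likewise with $\psit$ replaced by $\psit'$.

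For the size bounds \sref{eq:est:wtc:l2:bnds}, since $\overline{\chi}_l$ and $\overline{\chi}_h$ are scalars that commute with every operator, I would write
\[
\widetilde{\mathcal{K}}_C(\Phi+v,\gamma) = \overline{\chi}_l(\Phi+v,\gamma)\, Q^{1/2}\big[\,\overline{\chi}_h(\Phi+v,\gamma)\, g^T(\Phi+v)\,T_\gamma\psit\,\big],
\]
so that by fact (b), $\abs{\overline{\chi}_l}\le K$ (Lemma \ref{lem:est:bnds:cutoff}) and by pulling the $L^\infty$-factor $T_\gamma\psit$ out of the pointwise product,
\[
\norm{\widetilde{\mathcal{K}}_C(\Phi+v,\gamma)}_{L^2_Q}\le K\norm{q}_{L^1}^{1/2}\,\norm{\overline{\chi}_h(\Phi+v,\gamma)\,g(\Phi+v)}_{L^2}\,\norm{\psit}_{L^\infty},
\]
which is uniformly bounded by the first estimate in \sref{eq:est:chig:l2:bnds}. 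For the derivative I would apply fact (c) and the product rule $\partial_\xi\big(g^T(\Phi+v)T_\gamma\psit\big) = [\partial_\xi g^T(\Phi+v)]T_\gamma\psit + g^T(\Phi+v)T_\gamma\psit'$, then bound the first term by $\norm{\overline{\chi}_h\partial_\xi g(\Phi+v)}_{L^2}\norm{\psit}_{L^\infty}\le K(1+\norm{v}_{H^1})$ and the second by $\norm{\overline{\chi}_h g(\Phi+v)}_{L^2}\norm{\psit'}_{L^\infty}\le K$, again via \sref{eq:est:chig:l2:bnds}.

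For the Lipschitz estimates \sref{eq:est:wtc:lip:bnds}, I would expand $\Delta_{AB}\widetilde{T}_C$ into a telescoping sum over its three sources of dependence — the prefactor $\overline{\chi}_l$, the prefactor $\overline{\chi}_h$ (kept glued to $g$), and the vector $g^T(\Phi+v)T_\gamma\psit$ — and then split the last one further into a $g$-difference $g(\Phi+v_A)-g(\Phi+v_B)$ and a translation-difference $(T_{\gamma_A}-T_{\gamma_B})\psit$. The $\overline{\chi}_l$- and $\overline{\chi}_h$-differences are handled by \sref{eq:est:chi:lip:bnds} (the $\abs{\gamma_A}$-independence of the first of those being exactly what produces the uniform $(1+\norm{v_A}_{H^1})$-free factor), the $g$-differences (also with a $\partial_\xi$) by \sref{eq:est:chig:lip:bnds}, and the translation-differences by the $L^2$-estimates for $\psit$, $\psit'$ noted above; for $\partial_\xi\Delta_{AB}\widetilde{T}_C$ one uses in addition that $\psit''\in L^2$. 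Each term is then reassembled using the $Q^{1/2}$-operator bound of fact (b), which gives the stated Lipschitz constants.

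The step I expect to require the most care is making sure the cut-off $\overline{\chi}_h$ is exploited so that the ``uniform'' bounds — those with no $\norm{v}$-factor — are genuinely uniform in the pair $(\Phi,c)$ satisfying (hPar), in $v$, and in $\gamma$; this is precisely the subtlety already resolved in Corollary \ref{cor:est:chig} through the refined cut-off argument adapted from \cite[Lem. 3.2, Lem. 3.3]{Hamster2017}. I would therefore lean entirely on Corollary \ref{cor:est:chig} (and Lemma \ref{lem:est:bnds:cutoff}) for this point rather than re-deriving it, so that the only genuinely new content of Lemma \ref{lem:est:bnd:wtc} is the bookkeeping for the convolution operator $Q^{1/2}$ and the matrix-valued kernel $q$, together with the pairing against the exponentially localised functions $\psit,\psit',\psit''$.
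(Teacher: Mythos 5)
Your proposal is correct and follows essentially the same route as the paper's proof: the size bounds come from the identity $\norm{Qz\psi}_{L^2_Q}^2=\langle Qz\psi,z\psi\rangle_{L^2}\le\norm{q}_{L^1}\norm{z}_{L^2}^2\norm{\psi}_{\infty}^2$ (your facts (a)--(b)) together with commuting $\partial_\xi$ past the convolution, and the Lipschitz bounds come from the same three-way decomposition of $\Delta_{AB}\widetilde{\mathcal{K}}_C$ (difference in $\overline{\chi}_l$, translation difference $T_{\gamma_A}\psit-T_{\gamma_B}\psit$, and the glued difference $\Delta_{AB}\overline{\chi}_h g$) estimated via Lemma \ref{lem:est:bnds:cutoff} and Corollary \ref{cor:est:chig}, exactly as in the paper (which additionally spells out the symmetry trick for trading $\norm{v_A}_{L^2}$ for $\norm{v_B}_{L^2}$ when only one cut-off is active).
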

\begin{proof}
Note first
that for any $z \in H^1(\R,\R^{m \times n})$
and any $\psi \in W^{1, \infty}(\R,\R^n)$,
we have
\begin{equation}
   \norm{ Q z \psi}_{L^2_Q}^2 = \langle Q z\psi, z\psi \rangle_{L^2(\R,\R^m)}
    \le \norm{q}_{L^1} \norm{z}^2_{L^2( \R,\R^{n \times m} )}
    \norm{\psi}_\infty^2
\end{equation}
together with
\begin{equation}
\begin{array}{lcl}
   \norm{ \partial_\xi Q z\psi}_{L^2_Q}^2
   = \norm{  Q \partial_\xi [z\psi]}_{L^2_Q}^2
   & \le & \norm{q}_{L^1} \norm{\partial_\xi [z\psi]}_{L^2(\R,\R^m)}^2
   \\[0.2cm]
    & \le & \norm{q}_{L^1} \norm{z}^2_{H^1(\R,\R^{m \times n})}
    [\norm{\psi}_\infty + \norm{\psi'}_\infty]^2 .
\end{array}    
\end{equation}
The bounds \sref{eq:est:wtc:l2:bnds} hence follow
directly from Lemma \ref{lem:est:bnds:g:l2},
using the cut-off function again to eliminate the
dependence on $\norm{v}_{L^2(\R,\R^n)}$.

Turning to the Lipschitz estimates 
\sref{eq:est:wtc:lip:bnds},
we first compute
\begin{equation}
\begin{array}{lcl}
    \Delta_{AB} \widetilde{\mathcal{K}}_C & = &
    \big[\overline{\chi}_l( \Phi + v_A, \gamma_A)
      - \overline{\chi}_l( \Phi + v_B, \gamma_B)\big]
         Q \overline{\chi}_h( \Phi + v_A, \gamma_A)  g^T(\Phi + v_A)
         T_{\g_A} \psit
    \\[0.2cm]
    & & \qquad
      +\overline{\chi}_l( \Phi + v_B, \gamma_B)
                   Q \overline{\chi}_h( \Phi + v_A, \gamma_A)  g^T(\Phi + v_A)
                 \big[ T_{\g_A}\psit - T_{\g_B} \psit \big]
    \\[0.2cm]
    & & \qquad
     +\overline{\chi}_l( \Phi + v_B, \gamma_B)
        Q \big[\Delta_{AB} \overline{\chi}_h g \big]^T
         T_{\g_B} \psit .
\end{array}
\end{equation}
If $\overline{\chi}_h(\Phi + v_A, \gamma_A) \neq 0$,
then we can use an a-priori bound on $\norm{v_A}_{L^2(\R,\R^n)}$
to obtain the result directly
from Lemma \ref{lem:est:bnds:cutoff}
and Corollary \ref{cor:est:chig}.
On the other hand, if we have an a-priori bound on 
$\norm{v_B}_{L^2(\R,\R^n)}$, we can exploit symmetry
to replace the $\norm{v_A}_{L^2(\R,\R^n)}$  term in  
\sref{eq:lem:prlm:g:lipschitz} by $\norm{v_B}_{L^2(\R,\R^n)}$
and obtain the same result.
\end{proof}

We are now ready to consider the final
nonlinearity $\overline{b}$
that was defined in \sref{eq:sps:cutoffb}.
Fortunately, our estimates
for $\widetilde{\mathcal{K}_C}$ can also
be used to establish the following bounds,
which correspond precisely
to those obtained in 
\cite{Hamster2017}.

\begin{lem}
\label{lem:prlm:bnds:b}
Suppose that (Hq), (HEq), (Hg), (HSt) and (hPar) are satisfied.
Then there exist constants $K_b > 0$
and $K > 0$,
which do not depend on the pair 
$(\Phi,c)$ so that the following holds true. For any $v \in H^1(\R,\R^n)$ and $\gamma\in\Real$
we have the bound
\begin{equation}
  \label{eq:res:b:glb:bnd}
  \begin{array}{lcl}
      \nrm{\overline{b}(\Phi + \g, \psi)}_{HS(L_Q^2,\R)}  & \le &
        K_b,
     \\[0.2cm]
  \end{array}
\end{equation}
while for any set of pairs
$(v_A , v_B) \in H^1(\R,\R^n) \times H^1(\R,\R^n)$
and $(\g_A, \g_B) \in \Real^2$
we have the estimate
\begin{equation}
\label{eq:res:b:lip:bnd}
    \begin{array}{lcl}
      \nrm{\overline{b}(\Phi + v_A , \g_A) - \overline{b}(\Phi + v_B, \g_B) }_{HS(L^2_Q,\R)}
        & \le &
           K \norm{v_A-v_B}_{L^2(\R,\R^n)}
   \\[0.2cm]
   & & \qquad
           + K \big[ 1 +  \norm{v_B}_{L^2(\R,\R^n)} \big]
              \abs{\g_A - \g_B}.
    \end{array}
\end{equation}
\end{lem}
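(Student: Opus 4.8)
The plan is to reduce the bound on $\overline{b}$ directly to the estimates already established for $\widetilde{\mathcal{K}}_C$ in Lemma \ref{lem:est:bnd:wtc}, exploiting the fact that $\overline{b}$ is built from essentially the same ingredients. Recall from \sref{eq:sps:cutoffb} that
\begin{equation}
\overline{b}(U,\G)[v]=
- \overline{\chi}_h(U, \G)\, \bigl\langle g(U)v, \overline{\chi}_h(U,\G)\overline{\chi}_l(U,\G)\, T_\G \psit \bigr\rangle_{L^2(\R,\R^n)},
\end{equation}
and observe that the computation in \sref{eq:sps:hs:norm:ovl:b} already shows
\begin{equation}
\nrm{\overline{b}(U,\G)}^2_{HS(L^2_Q, \R)}
= \overline{\chi}_h(U, \G)^2 \, \bigl\langle g(U)\, \widetilde{\mathcal{K}}_C(U,\G),\, T_{\Gamma} \psit \bigr\rangle_{L^2(\R,\R^n)},
\end{equation}
using the identification $g^\mathrm{adj}(U) = Q g^T(U)$ from \sref{eq:sps:def:g:adj:spec}. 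By Cauchy--Schwarz this is bounded by $\overline{\chi}_h(U,\G)^2 \nrm{g(U)\widetilde{\mathcal{K}}_C(U,\G)}_{L^2} \nrm{T_\G\psit}_{L^2}$, and since $T_\G$ is an isometry and $\widetilde{\mathcal{K}}_C$ carries an extra $\overline{\chi}_h$ factor inside it, I can write $g(U)\widetilde{\mathcal{K}}_C(U,\G)$ as $g(U)\cdot[\overline{\chi}_h$-weighted $L^2_Q$ element$]$ and apply Corollary \ref{cor:prlm:ests:g} together with the $L^2_Q$-bound from \sref{eq:est:wtc:l2:bnds}. The cut-off $\overline{\chi}_h$ supplies the a-priori control $\nrm{v}_{L^2}\le K$ that eliminates the perturbation-size dependence, so \sref{eq:res:b:glb:bnd} follows with a constant $K_b$ independent of $(\Phi,c)$.

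For the Lipschitz estimate \sref{eq:res:b:lip:bnd}, I would work with $\overline{b}$ itself rather than its square, and decompose the difference $\overline{b}(\Phi+v_A,\g_A) - \overline{b}(\Phi+v_B,\g_B)$ into three telescoping pieces: one where only the outer $\overline{\chi}_h(\cdot)$ factor changes, one where only $g(\Phi+v_\bullet)$ changes (viewed as a Hilbert--Schmidt operator $L^2_Q\to L^2(\R,\R^n)$), and one where only the inner argument $\overline{\chi}_h\overline{\chi}_l\, T_{\g_\bullet}\psit$ changes. The first piece is controlled by \sref{eq:est:chi:lip:bnds} combined with the uniform bound $\nrm{g(\Phi+v_B)\widetilde{\mathcal{K}}_C}\le K$; the second by the Hilbert--Schmidt Lipschitz bound \sref{eq:lem:prlm:g:lipschitz:hs} from Corollary \ref{cor:prlm:ests:g}; and the third reduces exactly to the $L^2_Q$-Lipschitz bound $\nrm{\Delta_{AB}\widetilde{\mathcal{K}}_C}_{L^2_Q}\le K[\nrm{v_A-v_B}_{L^2}+\abs{\g_A-\g_B}]$ established in \sref{eq:est:wtc:lip:bnds}. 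Assembling these via the triangle inequality gives a bound of the form $K\nrm{v_A-v_B}_{L^2} + K(1+\nrm{v_A}_{L^2})\abs{\g_A-\g_B}$, and the symmetry trick used at the end of the proof of Lemma \ref{lem:est:bnd:wtc} --- using whichever of $\overline{\chi}_h(\Phi+v_A,\g_A)$, $\overline{\chi}_h(\Phi+v_B,\g_B)$ is nonzero to get the a-priori $L^2$-bound --- lets me replace $\nrm{v_A}_{L^2}$ by $\nrm{v_B}_{L^2}$ where the statement requires it.

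The main obstacle I anticipate is purely bookkeeping: keeping careful track of \emph{which} copies of $\overline{\chi}_h$ and $\overline{\chi}_l$ are attached to which factor (note that \sref{eq:sps:cutoffb} has $\overline{\chi}_h^2$ while $\widetilde{\mathcal{K}}_C$ absorbs one power of $\overline{\chi}_h$, so one power remains "free" outside), and making sure that in the Lipschitz decomposition every term genuinely inherits a uniform $L^2$-a-priori bound on at least one of $v_A,v_B$ from a nonvanishing cut-off. There is no analytic difficulty beyond what is already in Lemma \ref{lem:est:bnd:wtc} and Corollary \ref{cor:prlm:ests:g}; the work is in organizing the telescoping so that the final constant does not secretly depend on $(\Phi,c)$, which is guaranteed because every invoked lemma states its constant is $(\Phi,c)$-independent. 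Hence the proof can be stated quite compactly as "combine \sref{eq:sps:hs:norm:ovl:b}, Corollary \ref{cor:prlm:ests:g} and Lemma \ref{lem:est:bnd:wtc}", with the telescoping decomposition written out for the Lipschitz part.
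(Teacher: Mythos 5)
Your proposal is correct and follows essentially the same route as the paper: both reduce the Hilbert--Schmidt norms of $\overline{b}$ and of its differences to the $L^2_Q$-norm of $\overline{\chi}_h\,\widetilde{\mathcal{K}}_C$ (respectively of its difference) via the computation \sref{eq:sps:hs:norm:ovl:b}, and then telescope using the cut-off bounds of Lemma \ref{lem:est:bnds:cutoff} together with the $\widetilde{\mathcal{K}}_C$-estimates of Lemma \ref{lem:est:bnd:wtc} (the symmetry trick to place the factor on $\norm{v_B}_{L^2(\R,\R^n)}$ included). One harmless bookkeeping slip: the identity you quote should read $\nrm{\overline{b}(U,\G)}^2_{HS(L^2_Q,\R)}=\overline{\chi}_h(U,\G)^2\,\nrm{\widetilde{\mathcal{K}}_C(U,\G)}^2_{L^2_Q}$, so your displayed right-hand side is off by a factor $\overline{\chi}_h\overline{\chi}_l$, which does not affect any of the bounds since both cut-offs are uniformly bounded.
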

\begin{proof}
The computation \sref{eq:sps:hs:norm:ovl:b}
shows that
\begin{equation}
    \nrm{\overline{b}(\Phi + v, \g)}^2_{HS(L^2_Q,\R)}
    = \overline{\chi}_h(\Phi+V,\gamma)^2
    \nrm{\widetilde{\mathcal{K}}_C(\Phi + v_A , \gamma_A)}_{L^2_Q}^2,
\end{equation}
which on account
of \sref{eq:est:wtc:l2:bnds} 
immediately implies 
\sref{eq:res:b:glb:bnd}. 

Turning to the Lipschitz bound 
\sref{eq:res:b:lip:bnd},
we introduce
the notation
\begin{align}\begin{split}
\mathcal{I}_k &= \overline{b}(v_A+\Phi,\g_A)[\sqrt Qe_k]
-\overline{b}(v_B+\Phi,\g_B)[\sqrt Qe_k]\\
\end{split}
\end{align}
and note that
\begin{align}\begin{split}
\nrm{\overline{b}(\Phi+v_A,\g_A) - \overline{b}(\Phi+v_B, \g_B) }_{HS(L^2_Q,\R)}^2
=\sum_{k=0}^\infty \mathcal{I}_k^2.
\end{split}
\end{align}
We now compute
\begin{align}\begin{split}
\mathcal{I}_k 
&=\overline{\chi}_h(v_A + \Phi, \g_A)^2
\overline{\chi}_l(v_A + \Phi, \g_A)
\langle g(\Phi + v_A) \sqrt{Q} e_k, T_{\gamma_A} \psit \rangle
\\
& \qquad
- \overline{\chi}_h(V_B + \Phi, \g_B)^2
\overline{\chi}_l(V_B + \Phi, \g_B)
\langle g(\Phi + V_A) \sqrt{Q} e_k, T_{\gamma_B} \psit \rangle
\\
&=
\langle \sqrt{Q} e_k ,
  \overline{\chi}_h(v_A + \Phi, \g_A)\widetilde{\mathcal{K}}_C(\Phi + v_A , \gamma_A)
  - \overline{\chi}_h(v_B + \Phi, \g_B)\widetilde{\mathcal{K}}_C(\Phi + v_B , \gamma_B)
  \rangle_{L^2_Q}.
\\
\end{split}
\end{align}
In particular, we see that
\begin{equation}
    \sum_{k=0}^\infty \mathcal{I}_k^2 =
      \norm{\overline{\chi}_h(v_A + \Phi, \g_A)\widetilde{\mathcal{K}}_C(\Phi + v_A , \gamma_A)
  - \overline{\chi}_h(v_B + \Phi, \g_B)\widetilde{\mathcal{K}}_C(\Phi + v_B , \gamma_B)
      }^2_{L^2_Q}.
\end{equation}
The desired bound now follows
by combining
Lemma's \ref{lem:est:bnds:cutoff}
and \ref{lem:est:bnd:wtc}.
\end{proof}

\bibliographystyle{klunumHJ}
\bibliography{ref}

\end{document}